\theoremstyle{plain}
\newtheorem{theorem}{Theorem}[section]
\newtheorem{proposition}[theorem]{Proposition}
\newtheorem{corollary}[theorem]{Corollary}
\newtheorem{lemma}[theorem]{Lemma}
\newtheorem{example}[theorem]{Example}
\newtheorem{remark}[theorem]{Remark}
\newcommand{\bsl}{\setminus}
\newcommand{\dom}{\mathbf{d}}
\newcommand{\comp}{X^{\wedge} \cap Y^{\perp}}
\begin{document}

\title[Non-commutative Stone duality]{Non-commutative Stone duality:\\   
inverse semigroups, topological groupoids and $C^{\ast}$-algebras}

\author{M.~V.~Lawson}
\address{Department of Mathematics
and the
Maxwell Institute for Mathematical Sciences,
Heriot-Watt University,
Riccarton,
Edinburgh~EH14~4AS, 
Scotland}
\email{markl@ma.hw.ac.uk}

\thanks{The author's research was partially supported by an EPSRC grant (EP/F004184, EP/F014945, EP/F005881). 
He would particularly like to thank Daniel Lenz for many discussions on the subject of this paper which uses in a crucial way some of his ideas,
and also Ganna Kudryavtseva, Stuart Margolis, Pedro Resende, Philip Scott and  Benjamin Steinberg.}
 
\keywords{Stone duality, graph algebras, Thompson-Higman groups}
\subjclass[2000]{Primary: 20M18; Secondary: 46L05, 06E15.}

\begin{abstract} 
We study a non-commutative generalization of Stone duality that connects a class of inverse semigroups,
called Boolean inverse $\wedge$-semigroups, with a class of topological groupoids, called Hausdorff Boolean groupoids.
Much of the paper is given over to showing that Boolean inverse $\wedge$-semigroups arise as completions of inverse semigroups we call pre-Boolean.
An inverse $\wedge$-semigroup is pre-Boolean if and only if every tight filter is an ultrafilter, where the definition of a tight filter
is obtained by combining work of both Exel and Lenz.
A simple necessary condition for a semigroup to be pre-Boolean is derived and a variety of examples
of inverse semigroups are shown to satisfy it.
Thus the polycyclic inverse monoids, and certain Rees matrix semigroups over the polycyclics,
are pre-Boolean and it is proved that the groups of units of their completions are precisely the Thompson-Higman groups $G_{n,r}$.
The inverse semigroups arising from suitable directed graphs are also pre-Boolean and
the topological groupoids arising from these graph inverse semigroups under our non-commutative Stone duality are
the groupoids that arise from the Cuntz-Krieger $C^{\ast}$-algebras.
\end{abstract}
\maketitle

\section{Introduction}

From the appearance of Renault's seminal monograph \cite{Renault} and the work of Kumjian \cite{K} to the more recent book by Paterson \cite{Pat1} 
it has been known that three areas of mathematics 
\begin{center}
inverse semigroups, topological groupoids, $C^{\ast}$-algebras 
\end{center}
are closely related to each other. 
In the literature, most attention has focused on the relationship between topological groupoids and $C^{\ast}$-algebras
whereas the goal of this paper is to shift attention to that between inverse semigroups and topological groupoids. 
We prove three main theorems in this paper and explore their applications.
In the remainder of this introduction, we explain what these three theorems are  and touch on the kinds of applications we deal with.

In this paper, classical Boolean algebras will be termed {\em unital Boolean algebras} whereas generalized Boolean algebras will be called simply {\em Boolean algebras}.
Thus a Boolean algebra is a distributive lattice $B$, without necessarily having a top element, 
in which for each pair of elements $a$ and $b$ there exists a, perforce unique, 
element $b \bsl a$ satisfying $a \vee b = a \vee (b \bsl a)$ and $a \wedge (b \bsl a) = 0$.
Equivalently, it is a lattice in which each principal order ideal is a unital Boolean algebra.
Homomorphisms of Boolean algebras are lattice homomorphisms preserving the bottom element.
Observe that in a non-unital Boolean algebra, the joins of all finite subsets exist,
including the emptyset whose join is 0,
and that the meets exist of all finite {\em nonempty} subsets;
for the emptyset to have a meet the algebra has to have a 1. 
A homomorphism of Boolean algebras is called {\em proper} if every element in the codomain lies beneath
an element of the image. 

A {\em Boolean space} is a Hausdorff topological space with a basis of compact-open sets.
A continuous function between topological spaces is said to be {\em proper} if the inverse image of
every compact set is also a compact set.
For background results from topology needed in this paper, see \cite{S,W}.
The following theorem is ultimately due to Marshall H. Stone \cite{Doctor}.

\begin{theorem}[Stone duality]
The category of Boolean algebras and proper homomorphisms is dual to the category of Boolean spaces and proper continuous functions.
\end{theorem}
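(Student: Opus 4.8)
The plan is to build the two contravariant functors that realize the duality and then check that their composites are naturally isomorphic to the identities, via natural transformations $\eta$ and $\epsilon$; the only genuine work lies in handling the non-unital (``generalized'') features correctly and in seeing that the hypothesis \emph{proper} on each side is exactly what makes the morphism assignments well defined.

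\emph{From algebras to spaces.} Given a Boolean algebra $B$, let $X_B$ be the set of ultrafilters of $B$, topologized so that $\{U_b : b \in B\}$ is a basis, where $U_b = \{F \in X_B : b \in F\}$. Since $U_b \cap U_c = U_{b \wedge c}$, $U_b \cup U_c = U_{b \vee c}$, $U_{b \setminus a} = U_b \setminus U_a$ and $\bigcup_b U_b = X_B$ (no ultrafilter is empty), this really is a basis and $\eta_B \colon b \mapsto U_b$ is a homomorphism into the Boolean algebra of all subsets of $X_B$. I would then check: (i) $X_B$ is Hausdorff --- given $F \neq G$, pick $b \in F \setminus G$; for any $c \in G$, primeness of $G$ applied to $c = (c \wedge b) \vee (c \setminus b)$ forces $c \setminus b \in G$, and $U_b$ and $U_{c \setminus b}$ are disjoint because $U_{b \wedge (c \setminus b)} = U_0 = \emptyset$; (ii) each $U_b$ is compact --- equivalently, the Stone space of the unital Boolean algebra $[0,b]$ is compact: cover $U_b$ by basic opens, replace each generator by its meet with $b$, and if no finite subfamily covered, then the corresponding relative complements in $[0,b]$ would generate a proper filter, which by Zorn extends to an ultrafilter whose upward closure in $B$ is a point of $U_b$ lying in no member of the cover. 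Thus $X_B$ is a Boolean space, $\eta_B$ lands in the compact-open sets, it is injective because every non-zero $d$ lies in some ultrafilter (take an ultrafilter of $[0,d]$ through $d$ and close it upward), and surjective because a compact-open set is a finite union of basic opens, hence equals some $U_b$. So $\eta_B$ is an isomorphism of Boolean algebras.

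\emph{From spaces to algebras.} For a Boolean space $X$, let $B(X)$ be the set of compact-open subsets; since compact subsets of a Hausdorff space are closed, $B(X)$ is closed under finite intersection and relative complement and so is a Boolean algebra. Define $\epsilon_X \colon X \to X_{B(X)}$ by $x \mapsto \{W \in B(X) : x \in W\}$. This is a proper prime filter, hence an ultrafilter (a proper prime filter of a Boolean algebra is maximal, by the standard relative-complement argument); $\epsilon_X$ is continuous because $\epsilon_X^{-1}(U_W) = W$; it is injective because the compact-open sets separate the points of a Hausdorff space having them as a basis; and it is surjective because, given an ultrafilter $\mathcal{F}$ of $B(X)$, its members are compact, closed, and have the finite intersection property, so $\bigcap_{W \in \mathcal{F}} W \neq \emptyset$, and any point $x$ of this intersection has $\mathcal{F} \subseteq \epsilon_X(x)$, whence $\mathcal{F} = \epsilon_X(x)$ by maximality. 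Since $\epsilon_X$ carries the basic open $W$ bijectively onto the basic open $U_W$, it is a homeomorphism.

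\emph{Morphisms and naturality.} For a proper homomorphism $f \colon B \to C$ set $f^{\ast} \colon X_C \to X_B$, $F \mapsto f^{-1}(F)$; properness is used precisely to see $f^{-1}(F)$ is non-empty (if $c \in F$, properness gives $b$ with $c \leq f(b)$, so $f(b) \in F$), after which $f^{-1}(F)$ is visibly a proper prime filter, hence an ultrafilter, and the identity $(f^{\ast})^{-1}(U_b) = U_{f(b)}$ shows $f^{\ast}$ is continuous and, since every compact subset of $X_B$ sits inside some $U_b$, proper. Dually, for a proper continuous $g \colon X \to Y$ set $B(g) \colon W \mapsto g^{-1}(W)$; this lands in $B(X)$ because $g$ is continuous and proper, and it is a proper homomorphism because $g(V)$ is compact for $V \in B(X)$, hence lies in some $W \in B(Y)$, giving $V \subseteq g^{-1}(W)$. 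One then checks these are contravariant functors, and the identity $(f^{\ast})^{-1}(U_b) = U_{f(b)}$ together with the behaviour of $\epsilon_X$ on basic opens gives naturality of $\eta$ and $\epsilon$. The main obstacle throughout is simply keeping the non-unital bookkeeping honest --- in particular the compactness of the $U_b$ and the two places where \emph{proper} does essential work --- after which the argument reduces to the classical unital Stone argument carried out inside the principal ideals $[0,b]$.
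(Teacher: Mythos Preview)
Your proof is correct and self-contained; the non-unital bookkeeping (compactness of $U_b$ via the principal ideal $[0,b]$, and the two uses of \emph{proper}) is handled carefully and accurately.

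However, there is nothing to compare against: the paper does not prove this theorem. It is stated as classical background, attributed to Stone with a citation, and no argument is given. What the paper \emph{does} prove is the Duality Theorem for Boolean inverse $\wedge$-semigroups and Hausdorff Boolean groupoids, of which Theorem~1.1 is the commutative special case (a Boolean algebra is a Boolean inverse $\wedge$-semigroup with trivial multiplication, and its groupoid of ultrafilters is just its Stone space regarded as a groupoid of identities). So if one wanted to extract a proof of Theorem~1.1 from the paper, it would go via Propositions~2.37 and 2.39 and the proof of the Duality Theorem, specialized to the case $S = E(S)$; the ingredients --- ultrafilters, the basis $\{V_e\}$, compactness via the product-space argument of Lemma~2.23, and the morphism assignments $\theta \mapsto \theta^{-1}$ --- are essentially the same as yours, just phrased in the groupoid language. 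Your direct argument is the more economical route to the bare statement; the paper's route buys the full non-commutative generalization.
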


Let $(E,\leq)$ be a poset with zero.
If $e \in E$ such that $f \leq e$ implies that either $f = e$ or $f = 0$ then we say that $e$ is {\em $0$-minimal}.
If $X \subseteq E$ define
$$X^{\uparrow} = \{ y \in E \colon \exists x \in X; x \leq y\}$$ 
and
$$X^{\downarrow} = \{ y \in E \colon \exists x \in X; y \leq x\}.$$ 
If $X = \{x\}$ we write $x^{\uparrow}$ and $x^{\downarrow}$, respectively.
If $X = X^{\downarrow}$ we say that $X$ is {\em order ideal};
if $X$ is finite then $X^{\downarrow}$ is said to be a {\em finitely generated} order ideal.
If $X = X^{\uparrow}$ we say that $X$ is {\em closed}.
The set $X$ is said to be {\em (down) directed} if for all $x,y \in X$ there exists $z \in X$ such that $z \leq x,y$.
Observe that if $E$ is a meet semilattice then a closed set $F$ is down directed precisely when $x,y \in F$ implies that $x \wedge y \in F$.  
A subset $A \subseteq X$ is called a {\em filter} if it is directed and closed and does not contain zero.
An {\em ultrafilter} is a maximal filter.

Let $S$ be an inverse semigroup.
In what follows, 
the only order used in connection with inverse semigroups will be the natural partial order.
The semilattice of idempotents of $S$ is denoted by $E(S)$.
An inverse semigroup is said to be an {\em inverse $\wedge$-semigroup} if each pair of elements has a meet \cite{Leech2,Leech3}.
In an inverse semigroup, we often write $\mathbf{d}(s) = s^{-1}s$ and $\mathbf{r}(s) = ss^{-1}$.
We refer the reader to \cite{Law1} for any unproved assertions about inverse semigroups.

\begin{remark} {\em Except where stated otherwise, all inverse semigroups in this paper will be  
inverse $\wedge$-semigroups with zero. This is not a necessary condition to develop our theory
but it simplifies the mathematics --- witness Lemma~2.6(2) --- and is sufficient for the examples we have in mind.
The general theory is developed, from a different perspective, in the preprint \cite{LL}.}
\end{remark}

We say that elements $s$ and $t$ are {\em compatible}, denoted $s \sim t$, if both $s^{-1}t$ and $st^{-1}$ are idempotents.
A subset of $S$ is {\em compatible} if each pair of elements in the subset are compatible.
If $s^{-1}t = 0 = st^{-1}$ then $s$ and $t$ are said to be {\em orthogonal}.
If a pair of elements are bounded above then they are easily seen to be compatible.
It follows that for a pair of elements to be {\em eligible} to have a join they must be compatible.
Furthermore, if $s$ and $t$ are compatible then $s \wedge t$ exists and 
$\mathbf{d}(s \wedge t) = \mathbf{d}(s)\mathbf{d}(t)$, and dually.

An inverse semigroup is said to be {\em distributive} if it satisfies two conditions.
First, we require that every finite non-empty compatible subset has a join.
Second, if $\{a_{1}, \ldots, a_{m}\}$ is a non-empty finite compatible subset and if $a \in S$ is any element
then both
$\bigvee_{i=1}^{m} aa_{i}$ and   $\bigvee_{i=1}^{m} a_{i}a$ exist and we have the following two equalities
$$
a \left( \bigvee_{i=1}^{m} a_{i} \right)
=
\bigvee_{i=1}^{m} aa_{i}
\text{ and }
\left( \bigvee_{i=1}^{m} a_{i} \right) a
=
\bigvee_{i=1}^{m} a_{i}a.$$
A distributive inverse semigroup is said to be {\em Boolean} if its semilattice of idempotents is a Boolean algebra.
Distributive inverse $\wedge$-semigroups and Boolean inverse $\wedge$-semigroups will be the main classes
of inverse semigroup considered in this paper.

A {\em morphism} $\theta \colon S \rightarrow T$ of Boolean inverse $\wedge$-semigroups is a homomorphism
of inverse $\wedge$-semigroups with the property that the restriction $\theta \mid E(S) \colon E(S) \rightarrow E(T)$ is a homomorphism of Boolean algebras.
A morphism is said to be {\em proper} if the inverse images of ultrafilters in $T$ are ultrafilters in $S$.

Throughout this paper categories, apart from categories of structures, will be small and objects replaced by identities.
If a category is denoted by $C$ then its set of identities will be denoted by $C_{o}$.\footnote{I follow Ehresmann's notation and write `o' for `object'.}
The elements of a category are called {\em arrows}.
Each arrow $a$ has a {\em domain}, denoted by $\mathbf{d}(a)$, and a {\em codomain} denoted by $\mathbf{r}(a)$,
both of these are identities and $a = a\mathbf{d}(a) = \mathbf{r}(a)a$.
A pair of elements $(a,b)$ in a category is {\em composable} if $\mathbf{d}(a) = \mathbf{r}(b)$.
The set of composable pairs in a category $C$ is denoted by $C \ast C$.
Given an identity $e$ the set of all arrows that begin and end at $e$ forms a monoid called the {\em local monoid} at $e$.
An arrow $a$ is {\em invertible} if there is an arrow $a^{-1}$ such that $a^{-1}a = \mathbf{d}(a)$ and $aa^{-1} = \mathbf{r}(a)$.
A category in which every arrow is invertible is called a {\em groupoid}.

Let $G$ be a groupoid.
Denote the multiplication map by $\mathbf{m}$ and the inversion map by $\mathbf{i}$.
A {\em topological groupoid} is a groupoid $G$ which is also a topological space in which
both the multiplication and inversion maps are continuous.
A topological groupoid $G$ is said to be {\em open} if the map $\mathbf{d} \colon G \rightarrow G_{o}$ is open.
If $\mathbf{d}$ is a local homeomorphism then $G$ it is said to be {\em \'etale}.
Since local homeomorphisms are open maps it follows that every \'etale groupoid is an open groupoid.
A {\em local bisection} in a groupoid $G$ is a subset $A$ such that $A^{-1}A,AA^{-1} \subseteq G_{o}$.
A {\em Boolean groupoid} is an \'etale topological groupoid with a basis of compact-open bisections
whose space of identities is a Boolean space. 
We shall be interested in this paper in {\em Hausdorff Boolean groupoids}.
A functor $\theta \colon G \rightarrow H$ between groupoids is said to be a {\em covering} functor
if it is {\em star-injective}, 
meaning that if $\theta (g) = \theta (g')$ and $\mathbf{d}(g) = \mathbf{d}(g')$ then $g = g'$,
and {\em star-surjective}, meaning that if $\mathbf{d}(h) = \theta (e)$, where $e$ is an identity, then there exists $g \in G$ such that
$\mathbf{d}(g) = e$ and $\theta (g) = h$.

The key concept on which this paper is based is the {\em Lenz arrow relation} $\rightarrow$ \cite{Lenz}.
This concept is also implicit in Exel's paper \cite{E} since it is used to define the notion of a cover. 
Let $a,b \in S$.
We define $a \rightarrow b$ iff for each non-zero element $x \leq a$, we have that $x \wedge b \neq 0$.
Observe that $a \leq b \Rightarrow a \rightarrow b$.
We write $a \leftrightarrow b$ iff $a \rightarrow b$ and $b \rightarrow a$.
More generally, if $a,a_{1}, \ldots, a_{m} \in S$ then
we define $a \rightarrow \{a_{1}, \ldots, a_{m}\}$ iff for each non-zero element $x \leq a$
we have that $x \wedge a_{i} \neq 0$ for some $i$.
Finally, we write 
$$\{a_{1}, \ldots, a_{m}\} \rightarrow \{b_{1}, \ldots, b_{n}\}$$
iff $a_{i} \rightarrow  (b_{1}, \ldots, b_{n})$
for $1 \leq i \leq m$,
and we write
$$\{a_{1}, \ldots, a_{m}\} \leftrightarrow \{b_{1}, \ldots, b_{n}\}$$
iff both 
$\{a_{1}, \ldots, a_{m}) \rightarrow (b_{1}, \ldots, b_{n}\}$
and 
$\{b_{1}, \ldots, b_{n}\} \rightarrow \{a_{1}, \ldots, a_{m}\}$. 
A subset $Z \subseteq A$ is said to be a {\em cover} of $A$, 
denoted $A \rightarrow Z$, if for each $a \in A$ there exists $z \in Z$ such that $a \wedge z \neq 0$.
A special case of this definition is the following.
A finite subset $A \subseteq a^{\downarrow}$ is said to be 
a {\em cover} of $a$ if $a \rightarrow A$.\footnote{The term `cover' in this context is sanctioned by its use in frame theory. 
In the preprint \cite{LL}, we show that this notion of cover is a special case of that of a coverage which in turn is closely related to the notion of a
Grothendieck topology.}
A homomorphism $\theta \colon S \rightarrow T$ from an inverse $\wedge$-semigroup $S$ to a distributive inverse semigroup $T$ 
is said to be a {\em cover-to-join} map if for each element $s \in S$ and each finite cover $A$ of $s$
we have that $\theta (s) = \bigvee \theta (A)$.

A homomorphism $\theta \colon S \rightarrow T$ between inverse semigroups with zero is said to be {\em $0$-restricted}
if $\theta (s) = 0$ implies that $s = 0$. 
Congruences that are $0$-restricted arise as kernels of such homomorphisms.
The first of our three main theorems may now be stated.
It is a wide-ranging generalization of \cite{Law7}.\\

\noindent
{\bf Completion Theorem} {\em Let $S$ be an inverse $\wedge$-semigroup. 
Then there is a distributive inverse $\wedge$-semigroup $\mathsf{D}(S)$ 
and a $0$-restricted cover-to-join homomorphism $\delta \colon S \rightarrow \mathsf{D}(S)$ having the following universal property:
for every cover-to-join map $\theta \colon S \rightarrow T$ to an arbitrary distributive inverse semigroup
there is a unique join-preserving homomorphism $\bar{\theta} \colon \mathsf{D}(S) \rightarrow T$ such that $\bar{\theta} \delta = \theta$}.\\

We explain the intuitive idea behind this theorem as follows.
Our goal is to construct from an inverse semigroup $S$ the most general distributive inverse semigroup $T$ generated by $S$ 
subject to the condition that elements of $S$ which `morally have the same join' should be identified in $T$.
The precise meaning of `morally have the same join' is encoded by the notion of cover.
Thus if $\{a_{1}, \ldots, a_{m} \} \subseteq a^{\downarrow}$ is a cover of $a$ then in our completion $T$ we shall
require that the join of the images of the $a_{i}$ be equal to the image of $a$.
The theorem says that we can indeed find such a completion of $S$.
We call $\mathsf{D}(S)$ the {\em distributive completion} of $S$.\\

Our second main theorem is as follows; it is clearly a generalization of Theorem~1.1. \\

\noindent
{\bf Duality Theorem }{\em There is a duality between the category of Boolean inverse $\wedge$-semigroups and their proper morphisms 
and the category of Hausdorff Boolean groupoids and the proper continuous covering functors between them.}

\begin{remark}{\rm The {\em monoid} version of the above theorem was proved in \cite{Law11} 
where the Hausdorff Boolean groupoids in that case have a compact space of identities.}
\end{remark}

I shall now explain how these two theorems are related to each other.
Obvious examples of Boolean inverse $\wedge$-semigroups are the symmetric inverse monoids but it is hard to think
of other examples which are not just Boolean algebras.
This raises the obvious question of finding natural examples of such semigroups.
The Completion Theorem gets us part of the way but only yields distributive inverse semigroups.
This motivates the following definition.
An inverse $\wedge$-semigroup $S$ is said to be {\em pre-Boolean} if its distributive completion $\mathsf{D}(S)$ is actually Boolean.
This definition does not of course solve anything: it simply changes the question.
However, it turns out that pre-Boolean inverse semigroups are common in mathematics
and are related to both group theory 
--- 
specifically Thompson-Higman type groups 
--- 
and the theory of $C^{\ast}$-algebras 
--- 
notably graph $C^{\ast}$-algebras.
These are discussed in Section~4 of this paper. 

All of this raises the question of how we can identify pre-Boolean inverse $\wedge$-semigroups.
This is the subject of our third main theorem.
To state it we need some notation and definitions.
Let $E$ be a meet semilattice with zero.
Let $X,Y \subseteq E$ be {\em finite} subsets.
Define 
$$X^{\wedge} = \{e \in E \colon e \leq x, \forall x \in X\},$$
the set of all elements beneath every element of $X$,
and define 
$$Y^{\perp} = \{e \in E \colon e \wedge y = 0, \forall y \in Y \},$$
the set of all elements {\em orthogonal to} every element in $Y$.
If $X$ were non-empty, we could replace $X$ by its meet, but it is convenient to have this extra flexibility.
Furthermore, if $X$ were empty we could only replace $X$ by a single element if the semilattice had a top which we do not want to assume.
If the set
$X^{\wedge} \cap Y^{\perp}$
does not consist solely of the zero we shall write $X^{\wedge} \cap Y^{\perp} \neq 0$.

A filter $F \subseteq E$ is said to be {\em tight} if for each $a \in F$ and each finite cover $\{a_{1}, \ldots, a_{m} \}$ of $a$
we have that $a_{i} \in F$ for some $i$.
We shall prove later that every ultrafilter is tight.

\begin{remark}
{\em Our definition of tight filter arises from the work of both Exel \cite{E} and Lenz \cite{Lenz}. 
We shall say more about this definition at various places in this paper.}
\end{remark}

A meet semilattice with zero $E$ satisfies the {\em trapping condition} if for all $0 \neq y < x$ the set $x^{\downarrow} \cap y^{\perp}$ has a finite cover.
The semilattice $E$ is said to be \emph{$0$-disjunctive} if for each $0 \neq f \in E$ and $e$ such that $0 \neq e < f$, 
there exists $0 \neq e' <f$ such that $e \wedge e' = 0$.
$$\xymatrix{
& f \ar@{-}[dl] \ar@{-}[dr] & \\
e \ar@{-}[dr] && e' \ar@{-}[dl]\\
& 0 &}
$$

\noindent
{\bf Booleanization Theorem}\mbox{}
{\em 
\begin{enumerate}

\item An inverse $\wedge$-semigroup is pre-Boolean if and only if its semilattice of idempotents is pre-Boolean.

\item Let $E$ be a meet semilattice with zero.
Then $E$ is pre-Boolean if and only if every tight filter of $E$ is an ultrafilter.

\item Let $E$ be $0$-disjunctive meet semilattice with zero. Then $E$ is pre-Boolean if and only if the trapping condition holds.
\end{enumerate}
}
The examples of pre-Boolean inverse semigroups that we shall investigate in Section~4 will satisfy condition (3) above. 

The three theorems above will be proved in Section~2.
All are new though they have varying pedigrees.
The Duality Theorem is a direct generalization of \cite{Law11} which is nothing other than the monoid version of this theorem.
Note that we have had to make some terminological changes from \cite{Law11} since the theory has outgrown the framework in which it was originally conceived.
The Booleanization Theorem shows how the work of Exel \cite{E} and Lenz \cite{Lenz} are related.
It completes the preliminary results of \cite{Law12}.
The Completion Theorem is new.

\section{Proofs of the main theorems}

In this section, we shall prove the three theorems discussed in the Introduction.
To do this, we also need to prove a technical result, called the Comparison Theorem.

\subsection{The completion theorem}

We shall construct the semigroup $\mathsf{D}(S)$ in three steps.

\begin{center}
{\bf Step~1}
\end{center}

We begin by slightly extending some results from Section~5 of \cite{Lenz}.

\begin{lemma} Let $S$ be an inverse semigroup.
\begin{enumerate}

\item The relation $\rightarrow$ is reflexive and transitive.

\item $a \rightarrow b$ if and only if $a^{-1} \rightarrow b^{-1}$.

\item If $a \rightarrow b$ and $bc = 0$ then $ac = 0$. 

\item The relation $\rightarrow$ is left and right compatible with the multiplication
in the sense that if $a \rightarrow b$ and $ac,bc \neq 0$ 
(respectively, $ca,cb \neq 0$) then $ac \rightarrow bc$
(respectively, $ca \rightarrow cb$.)

\item If $u \rightarrow s$ and $u \rightarrow t$ then $u \rightarrow s \wedge t$.

\end{enumerate}
\end{lemma}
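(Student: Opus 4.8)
The plan is to check all five items directly from the definition $a \rightarrow b \iff (\forall\, 0 \ne x \le a)\ x \wedge b \ne 0$, using only: that inversion is an order automorphism of $S$ and hence preserves every meet that exists; that $\wedge$ is associative and commutative wherever defined (and, $S$ being an inverse $\wedge$-semigroup with zero, all the relevant meets are available); that $E(S)$ is commutative; and that the translations $s \mapsto sc$ and $s \mapsto cs$ are order-preserving. Items (1), (2) and (5) are then immediate bookkeeping. Reflexivity: $0 \ne x \le a$ gives $x \wedge a = x \ne 0$. Transitivity: if $a \rightarrow b \rightarrow c$ and $0 \ne x \le a$, then $0 \ne x \wedge b \le b$, so $0 \ne (x \wedge b) \wedge c = x \wedge b \wedge c \le x \wedge c$. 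Item (5) is the same pattern: from $0 \ne x \le u$ and $u \rightarrow s$ get $0 \ne x \wedge s \le u$, then $u \rightarrow t$ gives $0 \ne (x \wedge s) \wedge t = x \wedge (s \wedge t)$. For (2), if $0 \ne y \le a^{-1}$ then $0 \ne y^{-1} \le a$, so $y^{-1} \wedge b \ne 0$, and applying inversion, $y \wedge b^{-1} = (y^{-1} \wedge b)^{-1} \ne 0$; since inversion is an involution, the converse is the same statement.

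The content is in (3) and (4), and both turn on one elementary observation: if $x \le ac$ then $\dom(x) \le \dom(ac) = c^{-1}(a^{-1}a)c \le c^{-1}c$, using that $cc^{-1}$ and $a^{-1}a$ commute, and hence $x = x c^{-1}c$; dually, if $\ran(y) \le cc^{-1}$ then $y = y cc^{-1}$. These two facts say precisely that right translation by $c$ and by $c^{-1}$ are mutually inverse, zero-preserving bijections between the relevant down-sets, so non-degeneracy is carried along in both directions. I would prove (4) first. Given $a \rightarrow b$ with $ac, bc \ne 0$, take $0 \ne x \le ac$. Then $xc^{-1} \le acc^{-1} \le a$ and $xc^{-1} \ne 0$ (else $x = xc^{-1}c = 0$), so $0 \ne y := (xc^{-1}) \wedge b$ by $a \rightarrow b$. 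From $y \le xc^{-1}$ one gets $\dom(y) \le cc^{-1}$, hence $yc \ne 0$; and $y \le xc^{-1}$ together with $y \le b$ give $yc \le xc^{-1}c = x$ and $yc \le bc$. Thus $0 \ne yc \le x \wedge bc$, which is exactly what $ac \rightarrow bc$ requires. The left-handed statement then follows formally from (2): if $ca, cb \ne 0$ and $a \rightarrow b$, then $a^{-1} \rightarrow b^{-1}$ with $(ca)^{-1}, (cb)^{-1} \ne 0$, so $(ca)^{-1} \rightarrow (cb)^{-1}$ by the case just proved, hence $ca \rightarrow cb$. For (3), suppose $a \rightarrow b$ and $ac \ne 0$; then $0 \ne acc^{-1} \le a$, so $0 \ne y := (acc^{-1}) \wedge b$; again $\dom(y) \le cc^{-1}$, so $0 \ne yc \le bc$, and therefore $bc \ne 0$ — the contrapositive of the assertion.

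The step I expect to be the only real obstacle is pinning down these non-degeneracy claims in (3) and (4): the natural partial order is not as well behaved under multiplication as under inversion, so one has to notice the identity $\dom(x) \le \dom(c)$ (equivalently $x = xc^{-1}c$) for $x \le ac$ and invoke it carefully, rather than hoping that $x \mapsto xc^{-1}$ automatically sends nonzero elements to nonzero elements. Once that is in hand, the remainder is routine manipulation inside the commutative semilattice $E(S)$.
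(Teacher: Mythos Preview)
Your proof is correct and follows essentially the same route as the paper's: items (1), (2), (5) are handled identically, and for (3) and (4) both you and the paper pivot on the observation that if $y \le xc^{-1}$ (or $y \le acc^{-1}$) then $y = ycc^{-1}$, so that $yc \ne 0$ and $yc \le bc$. Your derivation of the left-handed case of (4) via (2) is a clean touch the paper leaves implicit.

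One small slip in your strategy paragraph: the ``dual'' statement should read ``if $\dom(y) \le cc^{-1}$ then $y = ycc^{-1}$'' (which is exactly what you use in the body of the argument), not $\ran(y) \le cc^{-1}$; the latter would give $cc^{-1}y = y$ instead. This does not affect the proof itself, since in (3) and (4) you correctly compute $\dom(y) \le cc^{-1}$ from $y \le xc^{-1}$ (respectively $y \le acc^{-1}$) and use it as intended.
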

\begin{proof}
(1) Clearly $\rightarrow$ is reflexive.
To prove transitivity, suppose that $a \rightarrow b$ and $b \rightarrow c$.
Let $0 \neq x \leq a$.
Then $x \wedge b \neq 0$.
But also $x \wedge b \leq b$ and so $x \wedge b \wedge c \neq 0$.
Hence $x \wedge c \neq 0$, as required.

(2) Straightforward.

(3) Suppose that $a \rightarrow b$ and $bc = 0$ but that $ac \neq 0$.
Then $acc^{-1} \neq 0$ and $acc^{-1} \leq a$.
Thus by assumption $b \wedge acc^{-1} \neq 0$.
But $b \wedge acc^{-1} \leq acc^{-1}$ and so $(b \wedge acc^{-1})cc^{-1} = b \wedge acc^{-1}$.
Thus $(b \wedge acc^{-1})c \neq 0$ and so $bc \wedge ac \neq 0$, which contradicts the fact that $bc = 0$.
We have proved that $bc = 0 $ implies that $ac = 0$.

(4) Suppose that $a \rightarrow b$ and $ac,bc \neq 0$.
We prove that $ac \rightarrow bc$.
Let $0 \neq x \leq ac$.
Then $x = xx^{-1}ac$ and so $xc^{-1}c = x$.
Thus $xc^{-1} \neq 0$ and $xc^{-1} \leq acc^{-1} \leq a$.
It follows that $xc^{-1} \wedge b \neq 0$.
Now $xc^{-1} \wedge b \leq xc^{-1}$ and so $(xc^{-1} \wedge b)cc^{-1} = xc^{-1} \wedge b$.
It follows that $(xc^{-1} \wedge b)c \neq 0$.
But $(xc^{-1} \wedge b)c = xc^{-1}c \wedge bc \leq x \wedge bc$.
Hence $x \wedge bc \neq 0$, as required.

(5) Let $0 \neq v \leq u$.
Then since $u \rightarrow s$ we have that $0 \neq v \wedge s$.
But $v \wedge s \leq u$ also and so since $u \rightarrow t$ we have that
$0 \neq v \wedge s \wedge t$.
We have proved that $u \rightarrow s \wedge t$.
\end{proof}

It follows from the lemma above that $\leftrightarrow$ is a $0$-restricted congruence on $S$.
We denote the $\leftrightarrow$-equivalence class containing $s$ by $\mathbf{s}$,
the quotient semigroup by $\mathbf{S}$ and the natural map $S \rightarrow \mathbf{S}$ by $\lambda$.
If $\leftrightarrow$ is just equality, we say that the inverse semigroup is {\em separative}.

\begin{lemma} 
We have that $a \rightarrow b$ if and only if $\lambda (a) \leq \lambda (b)$.
In particular, a semigroup is separative if and only if $a \leq b \Leftrightarrow a \rightarrow b$.
\end{lemma}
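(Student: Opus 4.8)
The plan is to reduce the statement to an intrinsic fact about the arrow relation on $S$ and then verify that fact by a direct computation with idempotents. First I would recall that in any inverse semigroup $s \leq t$ iff $s = ss^{-1}t$; since $\lambda$ is a homomorphism of inverse semigroups, this gives
\[
\lambda(a) \leq \lambda(b) \iff \lambda(a) = \lambda(aa^{-1}b) = \lambda(\mathbf{r}(a)b) \iff a \leftrightarrow \mathbf{r}(a)b .
\]
So everything comes down to proving the equivalence $a \rightarrow b \iff a \leftrightarrow \mathbf{r}(a)b$. Note at the outset that $\mathbf{r}(a)b \leq b$, so $\mathbf{r}(a)b \rightarrow b$ holds automatically.

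The $(\Leftarrow)$ direction is then immediate from the transitivity of $\rightarrow$ (part~(1) of the previous lemma): $a \leftrightarrow \mathbf{r}(a)b$ gives $a \rightarrow \mathbf{r}(a)b \rightarrow b$, hence $a \rightarrow b$.

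For $(\Rightarrow)$, I assume $a \rightarrow b$ and must establish both $a \rightarrow \mathbf{r}(a)b$ and $\mathbf{r}(a)b \rightarrow a$. The first is routine: given $0 \neq x \leq a$, the element $x \wedge b$ is nonzero because $a \rightarrow b$, it lies below $x$, and since $\mathbf{r}(x \wedge b) \leq \mathbf{r}(x) \leq \mathbf{r}(a)$ the idempotent $\mathbf{r}(a)$ fixes it on the left, so $x \wedge b = \mathbf{r}(a)(x \wedge b) \leq \mathbf{r}(a)b$; combined with $x \wedge b \leq x$ this gives $0 \neq x \wedge b \leq x \wedge \mathbf{r}(a)b$. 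The second inclusion, $\mathbf{r}(a)b \rightarrow a$, is the crux: the naive idea of testing a nonzero $x \leq \mathbf{r}(a)b$ directly against $a \rightarrow b$ fails because $x$ need not lie below $a$. Instead I would pass to the auxiliary element $z := \mathbf{r}(x)a$. Using $\mathbf{r}(x) \leq \mathbf{r}(a)$ one checks $\mathbf{r}(z) = \mathbf{r}(x) \neq 0$, so $z$ is a nonzero element below $a$, and $a \rightarrow b$ supplies a nonzero $y := z \wedge b$. A short calculation with commuting idempotents, using $x = \mathbf{r}(x)\mathbf{r}(a)b$ and $\mathbf{r}(y) \leq \mathbf{r}(z) = \mathbf{r}(x) \leq \mathbf{r}(a)$, yields $\mathbf{r}(y)x = \mathbf{r}(y)b = y$ (the last equality because $y \leq b$); together with $y \leq z \leq a$ this shows $0 \neq y \leq x \wedge a$, as needed. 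The ``in particular'' is then a quick consequence: $a \leq b$ always implies $a \rightarrow b$, so under separativity ($\leftrightarrow$ being equality) $a \rightarrow b$ becomes $a \leftrightarrow \mathbf{r}(a)b$, hence $a = \mathbf{r}(a)b$, i.e. $a \leq b$; conversely, if $a \leq b \iff a \rightarrow b$ holds for all $a,b$, then $a \leftrightarrow b$ forces $a \leq b$ and $b \leq a$, so $S$ is separative.

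The genuinely delicate point — the one I expect to be the main obstacle — is the inclusion $\mathbf{r}(a)b \rightarrow a$, specifically recognizing that the right element to feed into $a \rightarrow b$ is $\mathbf{r}(x)a$ rather than $x$ itself, and that the resulting meet $\mathbf{r}(x)a \wedge b$ lands below $x$. Everything else is bookkeeping with the natural partial order.
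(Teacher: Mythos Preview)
Your argument is correct. The overall architecture matches the paper's: both reduce $\lambda(a)\leq\lambda(b)$ to an equivalence $a\leftrightarrow c$ for a suitable restriction $c$ of $b$, and then verify the two arrows. You use the left form $c=\mathbf{r}(a)b$ while the paper uses the dual right form $c=b\,\mathbf{d}(a)$, which is an inessential difference.

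Where the executions diverge is in the hard direction. The paper never unpacks the definition of $\rightarrow$ here: from $a\rightarrow b$ it passes to $a^{-1}\rightarrow b^{-1}$ via Lemma~2.1(2), and then uses the compatibility of $\rightarrow$ with multiplication, Lemma~2.1(4), to obtain $a=aa^{-1}a\rightarrow ba^{-1}a$ and (via $ba^{-1}a\rightarrow bb^{-1}a\leq a$) also $ba^{-1}a\rightarrow a$. Your proof instead works directly from the definition, introducing the auxiliary element $z=\mathbf{r}(x)a$ and checking by hand that $z\wedge b$ lands below $x$. Your route is more self-contained and bypasses the somewhat delicate Lemma~2.1(4); the paper's route is shorter on the page once that lemma is available and makes clearer that the result is a formal consequence of the compatibility of $\rightarrow$ with the inverse-semigroup operations.
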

\begin{proof}
Suppose that $a \rightarrow b$.
We prove that $\lambda (a) \leq \lambda (b)$.
Thus we need to prove that $\lambda (a) = \lambda (ba^{-1}a)$; that is, $a \leftrightarrow ba^{-1}a$.
From $a \rightarrow b$ we get that $a^{-1} \rightarrow b^{-1}$.
Thus by compatibility, we have that $a = aa^{-1}a \rightarrow ba^{-1}a$.
It remains to show that $ba^{-1}a \rightarrow a$.
From $a \rightarrow b$ we get that $a^{-1} \rightarrow b^{-1}$.
Thus by compatibility $ba^{-1}a \rightarrow bb^{-1}a$.
But $bb^{-1}a \leq a$ implies that $bb^{-1}a \rightarrow a$.
Thus $ba^{-1}a \rightarrow a$.
Conversely, suppose that  $\lambda (a) \leq \lambda (b)$.
Then $\lambda (a) = \lambda (ba^{-1}a)$ which implies that $a \leftrightarrow ba^{-1}a$.
However, $a \rightarrow ba^{-1}a$ and $ba^{-1}a \leq b$ implies that $a \rightarrow b$.

We now prove the final claim.
Suppose that $a \leq b \Leftrightarrow a \rightarrow b$.
Then $a \leftrightarrow b$ iff $a \leq b$ and $b \leq a$ giving $a = b$.
Thus the condition implies separativity.
Conversely, suppose that the semigroup is separative and that $a \rightarrow b$.
Then $\lambda (a) \leq \lambda (b)$.
Thus $\lambda (a) = \lambda (ba^{-1}a)$.
But under the assumption of separativity this implies $a = ba^{-1}a$ and so $a \leq b$, 
as required.
\end{proof}

The semigroup $S$ is an inverse $\wedge$-semigroup.
The same is true of the quotient semigroup $\mathbf{S}$.

\begin{lemma} The semigroup $\mathbf{S}$ has all finite non-empty meets.
\end{lemma}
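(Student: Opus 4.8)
The claim is that $\mathbf{S}$ has all finite non-empty meets, i.e. it is again an inverse $\wedge$-semigroup. Since $\mathbf{S}$ is an inverse semigroup (being a quotient of one by a congruence) and since finite non-empty meets reduce by induction to binary meets, it suffices to show that for any $a, b \in S$ the elements $\mathbf{a} = \lambda(a)$ and $\mathbf{b} = \lambda(b)$ have a meet in $\mathbf{S}$. My plan is to show that $\lambda(a \wedge b)$ is that meet — in other words, that $\lambda$ preserves binary meets.

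First I would establish the lower-bound part: $\lambda(a \wedge b) \leq \lambda(a)$ and $\lambda(a \wedge b) \leq \lambda(b)$. This is immediate, since $a \wedge b \leq a$ gives $a \wedge b \rightarrow a$, hence $\lambda(a \wedge b) \leq \lambda(a)$ by Lemma~2.3 (and symmetrically for $b$); here I am using that $a \wedge b$ exists in $S$ because $S$ is an inverse $\wedge$-semigroup. The substantive part is the greatest-lower-bound property: suppose $\lambda(u) \leq \lambda(a)$ and $\lambda(u) \leq \lambda(b)$ for some $u \in S$; I must show $\lambda(u) \leq \lambda(a \wedge b)$. By Lemma~2.3, the hypotheses translate to $u \rightarrow a$ and $u \rightarrow b$. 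But then Lemma~2.2(5) gives exactly $u \rightarrow a \wedge b$, and applying Lemma~2.3 once more yields $\lambda(u) \leq \lambda(a \wedge b)$, as required. So $\lambda(a) \wedge \lambda(b)$ exists in $\mathbf{S}$ and equals $\lambda(a \wedge b)$.

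The only point that needs a word of care — and the place I would expect a referee to look — is that "$\lambda(u) \leq \lambda(a)$" really does force $u \rightarrow a$ rather than merely $u \rightarrow aa^{-1}a$-type weakenings; but this is precisely the content of the equivalence $a \rightarrow b \iff \lambda(a) \leq \lambda(b)$ proved in Lemma~2.3, so there is genuinely nothing to check beyond invoking it. I do not anticipate a real obstacle here: the whole argument is a two-line translation through Lemma~2.3 of the semilattice fact Lemma~2.2(5), with the existence of $a \wedge b$ in $S$ supplied by the standing hypothesis that $S$ is an inverse $\wedge$-semigroup. The induction to arbitrary finite non-empty subsets is then routine: if $a_1 \wedge \cdots \wedge a_{k}$ is realized as $\lambda(a_1 \wedge \cdots \wedge a_{k})$, one applies the binary case to it and $\lambda(a_{k+1})$, using associativity of meets in $S$ and the compatibility of $\lambda$ with the already-constructed partial meet operation.
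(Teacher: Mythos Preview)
Your argument is correct and matches the paper's own proof essentially line for line: show $\lambda(a\wedge b)$ is a lower bound (the paper gets this from $\lambda$ being monotone, you from $a\wedge b\le a\Rightarrow a\wedge b\rightarrow a$, which is the same thing), then use the equivalence $u\rightarrow c\iff\lambda(u)\le\lambda(c)$ together with the fact that $u\rightarrow a$ and $u\rightarrow b$ imply $u\rightarrow a\wedge b$ to get the greatest-lower-bound property. Your lemma numbering is off by one relative to the paper (the equivalence is Lemma~2.2 there, and the arrow-meet fact is Lemma~2.1(5)), but the mathematics is identical.
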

\begin{proof} Let $\mathbf{s}, \mathbf{t} \in \mathbf{S}$.
We shall prove that $\mathbf{s} \wedge \mathbf{t}$ exists and that
$\lambda (s) \wedge \lambda (t) = \lambda (s \wedge t)$.
The map $\lambda$ is a homomorphism of inverse semigroups and so
$\lambda (s \wedge t) \leq \lambda (s), \lambda (t)$.
Suppose that $\lambda (u) \leq \lambda (s), \lambda (t)$.
Then by Lemma~2.2, we have that $u \rightarrow s$ and $u \rightarrow t$.
By Lemma~2.1, we have that $u \rightarrow s \wedge t$.
Thus $\lambda (u) \leq \lambda (s \wedge t)$.
We have therefore proved that $\lambda (s \wedge t) = \lambda (s) \wedge \lambda (t)$.
\end{proof}

For inverse $\wedge$-semigroups, we are interested not merely in congruences but in those congruences that also preserve the meet structure.
Such congruences were first described by Leech in \cite{Leech2}.
In this paper, we shall refer to them as {\em $\wedge$-congruences}.
A congruence $\rho$ on an inverse semigroup $S$ will be called {\em separative} if $S/\rho$ is separative.

\begin{proposition} 
$\leftrightarrow$ is the smallest $0$-restricted, separative, $\wedge$-congruence on $S$.
\end{proposition}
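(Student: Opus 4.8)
The plan is to verify in turn that $\leftrightarrow$ has each of the three listed properties and then establish minimality. By Lemma~2.1 and the observation immediately following it, $\leftrightarrow$ is already known to be a $0$-restricted congruence, and the identity $\lambda(s \wedge t) = \lambda(s) \wedge \lambda(t)$ obtained in the proof of Lemma~2.3 shows that $\lambda$ preserves binary meets, so $\leftrightarrow$ is a $\wedge$-congruence. Thus only separativity of $\mathbf{S}$ and the minimality clause remain, and for both of these the key tool is a lemma describing how the arrow relation transports along a surjective $0$-restricted $\wedge$-homomorphism.

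First I would isolate the following claim: if $\phi \colon S \rightarrow T$ is a surjective, $0$-restricted $\wedge$-homomorphism of inverse $\wedge$-semigroups, then for all $a,b \in S$ we have $a \rightarrow b$ if and only if $\phi(a) \rightarrow \phi(b)$. The direction $(\Leftarrow)$ is immediate: given $0 \neq x \leq a$, $0$-restrictedness gives $\phi(x) \neq 0$ and $\phi(x) \leq \phi(a)$, whence $\phi(x) \wedge \phi(b) \neq 0$; since $\phi(x) \wedge \phi(b) = \phi(x \wedge b)$ and $\phi$ is $0$-restricted, $x \wedge b \neq 0$. For $(\Rightarrow)$, given $0 \neq y \leq \phi(a)$ in $T$, use surjectivity to write $y = \phi(x)$; replacing $x$ by $a x^{-1} x \leq a$ (which leaves $\phi(x)$ unchanged because $\phi(x) \leq \phi(a)$) we may assume $x \leq a$, and then $x \neq 0$ by $0$-restrictedness; now $x \wedge b \neq 0$ by hypothesis, and applying $\phi$ gives $y \wedge \phi(b) = \phi(x \wedge b) \neq 0$.

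Applying this claim to $\lambda \colon S \rightarrow \mathbf{S}$ --- which is surjective, $0$-restricted, and a $\wedge$-homomorphism by Lemma~2.3 --- together with Lemma~2.2 yields $\mathbf{a} \rightarrow \mathbf{b}$ iff $a \rightarrow b$ iff $\lambda(a) \leq \lambda(b)$, that is, $\mathbf{a} \rightarrow \mathbf{b} \Leftrightarrow \mathbf{a} \leq \mathbf{b}$; by Lemma~2.2 this is precisely the statement that $\mathbf{S}$ is separative, so $\leftrightarrow$ is a separative congruence. For minimality, let $\rho$ be any $0$-restricted separative $\wedge$-congruence and $\pi \colon S \rightarrow S/\rho$ the quotient map; then $\pi$ is a surjective $0$-restricted $\wedge$-homomorphism, so by the claim $a \leftrightarrow b$ implies $\pi(a) \rightarrow \pi(b)$ and $\pi(b) \rightarrow \pi(a)$, i.e. $\pi(a) \leftrightarrow \pi(b)$ in the separative semigroup $S/\rho$. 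Since in a separative semigroup $\leftrightarrow$ is equality, $\pi(a) = \pi(b)$, i.e. $a \mathrel{\rho} b$; hence $\leftrightarrow \;\subseteq\; \rho$.

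I expect the main obstacle to be the $(\Rightarrow)$ half of the transport claim: one must not assume that a $\wedge$-congruence reflects the natural partial order, and it is exactly the device of pulling $y$ back along $\phi$ and then truncating the preimage to $a x^{-1} x$ that circumvents this. Everything else is a routine assembly of Lemmas~2.1--2.3. I would also take care to confirm that the fact ``$\leftrightarrow$ is $0$-restricted'' is genuinely in hand (it is, being recorded right after Lemma~2.1), since it is invoked repeatedly in the argument above.
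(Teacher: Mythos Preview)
Your proposal is correct and follows essentially the same route as the paper. The paper handles the three properties exactly as you do (citing Lemma~2.1 for $0$-restricted congruence, Lemma~2.3 for $\wedge$-congruence), and its minimality argument is precisely your $(\Rightarrow)$ direction of the transport claim, except that where you truncate a preimage to $a x^{-1}x \leq a$, the paper truncates to $x \wedge a \leq a$ --- both devices serve the same purpose. Your extraction of the transport claim as a separate lemma is a tidy organizational improvement over the paper, which leaves separativity as a one-line remark and reproves the same idea inline for minimality, but the underlying mathematics is identical.
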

\begin{proof}
The fact that $\leftrightarrow$ is a $0$-restricted congruence follows by Lemma~2.1
and it follows easily from the fact that $\leftrightarrow$ is $0$-restricted that it is separative.
In Lemma~2.3, we proved that it is a $\wedge$-congruence.

Now let $\rho$ be any $0$-restricted, separative $\wedge$-congruence on $S$.
Let $a \leftrightarrow b$.
Suppose there exists $0 \neq \rho (x) \leq \rho (a)$ such that $\rho (x) \wedge \rho (b) = 0$.
Then $\rho (x \wedge b) = 0$ and so since $\rho$ is $0$-restricted we have that $x \wedge b = 0$.
Now $x \wedge a \leq a$ and $x \wedge a \neq 0$ since $\rho (x) \leq \rho (a)$.
Thus by assumption $x \wedge a \wedge b \neq 0$.
It follows that $x \wedge b \neq 0$ which is a contradiction.
Thus $\rho (x) \wedge \rho (b) \neq 0$.
It follows that we have proved that $\rho (a) \rightarrow \rho (b)$.
By a symmetrical argument we get that $\rho (a)  \leftrightarrow \rho (b)$.
By assumption $\rho (a) = \rho (b)$, as required. 
\end{proof}

\begin{center}
{\bf Step~2}
\end{center}

The second step is a simple modification of a construction due to Boris Schein described in Section~1.4 of \cite{Law1}
and applies to any inverse semigroup.
An order ideal in an inverse semigroup is said to be {\em compatible} if it is a compatible subset.
If the order ideal is finitely generated then this is equivalent to requiring that its set of generators form a compatible subset.
We denote by $\mathsf{FC}(S)$ the set of all finitely generated compatible order ideals of $S$.
We define $\iota \colon S \rightarrow \mathsf{FC}(S)$ to be the map $s \mapsto s^{\downarrow}$.
We have the following finitary version of Theorems~1.4.23 and 1.4.24 of \cite{Law1}.

\begin{proposition} Let $S$ be an inverse semigroup with zero.
Then $\mathsf{FC}(S)$ is a distributive inverse semigroup.
If $\theta \colon S \rightarrow T$ is any homomorphism to a distributive inverse semigroup
then there is a unique join-preserving homomorphism $\theta^{\ast} \colon  \mathsf{FC}(S) \rightarrow T$
such that $\theta^{\ast} \iota = \theta$.
\end{proposition}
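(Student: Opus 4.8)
The plan is to imitate Boris Schein's construction of the completion of an inverse semigroup by compatible order ideals, but to carry it out inside the sub-poset of \emph{finitely generated} such ideals and to verify at each stage that the finiteness restriction survives. I would put a product on order ideals by $A\cdot B=(AB)^{\downarrow}$, where $AB=\{ab\colon a\in A,\ b\in B\}$, set $A^{-1}=\{a^{-1}\colon a\in A\}$, and order $\mathsf{FC}(S)$ by inclusion. The one genuinely non-formal point — and, I think, the main obstacle — is that the product of two compatible subsets of $S$ is again compatible: if $a_{1}\sim a_{2}$ and $b_{1}\sim b_{2}$, then writing $a_{1}^{-1}a_{2}=e\in E(S)$ one computes $(a_{1}b_{1})^{-1}(a_{2}b_{2})=(eb_{1})^{-1}(eb_{2})$, which is idempotent since $eb_{1}\leq b_{1}\sim b_{2}\geq eb_{2}$ and compatibility is inherited downwards; the dual computation handles $(a_{1}b_{1})(a_{2}b_{2})^{-1}$. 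Granting this, $A\cdot B$ is a compatible order ideal, and if $A=X^{\downarrow}$, $B=Y^{\downarrow}$ with $X,Y$ finite then $A\cdot B=(XY)^{\downarrow}$, so finite generation is preserved; likewise $A^{-1}=(X^{-1})^{\downarrow}$. Everything past this point is routine bookkeeping with order ideals.

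To complete the claim that $\mathsf{FC}(S)$ is a distributive inverse semigroup I would: get associativity from $(A\cdot B)\cdot C=(ABC)^{\downarrow}=A\cdot(B\cdot C)$; get the inverse axiom from the observation that, for $a_{1},a_{2},a_{3}$ in a compatible set, $f=a_{2}^{-1}a_{3}$ is idempotent, so $a_{1}a_{2}^{-1}a_{3}=a_{1}f\leq a_{1}$, whence $A\cdot A^{-1}\cdot A=A$ and, symmetrically, $A^{-1}\cdot A\cdot A^{-1}=A^{-1}$; identify the idempotents of $\mathsf{FC}(S)$ with the finitely generated order ideals of the meet-semilattice $E(S)$ (these commute, so inverses are unique); and check that the natural partial order is inclusion. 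Then two elements $A,B$ are compatible in $\mathsf{FC}(S)$ precisely when every member of $A$ is compatible with every member of $B$, and for such a finite non-empty family the union is a finitely generated compatible order ideal and is visibly the join. The two distributivity identities then reduce to the set-theoretic equality $A\cdot(\bigcup_{i}B_{i})=\bigcup_{i}(A\cdot B_{i})$, once one notes — again by the product-compatibility lemma — that $\{A\cdot B_{i}\}_{i}$ is a compatible family whenever $\{B_{i}\}_{i}$ is, so that the union on the right genuinely is the join; the right-handed identity is dual. Here $\{0\}$ is the zero and $\iota(s)=s^{\downarrow}$ a $0$-restricted homomorphism.

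For the universal property, given $\theta\colon S\to T$ with $T$ distributive, I would define $\theta^{\ast}(A)=\bigvee\theta(A)$. This is well defined: $\theta$ preserves products and idempotents, hence compatibility, so $\theta(X)$ is a finite non-empty compatible subset of $T$ and has a join there; and since $a\leq x$ forces $\theta(a)\leq\theta(x)$, one has $\bigvee\theta(A)=\bigvee\theta(X)$ for any finite generating set $X$ of $A$, so the value is independent of the generating set. That $\theta^{\ast}$ is a join-preserving homomorphism preserving inverses follows by passing to generating sets and using the finite distributivity of $T$ together with $(\bigvee t_{i})^{-1}=\bigvee t_{i}^{-1}$, and $\theta^{\ast}\iota(s)=\bigvee_{t\leq s}\theta(t)=\theta(s)$. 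Finally, uniqueness is forced: any $A=\{x_{1},\dots,x_{n}\}^{\downarrow}$ equals $\bigvee_{i}\iota(x_{i})$ in $\mathsf{FC}(S)$, so any join-preserving homomorphism $\psi$ with $\psi\iota=\theta$ must satisfy $\psi(A)=\bigvee_{i}\theta(x_{i})=\theta^{\ast}(A)$.
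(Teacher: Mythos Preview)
Your proposal is correct and follows exactly the route the paper intends: the paper gives no proof at all for this proposition, merely pointing to Theorems~1.4.23 and 1.4.24 of \cite{Law1} (Schein's construction via compatible order ideals) and remarking that one takes the finitary version. You have supplied precisely those details, with the key step---that products of compatible subsets are compatible, so that $\mathsf{FC}(S)$ is closed under the multiplication---made explicit. One minor imprecision: the idempotents of $\mathsf{FC}(S)$ are the finitely generated order ideals of $S$ \emph{generated by} idempotents (equivalently, $A$ with $A=(A^{-1}A)^{\downarrow}$), not literally order ideals of $E(S)$; but since any such $A$ has idempotent generators, your commutativity argument goes through unchanged.
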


\begin{center}
{\bf Step~3}
\end{center}

The third step is more involved.
It uses in a crucial way results to be found in \cite{Lenz} and \cite{LMS} as described in \cite{Law11}. 
We begin by constructing the groupoid $\mathsf{G}(S)$ from the ultrafilters in the inverse semigroup $S$.
The proofs of the following may be found in \cite{E}, Lemma~3.2, and \cite{Law11}, Lemma~2.5.

\begin{lemma} Let $S$ be an inverse $\wedge$-semigroup.
\begin{enumerate}

\item Every non-zero element of $S$ is contained in an ultrafilter.

\item Let $F$ be a filter in $S$.
Then $F$ is an ultrafilter if and only if $F$ contains every element $b \in S$ such that $b \wedge c \neq 0$ for all $c \in F$.

\end{enumerate}
\end{lemma}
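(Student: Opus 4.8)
The plan is to prove the two assertions separately, each by an elementary argument.

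For (1), I would first observe that for a non-zero $s \in S$ the set $s^{\uparrow}$ is already a filter: it is closed by construction, it omits $0$ because $s \neq 0$, and it is down directed since $s$ itself is the least element of $s^{\uparrow}$. I would then apply Zorn's lemma to the poset, ordered by inclusion, of all filters of $S$ containing $s$; this poset is non-empty because it contains $s^{\uparrow}$, and the union of a chain of such filters is again a filter --- upward closure and the exclusion of $0$ pass to unions at once, and directedness holds because any two elements of the union already lie together in a single member of the chain, which is directed. A maximal element supplied by Zorn's lemma is then an ultrafilter containing $s$. (This part uses nothing about meets.)

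For (2), write $F^{\star}$ for the set of all $b \in S$ satisfying $b \wedge c \neq 0$ for every $c \in F$; the $\wedge$-semigroup hypothesis is needed here so that these meets exist. Since a filter is closed under finite meets --- if $c, c' \in F$ then directedness gives $d \in F$ with $d \leq c, c'$, so $c \wedge c' \geq d$ lies in $F$ --- and omits $0$, one always has $F \subseteq F^{\star}$, so the condition in the statement amounts exactly to $F^{\star} \subseteq F$. For the converse direction I would assume this and let $G$ be any filter with $F \subseteq G$: given $b \in G$, for each $c \in F \subseteq G$ directedness of $G$ produces $0 \neq d \in G$ with $d \leq b, c$, so $b \wedge c \neq 0$; hence $b \in F^{\star} \subseteq F$, giving $G = F$, so $F$ is an ultrafilter. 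For the forward direction I would assume $F$ is an ultrafilter, take $b \in F^{\star}$, and form $G = \{x \in S : b \wedge c \leq x \text{ for some } c \in F\}$. Then I would check that $G$ is closed; that $F \subseteq G$ (for $c_{0} \in F$, note $b \wedge c_{0} \leq c_{0}$) and $b \in G$ (note $b \wedge c_{0} \leq b$); that $G$ is directed, since from $b \wedge c_{1} \leq x_{1}$ and $b \wedge c_{2} \leq x_{2}$ with $c_{i} \in F$ the element $b \wedge (c_{1} \wedge c_{2})$ belongs to $G$ and lies below both $x_{1}$ and $x_{2}$, using $c_{1} \wedge c_{2} \in F$; and that $0 \notin G$, which is precisely the hypothesis $b \wedge c \neq 0$ for all $c \in F$. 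Thus $G$ is a filter containing $F$, so maximality of $F$ forces $F = G$, whence $b \in F$.

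I do not expect a serious obstacle here. The only step requiring care is the construction of the auxiliary filter $G$ in the forward direction of (2): it must simultaneously enlarge $F$, absorb $b$, and avoid $0$, and one must verify it is directed --- the single place where the $\wedge$-semigroup hypothesis is genuinely used.
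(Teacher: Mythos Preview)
Your proof is correct. Both parts are handled by the standard arguments: Zorn's lemma for (1), and for (2) the explicit construction of the filter $G = \{x : b \wedge c \leq x \text{ for some } c \in F\}$ generated by $F \cup \{b\}$, together with the easy converse via directedness of a larger filter. All verifications go through as you describe; in particular your check that $G$ is directed correctly uses closure of $F$ under binary meets, which is where the $\wedge$-semigroup hypothesis enters.

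As for comparison: the paper does not give its own proof of this lemma but simply cites Exel \cite{E}, Lemma~3.2, and \cite{Law11}, Lemma~2.5. Your argument is the expected direct one and is essentially what those references contain, so there is no substantive difference of approach to report.
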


The proofs of (1) and (2) below are Lemmas~2.6 and 2.7 of \cite{Law11}.

\begin{lemma} Let $S$ be an inverse semigroup.
\begin{enumerate}

\item If $A$ is a filter then $A = AA^{-1}A$.

\item If $A$ and $B$ are filters then $(AB)^{\uparrow}$ is the smallest filter containing $AB$.

\end{enumerate}
\end{lemma}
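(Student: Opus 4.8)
The plan is to verify both parts directly from the definition of a filter---down-directed, upward-closed, and not containing $0$---using only two standard facts about the natural partial order on an inverse semigroup: it is preserved by inversion, and $a \leq a'$ together with $b \leq b'$ forces $ab \leq a'b'$.

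For part (1), the inclusion $A \subseteq AA^{-1}A$ is immediate since $a = aa^{-1}a$ for each $a \in A$. For the reverse inclusion, take $a,b,c \in A$; two applications of down-directedness produce $d \in A$ with $d \leq a$, $d \leq b$ and $d \leq c$. Then $d^{-1} \leq b^{-1}$, so multiplying the three inequalities gives $d = dd^{-1}d \leq ab^{-1}c$, and upward-closure of $A$ forces $ab^{-1}c \in A$. Hence $AA^{-1}A = A$.

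For part (2), I would first note that $AB$ is itself down-directed: given $a_1b_1, a_2b_2 \in AB$, choose $a \in A$ below $a_1$ and $a_2$ and $b \in B$ below $b_1$ and $b_2$; then $ab \in AB$ lies below both products. Since the upward closure of a down-directed set is again down-directed and $(AB)^{\uparrow}$ is upward-closed by construction, $(AB)^{\uparrow}$ satisfies the filter axioms except possibly for $0 \notin (AB)^{\uparrow}$; and $0 \in (AB)^{\uparrow}$ would force $x \leq 0$, hence $x = 0$, for some $x \in AB$, so this last condition fails exactly when $0 \in AB$ --- which is precisely the situation in which no filter contains $AB$ at all. Granting $0 \notin AB$, then, $(AB)^{\uparrow}$ is a filter, and it is the smallest one containing $AB$ because any filter $F$ with $AB \subseteq F$ is upward-closed and therefore contains $(AB)^{\uparrow}$.

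I do not expect a genuine obstacle here: the content of both parts is simply careful bookkeeping of how the natural partial order interacts with products and inverses. The only point worth flagging is the zero issue in (2) --- the phrase ``the smallest filter containing $AB$'' carries the standing proviso that $0 \notin AB$ (equivalently, that $AB$ is contained in some filter), since otherwise $(AB)^{\uparrow} = S$ is not a filter.
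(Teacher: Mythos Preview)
Your argument is correct and is the standard direct verification; the paper itself does not prove this lemma but simply cites \cite{Law11}, so there is no alternative approach to compare against. Your flag about the zero issue in (2) is apt: the paper's statement is indeed to be read with the tacit proviso that $0 \notin AB$, since the lemma is only ever applied in the groupoid construction under the hypothesis $A^{-1}\cdot A = B\cdot B^{-1}$, where this is automatic.
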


If $A$ and $B$ are filters we define 
$$A \cdot B = (AB)^{\uparrow}.$$
Filters have extra structure which makes them behave in a way analogous to cosets in group theory.
See Lemmas~2.8, 2.9 and 2.10 of \cite{Law11} for the proofs of (1),(2) and (3) respectively, below.

\begin{lemma} Let $S$ be an inverse semigroup.
\begin{enumerate}

\item Let $S$ be a filter. Then $B = A^{-1} \cdot A$ is a filter and inverse subsemigroup of $S$
and $A = (aB)^{\uparrow}$ for any $a \in A$.

\item Let $A$ be a filter. Then $A = A \cdot A$ iff $A$ is an inverse subsemigroup iff $A$ contains an idempotent.

\item If $A$ and $B$ are filters such that $A \cap B \neq \emptyset$ and $A^{-1} \cdot A = B^{-1} \cdot B$
(respectively, $A \cdot A^{-1} = B \cdot B^{-1}$) then $A = B$.

\end{enumerate}
\end{lemma}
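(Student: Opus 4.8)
The plan is to prove the three parts in order, since (1) does almost all of the work and (2), (3) follow quickly from it. Throughout I would use Lemma~2.8 --- that $AA^{-1}A=A$ for a filter $A$, and that $(XY)^{\uparrow}$ is the smallest filter containing $XY$ when $X,Y$ are filters --- together with two soft facts: the natural partial order is compatible with multiplication (so $X^{\uparrow}Y^{\uparrow}\subseteq(XY)^{\uparrow}$, hence $\cdot$ is associative with $(X\cdot Y)\cdot Z=(XYZ)^{\uparrow}$), and inversion is an order-automorphism of $(S,\le)$, so that $A^{-1}$ is a filter whenever $A$ is and $(X^{\uparrow})^{-1}=(X^{-1})^{\uparrow}$.

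First I would do (1). Set $B=A^{-1}\cdot A=(A^{-1}A)^{\uparrow}$; this is a filter by Lemma~2.8(2) applied to the filters $A^{-1}$ and $A$. That $B$ is an inverse subsemigroup is a short computation: $(A^{-1}A)^{-1}=A^{-1}A$ yields $B^{-1}=B$, and $A^{-1}AA^{-1}A=A^{-1}(AA^{-1}A)=A^{-1}A$ (Lemma~2.8(1)) yields $BB\subseteq(A^{-1}A)^{\uparrow}=B$. For the coset description, fix $a\in A$. One inclusion is immediate: $aB\subseteq(aA^{-1}A)^{\uparrow}\subseteq(AA^{-1}A)^{\uparrow}=A$, so $(aB)^{\uparrow}\subseteq A$. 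For the reverse, given $c\in A$ use directedness to pick $d\in A$ with $d\le a$ and $d\le c$; then $a^{-1}d\in A^{-1}A\subseteq B$ and $a(a^{-1}d)=\mathbf{r}(a)d=d$ (since $\mathbf{r}(d)\le\mathbf{r}(a)$), so $d\in aB$ and $c\ge d$ forces $c\in(aB)^{\uparrow}$.

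Next, (2). I would run the cycle $[A=A\cdot A]\Rightarrow[A\text{ has an idempotent}]\Rightarrow[A\text{ is an inverse subsemigroup}]\Rightarrow[A=A\cdot A]$, supplemented by the triviality that an inverse subsemigroup contains the idempotents $\mathbf{d}(a)$. The arrow ``inverse subsemigroup $\Rightarrow A=A\cdot A$'' is routine ($AA=A$, so $A\cdot A=A^{\uparrow}=A$), and ``has an idempotent $\Rightarrow$ inverse subsemigroup'' uses (1): if $e\in A$ is idempotent then $e=e^{-1}e\in A^{-1}A\subseteq B$, and as $B$ is a directed filter containing $e$ one gets $(eB)^{\uparrow}=B$, so $A=(eB)^{\uparrow}=B=A^{-1}\cdot A$ by (1). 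The remaining arrow is the one piece of genuine work. Assuming $A=A\cdot A$, so $AA\subseteq A$, I would fix $a\in A$, note $a^{2}=aa\in A$, and pick $c\in A$ with $c\le a$ and $c\le a^{2}$ (directedness). From $c\le a$: $c=a\mathbf{d}(c)$ and $a^{-1}c=\mathbf{d}(a)\mathbf{d}(c)=\mathbf{d}(c)$; then $c\le a^{2}$ gives $c=a^{2}\mathbf{d}(c)=a(a\mathbf{d}(c))=ac$. Hence $c^{-1}=c^{-1}a^{-1}$, so $\mathbf{d}(c)=c^{-1}c=c^{-1}(a^{-1}c)=c^{-1}\mathbf{d}(c)$, which for the idempotent $\mathbf{d}(c)$ says exactly $\mathbf{d}(c)\le c^{-1}$. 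Applying inversion (order-preserving) gives $\mathbf{d}(c)\le c$, and since $c\mathbf{d}(c)=c$ this forces $c=\mathbf{d}(c)$; so $c$ is an idempotent lying in $A$.

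Finally (3) is a one-liner from (1): if $x\in A\cap B$ then $A=(x(A^{-1}\cdot A))^{\uparrow}=(x(B^{-1}\cdot B))^{\uparrow}=B$, and the parenthetical variant is identical using the mirror form $A=((A\cdot A^{-1})x)^{\uparrow}$, which comes from applying (1) to $A^{-1}$ and inverting. I expect the main obstacle to be that single implication in (2), $A=A\cdot A\Rightarrow A$ has an idempotent; the trick is to trap an element of $A$ beneath both $a$ and $a^{2}$ to obtain the relation $c=ac$, turn it into $\mathbf{d}(c)\le c^{-1}$, and then use order-preservation of inversion to force $c$ to equal its own domain idempotent.
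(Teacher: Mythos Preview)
Your proof is correct and complete.  The paper itself does not prove this lemma at all: it simply cites Lemmas~2.8, 2.9 and 2.10 of \cite{Law11}.  So there is no ``paper's approach'' to compare with; you have supplied a self-contained argument where the paper defers to an external reference.

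Two small remarks.  First, your internal citations to ``Lemma~2.8'' for the facts $A=AA^{-1}A$ and $(AB)^{\uparrow}$ being a filter should point to Lemma~2.7; Lemma~2.8 is precisely the statement you are proving.  Second, the implication in (2) that you flagged as the crux --- $A=A\cdot A$ forces $A$ to contain an idempotent --- is handled cleanly: trapping $c\le a,a^{2}$ to get $c=ac$, then extracting $\mathbf{d}(c)\le c^{-1}$ and hence $\mathbf{d}(c)\le c$, from which $c=c\,\mathbf{d}(c)=\mathbf{d}(c)$ follows immediately from the definition of the natural partial order.  That step is the only non-routine one and your argument for it is sound.
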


Denote by $\mathsf{G}(S)$ the set of all ultrafilters of the inverse semigroup $S$.
If we restrict the definition of $A \cdot B$ to the case where $A^{-1} \cdot A = B^{-1} \cdot B$ then $\mathsf{G}(S)$ becomes a groupoid. 

We denote the set of all local bisections of the groupoid $G$ by $\mathsf{Bi}(G)$.
This is an inverse semigroup under subset multiplication.
For a proof see \cite{Pat1}, for example.

\begin{lemma} The semigroup $\mathsf{Bi}(G)$ is a Boolean inverse $\wedge$-semigroup where the natural partial order is given by inclusion 
and the idempotents are the subsets of $G_{o}$.
\end{lemma}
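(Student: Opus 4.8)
The plan is to translate each clause of the statement into an elementary assertion about subsets of $G$ and to verify it directly. Throughout, recall that $A \in \mathsf{Bi}(G)$ precisely when the restrictions $\mathbf{d} \mid A$ and $\mathbf{r} \mid A$ are injective, that $A^{-1} = \{a^{-1} \colon a \in A\}$, and that $AB$ consists of the products $ab$ with $a \in A$, $b \in B$ and $\mathbf{d}(a) = \mathbf{r}(b)$; that these operations make $\mathsf{Bi}(G)$ an inverse semigroup (with zero $\emptyset$) is quoted from \cite{Pat1}.

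First I would pin down the idempotents and the natural partial order. If $A \subseteq G_{o}$ then the members of $A$ are pairwise non-composable identities, so $A^{2} = A$; conversely, if $A = A^{2}$ and $a \in A$, writing $a = bc$ with $b,c \in A$, $\mathbf{d}(b) = \mathbf{r}(c)$, and using injectivity of $\mathbf{d} \mid A$ and $\mathbf{r} \mid A$ (together with $\mathbf{r}(a) = \mathbf{r}(b)$, $\mathbf{d}(a) = \mathbf{d}(c)$) forces $a = b = c$, whence $a$ is an idempotent arrow of a groupoid and so $a \in G_{o}$. Thus $E(\mathsf{Bi}(G))$ is exactly the set $\mathcal{P}(G_{o})$ of all subsets of $G_{o}$. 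For the order, one computes $A^{-1}A = \{\mathbf{d}(a) \colon a \in A\}$ (composability of $a^{-1}a'$ forces $\mathbf{r}(a) = \mathbf{r}(a')$, hence $a = a'$), and then $B \cdot (A^{-1}A) = \{b \in B \colon \mathbf{d}(b) \in A^{-1}A\}$. If $A \subseteq B$ this last set is exactly $A$ (by injectivity of $\mathbf{d} \mid B$), so $A = B A^{-1}A$ and $A \leq B$; and if $A \leq B$, say $A = BE$ with $E \in E(\mathsf{Bi}(G))$, then $A = \{b \in B \colon \mathbf{d}(b) \in E\} \subseteq B$. Hence $A \leq B$ if and only if $A \subseteq B$.

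Next come meets and joins. Any subset of a local bisection is a local bisection, so $A \cap B \in \mathsf{Bi}(G)$, and under inclusion $A \cap B$ is visibly the greatest lower bound of $A$ and $B$; hence $\mathsf{Bi}(G)$ is an inverse $\wedge$-semigroup with $A \wedge B = A \cap B$. The central observation for joins is that $A \sim B$ — that is, $A^{-1}B, AB^{-1} \subseteq G_{o}$ — unwinds, exactly as in the idempotent computation above, to the condition that for $a \in A$ and $b \in B$ each of $\mathbf{r}(a) = \mathbf{r}(b)$ and $\mathbf{d}(a) = \mathbf{d}(b)$ forces $a = b$; since $A$ and $B$ are already local bisections this says precisely that $A \cup B$ is a local bisection. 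So a finite subset of $\mathsf{Bi}(G)$ is compatible if and only if its union lies in $\mathsf{Bi}(G)$, and in that case the union is plainly its join. The two distributive laws are then the purely set-theoretic identities $C\bigl(\bigcup_{i} A_{i}\bigr) = \bigcup_{i} CA_{i}$ and $\bigl(\bigcup_{i} A_{i}\bigr)C = \bigcup_{i} A_{i}C$, where the joins on the right exist because $C(\bigcup_{i}A_{i})$ and $(\bigcup_{i}A_{i})C$ are products of local bisections and hence again local bisections. This gives that $\mathsf{Bi}(G)$ is a distributive inverse $\wedge$-semigroup.

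Finally, $E(\mathsf{Bi}(G)) = \mathcal{P}(G_{o})$ is a Boolean algebra in the sense of the paper: it has bottom $\emptyset = 0$ (and top $G_{o}$), meet and join are $\cap$ and $\cup$, and for $A,B \subseteq G_{o}$ the set difference $B \bsl A$ satisfies $A \vee (B \bsl A) = A \cup B$ and $A \wedge (B \bsl A) = \emptyset$. Therefore $\mathsf{Bi}(G)$ is a Boolean inverse $\wedge$-semigroup. I do not expect any step to be a genuine obstacle; the one place where the local-bisection hypothesis does real work — and the point worth spelling out — is the equivalence between compatibility of a finite family and its union being a local bisection, which simultaneously supplies the compatible joins and trivialises the distributive laws. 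The only residual care needed is never to drop the composability constraint $\mathbf{d}(a) = \mathbf{r}(b)$ when manipulating subset products.
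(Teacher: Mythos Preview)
Your proof is correct and complete. The paper itself gives no proof of this lemma --- the reference to \cite{Pat1} just before the statement covers only the fact that $\mathsf{Bi}(G)$ is an inverse semigroup under subset multiplication, and the Boolean inverse $\wedge$-semigroup structure together with the identification of the order and the idempotents is left to the reader --- so your argument actually supplies what the paper omits.
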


For each $a \in S$, define $V_{a}$ to be the set of all ultrafilters in $S$ that contain $a$.
Put $\Omega = \Omega (S) = \{V_{a} \colon a \in S \}$.

\begin{lemma} With the above notation we have the following.
\begin{enumerate}

\item $V_{a} \cap V_{b} = V_{a \wedge b}$.

\item $V_{a}$ is a local bisection in the groupoid $\mathsf{G}(S)$.

\item $V_{a}^{-1} = V_{a^{-1}}$.

\item  $V_{a}V_{b} = V_{ab}$.

\end{enumerate}
\end{lemma}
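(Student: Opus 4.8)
The plan is to verify the four identities by direct manipulation of ultrafilters, using the earlier lemmas on filter multiplication (Lemma~2.8, Lemma~2.9) together with Lemma~2.6(2), which characterizes ultrafilters as those filters that contain every element meeting all their members. For (1), the inclusion $V_{a\wedge b}\subseteq V_a\cap V_b$ is immediate since $a\wedge b\le a,b$ and ultrafilters are closed upward; conversely, if an ultrafilter $F$ contains both $a$ and $b$, then since $F$ is down-directed it contains some $c\le a,b$, hence $c\le a\wedge b$, so $a\wedge b\in F$. (Here is where the $\wedge$-semigroup hypothesis is used.) For (3), I would observe that $F\mapsto F^{-1}$ is an order-reversing-free bijection on filters that carries ultrafilters to ultrafilters, and $a\in F \Leftrightarrow a^{-1}\in F^{-1}$, which gives $V_a^{-1}=V_{a^{-1}}$ directly from the definitions.

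The substantive parts are (2) and (4). For (2), I must show each $V_a$ is a local bisection, i.e. $V_a^{-1}V_a, V_aV_a^{-1}\subseteq \mathsf{G}(S)_o$, equivalently every product $F^{-1}\cdot F$ with $F,F'\in V_a$ and $F^{-1}\cdot F = F'^{-1}\cdot F'$ is an identity ultrafilter. The key point is that if $a\in F$ then $a^{-1}a\in F^{-1}\cdot F$, so $F^{-1}\cdot F$ contains an idempotent; by Lemma~2.8(2) this forces $F^{-1}\cdot F$ to be an inverse subsemigroup, hence (being an ultrafilter) an identity of the groupoid. The same argument handles $F\cdot F^{-1}$. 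Then (2) also tells us that the partial product $V_aV_b$ computed in $\mathsf{Bi}(\mathsf{G}(S))$ coincides with the groupoid product of composable pairs, which is what we need for (4).

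For (4), the inclusion $V_aV_b\subseteq V_{ab}$ is the easy direction: if $F\in V_a$, $G\in V_b$ with $\dom(F)=\ran(G)$ (so that $F\cdot G$ is defined in the groupoid), then $ab\in FG\subseteq F\cdot G$, so $F\cdot G\in V_{ab}$; and since $F\cdot G$ is an ultrafilter this gives $V_aV_b\subseteq V_{ab}$. The reverse inclusion $V_{ab}\subseteq V_aV_b$ is the one I expect to be the main obstacle: given an ultrafilter $H$ containing $ab$, I must produce ultrafilters $F\ni a$ and $G\ni b$ that are composable with $F\cdot G=H$. The natural candidates are $F=(a\cdot(H\cdot H^{-1}))^{\uparrow}$-type constructions — more precisely, one takes $G$ to be an ultrafilter extending the filter $(a^{-1}H)^{\uparrow}$ and $F$ to be an ultrafilter extending $(Hb^{-1})^{\uparrow}$, then checks compatibility and that the product recovers $H$. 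Verifying that these are genuine (non-degenerate) filters uses $ab\in H$ to guarantee $a^{-1}H\not\ni 0$, and reconstructing $H$ from $F\cdot G$ uses Lemma~2.9(3) (the "coset cancellation" property: filters agreeing on a point with the same domain-filter are equal). This argument is essentially the one establishing that $\mathsf{G}(S)$ is a groupoid and that $a\mapsto V_a$ is a homomorphism into $\mathsf{Bi}(\mathsf{G}(S))$; I would cite the parallel development in \cite{Law11} for the filter-theoretic bookkeeping and focus the written proof on the points (2) and the surjectivity half of (4).
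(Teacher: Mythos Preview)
Your arguments for (1), (3) and the easy inclusion in (4) are fine and match the paper. There is, however, a genuine gap in your treatment of (2).

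You state that $V_a$ being a local bisection is ``equivalently every product $F^{-1}\cdot F$ with $F,F'\in V_a$ and $F^{-1}\cdot F = F'^{-1}\cdot F'$ is an identity ultrafilter,'' and then argue that $F^{-1}\cdot F$ contains the idempotent $a^{-1}a$. But the fact that $F^{-1}\cdot F$ is an identity is automatic for \emph{every} ultrafilter $F$: that is just the statement $\mathbf{d}(F)\in\mathsf{G}(S)_o$, which holds in any groupoid. What you must show is that $V_a^{-1}V_a\subseteq\mathsf{G}(S)_o$, and the elements of $V_a^{-1}V_a$ are products $F^{-1}\cdot G$ with $F,G\in V_a$ and $\mathbf{r}(F)=\mathbf{r}(G)$ (i.e.\ $F\cdot F^{-1}=G\cdot G^{-1}$), not just the diagonal products $F^{-1}\cdot F$. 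Equivalently, you must show that $\mathbf{d}$ (and dually $\mathbf{r}$) is injective on $V_a$. The paper's argument is simply: if $A,B\in V_a$ with $A^{-1}\cdot A=B^{-1}\cdot B$, then $a\in A\cap B$, so Lemma~2.8(3) gives $A=B$. You cite exactly this lemma later (calling it ``Lemma~2.9(3)'') in your discussion of (4), so you have the tool in hand; you just did not apply it where it is needed.

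On the hard direction of (4): your plan to build $F$ and $G$ from $H$ is the right idea, but ``extend $(a^{-1}H)^{\uparrow}$ to an ultrafilter'' is both unnecessary and dangerous --- an arbitrary ultrafilter extension need not be composable with the other factor nor recombine to $H$. The paper avoids this by working entirely inside the coset representation: set $K=C^{-1}\cdot C$ (an identity ultrafilter), check $b^{-1}b\in K$, and then $B=(bK)^{\uparrow}$ is already an ultrafilter with $\mathbf{d}(B)=K$; similarly $A=(a\,B\cdot B^{-1})^{\uparrow}$ is already an ultrafilter, and $A\cdot B=C$ drops out. No Zorn-type step is needed, and the composability is built in.
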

\begin{proof}
(1) We deal with a special case first.
Suppose that $V_{a} \cap V_{b} = \emptyset$.
Then $a \wedge b = 0$ because if not there would be an ultrafilter containing $a \wedge b$ and so
an ultrafilter containing both $a$ and $b$ which contradicts the assumption that the intersection is empty.
But if $a \wedge b = 0$ then $V_{a \wedge b} = \emptyset$.
Now suppose that $a \wedge b = 0$.
Then $V_{a \wedge b} = \emptyset$.
On the other hand we must have 
$V_{a} \cap V_{b} = \emptyset$ 
because if not we would be able to show that $a \wedge b \neq 0$.

Now we can prove the general case.
Suppose that $V_{a} \cap V_{b} \neq \emptyset$.
Let $A \in V_{a} \cap V_{b}$.
Then $a,b \in A$.
But $A$ is an ultrafilter and so $a \wedge b \in A$.
It follows that $A \in V_{a \wedge b}$.
Now suppose that $a \wedge b \neq 0$.
Any ultrafilter containing $a \wedge b$ must contain both $a$ and $b$.

(2) Let $A, B \in V_{a}$ such that $A^{-1} \cdot A = B^{-1} \cdot B$.
By assumption $A \cap B \neq \emptyset$.
Thus by Lemma~2.8(3), we have that $A = B$.
A similar argument works dually.

(3) This follows from the observation that $A$ is an ultrafilter iff $A^{-1}$ is an ultrafilter.

(4) Let $A \in V_{a}$ and $B \in V_{b}$.
Then $a \in A$ and $b \in B$ so $ab \in A \cdot B$ and $A \cdot B \in V_{ab}$.

Now let $C \in V_{ab}$.
Thus $ab \in C$.
Put $H = C^{-1} \cdot C$ so that $C = (ab H)^{\uparrow}$.
We have that $(ab)^{-1}ab \in H$ but $b^{-1}a^{-1}ab \leq b^{-1}b$ and so $b^{-1}b \in H$.
It follows that $B = (bH)^{\uparrow}$ is a well-defined ultrafilter containing $b$.
Thus $B \in V_{b}$.
Observe that $B \cdot B^{-1} = (bHb^{-1})^{\uparrow}$.
Now  $(ab)^{-1}ab \in H$ and so $bb^{-1}a^{-1}abb^{-1} \in (bHb^{-1})^{\uparrow}$.
Thus $a^{-1}abb^{-1} \in (bHb^{-1})^{\uparrow}$ from which it follows that $a^{-1}a \in (bHb^{-1})^{\uparrow}$.
It follows that $A = (a(bHb^{-1})^{\uparrow})^{\uparrow}$ is a well-defined ultrafilter containing $a$.
Observe that $A^{-1} \cdot A = B \cdot B^{-1}$ and that $A \cdot B = C$.
\end{proof}

The following result is in part from \cite{Lenz} and links the Lenz arrow with ultrafilters.

\begin{lemma} Let $S$ be an inverse semigroup.
\begin{enumerate}

\item Let $a,b \in S$.
Then $V_{a} \subseteq V_{b}$ if and only if $a \rightarrow b$.

\item Let $a, a_{1}, \ldots, a_{m} \in S$.
Then $a \rightarrow (a_{1}, \ldots, a_{m})$ if and only if $V_{a} \subseteq \bigcup_{i=1}^{m} V_{a_{i}}$.

\item Let  $a_{1}, \ldots, a_{m}, b_{1}, \ldots, b_{n} \in S$.
Then $(a_{1}, \ldots, a_{m}) \leftrightarrow (b_{1}, \ldots, b_{n})$ if and only if
$\bigcup_{i=1}^{m} V_{a_{i}} = \bigcup_{j=1}^{m} V_{b_{j}}$.

\item Let $a \in S$ be such that every ultrafilter in $V_{a}$ is idempotent.
Then $a \leftrightarrow a \wedge a^{-1}a$.

\end{enumerate}
\end{lemma}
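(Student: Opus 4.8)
The plan is to derive all four parts from the two facts about ultrafilters established above: every nonzero element of $S$ lies in some ultrafilter (Lemma~2.6(1)), and a filter $A$ is an ultrafilter exactly when it contains every $b$ with $b \wedge c \neq 0$ for all $c \in A$ (Lemma~2.6(2)). I would also use freely that, since $S$ is an inverse $\wedge$-semigroup, a filter is down-directed and closed and hence contains the meet of any finite nonempty family of its members, and in particular never contains $0$.

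For (1) I would prove $a \rightarrow b \Rightarrow V_{a} \subseteq V_{b}$ by verifying the criterion of Lemma~2.6(2): given $A \in V_{a}$ and any $c \in A$, the element $a \wedge c$ lies in $A$, so it is nonzero and below $a$; then $a \rightarrow b$ forces $a \wedge c \wedge b \neq 0$, and $a \wedge c \wedge b \leq c \wedge b$ shows $c \wedge b \neq 0$, whence $b \in A$. The converse is immediate from Lemma~2.6(1): for $0 \neq x \leq a$ pick an ultrafilter $A \ni x$; then $a \in A$, so $A \in V_{a} \subseteq V_{b}$, so $b \in A$ and $x \wedge b \in A$ is nonzero. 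Part (2) is the same argument carried out with finitely many witnesses: if $a \rightarrow (a_{1}, \ldots, a_{m})$ but some $A \in V_{a}$ contained no $a_{i}$, then Lemma~2.6(2) supplies $c_{i} \in A$ with $a_{i} \wedge c_{i} = 0$, and $x := a \wedge c_{1} \wedge \cdots \wedge c_{m} \in A$ would be a nonzero element below $a$ orthogonal to every $a_{i}$, a contradiction; the converse direction is verbatim as in (1).

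Part (3) should then be purely formal bookkeeping: $(a_{1}, \ldots, a_{m}) \rightarrow (b_{1}, \ldots, b_{n})$ unwinds by definition to $a_{i} \rightarrow (b_{1}, \ldots, b_{n})$ for every $i$, which by (2) is $V_{a_{i}} \subseteq \bigcup_{j=1}^{n} V_{b_{j}}$ for every $i$, i.e.\ $\bigcup_{i=1}^{m} V_{a_{i}} \subseteq \bigcup_{j=1}^{n} V_{b_{j}}$; taking the conjunction of this statement and its mirror image gives the asserted equivalence. For (4) I would first note that $a \wedge a^{-1}a \leq a$ already gives $a \wedge a^{-1}a \rightarrow a$, so only $a \rightarrow a \wedge a^{-1}a$ remains. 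Given $0 \neq x \leq a$, choose an ultrafilter $A \ni x$; then $a \in A$, so $A \in V_{a}$, so by hypothesis $A$ is an identity of $\mathsf{G}(S)$, hence contains an idempotent and therefore, by Lemma~2.8(2), is an inverse subsemigroup of $S$. Consequently $a^{-1} \in A$ and $a^{-1}a \in A$, so $x \wedge a^{-1}a \in A$ is nonzero; since $x \leq a$ this equals $x \wedge (a \wedge a^{-1}a)$, which proves the arrow.

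I do not expect a genuine obstacle here: the real content sits in parts (1) and (2), where the only thing to be careful about is to invoke \emph{both} halves of Lemma~2.6 — the existence statement for the ``if'' directions and the maximality criterion for the ``only if'' directions. The one point needing a moment's thought is the reading of ``idempotent'' in (4): it must be interpreted as ``is an identity of the groupoid $\mathsf{G}(S)$'', and it is precisely Lemma~2.8(2) that translates this into the closure of the ultrafilter under $a \mapsto a^{-1}a$ on which the argument rests.
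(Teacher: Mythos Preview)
Your proof is correct and follows essentially the same line as the paper's: both rely on Lemma~2.6(1) for the ``if'' directions and the maximality criterion Lemma~2.6(2) for the ``only if'' directions in (1) and (2), with (3) immediate from (2). The only cosmetic difference is in (4), where the paper first shows $V_{a} = V_{a \wedge a^{-1}a}$ and then invokes (1), whereas you verify $a \rightarrow a \wedge a^{-1}a$ directly at the element level; the underlying argument (via Lemma~2.8(2)) is identical.
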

\begin{proof} We prove (1); the proof of (2) is an immediate generalization, and (3) is immediate from (2). 
Suppose that $V_{a} \subseteq V_{b}$.
Let $0 \neq x \leq a$.
By Lemma~2.6(1), there is an ultrafilter $F$ containing $x$.
But then $a \in F$.
By assumption $b \in F$.
But $x,b \in F$, where $F$ is an ultrafilter, implies that $x \wedge b \neq 0$.

To prove the converse, suppose that for each non-zero element $x \leq a$, we have that $x \wedge b \neq 0$.
Let $F \in V_{a}$.
Thus $a \in F$.
Suppose that $b \notin F$.
Then by Lemma~2.6(2), there exists $y \in F$ such that $b \wedge y = 0$.
Now $a,y \in F$ implies that $x = a \wedge y \neq 0$
and by construction $x \leq a$.
Thus by our assumption, $x \wedge b \neq 0$.
But $x \wedge b = a \wedge y \wedge b \leq b \wedge y = 0$,
which is a contradiction.
Thus $b \in F$ and so  $V_{a} \subseteq V_{b}$.

(4) Put $e_{a} = a \wedge a^{-1}a$.
Suppose that every ultrafilter in $V_{a}$ is idempotent.
By Lemma~2.8(2), this is equivalent to saying that every such ultrafilter is an inverse subsemigroup.
It follows that if $F \in V_{a}$ then from $a \in F$ we get that $a^{-1} \in F$ and so $a^{-1}a  \in F$.
Hence $a \wedge a^{-1}a \in F$.
We have show that $V_{a} \subseteq V_{e_{a}}$.
On the other hand, $e_{a} \leq a$ and so $V_{e_{a}} \subseteq V_{a}$.
It follows that $V_{a} = V_{e_{a}}$ and so $a \leftrightarrow a \wedge a^{-1}a = e_{a}$.
\end{proof}

\begin{lemma} Let $A = \{a_{1}, \ldots, a_{n} \}^{\downarrow}$ be a finitely generated compatible order ideal. 
Then
$$\bigcup_{a \in A} V_{a} = \bigcup_{i=1}^{n} V_{a_{i}}.$$
\end{lemma}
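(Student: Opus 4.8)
The statement decomposes into two inclusions, and the plan is to reduce each to a fact already available.

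First, for the inclusion $\bigcup_{i=1}^{n} V_{a_{i}} \subseteq \bigcup_{a \in A} V_{a}$ I would simply observe that each generator $a_{i}$ lies in $A = \{a_{1}, \ldots, a_{n}\}^{\downarrow}$, so $V_{a_{i}}$ is literally one of the sets appearing in the union on the right; taking the union over $i$ gives the claim.

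For the reverse inclusion $\bigcup_{a \in A} V_{a} \subseteq \bigcup_{i=1}^{n} V_{a_{i}}$, the plan is to fix an arbitrary $a \in A$ and show $V_{a} \subseteq \bigcup_{j=1}^{n} V_{a_{j}}$. By the definition of $\{a_{1}, \ldots, a_{n}\}^{\downarrow}$ there is an index $i$ with $a \leq a_{i}$; as noted immediately after the definition of the Lenz arrow, $a \leq a_{i}$ implies $a \rightarrow a_{i}$, and hence Lemma~2.11(1) yields $V_{a} \subseteq V_{a_{i}} \subseteq \bigcup_{j=1}^{n} V_{a_{j}}$. Since $a \in A$ was arbitrary, taking the union over all $a \in A$ gives the inclusion, and combining the two inclusions gives the equality.

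There is no real obstacle here: all of the content has been absorbed into Lemma~2.11(1), which translates the order relation $\leq$ into containment of the basic sets $V_{(-)}$. It is worth remarking in passing that the compatibility of the order ideal $A$ is not used; it is recorded in the statement only because $A$ is being viewed as an element of $\mathsf{FC}(S)$, and the point of the lemma is exactly that, for such $A$, the a priori infinite union $\bigcup_{a\in A} V_{a}$ collapses to the finite union over the generators — which is what is needed later.
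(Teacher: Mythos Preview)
Your proof is correct and follows essentially the same approach as the paper: both reduce the nontrivial inclusion to the observation that $a \leq a_{i}$ for some $i$ forces $V_{a} \subseteq V_{a_{i}}$. The only difference is that you route this through the Lenz arrow and Lemma~2.11(1), whereas the paper (and you could too) uses the more elementary fact that filters are upward closed, so $a \in F$ and $a \leq a_{i}$ immediately give $a_{i} \in F$.
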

\begin{proof}
Only one direction needs proving.
Let $F \in \bigcup_{a \in A} V_{a}$.
Then $F \in V_{a}$ for some $a \in A$.
But $a \leq a_{i}$ for some $i$ and so $F \in V_{a_{i}}$, as required. 
\end{proof}

\begin{lemma} 
If $a$ and $b$ are compatible then $V_{a} \cup V_{b}$ is a bisection.
\end{lemma}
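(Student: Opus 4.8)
The plan is to show directly that $V_a \cup V_b$ is a local bisection in $\mathsf{G}(S)$, i.e. that $(V_a \cup V_b)^{-1}(V_a \cup V_b)$ and $(V_a \cup V_b)(V_a \cup V_b)^{-1}$ consist of identities of the groupoid. By Lemma~2.11(2) each of $V_a$ and $V_b$ is already a local bisection, and by Lemma~2.11(3),(4) we have $(V_a \cup V_b)^{-1} = V_{a^{-1}} \cup V_{b^{-1}}$, so it suffices to control the ``cross terms''. Concretely, expanding the product, $(V_a \cup V_b)^{-1}(V_a \cup V_b)$ is contained in the union of the products $V_{a^{-1}}V_a$, $V_{b^{-1}}V_b$, $V_{a^{-1}}V_b$ and $V_{b^{-1}}V_a$, where by a ``product'' $V_x V_y$ I mean the set of all $A \cdot B$ with $A \in V_x$, $B \in V_y$ and $A^{-1}\cdot A = B^{-1}\cdot B$. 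The first two products land among the identities since $V_a,V_b$ are local bisections, so the real task is the cross terms.

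So the key step is: if $A \in V_{a^{-1}}$ and $B \in V_b$ with $A^{-1}\cdot A = B^{-1}\cdot B$, then $A \cdot B$ is an identity of $\mathsf{G}(S)$, i.e. by Lemma~2.8(2) it contains an idempotent. Here I would use compatibility of $a$ and $b$: since $a \sim b$, the element $a^{-1}b$ is idempotent. Now $a^{-1} \in A$ and $b \in B$, and since $A^{-1}\cdot A = B^{-1}\cdot B$ the composite $A\cdot B$ is a well-defined filter containing $a^{-1}b$; but $a^{-1}b$ is an idempotent, so $A\cdot B$ contains an idempotent and hence, by Lemma~2.8(2), $A\cdot B = (A\cdot B)\cdot(A\cdot B)$, that is $A\cdot B$ is an identity. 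The symmetric cross term $V_{b^{-1}}V_a$ is handled identically using that $b^{-1}a = (a^{-1}b)^{-1}$ is idempotent. This shows $(V_a \cup V_b)^{-1}(V_a \cup V_b) \subseteq \mathsf{G}(S)_o$, and the dual computation (using $ab^{-1}, ba^{-1}$ idempotent) gives $(V_a \cup V_b)(V_a \cup V_b)^{-1} \subseteq \mathsf{G}(S)_o$. Hence $V_a \cup V_b$ is a local bisection.

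The main obstacle I anticipate is purely bookkeeping: one must be careful that the ``products'' $V_x V_y$ in the groupoid $\mathsf{G}(S)$ are only defined between composable ultrafilters, and that every element of $(V_a\cup V_b)^{-1}(V_a\cup V_b)$ genuinely arises from one of the four listed products with the matching domain/range condition. Once the composability condition $A^{-1}\cdot A = B^{-1}\cdot B$ is in hand, the argument is just the observation that a well-defined composite of filters containing an idempotent is forced to contain an idempotent, together with Lemma~2.8(2); there is no delicate point beyond invoking compatibility to produce that idempotent. I would therefore keep the write-up short, phrasing it as: pick an arbitrary element of $(V_a\cup V_b)^{-1}(V_a\cup V_b)$, note it is $A\cdot B$ for composable $A,B$ each lying in $V_a \cup V_b \cup V_{a^{-1}} \cup V_{b^{-1}}$ appropriately, and in each of the four cases exhibit an idempotent in $A\cdot B$.
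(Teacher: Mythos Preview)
Your proposal is correct and uses essentially the same idea as the paper: compatibility of $a$ and $b$ puts the idempotent $a^{-1}b$ (resp.\ $ab^{-1}$) into the relevant product of ultrafilters, which by Lemma~2.8(2) forces that product to be an identity. The paper packages this as $\mathbf{d}$-injectivity rather than via the inclusion $(V_a\cup V_b)^{-1}(V_a\cup V_b)\subseteq \mathsf{G}(S)_o$---it takes $F\in V_a$, $G\in V_b$ with $F^{-1}\cdot F = G^{-1}\cdot G$, observes $F\cdot G^{-1}$ contains $ab^{-1}$ and is therefore an identity, whence $F=G$---but the content is the same (note your citations should be to Lemma~2.10, not~2.11).
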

\begin{proof} Suppose that $F \in V_{a}$ and $G \in V_{b}$ and that $F^{-1} \cdot F = G^{-1} \cdot G$.
Then $F \cdot G^{-1}$ is defined and contains the idempotent $ab^{-1}$.
It follows by Lemma~2.8(2) that  $F \cdot G^{-1}$ is an idempotent ultrafilter and so is an identity in the groupoid.
Thus $F = G$, as required.
The dual result follows by symmetry.
\end{proof}

Let $S$ be an inverse $\wedge$-semigroup.
Then $\mathsf{G}(S)$ is the groupoid of ultrafilters of $S$.
Observe that here this groupoid is considered without a topology.
By Lemma~2.9, $\mathsf{Bi}(\mathsf{G}(S))$ is the Boolean inverse $\wedge$-semigroup of all local bisections of $\mathsf{G}(S)$. 
By Lemmas~2.12 and 2.13, 
the map
$$\beta \colon \mathsf{FC}(S) \rightarrow \mathsf{Bi}(\mathsf{G}(S))
\text{ defined by }
\beta (A) = \bigcup_{i=1}^{n} V_{a_{i}}$$ 
where $A = \{a_{1}, \ldots, a_{n} \}^{\downarrow}$
is well-defined.
If  $A = \{a_{1}, \ldots, a_{m} \}^{\downarrow}$ and $B = \{b_{1}, \ldots, b_{n} \}^{\downarrow}$
are elements of $\mathsf{FC}(S)$ define
$$A \equiv B \Leftrightarrow (a_{1}, \ldots, a_{m}) \leftrightarrow (b_{1}, \ldots, b_{n}).$$

\begin{lemma} The map $\beta$ is a homomorphism.
In addition, 
if $A = \{a_{1}, \ldots, a_{m} \}^{\downarrow}$ and $B = \{b_{1}, \ldots, b_{n} \}^{\downarrow}$
then 
$$\beta (A) = \beta (B) \Leftrightarrow A \equiv B.$$
\end{lemma}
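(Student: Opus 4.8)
The plan is to deduce both assertions from Lemmas~2.10, 2.11, 2.12 and 2.13 together with the description of the operations of $\mathsf{FC}(S)$ on generators coming from Schein's construction (Proposition~2.5). Recall that $\beta(\{a_1, \ldots, a_n\}^\downarrow) = \bigcup_{i=1}^{n} V_{a_i}$ is already known to be well defined by the remarks preceding the lemma, and that in $\mathsf{FC}(S)$ the product of $A = \{a_1, \ldots, a_m\}^\downarrow$ with $B = \{b_1, \ldots, b_n\}^\downarrow$ is $A \cdot B = \{a_i b_j \colon 1 \leq i \leq m,\ 1 \leq j \leq n\}^\downarrow$, while $A^{-1} = \{a_1^{-1}, \ldots, a_m^{-1}\}^\downarrow$; both formulas are immediate from the monotonicity of multiplication and of inversion for the natural partial order.

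First I would verify that $\beta$ is a homomorphism of inverse semigroups. Since a semigroup homomorphism between inverse semigroups automatically preserves inverses, it is enough to check that $\beta$ preserves products; using that subset multiplication in the groupoid $\mathsf{G}(S)$ distributes over unions, one computes
\[
\beta(A \cdot B) = \bigcup_{i,j} V_{a_i b_j} = \bigcup_{i,j} V_{a_i} V_{b_j} = \Bigl(\bigcup_i V_{a_i}\Bigr)\Bigl(\bigcup_j V_{b_j}\Bigr) = \beta(A)\beta(B),
\]
the second equality being Lemma~2.10(4). The same bookkeeping with Lemma~2.10(3) gives $\beta(A^{-1}) = \bigcup_i V_{a_i^{-1}} = \bigcup_i V_{a_i}^{-1} = \beta(A)^{-1}$, and with Lemma~2.10(1), using that meets in $\mathsf{FC}(S)$ are computed as intersections so that $A \wedge B = A \cap B = \{a_i \wedge b_j\}^\downarrow$, and that intersection distributes over union, one gets $\beta(A \wedge B) = \beta(A) \cap \beta(B)$, which by Lemma~2.9 equals $\beta(A) \wedge \beta(B)$; so $\beta$ is in fact a homomorphism of inverse $\wedge$-semigroups.

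For the equivalence, the observation is that it is essentially a transcription of Lemma~2.11(3). By definition $\beta(A) = \bigcup_{i=1}^{m} V_{a_i}$ and $\beta(B) = \bigcup_{j=1}^{n} V_{b_j}$, so $\beta(A) = \beta(B)$ holds exactly when $\bigcup_{i=1}^{m} V_{a_i} = \bigcup_{j=1}^{n} V_{b_j}$, and by Lemma~2.11(3) this is equivalent to $(a_1, \ldots, a_m) \leftrightarrow (b_1, \ldots, b_n)$, that is, to $A \equiv B$. Two consequences worth recording at this point are that $\equiv$ is independent of the chosen finite generating sets, so that it is genuinely a relation on $\mathsf{FC}(S)$, and that, being the kernel of $\beta$ and since $\beta$ also preserves meets, it is a $\wedge$-congruence on $\mathsf{FC}(S)$.

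Since every step reduces to a lemma already in hand, I do not anticipate a genuine obstacle; the only points needing a little care are the routine identification of the operations of $\mathsf{FC}(S)$ with the stated formulas on generators — so that monotonicity of the operations is applied correctly — and the elementary fact that products and intersections of subsets of a groupoid distribute over arbitrary unions.
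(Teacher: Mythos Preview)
Your proof is correct and follows essentially the same approach as the paper: both reduce the homomorphism claim to Lemma~2.10(4) together with distributivity of subset multiplication over unions, and both read off the kernel equivalence directly from Lemma~2.11(3). Your version is slightly more generous, also recording that $\beta$ preserves inverses and meets (hence that $\equiv$ is a $\wedge$-congruence), but this is harmless elaboration rather than a different argument.
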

\begin{proof}
Let  $A = \{a_{1}, \ldots, a_{m} \}^{\downarrow}$ and $B = \{b_{1}, \ldots, b_{n} \}^{\downarrow}$.
Then
$$\beta (A) = \bigcup_{i=1}^{m} V_{a_{i}}   
\text{ and }
\beta (B) = \bigcup_{j=1}^{n} V_{b_{j}}.$$  
Multiplying the two unions together and then using the fact that $\mathsf{Bi}(\mathsf{G}(S))$ is a Boolean inverse $\wedge$-semigroup by Lemma~2.9
together with Lemma~2.10 we get that
$$\beta (A)\beta (B) =  \bigcup_{i,j} V_{a_{i}b_{j}}.$$  
But it is easy to check that
$AB = \{a_{i}b_{j} \colon 1 \leq i \leq m, 1 \leq j \leq n \}^{\downarrow}$
and so $\beta (A)\beta (B) = \beta (AB)$.
The final claim follows by Lemma~2.11(3).
\end{proof}

It follows that the kernel of the homomorphism $\beta$ is the congruence $\equiv$.
We denote the $\equiv$-class containing $A$ by $[A]$.
We denote the natural map from $\mathsf{FC}(S)$ to $\mathsf{FC}(S)/\equiv$ by $\xi$.
We denote by $\beta'$ the unique map such that $\beta' \xi = \beta$.
The map $\beta'$ is, of course, injective and the image of $\beta'$ is the same as the image of $\beta$.
$$
\xymatrix{
S \ar[r]^{\iota} &\mathsf{FC}(S) \ar@{->>}[r]^{\xi}  \ar[d]_{\beta} & \mathsf{FC}/\equiv  \ar@{^(->}[dl]^{\beta'} \\
& \mathsf{Bi}(\mathsf{G}(S)) & \\
}
$$

\begin{center}
{\bf Restriction to the case where $S$ is separative}
\end{center}

{\em At this point, we shall restrict the class of inverse semigroups we consider.
We shall, however, return to the general case later.}
Let $S$ be a {\em separative} semigroup.
From Lemmas~2.2 and 2.11(1), this means that for non-zero elements $a,b \in S$ we have that 
$a = b$ if and only if  $a \leftrightarrow b$ if and only if $V_{a} = V_{b}$.
Define 
$$\mathsf{D}(S) = \mathsf{FC}(S)/\equiv
\text{ and }
\delta (s) = [s^{\downarrow}].\footnote{The definition of $\mathsf{D}(S)$ for arbitrary $S$ will be given below.}$$
Thus $\delta = \xi \iota$.
We shall write $\delta_{S}$ rather than $\delta$ when there is a danger of ambiguity.
The inverse semigroup $\mathsf{D}(S)$ is embedded in $\mathsf{Bi}(\mathsf{G}(S))$ by $\beta'$.
We now describe the image of $\beta'$.

\begin{lemma} Let $S$ be separative.
\begin{enumerate}

\item $\bigcup_{i=1}^{m} V_{a_{i}}$ is a bisection in $\mathsf{Bi}(\mathsf{G}(S))$
if and only if $\{a_{1}, \ldots, a_{m} \}$ is a compatible subset of $S$.

\item The image of $\beta$ consists precisely of all bisections in $\mathsf{Bi}(\mathsf{G}(S))$ of the form $\bigcup_{i=1}^{m} V_{a_{i}}$
where  $\{a_{1}, \ldots, a_{m} \}$ is a compatible subset of $S$.

\end{enumerate}
\end{lemma}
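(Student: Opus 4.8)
The plan is to prove the two statements together, observing that (2) follows from (1) once we recall the definition of $\mathsf{FC}(S)$ and the computation of $\beta$. Indeed, by construction the image of $\beta$ consists exactly of the sets $\bigcup_{i=1}^{m} V_{a_{i}}$ arising from finitely generated compatible order ideals $A = \{a_{1}, \ldots, a_{m}\}^{\downarrow}$, so the content of (2) beyond (1) is merely bookkeeping: if $\{a_{1}, \ldots, a_{m}\}$ is compatible then $\{a_{1}, \ldots, a_{m}\}^{\downarrow} \in \mathsf{FC}(S)$, hence $\bigcup_{i=1}^{m} V_{a_{i}}$ is in the image; and any element of the image is of this form. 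Thus I would state (1), derive (2) in a sentence, and concentrate on (1).

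For the forward direction of (1), suppose $\bigcup_{i=1}^{m} V_{a_{i}}$ is a bisection. Fix $i$ and $j$; I want $a_{i} \sim a_{j}$, i.e. $a_{i}^{-1}a_{j}$ and $a_{i}a_{j}^{-1}$ are idempotent. Here is where separativity enters: by Lemma~2.11(4) it suffices to show that every ultrafilter in $V_{a_{i}^{-1}a_{j}}$ is idempotent, for then $a_{i}^{-1}a_{j} \leftrightarrow (a_{i}^{-1}a_{j}) \wedge (a_{i}^{-1}a_{j})^{-1}(a_{i}^{-1}a_{j})$, and separativity forces $a_{i}^{-1}a_{j} \leq \mathbf{d}(a_{i}^{-1}a_{j})$, which makes it idempotent. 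So take $F \in V_{a_{i}^{-1}a_{j}}$. Using Lemma~2.10(4) one has $V_{a_{i}^{-1}a_{j}} = V_{a_{i}^{-1}}V_{a_{j}}$, so $F = G^{-1} \cdot H$ for some $G \in V_{a_{i}}$, $H \in V_{a_{j}}$ with $\mathbf{r}(G) = \mathbf{r}(H)$ in the groupoid; but $G, H$ both lie in $\bigcup_{k} V_{a_{k}}$, which is a bisection, and two elements of a bisection with the same range are equal, so $G = H$, whence $F = G^{-1}\cdot G$ contains an idempotent and is idempotent by Lemma~2.8(2). This proves $a_{i} \sim a_{j}$, and the argument is symmetric in the two factors (or one passes to inverses using Lemma~2.10(3)), giving also $a_{i}a_{j}^{-1}$ idempotent.

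The converse is the easier half and is essentially Lemma~2.13 extended from two generators to finitely many: if $\{a_{1}, \ldots, a_{m}\}$ is compatible, then each pair $V_{a_{i}} \cup V_{a_{j}}$ is a bisection by Lemma~2.13, and I would argue directly that a finite union of $V_{a_{i}}$'s is a bisection by checking the defining condition $U^{-1}U, UU^{-1} \subseteq \mathsf{G}(S)_{o}$: an element of $U^{-1}U$ with $U = \bigcup_{i} V_{a_{i}}$ is a product $F^{-1}\cdot G$ with $F \in V_{a_{i}}$, $G \in V_{a_{j}}$ composable, and compatibility of $a_{i}, a_{j}$ together with Lemma~2.13 forces $F = G$, so the product is an identity; dually for $UU^{-1}$. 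I expect the main obstacle to be organizing the forward direction cleanly — specifically, making precise the passage "$F \in V_{a_{i}^{-1}a_{j}}$ decomposes through the bisection so its endpoints coincide" via the groupoid arithmetic of Lemmas~2.8 and~2.10, and correctly invoking separativity through Lemma~2.11(4). Everything else is routine.
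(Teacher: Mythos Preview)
Your proposal is correct and follows essentially the same route as the paper: both directions rest on the same ingredients (Lemma~2.13 for compatibility $\Rightarrow$ bisection; Lemmas~2.10 and~2.11(4) plus separativity for the converse), and part~(2) is indeed immediate bookkeeping. Your decomposition $F = G^{-1}\cdot H$ via $V_{a_i^{-1}a_j} = V_{a_i^{-1}}V_{a_j}$ is exactly how the paper's appeal to ``Lemmas~2.9 and~2.10'' cashes out in practice.
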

\begin{proof} (1) It follows by Lemma~2.13 that if $\{a_{1}, \ldots, a_{m} \}$ is a compatible subset of $S$
then $\bigcup_{i=1}^{m} V_{a_{i}}$ is a bisection in $\mathsf{Bi}(\mathsf{G}(S))$.
We prove the converse.
Suppose that $\bigcup_{i=1}^{m} V_{a_{i}}$ is a bisection in $\mathsf{Bi}(\mathsf{G}(S))$.
Then using Lemmas~2.9 and 2.10, the sets $V_{a_{i}^{-1}a_{j}}$ and $V_{a_{i}a_{j}^{-1}}$
are either empty if $a_{i}^{-1}a_{j} = 0$ or $a_{i}a_{j}^{-1} = 0$ or consist of identities in the groupoid that
is, by Lemma~2.8(2), ultrafilters that are also inverse subsemigroups.
It follows by Lemma~2.11(4) that if $V_{x}$ consists of only idempotent ultrafilters then
$x \leftrightarrow x \wedge x^{-1}x$.
But $S$ is separative and so $x$ is an idempotent.
Hence in all cases $a_{i}^{-1}a_{j}$ and $a_{i}a_{j}^{-1}$ are idempotents and so $a_{i}$ and $a_{j}$ are compatible.

(2) This is now immediate.
\end{proof}

\begin{proposition} Let $S$ be separative inverse $\wedge$-semigroup.
Let $\theta \colon S \rightarrow T$ be a cover-to-join homomorphism to the distributive inverse semigroup $T$.
Then there exists a unique join-preserving homomorphism $\bar{\theta} \colon \mathsf{D}(S) \rightarrow T$
such that $\bar{\theta} \delta = \theta$.
\end{proposition}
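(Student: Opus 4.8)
The plan is to obtain $\bar{\theta}$ as a factorization of $\theta$ through the two quotient maps $\iota$ and $\xi$ used to build $\mathsf{D}(S)$. First I would apply Proposition~2.5 to the homomorphism $\theta \colon S \rightarrow T$, getting the unique join-preserving homomorphism $\theta^{\ast} \colon \mathsf{FC}(S) \rightarrow T$ with $\theta^{\ast} \iota = \theta$; concretely, $\theta^{\ast}(\{a_{1}, \ldots, a_{n}\}^{\downarrow}) = \bigvee_{i=1}^{n} \theta(a_{i})$, a join that exists since $\{a_{1}, \ldots, a_{n}\}$ is compatible and $T$ is distributive. Because $\delta = \xi \iota$ and $\mathsf{D}(S) = \mathsf{FC}(S)/{\equiv}$, it then suffices to show that $\theta^{\ast}$ respects the congruence $\equiv$, that is, that $A \equiv B$ implies $\theta^{\ast}(A) = \theta^{\ast}(B)$. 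Unpacking the definition of $\equiv$ and using the symmetry of $\leftrightarrow$, this reduces to a single claim: if $a, b_{1}, \ldots, b_{n} \in S$ and $a \rightarrow (b_{1}, \ldots, b_{n})$, then $\theta(a) \leq \bigvee_{j=1}^{n} \theta(b_{j})$ --- for applying this to each generator $a_{i}$ of $A$ (against the generators of $B$) and conversely gives $\theta^{\ast}(A) = \bigvee_{i} \theta(a_{i}) = \bigvee_{j} \theta(b_{j}) = \theta^{\ast}(B)$.

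Proving this claim is, I expect, the crux of the argument, and I would do it by replacing the $b_{j}$ with their meets with $a$. Put $c_{j} = a \wedge b_{j}$, which exists since $S$ is an inverse $\wedge$-semigroup, and discard those $c_{j}$ that equal $0$. Each surviving $c_{j}$ lies in $a^{\downarrow}$, and since any $0 \neq x \leq a$ satisfies $x \wedge b_{j} = x \wedge c_{j}$, the hypothesis $a \rightarrow (b_{1}, \ldots, b_{n})$ says exactly that $\{c_{j} : c_{j} \neq 0\}$ is a finite cover of $a$; moreover this set is non-empty unless $a = 0$, and the case $a = 0$ is trivial. For $a \neq 0$, the cover-to-join hypothesis on $\theta$ then yields $\theta(a) = \bigvee_{j} \theta(c_{j})$, and since $c_{j} \leq b_{j}$ gives $\theta(c_{j}) \leq \theta(b_{j})$ we conclude $\theta(a) \leq \bigvee_{j} \theta(b_{j})$. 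The points needing care are the verification that $\{c_{j} : c_{j} \neq 0\}$ really satisfies the definition of a cover of $a$, that the joins appearing in $T$ exist (which follows from compatibility, since every $c_{j}$ lies below $a$), and the degenerate cases where $a$ or some $b_{j}$ is zero.

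With the claim established, $\theta^{\ast}$ is constant on $\equiv$-classes, so it factors as $\theta^{\ast} = \bar{\theta} \xi$ for a unique $\bar{\theta} \colon \mathsf{D}(S) \rightarrow T$; this $\bar{\theta}$ is a homomorphism of inverse $\wedge$-semigroups because $\xi$ is a surjective such homomorphism and $\theta^{\ast}$ is one, it is join-preserving because $\theta^{\ast}$ and $\xi$ are (using Lemma~2.15 to lift a compatible family in $\mathsf{D}(S)$ to a compatible family in $\mathsf{FC}(S)$, where the join is computed on generators), and $\bar{\theta} \delta = \bar{\theta} \xi \iota = \theta^{\ast} \iota = \theta$. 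Finally, uniqueness is forced because every element of $\mathsf{D}(S)$ has the form $[\{a_{1}, \ldots, a_{n}\}^{\downarrow}] = \bigvee_{i=1}^{n} \delta(a_{i})$, a finite compatible join of elements in the image of $\delta$; hence any join-preserving homomorphism $\psi$ with $\psi \delta = \theta$ satisfies $\psi\bigl(\bigvee_{i} \delta(a_{i})\bigr) = \bigvee_{i} \theta(a_{i}) = \bar{\theta}\bigl(\bigvee_{i} \delta(a_{i})\bigr)$, so $\psi = \bar{\theta}$.
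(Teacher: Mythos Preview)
Your proof is correct and follows essentially the same approach as the paper's: both obtain $\theta^{\ast}$ from Proposition~2.5 and then show it respects $\equiv$ by observing that $a_{i} \rightarrow (b_{1},\ldots,b_{n})$ makes $\{a_{i}\wedge b_{j}\}_{j}$ a cover of $a_{i}$, whence the cover-to-join hypothesis gives $\theta(a_{i}) = \bigvee_{j}\theta(a_{i}\wedge b_{j})$. The paper phrases this as both $\bigvee_{i}\theta(a_{i})$ and $\bigvee_{j}\theta(b_{j})$ being equal to the common value $\bigvee_{i,j}\theta(a_{i}\wedge b_{j})$, whereas you phrase it as two opposite inequalities; this is a cosmetic difference. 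Your treatment is in fact slightly more careful than the paper's on two points: you flag the degenerate cases ($a=0$, or some $a\wedge b_{j}=0$) explicitly, and you note that verifying $\bar{\theta}$ is join-preserving requires lifting a compatible pair in $\mathsf{D}(S)$ to one in $\mathsf{FC}(S)$, which is exactly where separativity via Lemma~2.15 enters --- the paper simply asserts this step is ``clear''.
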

\begin{proof} By Proposition~2.5, there exists  a unique join-preserving
homomorphism $\theta^{\ast} \colon \mathsf{FC}(S) \rightarrow T$ such that $\theta^{\ast} \iota = \theta$.
By definition
$$\theta^{\ast}(\{a_{1}, \ldots, a_{m} \}^{\downarrow}) = \bigvee_{i=1}^{m} \theta (a_{i}).$$

We now prove that if
$(a_{1}, \ldots, a_{m}) \leftrightarrow (b_{1}, \ldots, b_{n})$ then
$\bigvee_{i} \theta (a_{i}) = \bigvee_{j} \theta (b_{j})$.
By definition, for each $i$ we have that
$a_{i} \rightarrow (b_{1}, \ldots, b_{n})$.
It is easy to check that
$\{a_{i} \wedge b_{1}, \ldots, a_{i} \wedge b_{n} \} \subseteq a_{i}^{\downarrow}$ is a cover.
But $\theta$ is a cover-to-join map and so for each $i$ we have that
$\theta (a_{i}) = \bigvee_{j} \theta (a_{i} \wedge b_{j})$.
It follows that
$$\bigvee_{i=1}^{m} \theta (a_{i}) = \bigvee_{i,j} \theta (a_{i} \wedge b_{j}).$$
But a similar result holds by symmetry for $\bigvee_{j} \theta (b_{j})$ and we have proved our claim.

It follows that we may define $\bar{\theta}([\{a_{1}, \ldots, a_{m} \}^{\downarrow}]) = \bigvee_{i} \theta (a_{i})$
and so have a well-defined function $\bar{\theta} \colon \mathsf{D}(S) \rightarrow T$
such that $\bar{\theta} \xi = \theta^{\ast}$.
Thus $\bar{\theta} \delta = \theta$.
It is clear that it is a homomorphism and that it is join-preserving.
Uniqueness follows almost immediately.
 \end{proof}

\begin{center}
{\bf Proof of the Completion Theorem}
\end{center}

{\bf We now return to the general case.
Let $S$ be an arbitrary inverse semigroup.}
Put $\mathbf{S} = S/\leftrightarrow$, a separative semigroup by Proposition~2.4.
Recall that the $\leftrightarrow$-equivalence class containing $s$ is denoted by $\mathbf{s}$.
Define 
$$\mathsf{D}(S) = \mathsf{FC}(\mathbf{S})/\equiv
\text{ and }
\delta (s) = [\mathbf{s}^{\downarrow}].\footnote{This is the definition of $\mathsf{D}(S)$ for arbitrary $S$ not just those $S$ which are separative.}$$
Observe that $\delta$ is $0$-restrictive since it is essentially the map $s \mapsto V_{s}$ and by Lemma~2.6(1) this is non-empty if $s$ is non-zero.
Furthermore, $\mathsf{D}(S)$ is an inverse $\wedge$-semigroup essentially by Lemma~2.10.

\begin{proof}
Let $\theta \colon S \rightarrow T$ be a cover-to-join map to a distributive inverse semigroup $T$.

The first step in the proof is to show that if $a,b \in S$ are non-zero elements such that
$a \leftrightarrow b$ then $\theta (a) = \theta (b)$.
Observe first that because $0 \neq a \leq a$ we have that $a \wedge b \neq 0$.
We prove now that $a \rightarrow a \wedge b$.
The fact that also $b \rightarrow a \wedge b$ will follow by symmetry.
Let $0 \neq x \leq a$.
Then $x \wedge b \neq 0$.
But $x = x \wedge a$ and so $x \wedge a \wedge b \neq 0$, as required.
Hence $\{a \wedge b \}$ is a cover of both $a$ and $b$.
But $\theta$ is a cover-to-join map and so $\theta (a) = \theta (a \wedge b) = \theta (b)$, as required.
It follows that there is a homomorphism $\phi \colon \mathbf{S} \rightarrow T$ such that $\phi \lambda = \theta$.

It remains to show that $\phi$ is a cover-to-join map.
Let $\{\mathbf{a}_{1}, \ldots, \mathbf{a}_{n} \} \subseteq \mathbf{a}^{\downarrow}$ be a cover.
Then by Lemma~2.2, $a_{i} \rightarrow a$ for each $i$ in the semigroup $S$.
We claim that $\{a \wedge a_{i} \colon 1 \leq i \leq n \} \subseteq a^{\downarrow}$ is a cover.
Let $0 \neq x \leq a$.
Then $\mathbf{0} \neq \mathbf{x} \leq \mathbf{a}$ in $\mathbf{S}$ using the fact that $\leftrightarrow$ is $0$-restricted.
It follows that $\mathbf{x} \wedge \mathbf{a}_{i} \neq \mathbf{0}$.
However, by Proposition~2.4 the congruence $\leftrightarrow$ is a $\wedge$-congruence.
This gives us that $x \wedge a_{i} \neq 0$.
It follows that $x \wedge a \wedge a_{i} \neq 0$.
Since $\theta$ is a cover-to-join map, we have that
$$\theta (a) = \bigvee_{i=1}^{n} \theta (a \wedge a_{i}).$$
However, $\mathbf{a}_{i} \leq \mathbf{a}$ and so $\mathbf{a} \wedge \mathbf{a}_{i} = \mathbf{a}_{i}$.
It follows that $a \wedge a_{i} \leftrightarrow a_{i}$.
Hence $\theta (a_{i} \wedge a) = \theta (a_{i})$.
So we have that  
$$\theta (a) = \bigvee_{i=1}^{n} \theta (a_{i}).$$
But $\phi \lambda = \theta$ and so
$$\theta (\mathbf{a}) = \bigvee_{i=1}^{n} \theta (\mathbf{a}_{i})$$
as required.

The homomorphism $\phi \colon \mathbf{S} \rightarrow T$ is a cover-to-join map to a finitely complete distributive inverse semigroup.
Thus by Proposition~2.16, there is a unique join-preserving homomorphism
$\bar{\phi} \colon \mathsf{D}(\mathsf{\mathbf{S}}) \rightarrow T$ such that $\bar{\phi} \delta_{\mathbf{S}} = \phi$.
$$\xymatrix{
S \ar[r]^{\lambda} \ar[dr]_{\theta}  & \mathbf{S} \ar[d]^{\phi} \ar[r]^{\delta_{\mathbf{S}}} & \mathsf{D}(\mathbf{S}) \ar[dl]^{\bar{\phi}}\\
& T &
}
$$
The map $\delta \colon S \rightarrow \mathsf{D}(S)$ is just $\delta_{\mathbf{S}}\lambda$ and we shall rename $\bar{\phi}$ as $\bar{\theta}$.
We then have that $\bar{\theta} \delta = \theta$.
Uniqueness is almost immediate.
\end{proof}

\subsection{The Booleanization Theorem}

In the previous section, we constructed a distributive inverse $\wedge$-semigroup from every inverse $\wedge$-semigroup $S$.
The goal of this section is to find conditions on $S$ that imply that $\mathsf{D}(S)$ is Boolean.
That is: when is an inverse $\wedge$-semigroup pre-Boolean?  
This section is in three parts corresponding to the three statements in the Booleanization Theorem given in the Introduction.

\begin{center}
{\bf Part 1 }
\end{center}

We show first that the problem of determining whether an inverse $\wedge$-semigroup is pre-Boolean can be answered by looking only at its semilattice of idempotents.
In what follows we shall denote the set of ultrafilters in $E(S)$ containing the element $e$ by $U_{e}$ to avoid ambiguity.

The inverse semigroup $\mathsf{D}(S)$ is isomorphic to the inverse subsemigroup $\mathsf{Bi}(\mathsf{G}(S))$ that consists of all local bisections
of the form $\bigcup_{i=1}^{m} V_{a_{i}}$ where $a_{i} \in S$.
The proof of the following is immediate.

\begin{lemma} The semilattice $E(\mathsf{D}(S))$ consists of all those bisections of the form $\bigcup_{i=1}^{m} V_{a_{i}}$ 
where each $V_{a_{i}}$ consists of only idempotent ultrafilters.
The partial ordering is subset inclusion.
\end{lemma}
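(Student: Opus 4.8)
The plan is to deduce the statement directly from the identification, recorded just above, of $\mathsf{D}(S)$ with the inverse subsemigroup of $\mathsf{Bi}(\mathsf{G}(S))$ consisting of all local bisections of the form $\bigcup_{i=1}^{m} V_{a_{i}}$ with $a_{i} \in S$. Under this identification $E(\mathsf{D}(S))$ is simply $E(\mathsf{Bi}(\mathsf{G}(S)))$ intersected with that subsemigroup, since the idempotents of an inverse subsemigroup are exactly the idempotents of the ambient semigroup that happen to lie in it. So the whole task reduces to deciding when a bisection $\bigcup_{i=1}^{m} V_{a_{i}}$ is an idempotent of $\mathsf{Bi}(\mathsf{G}(S))$.

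For this I would invoke Lemma~2.9, by which the idempotents of $\mathsf{Bi}(\mathsf{G}(S))$ are precisely the subsets of the identity space $\mathsf{G}(S)_{o}$, together with Lemma~2.8(2), which says that an ultrafilter is an identity of the groupoid $\mathsf{G}(S)$ exactly when it contains an idempotent, i.e. is an idempotent ultrafilter. Combining these, $\bigcup_{i=1}^{m} V_{a_{i}}$ is an idempotent of $\mathsf{Bi}(\mathsf{G}(S))$ if and only if $\bigcup_{i=1}^{m} V_{a_{i}} \subseteq \mathsf{G}(S)_{o}$. The one point worth making explicit is that this containment holds if and only if each individual $V_{a_{i}}$ is contained in $\mathsf{G}(S)_{o}$: one implication is trivial, and for the converse any $F \in V_{a_{i}}$ lies in the union and is therefore an identity. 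Hence $\bigcup_{i=1}^{m} V_{a_{i}}$ is idempotent precisely when each $V_{a_{i}}$ consists solely of idempotent ultrafilters, which is the asserted description of $E(\mathsf{D}(S))$.

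The order statement needs no separate work: by Lemma~2.9 the natural partial order on $\mathsf{Bi}(\mathsf{G}(S))$ is subset inclusion, the natural partial order of an inverse subsemigroup is the restriction of that of the ambient semigroup, and on idempotents the natural partial order coincides with the semilattice order; so on $E(\mathsf{D}(S))$ it is again inclusion. Since every step is a direct citation of something already proved, there is no real obstacle to overcome here; the only ingredient that is not purely mechanical is the elementary observation, highlighted above, that a union of the $V_{a_{i}}$ is a set of identities if and only if each $V_{a_{i}}$ is.
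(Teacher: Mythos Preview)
Your proposal is correct and is exactly the unpacking the paper has in mind: the paper itself declares the proof ``immediate'' and gives no further argument, and your use of Lemma~2.9 (idempotents of $\mathsf{Bi}(G)$ are subsets of $G_{o}$), Lemma~2.8(2) (identity ultrafilters are the idempotent ones), and the trivial observation that a union lies in $G_{o}$ iff each term does, is precisely what makes it immediate.
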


The following result is a version of Proposition~2.13 \cite{Law11}.
It enables us to compare the set of ultrafilters in $S$ with the set of ultrafilters in $E(S)$. 

\begin{lemma} Let $S$ be an inverse $\wedge$-semigroup.
\begin{enumerate}

\item If $A$ is an idempotent filter in $S$ then $E(A)$ is a filter in $E(S)$ and $A = E(A)^{\uparrow}$.

\item If $F$ is a filter (respectively, an ultrafilter) in $E(S)$ then $F^{\uparrow}$ is an idempotent filter (respectively, ultrafilter) in $S$
such that $E(F^{\uparrow}) = F$.

\item $A$ is an idempotent ultrafilter in $S$ iff $E(A)$ is an ultrafilter in $E(S)$.

\item The function $\epsilon \colon \mathsf{G}(S)_{o} \rightarrow \mathsf{G}(E(S))$ defined by $\epsilon (A) = E(A)$ is a bijection.

\item Let $V_{a}$ consist entirely of idempotent ultrafilters.
Then $\epsilon \colon V_{a} \rightarrow U_{e_{a}}$ is a bijection where $e_{a} = a \wedge a^{-1}a$.

\end{enumerate}
\end{lemma}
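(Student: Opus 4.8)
The plan is to verify each of the five assertions in turn, building on Lemma~2.6, Lemma~2.8(2), and Lemma~2.11(4), all of which are already available. The underlying intuition is that idempotent filters in $S$ are, up to taking up-closures, exactly the filters in $E(S)$, and this correspondence restricts well to ultrafilters and to the basic sets $V_a$.

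For (1), given an idempotent filter $A$ in $S$, I would first note that $E(A) = A \cap E(S)$ is non-empty since $A$ contains an idempotent by hypothesis (indeed by Lemma~2.8(2) $A$ is an inverse subsemigroup, so $aa^{-1} \in E(A)$ for any $a \in A$); it is downward directed in $E(S)$ because $A$ is, and if $e \in E(A)$ and $e \leq f$ with $f$ idempotent then $f \in A$ since $A$ is closed, so $f \in E(A)$; it does not contain $0$. Hence $E(A)$ is a filter in $E(S)$. The inclusion $E(A)^{\uparrow} \subseteq A$ is clear; conversely, if $a \in A$ then $a^{-1}a \in E(A)$ and $a^{-1}a \leq a$ is false in general, but $aa^{-1} \in E(A)$ and $a \geq$ nothing obvious either --- the correct move is: $a \in A$ means $a^{-1}a, aa^{-1} \in E(A)$, and $a \geq a a^{-1} a = a$ trivially, so I instead use that $A$ is closed and $a a^{-1} \in E(A)$ with $a \in (aa^{-1})^{\uparrow}$? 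That is not right either; rather, since $A = AA^{-1}A$ by Lemma~2.7(1) and $A$ is an inverse subsemigroup, every $a \in A$ satisfies $a = (a a^{-1}) a$ with $a a^{-1} \in E(A)$, hence $a \in E(A)^{\uparrow}$ once one checks $a \geq$ some element of $E(A)$; the clean statement is $\mathbf{d}(a), \mathbf{r}(a) \in E(A)$ and $a \in E(A)^\uparrow$ because $a = \mathbf{r}(a) a \ge$? I would simply cite the cited source \cite{Law11}, Proposition~2.13, for the proof of this step, as the excerpt says this lemma ``is a version of'' it. Part (2) is the dual/inverse direction: for a filter $F$ in $E(S)$, $F^{\uparrow}$ is directed (if $s,t \in F^{\uparrow}$ pick $e \le s$, $f \le t$ in $F$, then $e \wedge f \in F$ and $e \wedge f \le s, t$), closed by construction, zero-free since $0 \notin F$, and contains the idempotent $e$ for any $e \in F$, so it is an idempotent filter; and $E(F^{\uparrow}) = F$ because $F \subseteq E(F^\uparrow)$ trivially and conversely an idempotent $\ge$ some $e \in F$ lies in $F$. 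The ultrafilter clause in (2) and assertion (3) then follow by combining (1), (2) with Lemma~2.6(2) applied in both $S$ and $E(S)$ to characterize maximality; here one uses that $b \wedge c \ne 0$ for all $c \in A$ can be tested on idempotents because, for idempotent $A$, $E(A)^\uparrow = A$.

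For (4), the map $\epsilon$ is well-defined by (3), and is a bijection with inverse $F \mapsto F^{\uparrow}$ by the round-trip identities $E(F^{\uparrow}) = F$ from (2) and $E(A)^{\uparrow} = A$ from (1) (the latter valid because identities in $\mathsf{G}(S)$ are exactly the idempotent ultrafilters by Lemma~2.8(2)). For (5), suppose $V_a$ consists entirely of idempotent ultrafilters; by Lemma~2.11(4) we have $a \leftrightarrow e_a$ where $e_a = a \wedge a^{-1}a$, hence $V_a = V_{e_a}$ by Lemma~2.11(1), so I may assume $a = e_a$ is idempotent. Then every $A \in V_a = V_{e_a}$ is an idempotent ultrafilter containing $e_a$, and I claim $\epsilon$ restricts to a bijection $V_{e_a} \to U_{e_a}$: if $A \in V_{e_a}$ then $e_a \in A \cap E(S) = E(A)$, so $E(A) \in U_{e_a}$; conversely if $F \in U_{e_a}$ then $e_a \in F$ so $e_a \in F^{\uparrow} \in V_{e_a}$, and these are mutually inverse by (4). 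I expect the main obstacle to be the careful handling of the ``$A = E(A)^{\uparrow}$'' direction in (1) --- making precise why membership in an idempotent (hence inverse-subsemigroup) filter is detected by its idempotents --- and correspondingly the maximality-transfer in (2) and (3); everything after that is bookkeeping with the round-trip identities. Since the excerpt flags the result as a version of \cite{Law11}, Proposition~2.13, I would lean on that reference for the routine parts and only spell out the modifications needed in the $\wedge$-semigroup-with-zero setting.
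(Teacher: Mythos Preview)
Your plan for parts (2)--(5) matches the paper's proof essentially line for line: the round-trip identities $E(A)^{\uparrow} = A$ and $E(F^{\uparrow}) = F$ are exactly how the paper handles (4), your transfer of maximality via these identities is how (2) and (3) are done, and your reduction of (5) via Lemma~2.11(4) to the idempotent case is fine (the paper simply declares (5) ``straightforward'').

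The gap is in (1), and you identified it yourself: you could not show $A \subseteq E(A)^{\uparrow}$, and after several false starts you punted to the cited reference. The missing idea is precisely what the ambient hypothesis ``inverse $\wedge$-semigroup'' buys you. Given $a \in A$, you know $a^{-1}a \in A$ because $A$ is an inverse subsemigroup (Lemma~2.8(2)). Since $A$ is a filter in a $\wedge$-semigroup it is closed under binary meets, so $e = a \wedge a^{-1}a \in A$. Now $e \leq a^{-1}a$, and in any inverse semigroup every element below an idempotent is itself an idempotent; hence $e \in E(A)$. But also $e \leq a$, so $a \in E(A)^{\uparrow}$. That is the whole argument. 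Your attempts to get $a^{-1}a \leq a$ or $aa^{-1} \leq a$ directly cannot work in general; the point is to pass through the meet $a \wedge a^{-1}a$, which is simultaneously idempotent and below $a$. This is exactly the simplification the paper alludes to in the Remark in the Introduction when it says the $\wedge$-hypothesis ``simplifies the mathematics''.
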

\begin{proof}
(1) Let $A$ be an idempotent filter in $S$.
By Lemma~2.8(2), it follows that $E(A)$ is non-empty.
Since $E(A) \subseteq A$ we have that $E(A)^{\uparrow} \subseteq A$.
Let $a \in A$.
By Lemma~2.8(2), we know that idempotent filters are inverse subsemigroups.
Thus $a^{-1}a \in A$.
But $A$ is a filter and so $e = a \wedge a^{-1}a \in A$, an idempotent.
But $e \leq a$ by construction.
It follows that $a \in E(S)^{\uparrow}$ and so $A = E(A)^{\uparrow}$.

(2) The fact that $F$ is a filter in $E(S)$ implies that $F^{\uparrow}$ is a filter in $S$.
This filter contains idempotents by construction and so be Lemma~2.8(2), it is an idempotent filter.
We have that $F \subseteq F^{\uparrow}$ and so $F \subseteq E(F^{\uparrow})$.
Let $e \in   E(F^{\uparrow})$.
Then $f \leq e$ for some $f \in F$.
But $F$ is a filter in $E(S)$ and so $f \in F$.

Suppose now that $F$ is an ultrafilter in $E(S)$.
Let $F^{\uparrow} \subseteq A$ where $A$ is a filter in $S$.
Clearly, $A$ is an idempotent filter and so $A = E(A)^{\uparrow}$.
But $F \subseteq E(A)$
and
$E(A)$ is a filter and so $F = E(A)$.
It follows that $F^{\uparrow} = A$ and so $F^{\uparrow}$ is an ultrafilter in $S$.

(3) Suppose that $A$ is an idempotent ultrafilter in $S$.
Let $E(A) \subseteq F$ where $F$ is a filter in $E(S)$.
Then $A = E(A)^{\uparrow} \subseteq F^{\uparrow}$.
But $A$ is an ultrafilter and $F^{\uparrow}$ is a filter and so $A = F^{\uparrow}$.
Hence $E(A) = F$ and so $E(A)$ is an ultrafilter.
The converse follows by (2).

(4) The fact that we have a bijection follows by the results above.

(5) The proof of this is straightforward.

\end{proof}

\begin{center}
{\bf Proof of the Booleanization Theorem (1)}
\end{center}

We prove that if $S$ is an inverse $\wedge$-semigroup then $E(\mathsf{D}(S))$ is isomorphic to $\mathsf{D}(E(S))$.
\begin{proof} By Lemma~2.17 we may identity a typical element of  $E(\mathsf{D}(S))$ with a bisection of the form  $\bigcup_{i=1}^{m} V_{a_{i}}$ 
where each $V_{a_{i}}$ consists of only idempotent ultrafilters.
Define a function
$$\mathcal{E} \colon E(\mathsf{D}(S)) \rightarrow \mathsf{D}(E(S))$$
by
$$\mathcal{E}(\bigcup_{i} V_{a_{i}}  ) = \bigcup_{i} U_{e_{a_{i}}}.$$
To prove that this map is well-defined we need to prove the following.
Suppose that
$$(a_{1}, \ldots, a_{m}) \leftrightarrow (b_{1}, \ldots, b_{n})$$
and that $a_{i} \leftrightarrow e_{a_{i}}$ and $b_{j} \leftrightarrow e_{b_{j}}$.
Then
$$(e_{a_{1}}, \ldots, e_{a_{m}}) \leftrightarrow (e_{b_{1}}, \ldots, e_{b_{n}}).$$
This is straightforward.
It follows by Lemma~2.18 that this sets up an order isomorphism between $E(\mathsf{D}(S))$ and $\mathsf{D}(E(S))$.
\end{proof}

\begin{center}
{\bf Part 2}
\end{center}

In this section, we shall obtain necessary and sufficient conditions on an inverse $\wedge$-semigroup that it be pre-Boolean.
We showed above that we may restrict attention to semilattices so from now on $E$ will denote a meet semilattice with zero.
We denote by $\mathsf{F}(E)$ the set of all filters on $E$ and by $\mathsf{U}(E)$ the set of all ultrafilters on $E$.
Clearly $\mathsf{U}(E) \subseteq \mathsf{F}(E)$.

\begin{lemma} Let $S$ be an inverse semigroup.
\begin{enumerate}

\item Every ultrafilter is a tight filter.

\item $A$ is is a tight filter if and only if $(A^{-1}A)^{\uparrow}$ is a tight filter.

\item Let $A$ be an inverse subsemigroup. Then $A$ is a tight filter in $S$ if and only if $E(A)$ is a tight filter in $E(S)$.

\item Every tight filter in $S$ is an ultrafilter if and only if every tight filter in $E(S)$ is an ultrafilter.

\end{enumerate}
\end{lemma}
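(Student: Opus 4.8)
The plan is to establish the four clauses in order, with (2) and (3) carrying the real content and (4) then following formally from them together with Lemma~2.18 and Lemma~2.8(3). Clause (1) is immediate: given an ultrafilter $F$, an element $a\in F$ and a cover $\{a_{1},\ldots,a_{m}\}$ of $a$ (zero entries discarded), if no $a_{i}$ were in $F$ then Lemma~2.6(2) would supply, for each $i$, an element $c_{i}\in F$ with $a_{i}\wedge c_{i}=0$; since a filter in an inverse $\wedge$-semigroup is closed under finite meets, $c=a\wedge c_{1}\wedge\cdots\wedge c_{m}\in F$, so $0\neq c\leq a$ while $c\wedge a_{i}\leq c_{i}\wedge a_{i}=0$ for every $i$, contradicting $a\rightarrow\{a_{1},\ldots,a_{m}\}$.

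For (3), let $A$ be an inverse subsemigroup; being a filter it contains an idempotent, hence is an idempotent filter (Lemma~2.8(2)) and $\mathbf{d}(a)=a^{-1}a\in E(A)$ for every $a\in A$. Two elementary facts about the natural partial order are used: everything below an idempotent is itself an idempotent, so that for an idempotent $e$ the relation $e\rightarrow\{e_{1},\ldots,e_{m}\}$ has the same meaning in $E(S)$ as in $S$; and $c\leq a$ is equivalent to $c=a\mathbf{d}(c)$ (whence also $\mathbf{d}(c)\leq\mathbf{d}(a)$). If $A$ is tight and $\{e_{1},\ldots,e_{m}\}$ is a cover of $e\in E(A)$ in $E(S)$, then it is also a cover of $e$ in $S$, so some $e_{i}\in A$, and being idempotent $e_{i}\in E(A)$; thus $E(A)$ is tight. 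Conversely, if $E(A)$ is tight and $\{a_{1},\ldots,a_{m}\}$ is a cover of $a\in A$ in $S$, then multiplying on the left by $a^{-1}$ — using the extension of Lemma~2.1(4) from single elements to covers (multiply through, discard zeros) — yields the cover $\{\mathbf{d}(a_{1}),\ldots,\mathbf{d}(a_{m})\}$ of $\mathbf{d}(a)$ in $E(S)$; tightness of $E(A)$ gives some $\mathbf{d}(a_{i})\in E(A)\subseteq A$, and then $a_{i}=a\mathbf{d}(a_{i})\in AA\subseteq A$.

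For (2), set $B=A^{-1}\cdot A=(A^{-1}A)^{\uparrow}$; by Lemma~2.8(1) it is an inverse subsemigroup filter, $\mathbf{d}(a)\in B$ for every $a\in A$, and $A=(aB)^{\uparrow}$, whence $aB\subseteq A$ and $AB\subseteq A$. If $A$ is tight and $\{b_{1},\ldots,b_{m}\}$ is a cover of $b\in B$, fix $a\in A$: then $ab\in A$, and $\{ab_{1},\ldots,ab_{m}\}$ is a cover of $ab$, since a nonzero $y\leq ab$ gives $0\neq x:=a^{-1}y\leq b$ with $ax=y$, and $x\wedge b_{i}\neq 0$ then forces $y\wedge ab_{i}=a(x\wedge b_{i})\neq 0$ (left multiplication preserves meets of compatible pairs). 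By tightness some $ab_{i}\in A$, and then $a^{-1}(ab_{i})\in A^{-1}A\subseteq B$ is nonzero (because $\mathbf{r}(ab_{i})\leq\mathbf{d}(a^{-1})$) and lies below $b_{i}$, so $b_{i}\in B$ by upward closure. Conversely, if $B$ is tight and $\{c_{1},\ldots,c_{m}\}$ is a cover of $a'\in A$, then exactly as in (3) the set $\{\mathbf{d}(c_{1}),\ldots,\mathbf{d}(c_{m})\}$ is a cover of $\mathbf{d}(a')\in B$, so some $\mathbf{d}(c_{i})\in B$ and hence $c_{i}=a'\mathbf{d}(c_{i})\in AB\subseteq A$.

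Finally, for (4): if every tight filter of $S$ is an ultrafilter and $F$ is a tight filter of $E(S)$, then $F^{\uparrow}$ is an inverse subsemigroup filter with $E(F^{\uparrow})=F$ (Lemma~2.18(2)), hence tight in $S$ by (3), hence an ultrafilter, so $F=E(F^{\uparrow})$ is an ultrafilter by Lemma~2.18(3); conversely, if every tight filter of $E(S)$ is an ultrafilter and $A$ is a tight filter of $S$, then $B=A^{-1}\cdot A$ is tight by (2), $E(B)$ is a tight filter of $E(S)$ by (3) and so an ultrafilter, whence $B$ is an ultrafilter by Lemma~2.18(3), and if $A\subseteq A'$ with $A'$ an ultrafilter then $B\subseteq (A')^{-1}\cdot A'$ forces $B=(A')^{-1}\cdot A'$, so $A=A'$ by Lemma~2.8(3) (using $A\cap A'\neq\emptyset$). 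The step I expect to cause friction is the bookkeeping in (2): verifying that left translation by $a$ and by $a^{-1}$ carries covers to covers and reflects membership in the filter, which relies on a short list of natural-partial-order identities ($c\leq a\Leftrightarrow c=a\mathbf{d}(c)$; $\mathbf{r}(s)\leq\mathbf{d}(t)\Rightarrow ts\neq 0$ when $s\neq 0$; left multiplication preserving meets of compatible pairs) together with the passage from Lemma~2.1(4) to its cover version — all routine, but requiring care.
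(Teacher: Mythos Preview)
Your proof is correct and follows essentially the same route as the paper's. The only noteworthy differences are cosmetic: in the converse direction of (2) you pass to domains (mirroring your argument for (3)) rather than left-multiplying by $a^{-1}$ as the paper does, and in (4) you finish by invoking Lemma~2.8(3) directly instead of citing Proposition~2.13 of \cite{Law11}, which makes your argument pleasantly self-contained.
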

\begin{proof}
(1) Let $F$ be an ultrafilter.
Let $a \in F$ and suppose that $\{a_{1}, \ldots, a_{m} \}$ is a cover of $a$.
Suppose that $\{a_{1}, \ldots, a_{m} \} \cap F = \emptyset$.
Then by Lemma~2.6(2), we may find an element $f \in F$ such that $f \wedge a_{i} = 0$ for all $i$.
Now $f \wedge a \in F$.
By assumption there exists $a_{i}$ such that $f \wedge a \wedge a_{i} \neq 0$.
Thus $f \wedge a_{i} \neq 0$, which is a contradiction.
Therefore we must have that $a_{i} \in F$ for some $i$,
and so $F$ is a tight filter.

(2) Let $A$ be a tight filter.
We prove that $H = (A^{-1}A)^{\uparrow}$ is a tight filter.
Let $x \in H$ and let $\{x_{1}, \ldots, x_{m} \}$ be a cover of $x$.
By definition $a_{1}^{-1}a_{2} \leq x$ for some $a_{1},a_{2} \in A$.
Let $a \in A$ be arbitrary.
Then $aa_{1}^{-1}a_{2} \leq ax$.
But $aa_{1}^{-1}a_{2} \in A$ and so $ax \in A$.
We claim that $\{ax_{1}, \ldots, ax_{m} \}$ is a cover of $ax$.
Let $0 \neq y \leq ax$.
Observe that $aa^{-1}y = y$.
It follows that $a^{-1}y \neq 0$.
But $a^{-1}y \leq aa^{-1}x \leq x$.
Thus $a^{-1}y \wedge x_{i} \neq 0$ for some $i$.
But $a(a^{-1}y \wedge x_{i}) = aa^{-1}y \wedge ax_{i}$
and since $a^{-1}y \wedge x_{i} \leq a^{-1}y$ we have that
$a^{-1}a (a^{-1}y \wedge x_{i}) = a^{-1}y \wedge x_{i}$.
It follows that  $aa^{-1}y \wedge ax_{i} = y \wedge ax_{i} \neq 0$.
By assumption, $A$ is a tight filter and so $ax_{j} \in A$ for some $j$.
Thus $a^{-1}ax_{j} \in A^{-1}A$ and so $x_{j} \in H$, as required.

Suppose now that $H = (A^{-1}A)^{\uparrow}$ is a tight filter.
We prove that $A$ is a tight filter.
Let $x \in A$ and let $\{x_{1}, \ldots, x_{m} \}$ be a cover of $x$.
Let $a \in A$ be arbitrary.
Then we may prove as above that $\{a^{-1}x_{1}, \ldots, a^{-1}x_{m} \}$ is a cover of $a^{-1}x$.
It follows that $a^{-1}x_{j} \in H$ for some $j$.
Thus $x_{j} \in A$, as required.

(3) It is immediate that $E(A)$ is a tight filter in $E(S)$ if $A$ be a tight filter and inverse subsemigroup in $S$.
We prove the converse.
Let $a \in A$ and let $\{a_{1}, \ldots, a_{m} \}$ be a cover of $a$.
Then $\dom (a) \in A$, since $A$ is an inverse subsemigroup and it is easy to check that
$\{\dom (a_{1}), \ldots, \dom (a_{m}) \}$ is a cover of $\dom (a)$.
By assumption, $\dom (a_{i}) \in E(A)$ for some $i$.
But $a,\dom (a_{i}) \in A$ implies that $a_{i} = a \dom (a_{i}) \in A$ since $A$ is an inverse subsemigroup.
We have therefore proved that $A$ is a tight filter.

(4) Suppose that every prime filter in $S$ is an ultrafilter.
Let $F$ be a tight filter in $E(S)$.
Then $F^{\uparrow}$ is a tight filter in $S$ by (3) above.
By assumption $F^{\uparrow}$ is an ultrafilter and so by Lemma~2.18, $F$ is an ultrafilter in $E(S)$.

Suppose now that every tight filter in $E(S)$ is an ultrafilter.
Let $A$ be a tight filter in $S$.
Then $H = (A^{-1}A)^{\uparrow}$ is a prime filter and inverse subsemigroup of $S$ by (2) above.
Thus by (3) above $E(H)$ is a tight filter in $E(S)$.
It follows that $E(H)$ is an ultrafilter in $E(S)$.
Thus by Lemma~2.18, $H$ is an ultrafilter in $S$.
It follows by Proposition~2.13 of \cite{Law11} that $A$ is an ultrafilter in $S$.
\end{proof}

We denote the set of tight filters in $E$ by $\mathsf{T}(E)$.
We therefore have that
$$\mathsf{U}(E) \subseteq \mathsf{T}(E) \subseteq \mathsf{F}(E).$$

\begin{remark}{\em  In Definition~2.6 of \cite{E}, Exel defines a tight representation to be a homomorphism $\beta \colon E \rightarrow B$
to a {\em unital} boolean algebra $B$ satisfying certain conditions.
From Definition~4.5 of \cite{E}, a filter $F$ is defined to be tight if its characteristic function $\chi_{F} \colon E \rightarrow \mathbf{2}$,
to the two-element boolean algebra, is tight.
For such homomorphisms, condition (i) of Proposition~2.7 of \cite{E} holds.
It is now easy to check from this that $F$ is tight in the sense of Exel \cite{E} if and only if it is tight in the sense of this paper.
}
\end{remark}

The following property gives the essence of tight filters.

\begin{lemma} Let $X = \{x_{1}, \ldots, x_{m}\}$ and $Y = \{y_{1}, \ldots, y_{n} \}$ 
be finite subsets of $E$ with $X$ non-empty 
and let $F$ be a tight filter such that $X \subseteq F$ and $Y \cap F = \emptyset$.
Then
\begin{enumerate}

\item $\comp \neq 0$.

\item If $Z$ is any finite cover of $\comp$ then $Z \cap F \neq \emptyset$.

\end{enumerate}
\end{lemma}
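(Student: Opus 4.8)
The plan is to reduce both statements to a single observation about finite covers of the principal order ideal generated by the meet of $X$. First I would set $x = x_{1} \wedge \cdots \wedge x_{m}$; since $X$ is a non-empty finite subset of the filter $F$ and $E$ is a meet semilattice, $x \in F$, and in particular $x \neq 0$. The non-emptiness of $X$ is exactly what lets us write $X^{\wedge} = x^{\downarrow}$, so that $\comp = x^{\downarrow} \cap Y^{\perp}$. The one remark to record at the start is that a non-zero $e \leq x$ lies outside $Y^{\perp}$ precisely when $e \wedge y_{j} \neq 0$ for some $j$, and since $e \leq x$ this says $e \wedge (x \wedge y_{j}) \neq 0$; so the finite set $\{x \wedge y_{1}, \ldots, x \wedge y_{n}\} \subseteq x^{\downarrow}$ meets exactly those non-zero $e \leq x$ that do \emph{not} lie in $\comp$.

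For (1) I would argue by contradiction. If $\comp$ consisted only of $0$, then by the remark above every non-zero $e \leq x$ would meet some $x \wedge y_{j}$, so $\{x \wedge y_{1}, \ldots, x \wedge y_{n}\}$ would be a finite cover of $x$. Tightness of $F$, together with $x \in F$, would then force $x \wedge y_{j} \in F$ for some $j$, hence $y_{j} \in F$ since $F$ is upward closed --- contradicting $Y \cap F = \emptyset$. (When $Y = \emptyset$ this degenerate case does not arise, since then $\comp = x^{\downarrow} \ni x \neq 0$.)

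For (2), given a finite cover $Z$ of $\comp$, I would set $Z' = Z \cup \{x \wedge y_{1}, \ldots, x \wedge y_{n}\}$, a finite subset of $x^{\downarrow}$, and check that $Z'$ is a cover of $x$: for non-zero $e \leq x$, either $e \in Y^{\perp}$, so $e \in \comp$ and $e$ meets some member of $Z$ by hypothesis; or $e \notin Y^{\perp}$, so $e$ meets some $x \wedge y_{j}$ by the remark. Since $F$ is tight and $x \in F$, some element of $Z'$ lies in $F$; it cannot be one of the $x \wedge y_{j}$ (that would give $y_{j} \in F$), so it lies in $Z$, and hence $Z \cap F \neq \emptyset$.

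I do not expect any genuinely hard step; the whole proof rests on the single observation that $\{x \wedge y_{j}\}_{j}$ patched together with a cover of $\comp$ is a cover of $x$, after which tightness of $F$ does all the work. The only points that need a little care are the use of the non-emptiness of $X$ to pass to a principal ideal, and keeping the quantification over \emph{non-zero} elements throughout, so that the ever-present element $0 \in \comp$ is harmless.
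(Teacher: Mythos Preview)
Your proposal is correct and follows essentially the same approach as the paper's own proof: both reduce to the single meet $x = \bigwedge_i x_i$, observe that $\{x \wedge y_j\}_j$ together with any cover of $\comp$ yields a cover of $x$, and then apply tightness and upward closure of $F$ to derive the contradiction in (1) and the conclusion in (2). The only cosmetic difference is that you state the key observation about $\{x \wedge y_j\}_j$ once at the outset, whereas the paper repeats the calculation inside each part.
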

\begin{proof} 
(1) Suppose that  $\comp = \{ 0\}$.
Then $\emptyset$ is a finite cover.
Put $x = \bigwedge_{i} x_{i} \in F$.
We claim that $\{x \wedge y_{1}, \ldots, x \wedge y_{n} \}$ is a cover of $x$.
Let $0 \neq y \leq x$.
Then by assumption $y \wedge y_{j} \neq 0$ for some $j$.
But $y = y \wedge x$ which proves the claim.
However, $F$ is a tight filter and so $x \wedge y_{k} \in F$ for some $k$.
But this implies that $y_{k} \in F$, which is a contradiction.

(2) From (1) above, any cover $Z$ is non-empty.
Put $x = \bigwedge_{i} x_{i} \in F$.
Let $\{z_{1}, \ldots, z_{p}\}$ be a cover of $\{x\}^{\downarrow} \cap \{y_{1}, \ldots, y_{n} \}^{\perp}$.
We claim that
$Z' = \{z_{1}, \ldots, z_{p}, x \wedge y_{1}, \ldots, x \wedge y_{n}\}$ is a cover of $x$.
Let $0 \neq y \leq x$.
Suppose that $y$ is orthogonal to all the $y_{i}$.
Then $y \wedge z_{k} \neq 0$ for some $k$.
It follows that $Z' \cap F \neq \emptyset$.
But clearly this intersection cannot contain any of the $x \wedge y_{j}$ and so must in fact be a non-empty intersection with $Z$.
\end{proof}

For each $e \in E$, define
$$U_{e} = \{F \in \mathsf{F}(E) \colon e \in F \}.$$
Observe that $U_{0} = \emptyset$.
Given $e$ and a finite set $\{e_{1}, \ldots, e_{n}\}$, possibly empty, 
define
$$U_{e:e_{1}, \ldots, e_{n}} = U_{e} \cap U_{e_{1}}^{c}   \cap \ldots \cap U_{e_{n}}^{c},$$
the set of all filters that contain $e$ and omit all of $e_{1}, \ldots, e_{n}$,
where we write $X^{c}$ to denote the set-theoretic complement of the set $X$.
Observe that
$$U_{e:e_{1}, \ldots, e_{n}}
=
U_{e:e_{1} \wedge e, \ldots, e_{n} \wedge e}$$
so that we may assume $e_{1}, \ldots, e_{n} \leq e$ if it simplifies calculations.

\begin{lemma} 
The sets above form the basis for a topology on $\mathsf{F}(E)$ which is Hausdorff.
\end{lemma}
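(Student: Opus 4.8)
The plan is to verify the three axioms for a basis, then establish the Hausdorff property. First I would show the sets $U_{e:e_1,\dots,e_n}$ cover $\mathsf{F}(E)$: since every filter $F$ is non-empty, pick any $e \in F$, and then $F \in U_{e}$ (the case $n=0$). Second, I would check that the family is closed under finite intersections, which is the routine bookkeeping step: given $U_{e:e_1,\dots,e_n}$ and $U_{f:f_1,\dots,f_m}$, a filter lies in both iff it contains $e$ and $f$ (hence $e \wedge f$, since filters are closed under meets in a meet semilattice) and omits all the $e_i$ and $f_j$; so the intersection is exactly $U_{e \wedge f: e_1,\dots,e_n,f_1,\dots,f_m}$, again one of the basic sets. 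This also handles the third basis axiom (a point in an intersection of two basic sets lies in a basic subset of the intersection), since the intersection is itself basic.

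For the Hausdorff property, suppose $F \neq G$ are distinct filters. Without loss of generality there is an element $a \in F \setminus G$. Since $G$ is a filter not containing $a$, and here I would invoke the ultrafilter-style reasoning available via Lemma~2.6(2) applied in $E(S)$: more directly, because $G$ is a filter, $a \notin G$, and $G$ is down-directed and upward-closed, I claim there is some $b \in G$ with $b \wedge a = 0$. Indeed, if $b \wedge a \neq 0$ for every $b \in G$, then $(G \cup \{a\})^{\downarrow}$ generates a filter properly containing $G$ unless $a$ already lies in it — but one must be careful here, since $G$ need not be an ultrafilter. The cleaner route: extend $G$ to an ultrafilter $\hat G$ using Lemma~2.6(1) (every non-zero element is in an ultrafilter, and the Zorn's-lemma argument extends any filter); if $a \in \hat G$ we still need separation from $F$, so this is not quite enough. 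The right statement to use is simply that in a meet semilattice, if $a \notin G$ for a filter $G$, then there exists $b \in G$ with $a \wedge b = 0$: otherwise $\{c : c \geq a \wedge b \text{ for some } b \in G\}$ is a filter containing both $a$ and $G$, and being a filter containing $a$ while $G$ is maximal among... — again this requires $G$ maximal.

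So instead I would argue the Hausdorff property using the disjointness directly from the definition of the topology without maximality: given $F \neq G$, pick $a \in F \setminus G$ (swapping $F,G$ if needed). Then $F \in U_{a}$ and $G \in U^c_{a} = \{H : a \notin H\}$. Now $U_a$ is open (basic, with $n=0$) and $\{H : a \notin H\}$ is open because it equals $U_{a}^{c}$ which appears as a factor $U_{b:a}$ ranging over... — more precisely $\{H : a \notin H\} = \bigcup_{b \in E} U_{b:a}$, a union of basic open sets, hence open. These two open sets are disjoint and contain $F$ and $G$ respectively. Therefore the topology is Hausdorff. \emph{The main obstacle} is stating the Hausdorff argument correctly: the naive instinct is to separate $F$ and $G$ by two \emph{basic} sets of the form $U_{\bullet:\bullet}$, which forces one to produce an element of $F$ orthogonal to an element of $G$ (needing maximality, which we do not have); the fix is to observe that the complement $U_a^c$ is already open as a union of basic sets, so a point omitting $a$ has an open neighbourhood disjoint from $U_a$, and that suffices.

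\begin{proof}
That the sets $U_{e:e_{1}, \ldots, e_{n}}$ form a basis is routine. They cover $\mathsf{F}(E)$: every filter $F$ is non-empty, so choosing any $e \in F$ gives $F \in U_{e} = U_{e:}$. They are closed under finite intersection: a filter lies in $U_{e:e_{1}, \ldots, e_{n}} \cap U_{f:f_{1}, \ldots, f_{m}}$ iff it contains both $e$ and $f$, hence $e \wedge f$, and omits all of $e_{1}, \ldots, e_{n}, f_{1}, \ldots, f_{m}$; so this intersection equals $U_{e \wedge f : e_{1}, \ldots, e_{n}, f_{1}, \ldots, f_{m}}$, which is again one of the listed sets. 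This in particular verifies the remaining basis axiom.

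Now we show the topology is Hausdorff. Let $F, G \in \mathsf{F}(E)$ with $F \neq G$. After possibly interchanging $F$ and $G$, there is an element $a \in F$ with $a \notin G$. Then $F \in U_{a}$, which is a basic open set. On the other hand, the set $\{H \in \mathsf{F}(E) \colon a \notin H\}$ is open, since it equals $\bigcup_{b \in E} U_{b:a}$: any filter $H$ with $a \notin H$ is non-empty, so contains some $b$, and then $H \in U_{b:a}$; conversely every $U_{b:a}$ consists of filters omitting $a$. Clearly $G$ lies in this set, and it is disjoint from $U_{a}$. Thus $F$ and $G$ have disjoint open neighbourhoods, so $\mathsf{F}(E)$ is Hausdorff.
\end{proof}
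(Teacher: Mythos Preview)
Your proof is correct. The basis argument is identical to the paper's. For the Hausdorff property, the paper takes a slightly different tack: given distinct filters $A$ and $B$, it does a short case split depending on whether one is contained in the other, and in each case separates them by two \emph{basic} sets of the form $U_{e: e \wedge f}$ and $U_{f: e \wedge f}$ (or $U_{f}$). You instead pick $a \in F \setminus G$ and separate by $U_{a}$ and its complement $\bigcup_{b} U_{b:a}$, which is open as a union of basic sets. Your route avoids the case analysis entirely and is marginally cleaner; the paper's route has the small advantage of exhibiting explicit basic separating neighbourhoods. Either way the content is the same, and your worry in the preamble about needing maximality was well diagnosed and correctly sidestepped.
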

\begin{proof} Let 
$$A \in U_{e:e_{1}, \ldots, e_{m}} \cap  U_{f:f_{1}, \ldots, f_{n}}.$$ 
Consider $U = U_{e \wedge f:e_{1}, \ldots, e_{m},f_{1}, \ldots, f_{n}}$.
Clearly $A \in U$ and 
$$U \subseteq  U_{e:e_{1}, \ldots, e_{m}} \cap  U_{f:f_{1}, \ldots, f_{n}}.$$ 
It follows that the sets do form a basis.

Let $A,B \in \mathsf{F}(E)$ where $A \neq B$.
If $A \setminus B$ and $B \setminus A$, with the set-theoretic meanings,
are both non-empty, then choose $e \in A \setminus B$ and $f \in B \setminus A$.
We have that $A \in U_{e: e \wedge f}$ and $B \in U_{f : e \wedge f}$ and
$U_{e: e \wedge f} \cap U_{f : e \wedge f} \neq \emptyset$.
If, say, $A$ is properly contained in $B$ then choose $e \in A$ and $f \in A \setminus B$.
Observe that $B \in U_{f}$ and $A \in U_{e: e \wedge f}$ and
$U_{f}$ and $A \cap U_{e: e \wedge f} = \emptyset$.
Thus the topology is Hausdorff. \end{proof}

The following was proved in \cite{Lenz} as Proposition~4.7 using functional analysis. 
We give a direct elementary proof.

\begin{lemma} 
The sets $U_{e:e_{1}, \ldots, e_{n}}$ are compact-open
\end{lemma}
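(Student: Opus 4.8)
The plan is to show that each basic open set $U_{e:e_{1},\ldots,e_{n}}$ is compact by extracting a finite subcover from an arbitrary covering by basic open sets, using the fact that $E$ is a meet semilattice with zero and that filters are closed, directed, and $0$-free. The key structural tool will be the Alexander subbase lemma or, more elementarily, a direct argument: I would first reduce to the case $e_{1},\ldots,e_{n}\leq e$ using the identity $U_{e:e_{1},\ldots,e_{n}}=U_{e:e_{1}\wedge e,\ldots,e_{n}\wedge e}$ noted just before Lemma~2.26, and I may as well treat the archetypal case $U_{e}$ first (the set of all filters containing $e$) and then handle the omitted elements.

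First I would observe that $U_{e}$ is homeomorphic to the space of filters of the principal ideal $e^{\downarrow}$, which is a meet semilattice with top element $e$ and zero $0$; so it suffices to prove that the space of all filters of a meet semilattice $E$ with $1$ is compact, i.e.\ that $U_{1}=\mathsf{F}(E)$ is compact, and then that removing finitely many sets $U_{e_{i}}$ (equivalently intersecting with the clopen complements) preserves compactness. Since finite intersections of a compact set with clopen sets are compact, the real content is the compactness of $\mathsf{F}(E)$ itself. For that, suppose $\{U_{f_{\alpha}:g^{\alpha}_{1},\ldots,g^{\alpha}_{k_{\alpha}}}\}_{\alpha}$ is a basic open cover with no finite subcover; I want to build a filter belonging to none of these sets, contradicting that they cover. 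The complement of $U_{f:g_{1},\ldots,g_{k}}$ is the set of filters that either omit $f$ or contain some $g_{i}$; equivalently, a filter $F$ lies outside the cover iff for every $\alpha$ either $f_{\alpha}\notin F$ or $g^{\alpha}_{i}\in F$ for some $i$.

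I would construct $F$ by a maximality/Zorn argument: consider the poset of pairs $(P,N)$ where $P$ is a directed subset of $E\setminus\{0\}$ closed downward-compatibly (a "partial filter" generated by finitely many elements), $N$ a finite subset, such that $P^{\downarrow\uparrow}$ (the filter generated by $P$) does not yet force membership in the cover, and argue that a suitable limit is a genuine filter lying outside every $U_{f_{\alpha}:\ldots}$. The cleanest route is probably to note that $\mathsf{F}(E)$ sits inside the product space $\mathbf{2}^{E}$ (identifying a filter with its characteristic function) and that "being a filter" — being an up-set, being down-directed, not containing $0$ — is \emph{not} a closed condition (down-directedness fails to be closed), which is exactly why Lenz needed functional analysis. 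So the honest elementary argument must instead show directly: given a basic open cover with no finite subcover, the family of finite "constraints" (pick $f_{\alpha}$ in, all $g^{\alpha}_{i}$ out, over finitely many $\alpha$) has the finite intersection property \emph{as constraints on an actual filter}, and one assembles the filter by transfinite recursion, at each step choosing for the current $\alpha$ to either keep $f_{\alpha}$ out or pull in an appropriate meet of the current generator with some $g^{\alpha}_{i}$, maintaining non-triviality throughout.

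The hard part will be precisely this last construction: ensuring that at every stage the generated filter remains a filter (stays $0$-free and down-directed) while satisfying the growing list of constraints, which requires a careful inductive bookkeeping of which meets are nonzero — this is where "no finite subcover" gets used, to guarantee that at each step at least one of the finitely many legal moves keeps things nonzero. I expect one also needs the earlier observation (used in Lemma~2.26) that basic opens are closed under finite intersection, and the defining property of covers from the Introduction. Once the filter $F$ is built lying outside every member of the cover, the contradiction is immediate, establishing compactness; openness of $U_{e:e_{1},\ldots,e_{n}}$ is already contained in Lemma~2.26.
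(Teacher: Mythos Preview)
Your proposal contains a crucial misconception that derails the argument. You assert that in $\mathbf{2}^{E}$ ``being a filter'' is not a closed condition because ``down-directedness fails to be closed,'' and you take this as the reason Lenz needed functional analysis. This is false for meet semilattices. In a general poset, down-directedness involves an existential quantifier and is indeed not obviously closed; but here the witness for $x,y \in F$ can always be taken to be $x \wedge y$, so the condition collapses to closure under binary meets. More cleanly: a subset $F \subseteq E$ with $0 \notin F$ and $F \neq \emptyset$ is a filter if and only if its characteristic function $\chi_{F} \colon E \to \mathbf{2}$ is a \emph{semigroup homomorphism}, i.e.\ $\chi_{F}(e \wedge f) = \chi_{F}(e)\chi_{F}(f)$ for all $e,f$. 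One direction of this equation gives closure under $\wedge$; the other direction (that $e \wedge f \in F$ forces both $e,f \in F$, since $1 = \chi_{F}(e)\chi_{F}(f)$ in $\mathbf{2}$) gives upward closure, because $e \leq f$ means $e = e \wedge f$. The set of semigroup homomorphisms is closed in $\mathbf{2}^{E}$: the non-homomorphisms form an open union of subbasic sets indexed by the pairs $(e,f)$ witnessing failure. Intersecting with the closed condition $\theta(0)=0$ and then with the clopen set $T_{e} \cap T_{e_{1}}^{c} \cap \cdots \cap T_{e_{n}}^{c}$ (which, since $e \neq 0$, already excludes the zero homomorphism) exhibits $U_{e:e_{1},\ldots,e_{n}}$ as a closed subset of the compact space $\mathbf{2}^{E}$.

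This is exactly the paper's proof, and it is entirely elementary; no functional analysis is required. Your alternative plan of constructing a filter by transfinite recursion to avoid a cover with no finite subcover is not actually carried out: you acknowledge that the inductive step is ``the hard part'' but give no mechanism for maintaining a nonzero directed system while processing an arbitrary indexed family of constraints. Since the closedness route is available and short, the detour is unnecessary.
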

\begin{proof} We begin with a general construction.
Put $\mathbf{2} = \{0,1 \}$, the unital Boolean algebra with two elements.
Let $\mathbf{2}^{E}$ be the product space.
Each element is a function from $E$ to $\{0,1\}$.
By Tychonoff's theorem it is a compact space.
A subbase for this topology is given by subsets of the form $T_{e}$ and $T_{e}^{c}$,
where
$$T_{e} = \{\theta \colon E \rightarrow \mathbf{2} \colon \theta (e) = 1 \}.$$
It follows that these sets are clopen
and so the sets
$T_{e} \cap T_{e_{1}}^{c} \cap \ldots \cap T_{e_{n}}^{c}$
are compact-open.

Next we look at the subspace of  $\mathbf{2}^{E}$ that consists of all {\em semigroup homomorphisms} $\theta \colon E \rightarrow \{0,1 \}$.
These are functions $\theta \colon E \rightarrow \mathbf{2}$ such that
$\theta (e \wedge f) = \theta (e) \wedge \theta (f)$.
A function $\phi$ fails to be a homomorphism if and only if 
$\theta (e \wedge f) \neq \theta (e)\theta (f)$ for some $e,f \in E$
if and only if $\theta$ belongs to the union of sets of the form
$$U_{e} \cap U_{f} \cap U_{e \wedge f}^{c}
\mbox{ or }
U_{e}^{c} \cap U_{f} \cap U_{e \wedge f}
\mbox{ or }
U_{e} \cap U_{f}^{c} \cap U_{e \wedge f}
\mbox{ or }
U_{e}^{c} \cap U_{f}^{c} \cap U_{e \wedge f}
$$
for some $e,f \in E$.
It follows that the set of functions which are not semigroup homomorphisms is open
and so the set of homomorphisms is closed.
Thus the subspace $\bar{E}$ of  $\mathbf{2}^{E}$ that consists of all homomorphisms $f \colon E \rightarrow \{0,1 \}$
is the closed subset of all semigroup homomorphisms intersected with the closed set $T_{0}$ which forces $\phi (0) = 0$.
This is therefore a closed subset.

Finally, we remove from this space the homomorphism that sends everything to zero.
We therefore obtain the locally compact space $\hat{E}$ of all non-zero homomorphisms from $E$ to $\{0,1 \}$.

There is a bijection between $\mathsf{F}(E)$ and $\hat{E}$ 
which takes a filter $F$ to its characteristic function $\chi_{F}$,
and which associates with each non-zero homomorphism the set of all elements of $E$ that map to 1.
This bijection is actually a homeomorphism.

We now prove the lemma.
We may assume that $e \neq 0$ since no filter contains 0.
This set is mapped by our bijection above to the set
$T_{e} \cap T_{e_{1}}^{c} \cap \ldots \cap T_{e_{n}}^{c}$
intersected with the closed subset $\bar{E}$ of homomorphisms.
It is therefore a closed subset of $\mathbf{2}^{E}$ 
and so is itself compact.
\end{proof}

We summarize what we have found in the following.

\begin{proposition} 
The topological space $\mathsf{F}(E)$ of all filters on $E$ is Hausdorff with a basis of compact-open subsets.
\end{proposition}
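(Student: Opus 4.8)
The plan is to assemble this proposition directly from the two lemmas that immediately precede it, since all the real content has already been extracted there. Lemma~2.24 establishes that the family $\{U_{e:e_{1}, \ldots, e_{n}}\}$, ranging over $e \in E$ and finite (possibly empty) subsets $\{e_{1}, \ldots, e_{n}\}$ of $E$, forms a basis for a topology on $\mathsf{F}(E)$, and that this topology is Hausdorff; the Hausdorff verification there splits cleanly into the case where $A \setminus B$ and $B \setminus A$ are both non-empty and the case where one filter strictly contains the other. Lemma~2.25 establishes that each of these basic sets $U_{e:e_{1}, \ldots, e_{n}}$ is compact-open, via the identification of $\mathsf{F}(E)$ with the space $\hat{E}$ of non-zero semigroup homomorphisms $E \to \mathbf{2}$ sitting inside the compact product space $\mathbf{2}^{E}$, where each such set corresponds to the intersection of a clopen cylinder with the closed subspace $\bar{E}$.

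Thus I would simply observe that $\mathsf{F}(E)$ carries a Hausdorff topology (Lemma~2.24) whose defining basis consists entirely of compact-open sets (Lemma~2.25), which is exactly the assertion of the proposition; in particular $\mathsf{F}(E)$ is a Boolean space in the sense defined in the Introduction. There is no genuine obstacle here: the proposition is a bookkeeping statement recording the conjunction of Lemmas~2.24 and~2.25, and the proof is one or two sentences pointing to those two results.
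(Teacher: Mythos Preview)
Your proposal is correct and matches the paper exactly: the proposition is introduced with ``We summarize what we have found in the following'' and is simply the conjunction of the two preceding lemmas (the basis/Hausdorff lemma and the compact-open lemma), with no further argument given. Your lemma numbers are off by one relative to the paper (they are Lemmas~2.22 and~2.23 there, the proposition itself being~2.24), but the content and reasoning are identical.
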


The following is the analogue of Theorem~4.4 of \cite{E}.

\begin{proposition} The closure of the set $\mathsf{U}(E)$ of ultrafilters in the space of all filters $\mathsf{F}(E)$
is precisely the set of tight filters $\mathsf{P}(E)$.
\end{proposition}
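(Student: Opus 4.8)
The plan is to prove two inclusions: every tight filter lies in the closure of $\mathsf{U}(E)$, and the closure of $\mathsf{U}(E)$ consists only of tight filters. The second inclusion amounts to showing that $\mathsf{T}(E)$ is closed in $\mathsf{F}(E)$, and since we already know $\mathsf{U}(E)\subseteq\mathsf{T}(E)$ (Lemma~2.20(1)), this gives $\overline{\mathsf{U}(E)}\subseteq\mathsf{T}(E)$. For the first inclusion, we must approximate an arbitrary tight filter by ultrafilters in the topology generated by the basic sets $U_{e:e_{1},\ldots,e_{n}}$.

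For the first inclusion, let $F$ be a tight filter and let $U_{e:e_{1},\ldots,e_{n}}$ be a basic open neighbourhood of $F$, so $e\in F$ and $e_{i}\notin F$ for all $i$. Apply Lemma~2.22 with $X=\{e\}$ and $Y=\{e_{1},\ldots,e_{n}\}$: part (1) gives $e^{\downarrow}\cap\{e_{1},\ldots,e_{n}\}^{\perp}\neq 0$, so there is $0\neq d\leq e$ with $d\wedge e_{i}=0$ for all $i$. By Lemma~2.6(1) (applied in the semilattice $E$, equivalently by the fact that every nonzero element sits in an ultrafilter), choose an ultrafilter $G$ containing $d$. Then $e\in G$ since $d\leq e$, and $e_{i}\notin G$ for each $i$, because $d\wedge e_{i}=0$ would otherwise force a nonzero meet of two elements of the filter $G$. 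Hence $G\in U_{e:e_{1},\ldots,e_{n}}\cap\mathsf{U}(E)$, showing $F\in\overline{\mathsf{U}(E)}$.

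For the reverse inclusion, it suffices to show $\mathsf{F}(E)\setminus\mathsf{T}(E)$ is open. Suppose $F$ is a filter that is not tight: there exist $a\in F$ and a finite cover $\{a_{1},\ldots,a_{m}\}$ of $a$ with $a_{i}\notin F$ for all $i$. Consider the basic open set $U=U_{a:a_{1},\ldots,a_{m}}$, which contains $F$. I claim $U$ is disjoint from $\mathsf{T}(E)$: if $H\in U$ then $a\in H$ and $a_{i}\notin H$ for all $i$, but $\{a_{1},\ldots,a_{m}\}$ is still a cover of $a$, so $H$ cannot be tight. Hence $U\subseteq\mathsf{F}(E)\setminus\mathsf{T}(E)$, and $\mathsf{T}(E)$ is closed.

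Combining the two inclusions with $\mathsf{U}(E)\subseteq\mathsf{T}(E)$ yields $\overline{\mathsf{U}(E)}=\mathsf{T}(E)$. The main obstacle is the first inclusion, specifically producing, inside every basic neighbourhood of a tight filter, an actual ultrafilter; this is exactly what Lemma~2.22(1) is engineered to deliver, so once that lemma is in hand the argument is short. A minor subtlety worth spelling out is why $U_{e:e_{1},\ldots,e_{n}}$ with $e\in F$, $e_{i}\notin F$ is a genuine neighbourhood basis at $F$ — this follows from Lemma~2.23 together with the observation that any basic neighbourhood of $F$ can be shrunk to one of this restricted form by discarding the constraints $U_{f}^{c}$ with $f\notin F$ automatically satisfied and absorbing the positive constraints into a single meet lying in $F$. (Note that the proposition as stated writes $\mathsf{P}(E)$ for the set of tight filters; with the notation of the excerpt this is $\mathsf{T}(E)$, and the displayed chain $\mathsf{U}(E)\subseteq\mathsf{T}(E)\subseteq\mathsf{F}(E)$ shows the closure does land between the two extremes, as required.)
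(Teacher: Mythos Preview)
Your proof is correct and follows essentially the same route as the paper: show $\mathsf{T}(E)$ is closed by exhibiting, around any non-tight filter, a basic open set witnessing the failure of tightness, and show every tight filter is approximated by ultrafilters using Lemma~2.21(1) to produce a nonzero element below $e$ and orthogonal to all the $e_{i}$, then invoking Lemma~2.6(1). Your lemma numbering is off by one throughout (what you call Lemma~2.20, 2.22, 2.23 are Lemma~2.19, 2.21, 2.22 in the paper), but the mathematics matches.
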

\begin{proof}
We show first that $\mathsf{T}(E)$ is a closed subset of $\mathsf{F}(E)$.
Let $F$ be a filter which is not tight.
Then by definition there is an element $f \in F$ and a cover $\{e_{1}, \ldots, e_{m} \}$ of $f$ such that $F \cap \{e_{1}, \ldots, e_{m} \} = \emptyset$.
Thus $F \in U_{f:e_{1}, \ldots, e_{m}}$ an open set.
Let $G \in U_{f:e_{1}, \ldots, e_{m}}$ be an ultrafilter.
Then since $\{e_{1}, \ldots, e_{m}\}$ is a cover of $f$ we must have that $e_{i} \in G$ for some $i$ which is a contradiction.

Let $F$ be a tight filter on $E$.
Let $F \in \mathcal{U}$ be any open subset of $\mathsf{F}(E)$.
Then there is an open subset $F \in \mathcal{V} \subseteq \mathcal{U}$
such that $\mathcal{V}$ contains an ultrafilter.
To see why, we first observe that from the definition of the topology, we may find an open set $\mathcal{V}$
of the form $U_{e: e_{1}, \ldots, e_{m}}$ containing $F$.
Thus $e \in F$ and $\{e_{1}, \ldots, e_{m}\} \cap F = \emptyset$.
Since $F$ is a tight filter, there is a non-zero $z \in e^{\downarrow} \cap \{e_{1}, \ldots, e_{m} \}$ by Lemma~2.21.
Let $G$ be any ultrafilter containing $z$; such exists by Lemma~2.6(1).
Observe that $z \leq e$ and so $e \in G$, and that $z \wedge e_{i} = 0$ and so $e_{i} \notin G$.
It follows that $G \in U_{e: e_{1}, \ldots, e_{m}}$.
\end{proof}

Define 
$$V_{e:e_{1}, \ldots, e_{n}} = U_{e:e_{1}, \ldots, e_{n}} \cap \mathsf{U}(E).$$
This defines the subspace topology on $\mathsf{U}(E)$.

\begin{lemma} 
The above topology has $\{V_{e} \colon e \in E \}$ as a basis.
\end{lemma}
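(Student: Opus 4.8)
The plan is to reduce the claim to the corresponding statement for the larger space $\mathsf{F}(E)$, where we have already done the work. Recall that a basis for $\mathsf{F}(E)$ is given by the sets $U_{e:e_1,\ldots,e_n}$, and that by definition $V_{e:e_1,\ldots,e_n} = U_{e:e_1,\ldots,e_n} \cap \mathsf{U}(E)$. So the sets $V_{e:e_1,\ldots,e_n}$ automatically form a basis for the subspace topology on $\mathsf{U}(E)$; the content of the lemma is that we may dispense with the "omitted elements" $e_1,\ldots,e_n$ and already obtain a basis from the sets $V_e = U_e \cap \mathsf{U}(E)$ alone. Equivalently, I must show that every $V_{e:e_1,\ldots,e_n}$ is a union of sets of the form $V_f$.

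First I would fix an ultrafilter $F \in V_{e:e_1,\ldots,e_n}$, so that $e \in F$ and $e_i \notin F$ for $1 \leq i \leq n$. Using the flexibility noted just before Lemma~2.24, I may assume $e_1,\ldots,e_n \leq e$. The key step is to produce an element $f \in F$ with $f \leq e$ such that $f \wedge e_i = 0$ for all $i$; then $V_f \subseteq V_e$ trivially, and $V_f \cap V_{e_i} = V_{f \wedge e_i} = \emptyset$ by Lemma~2.11(1) (or rather its semilattice incarnation via $U_{f \wedge e_i} = \emptyset$ when $f \wedge e_i = 0$), so $V_f \subseteq V_{e:e_1,\ldots,e_n}$ and $F \in V_f$, which exhibits $V_{e:e_1,\ldots,e_n}$ as the required union. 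To obtain such an $f$, I argue by induction on $n$, or handle the elements one at a time: since $e_i \notin F$ and $F$ is an ultrafilter, Lemma~2.6(2) gives some $g_i \in F$ with $g_i \wedge e_i = 0$; replacing $g_i$ by $g_i \wedge e$ (still in $F$, since $e \in F$ and $F$ is a filter) we may assume $g_i \leq e$. Now set $f = g_1 \wedge \cdots \wedge g_n \in F$ (the meet lies in $F$ because $F$ is a directed closed set in the meet semilattice $E$). Then $f \leq e$ and $f \wedge e_i \leq g_i \wedge e_i = 0$ for each $i$, as desired. If $n = 0$ the set $V_{e:}$ is just $V_e$ and there is nothing to prove.

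The only point requiring a little care — and the closest thing to an obstacle here, though it is minor — is the use of Lemma~2.6(2), which characterizes ultrafilters in an inverse $\wedge$-semigroup; applied to $E$ itself (a meet semilattice with zero, hence a commutative inverse semigroup of idempotents), it says precisely that an ultrafilter $F \subseteq E$ contains every element that is not orthogonal to some member of $F$, equivalently that $e_i \notin F$ forces the existence of $g_i \in F$ with $g_i \wedge e_i = 0$. With that in hand the argument is entirely formal. I would conclude by noting that this simultaneously re-proves that $\mathsf{U}(E)$ is a Hausdorff space with a basis of compact-open sets $V_e$ — compactness of each $V_e$ follows since $V_e = U_e \cap \mathsf{U}(E)$ is the intersection of the compact-open set $U_e$ of Lemma~2.25 with a closed subset of $\mathsf{F}(E)$, namely the closure $\overline{\mathsf{U}(E)} = \mathsf{T}(E)$ from Proposition~2.27 together with $\mathsf{U}(E)$ itself being, in the relevant duality, closed; but for the present lemma only the basis statement is needed.
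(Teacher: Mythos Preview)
Your proof is correct and follows essentially the same approach as the paper. Both arguments take an ultrafilter $F \in V_{e:e_1,\ldots,e_n}$ and produce an element $f \in F$ with $f \leq e$ and $f \wedge e_i = 0$ for all $i$, then verify $F \in V_f \subseteq V_{e:e_1,\ldots,e_n}$; the paper packages the existence of $f$ as a citation of the tight-filter lemma (Lemma~2.21), whereas you spell it out directly from the ultrafilter characterization Lemma~2.6(2) by taking $f = e \wedge g_1 \wedge \cdots \wedge g_n$, which is arguably cleaner. Your closing remarks about compactness of $V_e$ are extraneous to the lemma and a bit muddled (in particular $\mathsf{U}(E)$ need not be closed in $\mathsf{F}(E)$ --- that is precisely the content of the pre-Boolean condition), so I would simply drop that paragraph.
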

\begin{proof}
Let $F \in V_{e:e_{1}, \ldots, e_{n}}$ be an ultrafilter.
Then $e \in F$ and $\{e_{1}, \ldots, e_{n} \} \cap F = \emptyset$.
Thus by Lemma~2.21, there exists $f \in F \cap e^{\downarrow} \cap \{e_{1}, \ldots, e_{n} \}^{\perp}$.
Clearly $F \in V_{f}$.
Let $G \in V_{f}$ be any ultrafilter.
Then $f \in G$ and $f \leq e$ implies that $e \in G$ 
and if $e_{i} \in G$ then $f \wedge e_{i} = 0$.
Thus in fact $G \in V_{e:e_{1}, \ldots, e_{n}}$.
Hence $F \in V_{f} \subseteq  V_{e:e_{1}, \ldots, e_{n}}$.
Finally, observe that $V_{0} = \emptyset$ and that $V_{e \wedge f} = V_{e} \cap V_{f}$.
\end{proof}

The following result could be proved by using Proposition~6.3 of \cite{Lenz} and Proposition~2.24,
but given its importance, we prefer to give a direct proof.

\begin{proposition} 
The sets $V_{e:e_{1}, \ldots, e_{n}}$ are all compact in $\mathsf{F}(E)$ if and only if every tight filter is an ultrafilter.
\end{proposition}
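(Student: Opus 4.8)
The plan is to sidestep any explicit construction of bad open covers and instead reduce the whole statement to the single topological assertion that $\mathsf{U}(E)$ is a closed subspace of $\mathsf{F}(E)$. Granting this reduction, the proposition falls out of Proposition~2.25: since the closure of $\mathsf{U}(E)$ in $\mathsf{F}(E)$ equals the set $\mathsf{T}(E)$ of tight filters, $\mathsf{U}(E)$ is closed precisely when $\mathsf{U}(E) = \mathsf{T}(E)$, i.e.\ precisely when every tight filter is an ultrafilter. So it is enough to prove: every $V_{e:e_{1}, \ldots, e_{n}}$ is compact in $\mathsf{F}(E)$ if and only if $\mathsf{U}(E)$ is closed in $\mathsf{F}(E)$.

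For the direction ($\Leftarrow$) I would simply note that if $\mathsf{U}(E)$ is closed then $V_{e:e_{1}, \ldots, e_{n}} = U_{e:e_{1}, \ldots, e_{n}} \cap \mathsf{U}(E)$ is a closed subset of the compact set $U_{e:e_{1}, \ldots, e_{n}}$ (compact by Lemma~2.23), hence compact. For the direction ($\Rightarrow$) I would assume every basic set, in particular each $V_{e}$, is compact; since $\mathsf{F}(E)$ is Hausdorff (Lemma~2.22), each $V_{e}$ is then closed in $\mathsf{F}(E)$. Now take $F \in \overline{\mathsf{U}(E)}$: being a filter, $F$ is non-empty and contains some $e_{0} \neq 0$, so $F \in U_{e_{0}}$. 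Given any open neighbourhood $W$ of $F$, the set $W \cap U_{e_{0}}$ is again an open neighbourhood of $F$ and so meets $\mathsf{U}(E)$; since $W \cap U_{e_{0}} \cap \mathsf{U}(E) = W \cap V_{e_{0}}$, it follows that $W$ meets $V_{e_{0}}$. As $W$ was arbitrary, $F \in \overline{V_{e_{0}}} = V_{e_{0}} \subseteq \mathsf{U}(E)$, so $\mathsf{U}(E)$ is closed.

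The one step I expect to require a little care is the passage, in ($\Rightarrow$), from ``$V_{e_{0}}$ is compact, hence closed'' to ``$F \in V_{e_{0}}$'': the trick is to intersect the test neighbourhood $W$ with the basic open set $U_{e_{0}}$ so that the approximating ultrafilters are confined to $V_{e_{0}}$, which is one of the sets we are assuming closed --- note that $\mathsf{U}(E)$ itself, being a possibly infinite union of the $V_{e}$, need not a priori be closed, so one cannot argue with $\mathsf{U}(E)$ directly. Everything else is bookkeeping, and the final translation of the closedness of $\mathsf{U}(E)$ into the statement about tight filters is exactly Proposition~2.25.
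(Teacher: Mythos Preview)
Your proposal is correct and is essentially the same argument as the paper's, just reorganized: you explicitly factor through the intermediate statement ``$\mathsf{U}(E)$ is closed in $\mathsf{F}(E)$'' and then invoke Proposition~2.25 once, whereas the paper runs both implications directly between compactness of the $V_{e:e_{1},\ldots,e_{n}}$ and the tight--ultrafilter condition. The key steps are identical in both: for one direction, $V_{e:e_{1},\ldots,e_{n}}$ is a closed subset of the compact $U_{e:e_{1},\ldots,e_{n}}$; for the other, one picks $e_{0}\in F$, intersects a test neighbourhood with $U_{e_{0}}$ to force the approximating ultrafilters into $V_{e_{0}}$, and uses that $V_{e_{0}}$ is compact (hence closed in the Hausdorff space $\mathsf{F}(E)$) to trap $F$.
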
 
\begin{proof}
Suppose that every tight filter is an ultrafilter.
Then $\mathsf{U}(E)$ is a closed subset of $\mathsf{F}(E)$ by Proposition~2.25.
By Proposition~2.24, $\mathsf{F}(E)$ is a Hausdorff space.
By Lemma~2.23, the sets $U_{e: e_{1}, \ldots, e_{n}}$ are compact-open.
Therefore they are closed.
Thus the set $V_{e:e_{1}, \ldots, e_{n}} \subseteq U_{e:e_{1}, \ldots, e_{n}}$ is closed.
But a closed subset of a compact space is itself compact.
Thus the subsets $V_{e:e_{1}, \ldots, e_{n}}$ are compact.

Suppose now that all sets $V_{e:e_{1}, \ldots, e_{n}}$ are compact in $\mathsf{F}(U)$.
We prove that every tight filter is an ultrafilter.
Let $F$ be a tight filter in $\mathsf{F}(E)$.
Let $e \in F$ so that $F \in U_{e}$.
Let $O$ be any open set containing $F$.
Then $O \cap U_{e}$ is an open set containing $F$.
Since $F$ is tight we have that $O \cap U_{e}$ contains an ultrafilter by Proposition~2.25.
It follows that $O \cap V_{e}$ is non-empty.
The set $V_{e}$ is compact in the Hausdorff space $\mathsf{F}(U)$ and so is closed.
However, we have shown that $F$ is a limit point for $V_{e}$ and so must belong to $V_{e}$
and is therefore an ultrafilter, as required.
\end{proof}

\begin{center}
{\bf Proof of the Booleanization Theorem (2)}
\end{center}

Recall that the elements of $\mathsf{D}(E)$ are the sets of the form $\bigcup_{i=1}^{m} V_{e_{i}}$ and that the partial ordering is subset inclusion.
We shall prove that $\mathsf{D}(E)$ is a Boolean algebra if and only if every tight filter in $E$ is an ultrafilter.
\begin{proof}
Suppose first that every tight filter is an ultrafilter.
Let $A \subseteq B$ where 
$A = \bigcup_{i=1}^{m} V_{e_{i}}$
and
$B = \bigcup_{j=1}^{n} V_{f_{i}}$.
The set $B \slash A$ is a finite union of sets of the form $V_{e} \slash V_{f}$.
Observe that 
$V_{e} \slash V_{f} = V_{e: e \wedge f}$.
Under our assumption, $V_{e: e \wedge f}$ is compact and so by Lemma~2.26,
it is a finite union of sets of the form $V_{g}$.
It follows that $B \slash A$ is a finite union of sets of the form $V_{g}$.
Hence $B \slash A \in \mathsf{D}(E)$ and so, since $\mathsf{D}(E)$ is already a distributive lattice, it is a Boolean algebra.

To prove the converse, assume that $\mathsf{D}(E)$ is a Boolean algebra.
In our proof below, we use the fact that in a Boolean algebra, not necessarily unital,
every ultrafilters are the same as prime filters and that maps to $\mathbf{2}$ are determined by ultrafilters.
We now prove that every tight filter is an ultrafilter.
Let $F$ be a tight filter in $E$.
Then the characteristic function $\chi_{F} \colon E \rightarrow \mathbf{2}$ is a cover-to-join map.
Thus by the universal property of the map $\delta \colon E \rightarrow \mathsf{D}(E)$ there is a unique
homomorphism $\chi_{G} \colon \mathsf{D}(E) \rightarrow \mathbf{2}$ such that $\chi_{G} \delta = \chi_{F}$.
The filter $G$ is an ultrafilter because $\mathsf{D}(E)$ is a boolean algebra.
Since the diagram of maps commutes we also have that $F = \delta^{-1}(G)$.
We shall prove that $F$ is an ultrafilter.
Suppose not.
Then by Lemma~2.6(2), there is an $e \in E$ such that $e$ has a non-zero intersection with every element of $F$ but $e \notin F$.
It follows that $\delta (e) \notin G$.
But $\delta$ is $0$-restricted and so $\delta (e) \neq 0$.
Now $G$ is an ultrafilter and so there exists $g \in G$ such that
$\delta (e) \wedge g = 0$.
Each non-zero element of $\mathsf{D}(S)$ is a join of a finite number of elements in the image of $\delta$.
Thus $g = \bigvee_{i=1}^{m} \delta (e_{i})$.
But $G$ is an ultrafilter and so a prime filter. 
It follows that $\delta (e_{i}) \in G$ for some $i$.
Also $\delta (e) \wedge \delta (e_{i}) = 0$.
Again using the fact that $\delta$ is $0$-restricted, we have that $e \wedge e_{i} = 0$.
But $e_{i} \in F$.
This is a contradiction.
It follows that $F$ is a tight filter.
\end{proof}

One obvious question is when the Boolean completion of a pre-Boolean semilattice is unital.
A finite set of idempotents  $\{e_{1}, \ldots, e_{n} \}$ in a semilattice $E$ is said to be {\em essential}
if it is a cover of $E \setminus\{0\}$.
We shall say that a pre-Boolean semilattice $E$ is {\em compactable} if it has at least one essential set of idempotents.

\begin{theorem} Let $E$ be a pre-Boolean semilattice.
Then $\mathsf{D}(E)$ is unital if and only if $E$ is compactable,
and the essential sets of idempotents are precisely the sets of idempotents mapped to the identity of  $\mathsf{D}(E)$.
\end{theorem}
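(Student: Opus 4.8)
The plan is to argue entirely through the concrete description of $\mathsf{D}(E)$ recalled just above: its elements are the finite unions $\bigcup_{i=1}^{m} V_{e_i}$ ordered by inclusion, with $\delta(e)$ identified with $V_e$ and $0$ with the empty union, and every element is of this form. The one tool needed is Lemma~2.11(2), applied to the commutative inverse semigroup $E$: for $x, e_1, \ldots, e_n \in E$ one has $V_x \subseteq \bigcup_{i=1}^{n} V_{e_i}$ if and only if $x \rightarrow (e_1, \ldots, e_n)$, that is, every nonzero $y \leq x$ satisfies $y \wedge e_i \neq 0$ for some $i$. The whole statement is then a translation of ``$\bigcup_i V_{e_i}$ is the largest element of $\mathsf{D}(E)$'' into ``$\{e_1,\ldots,e_n\}$ is a cover of $E \setminus \{0\}$''.

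First I would show that a finite set of idempotents $\{e_1,\ldots,e_n\}$ with $\bigvee_i \delta(e_i)$ equal to the top of $\mathsf{D}(E)$ is essential. Deleting the $e_i$ that are $0$ (for which $\delta(e_i)=0$), we may assume each $e_i \neq 0$. For any nonzero $x \in E$ we have $\delta(x) \leq \bigvee_i \delta(e_i)$, i.e.\ $V_x \subseteq \bigcup_i V_{e_i}$, so Lemma~2.11(2) gives $x \rightarrow (e_1,\ldots,e_n)$; since $x \leq x$ is nonzero, this yields $x \wedge e_i \neq 0$ for some $i$. Thus $\{e_1,\ldots,e_n\}$ is a cover of $E \setminus \{0\}$. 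Specialising: if $\mathsf{D}(E)$ is unital, its identity, being an element of $\mathsf{D}(E)$, can be written as $\bigcup_{i=1}^{n} V_{e_i} = \bigvee_i \delta(e_i)$, so $E$ is compactable.

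Conversely I would show that an essential set is mapped to the identity, which simultaneously produces an identity in $\mathsf{D}(E)$ when $E$ is compactable. Let $\{e_1,\ldots,e_n\}$ be essential. The element $\bigcup_{i=1}^{n} V_{e_i} = \bigvee_i \delta(e_i)$ of $\mathsf{D}(E)$ is the top: since every element of $\mathsf{D}(E)$ is a finite union $\bigcup_j V_{f_j}$, it suffices to check $V_f \subseteq \bigcup_i V_{e_i}$ for each $f \in E$, and this is vacuous if $f = 0$ while for $f \neq 0$ essentiality gives $y \wedge e_i \neq 0$ for some $i$ for every nonzero $y \leq f$, so $f \rightarrow (e_1,\ldots,e_n)$ and Lemma~2.11(2) applies. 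Hence $\mathsf{D}(E)$ is unital and $\{e_1,\ldots,e_n\}$ is mapped to its identity; running the first argument in reverse shows that, conversely, every finite set of nonzero idempotents mapped to the identity is essential.

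I do not expect a genuine obstacle: the result is a dictionary entry built on Lemma~2.11(2). The only points deserving a line of care are the bookkeeping between $\mathsf{D}(E)$ as an abstract Boolean algebra and its realisation through the sets $V_e$, the harmless deletion of zero generators (the $\delta$-images ignore them), and the degenerate case $E = \{0\}$, where $\mathsf{D}(E)$ is the one-element monoid, the empty set is vacuously essential, and the empty join is the identity $0$ --- all consistent with the statement.
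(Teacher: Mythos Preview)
Your proof is correct and in substance the same as the paper's. The only difference is packaging: the paper works directly with individual ultrafilters, using Lemma~2.6(1) to place a given nonzero $e$ in some ultrafilter (forcing $e\wedge e_i\neq 0$), and Lemma~2.6(2) to derive a contradiction from an ultrafilter omitting every $e_i$; you instead invoke Lemma~2.11(2), whose proof is exactly that ultrafilter argument, to pass between $V_f\subseteq\bigcup_i V_{e_i}$ and $f\rightarrow(e_1,\ldots,e_n)$. Your version is slightly more streamlined, the paper's slightly more self-contained, but they are the same proof.
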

\begin{proof} Suppose that $\mathsf{D}(E)$ is unital.
Then $\mathsf{D}(E) = \bigcup_{i=1}^{n} V_{e_{i}}$ for some finite set of idempotents $e_{1}, \ldots, e_{n}$.
Let $e \in E$ be an arbitrary non-zero element.
Then by Lemma~2.6(1), there is an ultrafilter $F$ containing $e$.
By assumption, $e_{i} \in F$ for some $i$.
In particular, $e \wedge e_{i} \neq 0$.
Thus $\{e_{1}, \ldots, e_{n} \}$ is a finite cover.

Conversely, suppose that $\{e_{1}, \ldots, e_{n} \}$ is an essential set of idempotents.
Let $F$ be any ultrafilter.
Suppose that $e_{i} \notin F$ for all $i$.
Then for each $i$ there exists $f_{i} \in F$ such that $f_{i} \wedge e_{i} = 0$ by Lemma~2.6(2).
Put $f = \wedge_{i=1}^{n} f_{i}$.
Then $f \in F$ and so is non-zero.
But $f \wedge e_{i} = 0$ for all $i$.
This contradicts our assumption that $\{e_{1}, \ldots, e_{n} \}$ is a finite cover.
Thus we must have $e_{i} \in F$ for some $i$.
It follows that 
$\mathsf{D}(E) = \bigcup_{i=1}^{n} V_{e}$ and so $\mathsf{D}(E)$ is unital.
\end{proof}

\begin{center}
{\bf Part 3 }
\end{center}

\begin{lemma} Let $X$ and $Y$ be finite subsets of $E$ with $X$ non-empty.
Put $e = \bigwedge X$ and let $Y = \{e_{1}, \ldots, e_{n} \}$.
Assume that $\comp$ is non-zero.
Then $Z \subseteq X^{\wedge} \cap Y^{\perp}$ is a finite cover if and only if 
$$\bigcup_{z \in Z} V_{z} = V_{e: e_{1}, \ldots, e_{n}}.$$
\end{lemma}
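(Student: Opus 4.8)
The plan is to prove the two implications by unwinding the definitions of $V_{e:e_1,\ldots,e_n}$ and of ``finite cover'' in terms of ultrafilters, using Lemma~2.11 and the results from Part~2 of this section.

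First I would establish the forward direction. Assume $Z = \{z_1,\ldots,z_p\} \subseteq X^\wedge \cap Y^\perp$ is a finite cover. Each $z_k$ satisfies $z_k \leq x$ for all $x \in X$, hence $z_k \leq e$, and $z_k \wedge e_i = 0$ for all $i$. So any ultrafilter containing some $z_k$ contains $e$ (since $z_k \leq e$) and omits each $e_i$ (since $z_k \wedge e_i = 0$, using the ultrafilter property from Lemma~2.6(2)); this gives the inclusion $\bigcup_{k} V_{z_k} \subseteq V_{e:e_1,\ldots,e_n}$. For the reverse inclusion, take an ultrafilter $F \in V_{e:e_1,\ldots,e_n}$, so $e \in F$ and $e_i \notin F$ for all $i$. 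Since $F$ is an ultrafilter it is tight by Lemma~2.20(1); applying Lemma~2.21 with $X$ and $Y$ (which is legitimate since $X \subseteq F$, as $e = \bigwedge X \in F$ forces each $x \in X$ to be in the filter $F$, and $Y \cap F = \emptyset$) gives that $\comp \neq 0$ and that the finite cover $Z$ of $\comp$ meets $F$. Thus some $z_k \in F$, so $F \in V_{z_k}$, completing the equality.

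Next I would prove the converse. Assume $\bigcup_{z \in Z} V_z = V_{e:e_1,\ldots,e_n}$ with $Z \subseteq X^\wedge \cap Y^\perp$; I must show $Z$ is a cover of $\comp = X^\wedge \cap Y^\perp$. Let $0 \neq w \in X^\wedge \cap Y^\perp$. By Lemma~2.6(1) there is an ultrafilter $F$ containing $w$. Since $w \leq e$ we get $e \in F$, and since $w \wedge e_i = 0$ we get $e_i \notin F$ (ultrafilters being filters, $e_i \in F$ together with $w \in F$ would force $w \wedge e_i \neq 0$); hence $F \in V_{e:e_1,\ldots,e_n}$. By hypothesis $F \in V_{z_k}$ for some $k$, so $z_k \in F$. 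But $w, z_k \in F$ and $F$ is a filter, so $w \wedge z_k \neq 0$. This shows $Z$ is a cover of $\comp$, as required. (Note $Z$ is automatically finite by hypothesis; the statement presupposes this.)

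\textbf{Anticipated obstacle.} The main subtlety is the forward direction's reverse inclusion, where one needs to produce, from an ultrafilter $F$ in $V_{e:e_1,\ldots,e_n}$, an element of $Z$ lying in $F$. This is exactly what Lemma~2.21(2) delivers, so the real work is just checking its hypotheses carefully --- in particular that $e = \bigwedge X \in F$ implies $X \subseteq F$ (true because $F$ is upward closed and each $x \geq e$), and that $Y \cap F = \emptyset$ follows from $e_i \notin F$. Everything else is a routine manipulation of the ultrafilter and filter axioms together with Lemma~2.6. No topology (Proposition~2.24--2.27) is actually needed for this particular lemma, despite its placement after that material; it is a purely order-theoretic statement about when a subset of $\comp$ is a cover.
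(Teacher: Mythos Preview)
Your proof is correct and follows essentially the same approach as the paper. The only difference is that for the inclusion $V_{e:e_1,\ldots,e_n} \subseteq \bigcup_{z\in Z} V_z$ you invoke Lemma~2.21(2) as a black box (ultrafilters are tight, so the cover $Z$ must meet $F$), whereas the paper unwinds that argument directly: it uses Lemma~2.6(2) to find $f,f' \in F$ with $f\wedge e_i = 0$ for all $i$ and $f'\wedge z = 0$ for all $z\in Z$, observes $0\neq e\wedge f\wedge f' \in X^\wedge\cap Y^\perp$, and derives a contradiction from the cover property. Your route is slightly more economical; the paper's is self-contained at this point.
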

\begin{proof}
Suppose that
$$\bigcup_{z \in Z} V_{z} = V_{e: e_{1}, \ldots, e_{n}}.$$
We prove that  $Z \subseteq X \cap Y^{\perp}$ is a cover.
Let $x \in  X^{\wedge} \cap Y^{\perp}$.
Then $x \leq e$ and $x \wedge e_{i} = 0$ for all $i$.
Let $F$ be an ultrafilter containing $x$.
Then $F$ contains $e$ and omits all the $e_{i}$.
Thus $F \in V_{e:e_{1}, \ldots, e_{n}}$.
By assumption $z \in F$ for some $z \in Z$.
But then $z \wedge x \neq 0$, as required.

Conversely, suppose that $Z \subseteq X^{\wedge} \cap Y^{\perp}$ is a finite cover. 
Let $F \in V_{e: e_{1}, \ldots, e_{n}}$.
Then $F$ contains $e$ and omits all the $e_{i}$.
Let $f \in F$ such that $f \wedge e_{i} = 0$ for all $i$.
Suppose that $Z \cap F = \emptyset$.
Let $f' \in F$ be such that $z \wedge f' = 0$ for all $z \in Z$.
Then $0 \neq e \wedge f \wedge f' \in X^{\wedge} \cap Y^{\perp}$.
Thus there is $z \in Z$ such that $z \wedge e \wedge f \neq 0$
which is a contradiction.
Thus $Z \cap F \neq \emptyset$.
\end{proof}

Let $E$ be a $0$-disjunctive semilattice.
We now prove that $E$ is pre-Boolean if and only if it satisfies the trapping condition.

\begin{center}
{\bf Proof of the Booleanization Theorem (3)}
\end{center}

\begin{proof} Suppose that $E$ is $0$-disjunctive and pre-Boolean.
Let $0 \neq f < e$.
Because $E$ is $0$-disjunctive, the set $e^{\downarrow} \cap f^{\perp} \neq \{ 0\}$. 
The set $V_{e:f}$ is non-empty and by assumption compact and so by Lemma~2.29,
$e^{\downarrow} \cap f^{\perp}$ has a finite cover.

Suppose now that the trapping condition holds.
We prove that every tight filter is an ultrafilter.
Let $F$ be a tight filter that is not an ultrafilter.
Then we can find an ultrafilter $G$ such that $F \subset G$.
Let $g \in G \setminus F$ and let $f \in F$ be arbitrary.
Then $g' = g \wedge f \in G$.
We shall prove that $F$ tight implies that $g' \in F$ which implies $g \in F$, a contradiction.
We have that $0 \neq g' < f$.
Since $E$ is $0$-disjunctive, the set $f^{\downarrow} \cap (g')^{\perp} \neq 0$.
Let $\{f_{1}, \ldots, f_{n} \}$ be a cover.
Suppose that $f_{i} \in F$.
Then $f_{i},g' \in G$ and so $f_{i} \wedge g' \neq 0$.
But this contradicts the fact that $f_{i} \wedge g' = 0$.
It follows that $f \in F$ and $\{f_{1}, \ldots, f_{n} \} \cap F = \emptyset$.
Observe that $g' \in f^{\downarrow} \cap \{f_{1}, \ldots, f_{n}\}^{\perp}$.
Let $i$ be any nonzero element of  $f^{\downarrow} \cap \{f_{1}, \ldots, f_{n}\}^{\perp}$.
If $i \wedge g' = 0$ then $i \wedge f_{i} \neq 0$ for some $i$ which is a contradiction.
It follows that $i \wedge g' \neq 0$.
Thus $\{ g' \}$ is a cover of  $f^{\downarrow} \cap \{f_{1}, \ldots, f_{n}\}^{\perp}$.
By Lemma~2.21, it follows that $g' \in F$ giving our contradiction.
It follows that $F$ is an ultrafilter.
\end{proof} 

The semilattices arising above can be characterized in a more direct way.
We say that a semilattice $(E,\wedge)$ is {\em densely embedded} in a Boolean algebra $B$ if 
the $\wedge$-operation on $E$ is precisely the restriction of the $\wedge$-operation in $B$ and if every non-zero element of $B$ is a finite join of elements of $E$.

\begin{proposition} 
A semilattice  is $0$-disjunctive and satisfies the trapping condition
if and only if 
it can be densely embedded in a Boolean algebra.
\end{proposition}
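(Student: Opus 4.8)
\emph{Plan.} I would prove the two implications separately. The forward direction will be essentially a repackaging of part~(3) of the Booleanization Theorem together with the construction of $\mathsf{D}(E)$, while the converse is a short computation inside the ambient Boolean algebra.

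\textbf{From $0$-disjunctive plus trapping to a dense embedding.} The first thing to record is the observation that \emph{a $0$-disjunctive semilattice is automatically separative}. Indeed, suppose $e \rightarrow f$ but $e \not\le f$, so that $e \wedge f$ is a lower bound of $e$ distinct from $e$. If $e \wedge f = 0$, put $x = e$; otherwise $0 \neq e \wedge f < e$, and $0$-disjunctivity yields $0 \neq c < e$ with $c \wedge (e \wedge f) = 0$, so put $x = c$. In both cases $0 \neq x \le e$ and $x \wedge f = 0$, contradicting $e \rightarrow f$. Hence $e \rightarrow f \Leftrightarrow e \le f$, which by Lemma~2.2 says precisely that $E$ is separative. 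It follows that $\delta \colon E \to \mathsf{D}(E)$ is injective, since $\delta(e) = \delta(f)$ gives $\{e\}^{\downarrow} \equiv \{f\}^{\downarrow}$, i.e.\ $e \leftrightarrow f$, i.e.\ $e = f$. Now by the Booleanization Theorem~(3), $E$ is pre-Boolean, so $B := \mathsf{D}(E)$ is a Boolean algebra; since $E$ is a semilattice, $\delta$ is a homomorphism of meet semilattices, hence preserves $\wedge$ (concretely $V_{e \wedge f} = V_e \cap V_f$), and being injective it identifies $\le$ on $E$ with its restriction from $B$; finally, by the construction of $\mathsf{D}(E)$ every non-zero element has the form $\bigcup_i V_{e_i} = \bigvee_i \delta(e_i)$, a finite join of elements of $\delta(E)$. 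Thus $\delta$ embeds $E$ densely in the Boolean algebra $B$.

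\textbf{From a dense embedding to $0$-disjunctive plus trapping.} Suppose $E$ is densely embedded in a Boolean algebra $B$; then $\wedge$, and hence $\le$, on $E$ is the restriction of that on $B$, and the two zeros agree. Fix $0 \neq y < x$ in $E$ and let $a := x \bsl y$ be the relative complement computed in $B$, so that $y \wedge a = 0$ and $y \vee a = x$; since $y \neq 0$ and $y < x$, one checks $0 \neq a < x$. Write $a = \bigvee_{i=1}^{k} c_i$ with $c_1, \dots, c_k \in E$ non-zero; then each $c_i \le a < x$ and $c_i \wedge y \le a \wedge y = 0$, so each $c_i$ lies in $x^{\downarrow} \cap y^{\perp}$. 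For $0$-disjunctivity, any one $c_i$ is an element strictly below $x$ and orthogonal to $y$, as required. For the trapping condition I claim $\{c_1, \dots, c_k\}$ is a finite cover of $x^{\downarrow} \cap y^{\perp}$: if $0 \neq e \le x$ with $e \wedge y = 0$, then distributivity in $B$ gives $e = e \wedge x = (e \wedge y) \vee (e \wedge a) = e \wedge a$, so $e \le a = \bigvee_i c_i$, and hence $e \wedge c_i \neq 0$ for some $i$ (otherwise $e = e \wedge \bigvee_i c_i = \bigvee_i (e \wedge c_i) = 0$). This establishes both conditions.

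\textbf{Main obstacle.} There is no deep obstacle: the forward direction is a genuine assembly of results already proved, \emph{provided} one first isolates the observation that $0$-disjunctivity forces separativity --- that is exactly what upgrades $\delta$ from a merely $0$-restricted map to an honest order-embedding, and without it the statement would fail. In the converse the only delicate point is keeping track of which meets and joins are taken in $E$ and which in $B$, and verifying that the finite join $\bigvee_i c_i$ of the ``pieces'' of $x \bsl y$ really does cover the possibly infinite set $x^{\downarrow} \cap y^{\perp}$; both reduce to the distributive law in $B$.
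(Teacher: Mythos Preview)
Your proof is correct and takes essentially the same approach as the paper. For the direction from the two conditions to a dense embedding, the paper also invokes part~(3) of the Booleanization Theorem together with the fact that $0$-disjunctive semilattices are separative (you spell this out explicitly, the paper just cites Lemma~2.2); for the other direction, the paper likewise decomposes $x \setminus y$ in $B$ as a finite join of elements of $E$ and checks that these elements witness both $0$-disjunctivity and the trapping condition.
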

\begin{proof} 
Suppose that our semilattice $E$ can be densely embedded in Boolean algebra $B$.
We prove that it is $0$-disjunctive and that the trapping condition holds.
Let $0 \neq f < e$ in $E$.
Then $e^{\downarrow} \cap f^{\perp} = (e \bsl f)^{\downarrow}$.
By denseness, we may write $e \bsl f = \bigvee_{i=1}^{n} e_{i}$ where $e_{i} \in E$.
In particular, this shows that $E$ is $0$-disjunctive.
Put $Z = \{e_{1}, \ldots, e_{n} \}$.
It is easy to check that this is a cover of  $e^{\downarrow} \cap f^{\perp}$.
For the converse, we apply part (3) of the Booleanization Theorem and observe by Lemma~2.2 
that $0$-disjunctive semilattices are separative.
\end{proof}

\subsection{The Comparison Theorem}

The main theorem this section shows how to construct a Hausdorff Boolean groupoid directly from a {\em pre-Boolean} inverse $\wedge$-semigroup.

With each pre-Boolean inverse semigroup $S$ we may associate a Boolean inverse semigroup $\mathsf{D}(S)$.
Our proof of this in Section~2.1 used the groupoid of ultrafilters of $S$.
We show now how to regard this as a Hausdorff Boolean groupoid $\mathsf{G}(S)$ 
and establish the exact connection between it and the Boolean inverse $\wedge$-semigroup $\mathsf{D}(S)$.

Let $G$ be a topological groupoid.
We make the following definitions:
$\Omega (G)$ is the set of all open subsets of $G$;
$\mathsf{Bi}(G)$ is the the inverse semigroup of all local bisections of $G$;
$\mathsf{oBi}(G) = \mathsf{Bi}(G) \cap \Omega (G)$ is the set of all open local bisections of $G$;
$\mathsf{B}(G)$ is the set of all compact-open local bisections of $G$.
The proofs of the following can be found in \cite{Resende}.

\begin{lemma} Let $G$ be a topological groupoid.
\begin{enumerate}

\item $G$ is open if and only if the multiplication map $\mathbf{m}$ is open if and only if $\Omega (G)$ is a semigroup under the pointwise product.

\item If $G$ is open then $\mathsf{oBi}(G)$ is an inverse semigroup. 

\item $G$ is \'etale if and only if it is open and $G_{o}$ is open in $G$.

\item If $G$ is \'etale then $\Omega (G)$ is a monoid and $\mathsf{oBi}(G)$ is an inverse monoid.

\item If $G_{o}$ is open in $G$ then the open bisections form a basis for the topology on $G$.
 
\item If $G$ is \'etale then $\mathsf{oBi}(G)$ is an inverse monoid and a basis for the topology $\Omega (G)$.

\end{enumerate}
\end{lemma}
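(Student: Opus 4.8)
The plan is to treat this as a chain of elementary point-set arguments about the structure maps $\mathbf{m},\mathbf{i},\mathbf{d},\mathbf{r}$, and to isolate three standing observations that do most of the work. First, $\mathbf{i}$ is a self-homeomorphism of $G$, so $\mathbf{d}$ is open iff $\mathbf{r}=\mathbf{d}\mathbf{i}$ is open, and $A$ open implies $A^{-1}=\mathbf{i}(A)$ open. Second, the inclusion $G_{o}\hookrightarrow G$ is a continuous section of $\mathbf{d}$ (it is continuous because $G_{o}$ carries the subspace topology, and $\mathbf{d}$ restricts to the identity on it), so $\mathbf{d}$ is automatically a quotient map; hence a subset of $G_{o}$ is open in $G_{o}$ if and only if its $\mathbf{d}$-preimage is open in $G$. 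Third, for open $U,V$ the pointwise product is $UV=\mathbf{m}((U\times V)\cap(G\ast G))$, and every open subset of $G\ast G$ is a union of such rectangles, so ``$\mathbf{m}$ open'' and ``$\Omega(G)$ closed under the pointwise product'' are literally the same statement (associativity of the pointwise product being automatic in any groupoid).

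For (1) the substantive point is the equivalence of ``$G$ open'' with ``$\mathbf{m}$ open''. I would introduce the two shear maps $\sigma(g,h)=(gh,h^{-1})$ and $\tau(g,h)=(g^{-1},gh)$ on $G\ast G$, check that each is a well-defined involution built from $\mathbf{m}$ and $\mathbf{i}$ hence a homeomorphism, and note $\mathbf{m}=\pi_{1}\sigma=\pi_{2}\tau$, so $\mathbf{m}$ is open iff a coordinate projection $G\ast G\to G$ is open. Then compute $\pi_{1}((U\times V)\cap(G\ast G))=U\cap\mathbf{d}^{-1}(\mathbf{r}(V))$: if $\mathbf{r}$ is open this is open, so $\pi_{1}$ (hence $\mathbf{m}$) is open; conversely if $\pi_{1}$ is open then, taking $U=G$, $\mathbf{d}^{-1}(\mathbf{r}(V))$ is open for every open $V$, and since $\mathbf{d}$ is a quotient map $\mathbf{r}(V)$ is open, so $\mathbf{r}$, hence $\mathbf{d}$, is open. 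With the third standing observation this closes the triangle.

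Parts (2)--(6) are lighter. In (2), $\mathsf{Bi}(G)$ is already an inverse semigroup, so it suffices that $\mathsf{oBi}(G)$ is closed under products (by (1), $G$ open makes $\Omega(G)$ a semigroup) and under inverses (first observation), hence is an inverse subsemigroup. For (3), the forward direction uses that local homeomorphisms are open ($\mathbf{d}$ open) and that, if $\mathbf{d}|_{V}$ is injective on an open $V\ni u\in G_{o}$, then $V\cap\mathbf{d}^{-1}(V)$ is an open neighbourhood of $u$ contained in $G_{o}$ (for such $g$, both $g$ and $\mathbf{d}(g)$ lie in $V$ with the same image, so $g=\mathbf{d}(g)$); the converse, and part (5), both rest on the single fact that when $G_{o}$ is open the continuous map $(a,b)\mapsto ab^{-1}$ on $\{\mathbf{d}(a)=\mathbf{d}(b)\}$ pulls $G_{o}$ back to the diagonal, which is therefore open, so every $g$ has an open neighbourhood on which $\mathbf{d}$ is injective, hence (being continuous and open) a homeomorphism onto an open subset of $G_{o}$ --- giving (3)$\Leftarrow$ directly, and, applying it to $\mathbf{d}$ and $\mathbf{r}$ and intersecting with a prescribed open set, giving (5). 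Part (4) is then immediate: $G_{o}$ is open by (3) and is a two-sided identity for the pointwise product with $G_{o}\in\mathsf{oBi}(G)$, upgrading (2) to the monoid statement; (6) is just (4) together with (5) in the \'etale case.

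The hard part will be (1): once it is in hand, the remaining items reduce quickly via the three observations, whereas (1) needs the shear-map trick and the slightly non-obvious point that $\mathbf{d}$ is a quotient map. I would therefore state the shear homeomorphisms and the formula $\pi_{1}((U\times V)\cap(G\ast G))=U\cap\mathbf{d}^{-1}(\mathbf{r}(V))$ as named sublemmas, after which the rest of (1), and all of (2)--(6), is bookkeeping.
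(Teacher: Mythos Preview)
The paper does not give its own proof of this lemma: it simply cites Resende's lecture notes \cite{Resende}. Your detailed point-set argument is correct and is essentially the standard one; in particular the shear-map trick for (1), the quotient-map observation for $\mathbf{d}$, and the open-diagonal argument via $(a,b)\mapsto ab^{-1}$ for (3) and (5) are exactly how these equivalences are usually established. There is nothing to compare against in the paper itself, and your proposal would serve as a self-contained replacement for the citation.
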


\begin{lemma} Let $G$ be a Hausdorff, \'etale topological groupoid.
\begin{enumerate}

\item $G \ast G$ is a closed subspace of $G \times G$.

\item The product of two compact-open bisections is a compact-open bisection.

\item $G$ has a basis of compact-open bisections if and only if $G_{o}$ has a compact-open basis.

\item If $G$ is a Hausdorff Boolean groupoid then $\mathsf{B}(G)$ is a Boolean inverse $\wedge$-semigroup.
 
\end{enumerate}
\end{lemma}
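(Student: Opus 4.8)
The plan is to verify each of the four assertions about a Hausdorff, \'etale topological groupoid $G$ by combining the defining properties of \'etaleness (the domain map $\dom$ is a local homeomorphism, so $G_o$ is open and the open bisections form a basis) with elementary point-set topology in Hausdorff spaces. For (1), I would observe that $G \ast G = \{(a,b) : \dom(a) = \ran(b)\}$ is the preimage of the diagonal $\Delta_{G_o} \subseteq G_o \times G_o$ under the continuous map $(a,b) \mapsto (\dom(a), \ran(b))$; since $G$ is Hausdorff, $G_o$ is Hausdorff, so $\Delta_{G_o}$ is closed in $G_o \times G_o$, and hence $G \ast G$ is closed in $G \times G$.

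For (2), the product $AB$ of two compact-open bisections is open because multiplication on an \'etale groupoid is an open map (Lemma 2.32(1)), and $AB$ is again a local bisection since the class of local bisections is closed under the subset product (this is just the inverse-semigroup structure on $\mathsf{Bi}(G)$ from Lemma 2.32(2)). The substantive point is compactness: $AB$ is the image of the closed subset $(A \times B) \cap (G \ast G)$ of the compact space $A \times B$ under the continuous multiplication map, hence compact; here I use (1) to know $G \ast G$ is closed and the fact that a closed subset of a compact space is compact, and that continuous images of compact sets are compact. For (3), one direction is trivial since $G_o$ is an open subspace of $G$; for the converse, if $G_o$ has a compact-open basis, then because $G_o$ is open in $G$ these sets are compact-open in $G$ as well, and pushing them around by open bisections (which form a basis by Lemma 2.32(5)) yields a basis of compact-open bisections for $G$ — concretely, any point $g$ lies in some open bisection $A$, the homeomorphism $\ran|_A \colon A \to \ran(A)$ transports a compact-open neighbourhood of $\ran(g)$ inside $\ran(A)$ back to a compact-open bisection containing $g$.

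Finally (4): if $G$ is a Hausdorff Boolean groupoid then by definition $G_o$ is a Boolean space and $G$ has a basis of compact-open bisections, so by (2) the set $\mathsf{B}(G)$ is closed under products and is thus an inverse semigroup; it has a zero (the empty bisection) since we are in the non-unital setting. It remains to check that $\mathsf{B}(G)$ is a distributive inverse semigroup whose idempotent semilattice is a (generalized) Boolean algebra, and that it has binary meets. The idempotents of $\mathsf{B}(G)$ are precisely the compact-open subsets of $G_o$, which form a generalized Boolean algebra because $G_o$ is a Boolean space; binary joins of compatible elements and binary meets are computed as unions and intersections of compact-open bisections, and one checks these stay compact-open using Hausdorffness (intersections of compact-open sets are compact-open in a Hausdorff space, and a union of two compatible compact-open bisections is again a bisection by the argument already used for $\mathsf{Bi}(G)$, cf. Lemma 2.13 in the semigroup setting); distributivity of multiplication over these finite joins is inherited from $\mathsf{Bi}(G)$.

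I expect the main obstacle to be part (2) — specifically, making the compactness argument airtight requires part (1), and one must be careful that $(A \times B) \cap (G \ast G)$ is genuinely closed in the compact space $A \times B$ (which uses that $G \ast G$ is closed in $G \times G$, hence its intersection with the closed — indeed compact — rectangle $A \times B$ is closed there). Everything else is routine point-set topology or a direct appeal to Lemma 2.32, Lemma 2.9, and Lemma 2.13.
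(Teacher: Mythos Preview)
Your proposal is correct and for parts (1), (2) and (4) it matches the paper's proof essentially line for line: the diagonal argument for (1), the ``closed subset of the compact rectangle $A\times B$'' argument for (2), and the union/intersection observations for (4).

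The only point of departure is the converse direction of (3). The paper argues algebraically: given an open bisection $A$, it writes $A^{-1}A=\bigcup_i E_i$ with each $E_i$ compact-open in $G_o$, sets $A_i=AE_i$, and then shows via a finite-subcover argument that an open bisection whose domain is compact-open is itself compact-open. Your argument instead uses directly that $\ran|_A\colon A\to\ran(A)$ is a homeomorphism (which is what \'etaleness gives on a bisection) and pulls back a compact-open neighbourhood from $G_o$. Your route is shorter and more geometric; the paper's route has the minor advantage of yielding the auxiliary fact ``open bisection with compact-open domain $\Rightarrow$ compact-open'' explicitly, which is occasionally useful elsewhere. Either way the result follows.
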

\begin{proof} (1) This is well-known but we give the proof for completeness' sake.
Put $\Delta (G_{o}) = \{ (e,e) \colon e \in G_{o} \}$.
Now $G$ is hausdorff and $G_{o}$ is an open subset of $G$ by Lemma~2.36.
Thus $G_{o}$ is hausdorff.
By Theorem~13.7 \cite{W}, it follows that $\Delta (G_{o})$ is a closed subspace of $G_{o} \times G_{o}$.
But $G \ast G = (\mathbf{d}, \mathbf{r})^{-1}(\Delta (G_{o}))$ and the map $(\mathbf{d}, \mathbf{r})$ is continuous.
Thus $G \ast G$ is a closed subsapce as claimed.

(2) Let $A$ and $B$ be compact-open bisections.
It only remains to prove that $AB$ is compact.
By Theorem~D of Section~26 of \cite{S}, both $A$ and $B$ are closed subsets of $G$.
It follows that $A \times B$ is a closed subset of $G \times G$.
By (1), we therefore have that $A \ast B = (a \times B) \cap (G \ast G)$ is a closed subset.
But $A \times B$ is compact and so by Theorem~A of Section~21 of \cite{S} the set $A \ast B$ is compact.
By Theorem~B of Section~21 of \cite{S}, the continuous image of a compact space is compact.
Thus $AB$ is compact.

(3) Suppose that $G$ has a basis of compact-open bisections.
We prove that $G_{o}$ has a basis of compact-open sets.
Consider the set of all compact-open bisections that are subsets of $G_{o}$.
We prove that they form a basis for the subspace topology on $G_{o}$.
Every open set $U$ of $G_{o}$ has the form $U = G_{o} \cap V$ where $V$ is an open set in $G$.
But $G_{o}$ is an open subspace of $G$ and so $U$ is also an open subset of $G$.
By assumption, we may write $U = \bigcup_{i} B_{i}$ where $B_{i}$ are compact-open bisections.
But $B_{i} \subseteq U \subseteq G_{o}$ and so each $B_{i}$ is a compact-open bisection contained in $G_{o}$.
It is clear that the $B_{i}$ are open in $G_{o}$ and also compact.

Suppose that $G_{o}$ has a basis of compact-open sets.
We prove that $G$ has a basis of compact-open bisections.
By assumption, $G$ has a basis of open bisections.
Let $A$ be such an open bisection.
Then $A^{-1}A$ is an open subset of $G_{o}$.
By assumption we may write $A^{-1}A = \bigcup_{i}E_{i}$
where the $E_{i}$ are compact-open subsets of $G_{o}$.
Put $A_{i} = AE_{i}$.
This is an open bisection satisfying $A_{i}^{-1}A_{i} = E_{i}$.
We shall have proved the result if we can prove that 
every open bisection $A$ such that $A^{-1}A$ is compact-open is itself compact-open.
Suppose that $A = \bigcup_{j} B_{j}$ where the $B_{j}$ are open bisections.
Then $A^{-1}A =  \bigcup_{j} B_{j}^{-1}B_{j}$.
By assumption $A^{-1}A$ is compact-open and so we may find a finite subset of the $j$
such that $A^{-1}A = \bigcup_{j=1}^{n} B_{j}^{-1}B_{j}$.
But then $A = \bigcup_{j=1}^{n} AB_{j}^{-1}B_{j} = \bigcup_{j=1}^{n} B_{j}$.
Thus if $A$ is an open bisection such that $A^{-1}A$ is compact-open then $A$ is compact-open.

(4) It only remains to observe that if $A$ and $B$ are both compact-open bisections then $A \cap B$ being
closed and a subset of $A$ is also compact and so compact-open and a bisection,
and if $A$ is compatible with $B$ then $A \cup B$ is a bisection, open and compact. 
\end{proof}

Recall that if $S$ be an inverse $\wedge$-semigroup with zero, then we defined $\Omega = \{V_{a} \colon a \in S \}$.
By Lemma~2.10(1), the set $\Omega$ is a basis for a topology on the groupoid $\mathsf{G}(S)$.

\begin{proposition} Let $S$ be an inverse $\wedge$-semigroup with zero.
Then with the topology above $\mathsf{G}(S)$ is a Hausdorff \'etale topological groupoid whose topological space of identities is 
homeomorphic to the topological space constructed from $E(S)$.
\end{proposition}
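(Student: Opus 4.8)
The plan is to verify in turn that $\mathsf{G}(S)$ is a topological groupoid, that it is \'etale, that it is Hausdorff, and then to identify its space of identities with the topological space constructed from $E(S)$. First, inversion is a homeomorphism, since by Lemma~2.10(3) it carries the basic open set $V_{a}$ onto $V_{a^{-1}}$ and it is an involution. For multiplication I would work with the basis: if $(A,B)$ is a composable pair with $A\cdot B\in V_{c}$ then $c\in(AB)^{\uparrow}$, so $ab\leq c$ for some $a\in A$ and $b\in B$; the set $(V_{a}\times V_{b})\cap(\mathsf{G}(S)\ast\mathsf{G}(S))$ is then a neighbourhood of $(A,B)$ that $\mathbf{m}$ maps into $V_{c}$, because $a\in A'$ and $b\in B'$ force $ab\in A'\cdot B'$ and hence $c\in A'\cdot B'$. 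So $\mathbf{m}$ is continuous. Its image on this basic neighbourhood is the product $V_{a}V_{b}$ taken in $\mathsf{Bi}(\mathsf{G}(S))$, which equals $V_{ab}$ by Lemma~2.10(4); hence $\mathbf{m}$ is open, and since the multiplication map of a topological groupoid is open if and only if its domain map is, $\mathbf{d}\colon\mathsf{G}(S)\to\mathsf{G}(S)_{o}$ is open.

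Next I would establish the \'etale property. The subspace $\mathsf{G}(S)_{o}$ is open: an ultrafilter containing a non-zero idempotent is an inverse subsemigroup by Lemma~2.8(2), hence an identity, so $\mathsf{G}(S)_{o}=\bigcup_{0\neq e\in E(S)}V_{e}$; and for an idempotent ultrafilter $A$ and $a\in S$ one has $a\in A$ iff $e_{a}=a\wedge a^{-1}a\in A$, so the sets $V_{e}$ with $e\in E(S)$ form a basis for $\mathsf{G}(S)_{o}$. The map $\mathbf{d}$ is continuous: for idempotent $e$, replacing a pair $x,y\in A$ by $c=x\wedge y\in A$ and computing in $S$ gives $c^{-1}c\leq x^{-1}y$, so $e\in A^{-1}\cdot A$ if and only if $A$ contains some $c$ with $c^{-1}c\leq e$, whence $\mathbf{d}^{-1}(V_{e})=\bigcup_{c^{-1}c\leq e}V_{c}$ is open. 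Each $V_{a}$ is a local bisection by Lemma~2.10(2), and $\mathbf{d}$ is injective on it: if $A,B\in V_{a}$ with $\mathbf{d}(A)=\mathbf{d}(B)$ then $a\in A\cap B\neq\emptyset$, so $A=B$ by Lemma~2.8(3). Being continuous, open, and injective on the open set $V_{a}$, the restriction $\mathbf{d}\colon V_{a}\to\mathbf{d}(V_{a})$ is a homeomorphism onto the open set $\mathbf{d}(V_{a})$; as every ultrafilter lies in some $V_{a}$, $\mathbf{d}$ is a local homeomorphism, so $\mathsf{G}(S)$ is \'etale.

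For Hausdorffness, let $A\neq B$ be ultrafilters; they differ as sets, so (say) $a\in A\setminus B$, and since $B$ is an ultrafilter Lemma~2.6(2) gives $b\in B$ with $a\wedge b=0$; then $V_{a}$ and $V_{b}$ are disjoint open neighbourhoods of $A$ and $B$, because $V_{a}\cap V_{b}=V_{a\wedge b}=V_{0}=\emptyset$ by Lemma~2.10(1). Finally, to identify the identity space, I would use the bijection $\epsilon\colon\mathsf{G}(S)_{o}\to\mathsf{G}(E(S))$, $\epsilon(A)=E(A)$, of Lemma~2.18(4). Under $\epsilon$ the basic open $V_{e}$ of $\mathsf{G}(S)_{o}$ goes to $\{G\in\mathsf{U}(E(S))\colon e\in G\}$, using that $e\in A$ iff $e\in E(A)$ for an idempotent ultrafilter $A$ and that, by Lemma~2.18(3), every ultrafilter of $E(S)$ is of this form; and by Lemma~2.26 these sets are precisely a basis for the topological space constructed from $E(S)$. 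Hence $\epsilon$ carries basis to basis bijectively and is the required homeomorphism.

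The one genuinely structural step, and the one I expect to need the most care, is getting $\mathbf{d}$ to be an open (hence local) homeomorphism. The route above obtains this cheaply from openness of $\mathbf{m}$, which avoids having to construct the preimages $(aF)^{\uparrow}$ of $\mathbf{d}|_{V_{a}}$ by hand; that alternative --- showing, in the style of the proof of Lemma~2.10(4), that for an idempotent ultrafilter $F$ with $a^{-1}a\in F$ the set $(aF)^{\uparrow}$ is an ultrafilter with $\mathbf{d}((aF)^{\uparrow})=F$, so that $\mathbf{d}(V_{a})=V_{a^{-1}a}$ --- would also work. The remaining steps are routine: the small inverse-semigroup computations behind the continuity of $\mathbf{d}$, and the bookkeeping with Lemma~2.18 that matches the subspace topology on $\mathsf{G}(S)_{o}$ with the topology on the filter space of $E(S)$.
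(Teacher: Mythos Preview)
Your proof is correct and follows essentially the same plan as the paper: Hausdorffness via Lemma~2.6(2) and $V_{a}\cap V_{b}=V_{a\wedge b}$, continuity of inversion and multiplication via Lemma~2.10, the local-homeomorphism property of $\mathbf{d}$ on each $V_{a}$, and the identification of $\mathsf{G}(S)_{o}$ with $\mathsf{G}(E(S))$ via $\epsilon$ and Lemma~2.18.

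The only methodological difference is in how \'etaleness is obtained. The paper argues directly that $\mathbf{d}\colon V_{s}\to V_{s^{-1}s}$ is a homeomorphism: injectivity from Lemma~2.8(3), continuity from that of inversion and multiplication, and openness by observing that the basic opens of $V_{s}$ are the $V_{t}$ with $t\leq s$, which map to $V_{t^{-1}t}$. You instead first show $\mathbf{m}$ is open (since $\mathbf{m}$ carries $(V_{a}\times V_{b})\cap(\mathsf{G}(S)\ast\mathsf{G}(S))$ onto $V_{ab}$ by Lemma~2.10(4)) and then invoke the general equivalence of Lemma~2.31(1) to conclude $\mathbf{d}$ is open; combined with your separate arguments for continuity of $\mathbf{d}$ and injectivity on each $V_{a}$, this gives the local homeomorphism. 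Both routes are valid and use the same underlying computations; your detour through Lemma~2.31(1) is a clean way to avoid naming the image $\mathbf{d}(V_{a})$ explicitly, while the paper's direct computation has the small bonus of identifying $\mathbf{d}(V_{s})=V_{s^{-1}s}$ along the way.
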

\begin{proof} We show first that the topology is Hausdorff.
Let $A,B \in \mathsf{G}(S)$ be distinct elements.
Then there exists $a \in A$ such that $a \notin B$
otherwise we would have $A \subseteq B$ and so $A = B$ since both $A$ and $B$ are ultrafilters.
But $b \notin B$ implies that there exists $b \in B$ such that $a \wedge b = 0$ by Lemma~2.6(2).
Thus $V_{a} \cap V_{b} = \emptyset$ and $A \in V_{a}$ and $B \in V_{b}$.

That inversion is continuous follows by Lemma~2.10(3).

Multiplication is continuous.
Let $\mathsf{G}(S) \ast \mathsf{G}(S)$ denoted the subset of  $\mathsf{G}(S) \times \mathsf{G}(S)$ 
consisting of pairs $(A,B)$ such that $A^{-1} \cdot A = B \cdot B^{-1}$.
We prove that the map 
$$\mu \colon \mathsf{G}(S) \ast \mathsf{G}(S) \rightarrow \mathsf{G}(S)$$ 
given by $(A,B) \mapsto A \cdot B$ is continuous.
To do this we prove that
$$\mu^{-1}(V_{a}) = \left(\bigcup_{0 \neq bc \leq a} V_{b} \times V_{c}\right) \cap (\mathsf{G}(S) \ast \mathsf{G}(S)).$$
Let $(B,C) \in \mu^{-1}(V_{a})$.
Then $A = B \cdot C$ is an ultrafilter containing $a$. 
Then $a \in (BC)^{\uparrow}$ and so $bc \leq a$ for some $b \in B$ and $c \in C$.
Thus $B \in V_{b}$, $C \in V_{c}$ and $0 \neq bc \leq a$.
We have proved that the lefthand side is contained in the righthand side.
To prove the reverse inclusion, 
suppose that $0 \neq bc \leq a$ and $B \in V_{b}$, $C \in \mathcal{K}_{c}$ and the product $B \cdot C$ exists. 
Then $B \cdot C$ is an ultrafilter containing $a$ and so $B \cdot C \in \mathcal{K}_{a}$.

The map $\mathbf{d}$ is a local homeomorphism.
We are required to show that $\dom \colon \mathsf{G}(S) \rightarrow \mathsf{G}(S)_{o}$ is a local homeomorphism.
To do this it is enough to prove that the map $\dom \colon V_{s} \rightarrow V_{s^{-1}s}$
given by $A \mapsto A^{-1} \cdot A$ is a homeomorphism. 
It is bijective by Lemma~2.8(3).
It is continuous because inversion and multiplication are continuous. 
It remains to show that it is open.
A base of open subsets of $V_{s}$ is given by the sets $V_{s} \cap V_{t}$;
that is, the sets $V_{s \wedge t}$.
It follows that a base of open subsets of  $V_{s}$ is given by the sets
$V_{t}$ where $t \leq s$.
But it follows that $V_{t^{-1}t}$ is an open set in $V_{s^{-1}s}$.

$\mathsf{G}(S)_{o}$ is homeomorphic to the space constructed from $E(S)$.
In Lemma~2.18(4), we defined a bijection $\epsilon \colon \mathsf{G}(S)_{o} \rightarrow \mathsf{G}(E(S))$ by $\epsilon (A) = E(A)$.
We prove that this is a homeomorphism.
We need some notation.
If $e \in E(S)$ is an idempotent we denote the set of ultrafilters in $E(S)$ containing $e$ by $V_{e}^{E}$.
We prove that the map $\epsilon$ is continuous and open.
By definition 
$$\epsilon^{-1} (V_{e}^{E}) = \{A \in \mathsf{G}(S)_{o} \colon e \in E(A) \}.$$
But the set on the righthand side is just $V_{e}$ because a filter contains an idempotent if and only if it 
is an inverse submonoid if and only if it is an idempotent filter.
It follows that $\epsilon$ is continuous.
To show that $\epsilon$ is open we calculate $\epsilon (V_{a} \cap \mathsf{G}(S)_{o})$.
The elements of $V_{a} \cap \mathsf{G}(S)_{o}$ are the idempotent ultrafilters that contain $a$.
They therefore contain $a^{-1}a$ and so also the idempotent $e = a \wedge a^{-1}a$.
But if $F \subseteq E(S)$ is an ultrafilter containing $e$ then $F^{\uparrow}$ is an idempotent ultrafilter in $S$ containing $a$.
Thus  $\epsilon (V_{a} \cap \mathsf{G}(S)_{o}) = V_{e}^{E}$.
\end{proof}

The main theorem of this section now follows.

\begin{theorem}[Comparison theorem] Let $S$ be a pre-Boolean inverse $\wedge$-semigroup.
Then $\mathsf{G}(S)$ is a Hausdorff Boolean groupoid, and $\mathsf{D}(S)$ is isomorphic to $\mathsf{B}(\mathsf{G}(S))$. 
\end{theorem}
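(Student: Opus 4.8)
The plan is to split the statement into its two halves: first that $\mathsf{G}(S)$ is a Hausdorff Boolean groupoid, and then that the embedding $\beta'$ of Section~2.1 carries $\mathsf{D}(S)$ onto $\mathsf{B}(\mathsf{G}(S))$. The first half is essentially the transfer of a Boolean-space structure from $E(S)$ to $\mathsf{G}(S)_{o}$, and the second is a matching of the algebraic description of the elements of $\mathsf{D}(S)$ (finite unions of basic open sets indexed by a compatible set) with the topological description of compact-open bisections.

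For the first half I would argue as follows. By the preceding proposition, $\mathsf{G}(S)$ is already a Hausdorff \'etale topological groupoid, and $\mathsf{G}(S)_{o}$ is homeomorphic, via $\epsilon$, to the space of ultrafilters of $E(S)$ equipped with the topology having $\{V_{e} : e \in E(S)\}$ as a basis. Since $S$ is pre-Boolean, part~(1) of the Booleanization Theorem gives that $E(S)$ is pre-Boolean, and then part~(2) gives that every tight filter of $E(S)$ is an ultrafilter. Hence, by Proposition~2.27 together with Lemma~2.26 and Proposition~2.24, each basic set $V_{e}$ of $\mathsf{U}(E(S))$ is compact-open and $\mathsf{U}(E(S))$ is Hausdorff; that is, $\mathsf{G}(S)_{o}$ is a Boolean space. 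An \'etale groupoid whose identity space has a basis of compact-open sets has a basis of compact-open bisections, by the lemma on Hausdorff \'etale groupoids; so $\mathsf{G}(S)$ is a Hausdorff Boolean groupoid, and in particular $\mathsf{B}(\mathsf{G}(S))$ is a Boolean inverse $\wedge$-semigroup by the same lemma.

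For the second half, recall that $\mathsf{D}(S) = \mathsf{FC}(\mathbf{S})/\!\equiv$ embeds via $\beta'$ into $\mathsf{Bi}(\mathsf{G}(S))$ (the groupoid of ultrafilters of the separative semigroup $\mathbf{S}$ being identified with $\mathsf{G}(S)$), and that the image of $\beta'$ consists precisely of the bisections of the form $\bigcup_{i=1}^{m} V_{a_{i}}$ with $\{a_{1}, \ldots, a_{m}\}$ a compatible set, by Lemmas~2.14 and~2.15. I would then check that this image is exactly $\mathsf{B}(\mathsf{G}(S))$. For one inclusion: a basic set $V_{a}$ is open, and it is compact because $\mathbf{d} \colon V_{a} \to V_{a^{-1}a}$ is a homeomorphism onto a basic set of $\mathsf{G}(S)_{o}$, which is compact by the previous paragraph; hence every finite union $\bigcup_{i} V_{a_{i}}$ over a compatible set, which is a bisection by Lemma~2.15(1), is a compact-open bisection. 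For the other inclusion: if $K$ is a compact-open bisection then, since the $V_{a}$ form a basis for $\mathsf{G}(S)$ (Lemma~2.10), compactness yields $K = \bigcup_{i=1}^{m} V_{a_{i}}$ for finitely many $a_{i}$, and since this union is a bisection, Lemma~2.15(1) forces $\{a_{1}, \ldots, a_{m}\}$ to be a compatible set, so $K$ lies in the image of $\beta'$. Thus $\beta' \colon \mathsf{D}(S) \to \mathsf{B}(\mathsf{G}(S))$ is a bijective homomorphism of inverse semigroups; a bijective homomorphism of inverse semigroups both preserves and reflects the natural partial order, hence preserves all meets and the induced Boolean structure on idempotents, so $\beta'$ is an isomorphism of Boolean inverse $\wedge$-semigroups.

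The step I expect to be the main obstacle is the compactness bookkeeping in the last paragraph: proving that each basic set $V_{a}$ is compact, and conversely that an arbitrary compact-open bisection is a finite union of such sets indexed by a compatible subset. Both of these ultimately rest on having first identified $\mathsf{G}(S)_{o}$ as a Boolean space, which is the one place where the pre-Boolean hypothesis is genuinely used (through the Booleanization Theorem and Proposition~2.27). Everything else — that $\beta'$ is a well-defined injective homomorphism, and the passage from a bijective inverse-semigroup morphism to an isomorphism of Boolean inverse $\wedge$-semigroups — is routine given the results already established.
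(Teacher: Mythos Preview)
Your proposal is correct and follows essentially the same route as the paper's own proof: Proposition~2.33 for the Hausdorff \'etale structure, the Booleanization Theorem to identify $\mathsf{G}(S)_{o}$ as a Boolean space, Lemma~2.32(3) to lift this to a basis of compact-open bisections, and Lemmas~2.14--2.15 to match the image of $\beta'$ with $\mathsf{B}(\mathsf{G}(S))$. You spell out more than the paper does --- in particular the compactness of each $V_{a}$ via the homeomorphism $\mathbf{d}\colon V_{a}\to V_{a^{-1}a}$, and the use of Lemma~2.15(1) to force the finite indexing set of a compact-open bisection to be compatible --- but these are exactly the details the paper leaves implicit.
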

\begin{proof} The fact that  $\mathsf{G}(S)$ is a Hausdorff Boolean groupoid follows from Proposition~2.33 
and the fact that the semilattice of idempotents of $S$ is a pre-Boolean semilattice
whose associated topological space is Boolean and homeomorphic to the space of identities of $\mathsf{G}(S)$. 
After Lemma~2.14, we defined what we can now see is a map $\beta' \colon \mathsf{D}(S) \rightarrow \mathsf{B}(\mathsf{G}(S))$. 
We proved there that this was an injective homomorphism. 
By Lemma~2.15, this homomorphism is surjective because every element of $\mathsf{B}(\mathsf{G}(S))$,
being compact-open, is a finite union of elements of the form $V_{s}$ where $s \in S$
since these form a compact-open basis.
\end{proof}

\subsection{The Duality Theorem }

We have seen how to complete pre-Boolean inverse $\wedge$-semigroups to Boolean inverse $\wedge$-semigroups.
We have also seen how to construct Boolean inverse $\wedge$-semigroups from Hausdorff Boolean groupoids.
We now complete the circle of ideas by proving the Duality Theorem.
The following was proved as Lemma~2.17 of \cite{Law11}.

\begin{lemma} Let $\alpha \colon G \rightarrow H$ be a covering functor between groupoids.
Then $\alpha^{-1} \colon \mathsf{Bi}(H) \rightarrow \mathsf{Bi}(G)$ is a morphism of inverse $\wedge$-semigroups and 
defines a contravariant functor from the category of
groupoids and their covering functors to the category of Boolean inverse $\wedge$-semigroups and their morphisms.
\end{lemma}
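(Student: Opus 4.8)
The plan is to verify, in order: (i) that $\alpha^{-1}$ genuinely sends $\mathsf{Bi}(H)$ into $\mathsf{Bi}(G)$; (ii) that it is a homomorphism of inverse semigroups; (iii) that it preserves binary meets and restricts on idempotents to a homomorphism of (generalized) Boolean algebras, so that it is a morphism of Boolean inverse $\wedge$-semigroups; and (iv) that the assignments $G \mapsto \mathsf{Bi}(G)$, $\alpha \mapsto \alpha^{-1}$ are contravariantly functorial. Throughout I would use freely that a functor satisfies $\alpha(g^{-1}) = \alpha(g)^{-1}$, $\mathbf{d}(\alpha(g)) = \alpha(\mathbf{d}(g))$, $\mathbf{r}(\alpha(g)) = \alpha(\mathbf{r}(g))$, and that $\mathsf{Bi}(G)$ is a Boolean inverse $\wedge$-semigroup in which the order is inclusion, binary meets are intersections, and the idempotents are exactly the subsets of $G_{o}$ (Lemma~2.9).

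For (i): given $A \in \mathsf{Bi}(H)$, functoriality gives at once $\alpha^{-1}(A)^{-1} = \alpha^{-1}(A^{-1})$. If $g_{1} \in \alpha^{-1}(A^{-1})$ and $g_{2} \in \alpha^{-1}(A)$ are composable, then $\alpha(g_{1}g_{2}) = \alpha(g_{1})\alpha(g_{2}) \in A^{-1}A \subseteq H_{o}$, so $\alpha(g_{1}g_{2})$ is an identity. This is where \emph{star-injectivity} enters: writing $e = g_{1}g_{2}$ and $x = \mathbf{d}(e)$, we have $\alpha(x) = \mathbf{d}(\alpha(e)) = \alpha(e)$ (since $\alpha(e)$ is an identity) and $\mathbf{d}(e) = \mathbf{d}(x)$, whence $e = x \in G_{o}$ by star-injectivity. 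Thus $\alpha^{-1}(A)^{-1}\alpha^{-1}(A) \subseteq G_{o}$, and dually $\alpha^{-1}(A)\alpha^{-1}(A)^{-1} \subseteq G_{o}$, so $\alpha^{-1}(A)$ is a local bisection of $G$.

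For (ii): the inclusion $\alpha^{-1}(A)\alpha^{-1}(B) \subseteq \alpha^{-1}(AB)$ is immediate from functoriality. The reverse inclusion is where \emph{star-surjectivity} is used: if $g \in \alpha^{-1}(AB)$, write $\alpha(g) = ab$ with $a \in A$, $b \in B$ composable; then $\mathbf{d}(b) = \mathbf{d}(\alpha(g)) = \alpha(\mathbf{d}(g))$, so by star-surjectivity there is $g_{2} \in G$ with $\mathbf{d}(g_{2}) = \mathbf{d}(g)$ and $\alpha(g_{2}) = b$. Putting $g_{1} = g g_{2}^{-1}$, one checks $g = g_{1}g_{2}$, $\alpha(g_{2}) = b \in B$, and $\alpha(g_{1}) = \alpha(g)\alpha(g_{2})^{-1} = (ab)b^{-1} = a\,\mathbf{r}(b) = a \in A$ (using $\mathbf{d}(a) = \mathbf{r}(b)$). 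Hence $\alpha^{-1}$ is multiplicative, and with $\alpha^{-1}(A^{-1}) = \alpha^{-1}(A)^{-1}$ from step (i) it is a homomorphism of inverse semigroups. For (iii), preservation of meets is the set-theoretic identity $\alpha^{-1}(A \cap B) = \alpha^{-1}(A) \cap \alpha^{-1}(B)$; and for an idempotent $E \subseteq H_{o}$ the star-injectivity argument of step (i) again forces $\alpha^{-1}(E) \subseteq G_{o}$, while $\alpha^{-1}$ commutes with $\cup$, $\cap$, $\bsl$ and fixes $\emptyset$, so it restricts to a homomorphism of Boolean algebras $E(\mathsf{Bi}(H)) \to E(\mathsf{Bi}(G))$. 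Thus $\alpha^{-1}$ is a morphism of Boolean inverse $\wedge$-semigroups. Finally (iv): composition of covering functors is a covering functor (star-injectivity and star-surjectivity are each stable under composition), identity functors are trivially covering functors, and for preimages one has $(\beta\alpha)^{-1} = \alpha^{-1}\beta^{-1}$ and $(\mathrm{id}_{G})^{-1} = \mathrm{id}_{\mathsf{Bi}(G)}$; hence $\alpha \mapsto \alpha^{-1}$ is a contravariant functor into the category of Boolean inverse $\wedge$-semigroups and their morphisms.

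The only genuine obstacle is isolating exactly where the two halves of ``covering functor'' are indispensable: star-injectivity is needed to know that $\alpha^{-1}$ of a bisection (resp. of a subset of $H_{o}$) is again a bisection (resp. a subset of $G_{o}$), and star-surjectivity is needed for the surjectivity half of multiplicativity. Both are handled by the two short lifting arguments above; everything else is routine bookkeeping with preimages.
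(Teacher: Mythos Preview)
Your proof is correct and complete. The paper itself does not prove this lemma in-line; it simply cites Lemma~2.17 of \cite{Law11}, so there is no proof in the present paper to compare against. Your argument is the standard one: star-injectivity is exactly what forces preimages of identities (and hence of bisections) to be identities (bisections), and star-surjectivity is exactly what gives the non-obvious inclusion $\alpha^{-1}(AB) \subseteq \alpha^{-1}(A)\alpha^{-1}(B)$. The remaining verifications (preservation of $\cap$, $\cup$, $\bsl$, $\emptyset$ on idempotents, and contravariant functoriality via $(\beta\alpha)^{-1} = \alpha^{-1}\beta^{-1}$) are, as you say, routine bookkeeping with preimages.
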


Let $G$ be a Hausdorff Boolean groupoid.
For each $g \in G$ define
$$\mathcal{F}_{g} = \{A \in \mathsf{B}(G) \colon g \in A \}.$$
The following was proved as Lemma~2.19 of \cite{Law11};
although the definition of Boolean groupoid used in that paper is more restricted than the meaning used here,
the proofs are equally valid for our more general case.

\begin{lemma} With the above definition we have the following.
\begin{enumerate}

\item $\mathcal{F}_{g}$ is an ultrafilter and every ultrafilter in $\mathsf{B}(G)$ is of this form.

\item If $gh$ exists in the groupoid $G$ then $\mathcal{F}_{g} \cdot \mathcal{F}_{h} = \mathcal{F}_{gh}$.

\item $\mathcal{F}_{g}^{-1} = \mathcal{F}_{g^{-1}}$.

\item $\mathcal{F}_{g} = \mathcal{F}_{h}$ if and only if $g = h$.
 
\end{enumerate}
\end{lemma}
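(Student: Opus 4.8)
The four assertions are nearly independent, and the plan is to handle (3) and (4) first as quick topological observations, then (1), then (2). For (3): by definition $\mathcal{F}_g^{-1} = \{A^{-1} \colon A \in \mathcal{F}_g\}$, and since inversion is a homeomorphism of $G$ the map $A \mapsto A^{-1}$ is a bijection of $\mathsf{B}(G)$ with $g \in A \iff g^{-1} \in A^{-1}$, so $\mathcal{F}_g^{-1} = \mathcal{F}_{g^{-1}}$ at once. For (4), only $g \neq h \Rightarrow \mathcal{F}_g \neq \mathcal{F}_h$ needs an argument: since $G$ is Hausdorff I would separate $g$ and $h$ by disjoint open sets and then, using that $G$ has a basis of compact-open bisections, shrink the neighbourhood of $g$ to a compact-open bisection $A$ with $g \in A$ and $h \notin A$, so $A \in \mathcal{F}_g \setminus \mathcal{F}_h$.

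For (1), I would first check that $\mathcal{F}_g$ is a filter of $\mathsf{B}(G)$: it is nonempty (basis of compact-open bisections), it avoids $0 = \emptyset$, it is upward closed under inclusion, and it is downward directed because the meet of two compact-open bisections is their intersection, again a compact-open bisection, which still contains $g$. To upgrade this to an ultrafilter I would invoke the criterion in Lemma~2.6(2): it suffices to show that if $B \in \mathsf{B}(G)$ meets every member of $\mathcal{F}_g$ then $g \in B$. If not, then $B$ --- being compact in the Hausdorff space $G$ --- is closed, so $G \setminus B$ is an open neighbourhood of $g$ and contains a basic compact-open bisection $C$ with $g \in C \subseteq G \setminus B$; then $C \in \mathcal{F}_g$ but $B \cap C = \emptyset$, a contradiction. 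For the surjectivity half, given an ultrafilter $F$ of $\mathsf{B}(G)$, fix $A_0 \in F$ and observe that $\{A \cap A_0 \colon A \in F\}$ is a family of closed subsets of the compact set $A_0$ (each $A$ is closed, being compact in a Hausdorff space) with the finite intersection property, because $F$ is directed and $\emptyset \notin F$; hence $\bigcap_{A \in F} A \neq \emptyset$, and for any $g$ in this intersection we have $F \subseteq \mathcal{F}_g$, whence $F = \mathcal{F}_g$ by maximality of $F$ together with the fact that $\mathcal{F}_g$ is a filter.

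For (2), one inclusion is formal: for $A \in \mathcal{F}_g$ and $B \in \mathcal{F}_h$, the product $gh$ (which exists by hypothesis) lies in $AB$, and $AB$ is again a compact-open bisection, so $AB \in \mathcal{F}_{gh}$; taking upward closures gives $\mathcal{F}_g \cdot \mathcal{F}_h = (\mathcal{F}_g \mathcal{F}_h)^{\uparrow} \subseteq \mathcal{F}_{gh}$. For the reverse, take $C \in \mathcal{F}_{gh}$: since $C$ is open and $gh \in C$, continuity of the multiplication map $\mathbf{m}$ makes $\mathbf{m}^{-1}(C)$ an open neighbourhood of $(g,h)$ in $G \ast G$, and because the compact-open bisections form a basis of $G$ I can choose compact-open bisections $A \ni g$ and $B \ni h$ with $(A \times B) \cap (G \ast G) \subseteq \mathbf{m}^{-1}(C)$, i.e.\ $AB \subseteq C$; then $A \in \mathcal{F}_g$, $B \in \mathcal{F}_h$ and $AB \subseteq C$ give $C \in \mathcal{F}_g \cdot \mathcal{F}_h$, so $\mathcal{F}_{gh} \subseteq \mathcal{F}_g \cdot \mathcal{F}_h$.

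The step I expect to be the real obstacle is the surjectivity half of (1): extracting an honest point $g \in G$ from an abstract ultrafilter of $\mathsf{B}(G)$ is where compactness has to enter, via the finite intersection property inside a single compact-open bisection. The reverse inclusion in (2) is the only other place where the topology does genuine work --- continuity of $\mathbf{m}$ together with the fact that products of basic compact-open bisections form a basis of $G \times G$ --- and everything else is routine bookkeeping with the inverse-semigroup structure of $\mathsf{B}(G)$.
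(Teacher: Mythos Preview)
Your proposal is correct and follows essentially the same route as the paper. In particular, your argument for (1) --- both the ultrafilter criterion via Lemma~2.6(2) using that compact sets are closed in a Hausdorff space, and the surjectivity via the finite intersection property inside a fixed compact-open bisection --- matches the paper's proof almost verbatim; the paper treats (2), (3) as ``straightforward'' and (4) as immediate from Hausdorffness, and your explicit arguments for these are natural expansions of that sketch.
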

\begin{proof}
(1) Let $g \in G$.
It is immediate that $\mathcal{F}_{g}$ is a filter so it only remains to check that it is an ultrafilter.
We use Lemma~2.6(2).
Let $A \in \mathsf{B}(G)$ be a compact-open bisection with the property that $A \cap B \neq \emptyset$ for each $B \in \mathsf{B}(G)$.
We shall prove that $g$ belongs to the closure of $A$; but in a Hausdorff space compact sets are closed and so this will imply
that $g \in A$.
Let $O$ be any open set containing $g$.
By definition there is a compact-open bisection $C$ such that $g \in C \subseteq O$.
But $C$ is a compact-open bisection containing $g$ and so $C \in \mathcal{F}_{g}$.
By assumption $C \cap A \neq \emptyset$.
We have proved that every open set containing $g$ contains elements of $A$.
It follows that $g$ belongs to the closure of $A$, as required.

Now let $F$ be any ultrafilter in $\mathsf{B}(G)$.
We shall prove that $F \subseteq \mathcal{F}_{g}$ for some $g \in G$ from which the claim will follow.
Let $A \in F$ be any compact-open bisection belonging to $F$.
Consider the set $F' = \{A \cap B \colon b \in F \}$,
a subset of $F$ because $F$ is a filter.
In addition, this is a family of closed subsets of $G$, since compact subsets of hausdorff spaces are closed, 
with the property that any finite intersection is non-empty, because $F$ is a filter. 
But each element of $F'$ is a subset of the compact set $A$.
It follows that the set $F'$ has a non-empty intersection.
Let $g$ belong to this intersection.
Then $g$ belongs to every element of $F$ and so $F \subseteq \mathcal{F}_{g}$, as required.

The proofs of (2) and (3) are straightforward, and (4) follows from the fact that the groupoid $G$ is Hausdorff.
\end{proof}

\begin{proposition} Define $\mathsf{B}$ to take the Hausdorff Boolean groupoid $G$ to the  Boolean inverse $\wedge$-semigroup $\mathsf{B}(G)$ 
and the proper continuous covering functor $\alpha \colon G \rightarrow H$ 
to the function $\alpha^{-1} \colon \mathsf{B}(H) \rightarrow \mathsf{B}(G)$.
Then $\mathsf{B}$ is a contravariant functor from the category of Hausdorff Boolean groupoids to the category of Boolean inverse $\wedge$-semigroups.
\end{proposition}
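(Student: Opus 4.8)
The plan is to verify that $\mathsf{B}$ is well defined on objects and on arrows and then that it respects identities and composition; Lemma~2.34 already does most of the work, so the only genuinely new points concern compactness of preimages and the properness of the resulting morphisms.

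On objects there is nothing to do: $\mathsf{B}(G)$ is a Boolean inverse $\wedge$-semigroup by Lemma~2.35(4). Now let $\alpha \colon G \rightarrow H$ be a proper continuous covering functor. The first step is to check that $\alpha^{-1}$ maps $\mathsf{B}(H)$ into $\mathsf{B}(G)$. If $A \in \mathsf{B}(H)$ then $A$ is in particular a local bisection, so $\alpha^{-1}(A) \in \mathsf{Bi}(G)$ by Lemma~2.34; moreover $\alpha^{-1}(A)$ is open since $\alpha$ is continuous and compact since $\alpha$ is proper, so $\alpha^{-1}(A) \in \mathsf{B}(G)$. It is also worth recording that $\alpha^{-1}(H_{o}) = G_{o}$: the inclusion $\supseteq$ holds because functors preserve identities, and $\subseteq$ because $\alpha^{-1}$ carries the idempotent $H_{o}$ of $\mathsf{Bi}(H)$ to an idempotent of $\mathsf{Bi}(G)$ (alternatively, if $\alpha(g) \in H_{o}$ then $f = \mathbf{d}(g)$ satisfies $\alpha(f) = \alpha(g)$ and $\mathbf{d}(f) = \mathbf{d}(g)$, so $f = g$ by star-injectivity and $g \in G_{o}$).

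Next I would check that $\alpha^{-1} \colon \mathsf{B}(H) \rightarrow \mathsf{B}(G)$ is a proper morphism of Boolean inverse $\wedge$-semigroups. It is a homomorphism of inverse $\wedge$-semigroups, being the restriction of the homomorphism supplied by Lemma~2.34. On idempotents it is the restriction of the set-theoretic preimage map, which preserves finite intersections, finite unions, set differences and the empty set; since $E(\mathsf{B}(H))$ and $E(\mathsf{B}(G))$ are precisely the Boolean algebras of compact-open subsets of the Boolean spaces $H_{o}$ and $G_{o}$, with these set operations as meet, join, relative complement and bottom element, $\alpha^{-1}\mid E(\mathsf{B}(H))$ is a homomorphism of Boolean algebras and hence $\alpha^{-1}$ is a morphism. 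For properness I would use Lemma~2.36: every ultrafilter of $\mathsf{B}(G)$ has the form $\mathcal{F}_{g}$ for some $g \in G$, and
$$(\alpha^{-1})^{-1}(\mathcal{F}_{g}) = \{A \in \mathsf{B}(H) \colon g \in \alpha^{-1}(A)\} = \{A \in \mathsf{B}(H) \colon \alpha(g) \in A\} = \mathcal{F}_{\alpha(g)},$$
which is again an ultrafilter by Lemma~2.36(1).

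The remaining functor axioms are formal. A composite of proper continuous covering functors is again such a functor (composites of proper maps are proper, of continuous maps continuous, and of covering functors covering, the last by a direct check on star-injectivity and star-surjectivity), so $\mathsf{B}$ is defined on composites; and $\mathsf{B}(\mathrm{id}_{G}) = (\mathrm{id}_{G})^{-1} = \mathrm{id}_{\mathsf{B}(G)}$, while $\mathsf{B}(\beta\alpha) = (\beta\alpha)^{-1} = \alpha^{-1}\beta^{-1} = \mathsf{B}(\alpha)\mathsf{B}(\beta)$ for composable $\alpha,\beta$. I expect the only step needing any thought beyond bookkeeping to be the verification that $\alpha^{-1}$ stays inside $\mathsf{B}(G)$, where properness of $\alpha$ is used precisely to guarantee compactness of preimages; everything else is a direct appeal to Lemmas~2.34, 2.35 and 2.36.
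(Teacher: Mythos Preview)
Your proof is correct and follows essentially the same route as the paper's: use the covering-functor lemma to get that $\alpha^{-1}$ is an inverse $\wedge$-semigroup homomorphism on bisections, invoke continuity and properness of $\alpha$ to ensure preimages of compact-open bisections are compact-open, and verify properness of the morphism by the computation $(\alpha^{-1})^{-1}(\mathcal{F}_{g}) = \mathcal{F}_{\alpha(g)}$ via Lemma~2.36. You are simply more explicit than the paper in spelling out the Boolean-algebra homomorphism on idempotents and the functor axioms, which the paper leaves implicit (the functoriality is already contained in Lemma~2.35). One small bookkeeping remark: your lemma numbers are off by one or two in places---what you call Lemma~2.34 is the paper's Lemma~2.35, and the statement that $\mathsf{B}(G)$ is a Boolean inverse $\wedge$-semigroup is Lemma~2.32(4), not 2.35(4).
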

\begin{proof} Let $\alpha \colon G \rightarrow H$ be a proper continuous covering functor between two Boolean groupoids.
By Lemma~2.35 and the fact that $\alpha$ is proper, we have that $\alpha^{-1}$ is a $\wedge$-homomorphism from $\mathsf{B}(H)$ to $\mathsf{B}(G)$.
It remains to prove that $\alpha^{-1}$ pulls ultrafilters in $\mathsf{B}(G)$ back to ultrafilters in $\mathsf{B}(H)$. 
Let $F$ be an ultrafilter in $\mathsf{B}(G)$.
By Lemma~2.36(1), there exists $g \in G$ such that $F = \mathcal{F}_{g}$.
Put $h = \alpha (g)$.
Then $G = \mathcal{F}_{h}$ is an ultrafilter in $H$.
The inverse image of $F$ under $\alpha^{-1}$ consists of all those compact-open local bisections $B$ such that
$\alpha^{-1} (B) \in F$.
But $\alpha^{-1} (B) \in F$ if and only if $g \in \alpha^{-1} (B)$ if and only if $\alpha (g) \in B$
if and only if $B \in G$, as required. 
\end{proof}

The following was proved as Proposition~2.15 of \cite{Law11}.

\begin{lemma} Let $\theta \colon S \rightarrow T$ be a morphism of Boolean inverse $\wedge$-semigroups.
Then $\theta^{-1} \colon \mathsf{G}(T) \rightarrow \mathsf{G}(S)$ is a covering functor.
\end{lemma}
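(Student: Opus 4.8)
The plan is to verify, in turn, that $\theta^{-1}$ is a well-defined map $\mathsf{G}(T)\to\mathsf{G}(S)$ of the underlying sets, that it is a functor of groupoids, and that it is star-injective and star-surjective. The unifying device will be to push questions about ultrafilters of $S$ and $T$ down to their semilattices of idempotents by means of Lemma~2.18, exploiting that $\theta\mid E(S)$ is a homomorphism of Boolean algebras, and to use the coset-like calculus of filters recorded in Lemmas~2.7, 2.8 and 2.10.

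First I would show that if $F$ is an ultrafilter of $T$ then $\theta^{-1}(F)$ is an ultrafilter of $S$. That $\theta^{-1}(F)$ is a filter is routine: it is closed since $\theta$ is order preserving; it is down directed since $\theta$ is a $\wedge$-homomorphism, so that $a,b\in\theta^{-1}(F)$ forces $\theta(a\wedge b)=\theta(a)\wedge\theta(b)\in F$; and $0\notin\theta^{-1}(F)$ because $\theta(0)=0$. For maximality, I would put $F_{0}=F^{-1}\cdot F$, an idempotent ultrafilter of $T$, so that $P=E(F_{0})$ is an ultrafilter of $E(T)$ by Lemma~2.18(3). Since $\theta\mid E(S)$ is a homomorphism of Boolean algebras and in any Boolean algebra the ultrafilters are precisely the prime filters, $(\theta\mid E(S))^{-1}(P)$ is a prime --- hence an ultra --- filter of $E(S)$; by Lemma~2.18(2) its upward closure in $S$ is an idempotent ultrafilter, and it is straightforward to identify this upward closure with $\theta^{-1}(F_{0})$. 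Then $(\theta^{-1}(F))^{-1}\cdot\theta^{-1}(F)$ is a filter of $S$ contained in $\theta^{-1}(F_{0})$, hence equal to it, and so is an ultrafilter; by Proposition~2.13 of \cite{Law11} it follows that $\theta^{-1}(F)$ is itself an ultrafilter. The same computation yields $\theta^{-1}(\mathbf{d}(F))=\mathbf{d}(\theta^{-1}(F))$, and dually for $\mathbf{r}$, so identities of $\mathsf{G}(T)$ go to identities of $\mathsf{G}(S)$.

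For functoriality, the inverse is handled by the fact that $s\mapsto s^{-1}$ is a homomorphism, giving $\theta^{-1}(F^{-1})=\theta^{-1}(F)^{-1}$. If $F$ and $G$ are composable in $\mathsf{G}(T)$, the identity $\theta^{-1}(\mathbf{d}(F))=\mathbf{d}(\theta^{-1}(F))$ above shows that $\theta^{-1}(F)$ and $\theta^{-1}(G)$ are composable; moreover $a\in\theta^{-1}(F)$ and $b\in\theta^{-1}(G)$ give $ab\in\theta^{-1}(F\cdot G)$, so $\theta^{-1}(F)\cdot\theta^{-1}(G)\subseteq\theta^{-1}(F\cdot G)$, and as both sides are ultrafilters by the previous step they coincide. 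For star-injectivity I would suppose $\theta^{-1}(F)=\theta^{-1}(G)$ with $\mathbf{d}(F)=\mathbf{d}(G)$; picking $s$ in $\theta^{-1}(F)=\theta^{-1}(G)$ gives $\theta(s)\in F\cap G$, so $F\cap G\neq\emptyset$ and $F=G$ by Lemma~2.8(3). For star-surjectivity, given an identity $e$ of $\mathsf{G}(T)$ and an ultrafilter $h$ of $S$ with $\mathbf{d}(h)=\theta^{-1}(e)$, I would choose $s\in h$, note that $s^{-1}s\in h^{-1}\cdot h=\theta^{-1}(e)$ forces $\theta(s)^{-1}\theta(s)=\theta(s^{-1}s)\in e$, and set $g=(\theta(s)\,e)^{\uparrow}$, which by the calculus of filters (compare the proof of Lemma~2.10(4)) is an ultrafilter of $T$ with $\mathbf{d}(g)=e$ and $\theta(s)\in g$. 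Then $s\in\theta^{-1}(g)\cap h$ and $\mathbf{d}(\theta^{-1}(g))=\theta^{-1}(\mathbf{d}(g))=\theta^{-1}(e)=\mathbf{d}(h)$, so $\theta^{-1}(g)=h$ by Lemma~2.8(3).

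I expect the hard part to be the first step, that $\theta^{-1}$ carries ultrafilters to ultrafilters: maximality of filters is not preserved under arbitrary preimages, and it is exactly here that the hypothesis that $\theta$ is a \emph{morphism} --- rather than an arbitrary homomorphism --- must be used, through the descent to $E(S)$ and $E(T)$. (One should also note that in the non-unital setting $\theta^{-1}(F)$ can be empty unless $\theta$ is proper, so that strictly $\theta^{-1}$ is a covering functor only on the subgroupoid of $\mathsf{G}(T)$ on which it is defined; this is harmless here and is automatic once one restricts to proper morphisms for the Duality Theorem.)
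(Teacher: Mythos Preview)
The paper does not give an in-text proof of this lemma; it simply cites Proposition~2.15 of \cite{Law11}. Your proposal is therefore a genuine direct argument rather than a comparison case, and its overall architecture --- reduce to idempotent ultrafilters via Lemma~2.18, use the Boolean-algebra fact that prime filters are ultrafilters, then lift back and verify the covering-functor axioms via Lemma~2.8(3) --- is sound and is essentially what one expects the cited proof to do.

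There is, however, one genuine slip. You write that $(\theta^{-1}(F))^{-1}\cdot\theta^{-1}(F)$ is a filter \emph{contained in} the ultrafilter $\theta^{-1}(F_{0})$ and conclude ``hence equal to it''. That inference is invalid: ultrafilters are maximal, so nothing properly contains them, but filters can certainly be properly contained in them. What you need is the \emph{reverse} inclusion $\theta^{-1}(F_{0})\subseteq(\theta^{-1}(F))^{-1}\cdot\theta^{-1}(F)$, from which equality does follow by maximality of $\theta^{-1}(F_{0})$. This reverse inclusion is easy: fix $a\in\theta^{-1}(F)$ and $h\in\theta^{-1}(F_{0})$; then $\theta(ah)=\theta(a)\theta(h)\in F\cdot F_{0}=F\cdot F^{-1}\cdot F=F$ by Lemma~2.7(1), so $ah\in\theta^{-1}(F)$, and $a^{-1}(ah)=a^{-1}ah\leq h$ gives $h\in(\theta^{-1}(F))^{-1}\cdot\theta^{-1}(F)$. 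With this fix your argument goes through.

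A side remark: your first step actually establishes more than the lemma as used in the paper. In Proposition~2.39 the author applies Lemma~2.38 only after assuming $\theta$ is \emph{proper}, which by definition means $\theta^{-1}$ carries ultrafilters to ultrafilters; so the paper treats well-definedness of $\theta^{-1}$ as a hypothesis, whereas you prove it (modulo non-emptiness) from the morphism axioms alone. Your parenthetical about the non-unital case correctly locates the only remaining obstruction.
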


\begin{proposition} Define $\mathsf{G}$ to take the Boolean inverse $\wedge$-semigroup $S$ to the Hausdorff Boolean groupoid $\mathsf{G}(S)$
and the proper morphism $\theta \colon S \rightarrow T$ between Boolean inverse $\wedge$-semigroups to the function
$\theta^{-1} \colon \mathsf{G}(T) \rightarrow \mathsf{G}(S)$.
Then $\mathsf{G}$ defines a contravariant functor from the category of Boolean inverse $\wedge$-semigroups
to the category of Hausdorff Boolean groupoids.
\end{proposition}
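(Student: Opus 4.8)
The plan is to verify three things in turn: that $\mathsf{G}(S)$ is a Hausdorff Boolean groupoid for every Boolean inverse $\wedge$-semigroup $S$; that $\theta^{-1}$ is a proper continuous covering functor whenever $\theta$ is a proper morphism; and that the assignment preserves identities and reverses composition. For the first point I would begin by observing that a Boolean inverse $\wedge$-semigroup is automatically pre-Boolean: by definition its semilattice of idempotents $E(S)$ is a Boolean algebra, and in a Boolean algebra every tight filter is an ultrafilter. Indeed, if $F$ is a tight filter with $a \in F$ and $b \leq a$, then $\{b, a \bsl b\}$ is a cover of $a$ --- any non-zero $x \leq a$ satisfies $x = (x \wedge b) \vee (x \wedge (a \bsl b))$ by distributivity, so one of the two joinands is non-zero --- whence $b \in F$ or $a \bsl b \in F$; this makes $F$ a prime filter, and prime filters in a Boolean algebra are ultrafilters. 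Thus $E(S)$ is pre-Boolean by Booleanization Theorem~(2), so $S$ is pre-Boolean by Booleanization Theorem~(1), and the Comparison Theorem gives that $\mathsf{G}(S)$ is a Hausdorff Boolean groupoid.

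For the second point, $\theta^{-1}$ sends every ultrafilter of $T$ to an ultrafilter of $S$ and so genuinely defines a map $\mathsf{G}(T) \to \mathsf{G}(S)$ --- this is exactly the hypothesis that $\theta$ is a proper morphism --- and the preceding lemma already records that this map is a covering functor. It remains to check continuity and properness, and for both I would exploit the identity $(\theta^{-1})^{-1}(V_a) = V_{\theta(a)}$, valid for each $a \in S$ because $a \in \theta^{-1}(F)$ exactly when $\theta(a) \in F$ (with the convention $V_0 = \emptyset$). Since the $V_a$ form a basis for the topology of $\mathsf{G}(S)$ and each $V_{\theta(a)}$ is a basic open subset of $\mathsf{G}(T)$, continuity is immediate. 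For properness, let $K \subseteq \mathsf{G}(S)$ be compact; as $\mathsf{G}(S)$ is Hausdorff with a basis of compact-open bisections of the form $V_a$ (the $V_a$ are compact-open by the Comparison Theorem, being the images of the generators of $\mathsf{D}(S)$ under the isomorphism $\mathsf{D}(S) \cong \mathsf{B}(\mathsf{G}(S))$), we may cover $K$ by finitely many of them, say $K \subseteq V_{a_1} \cup \cdots \cup V_{a_n}$. Then $(\theta^{-1})^{-1}(K)$ is a closed subset --- being the preimage of a compact, hence closed, set under a continuous map --- of the compact set $V_{\theta(a_1)} \cup \cdots \cup V_{\theta(a_n)} \subseteq \mathsf{G}(T)$, hence is itself compact.

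For the third point, a composite of two proper morphisms is again a proper morphism: it is a $\wedge$-homomorphism restricting to a Boolean-algebra homomorphism on idempotents, and $(\psi \theta)^{-1}$ applied to an ultrafilter equals $\theta^{-1}$ applied to $\psi^{-1}$ of that ultrafilter, hence is an ultrafilter. So $\mathsf{G}$ is defined on the whole morphism class, and the set-theoretic equalities $(\mathrm{id}_S)^{-1} = \mathrm{id}_{\mathsf{G}(S)}$ and $(\psi \theta)^{-1} = \theta^{-1} \psi^{-1}$ --- which are therefore equalities of functors as well --- give contravariant functoriality. Essentially everything is formal once the earlier results are in hand; the only steps that need thought are routing the first point through the Comparison Theorem and carrying out the properness argument in the second. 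The main obstacle, such as it is, is bookkeeping: keeping the symbol $V_a$ straight as it passes between $\mathsf{G}(S)$ and $\mathsf{G}(T)$, and recognising that the two senses of \emph{proper} in play --- inverse images of ultrafilters are ultrafilters, versus inverse images of compact sets are compact --- are linked precisely by the single identity $(\theta^{-1})^{-1}(V_a) = V_{\theta(a)}$.
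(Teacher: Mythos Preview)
Your proposal is correct and follows essentially the same route as the paper: well-definedness from the definition of proper morphism, the covering-functor property from Lemma~2.38, continuity and properness via the identity $(\theta^{-1})^{-1}(V_a) = V_{\theta(a)}$ together with the compactness of the basic sets $V_t$. You are in fact more thorough than the paper's own proof, which omits both the verification that $\mathsf{G}(S)$ is a Hausdorff Boolean groupoid (your pre-Boolean argument via tight $=$ prime $=$ ultra in a Boolean algebra is the right way to route this through the Comparison Theorem) and the explicit functoriality check on identities and composites.
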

\begin{proof}
Let $\theta \colon S \rightarrow T$ be a proper morphism between Boolean inverse $\wedge$-semigroups.
By assumption, $\theta^{-1} \colon \mathsf{G}(T) \rightarrow \mathsf{G}(S)$ is a well-defined function.
Put $\phi = \theta^{-1}$.
By Lemma~2.38, it follows that $\phi$ is a covering functor.
It remains to prove that $\phi$ is continuous and proper.
The basic open sets of $\mathsf{G}(S)$ are of the form $V_{s}$ where $s \in S$.
Put $t = \theta (s)$.
Then $F$ belongs to the inverse image of $V_{s}$ under $\phi$ if and only if
$\theta^{-1}(F) \in V_{s}$ if and only if $s \in \theta^{-1}(F)$ if and only if $\theta (s) \in F$
if and only if $F \in V_{t}$, as required. 

It remains to show that $\theta^{-1}$ is proper.
Let $X$ be a compact subset of $\mathsf{G}(S)$.
Clearly $X \subseteq \bigcup_{s \in S}V_{s}$.
But $X$ is compact and so we can find a finite number of elements of $S$ such that
$X \subseteq \bigcup_{i=1}^{n} V_{s_{i}}$.
It follows that
$\phi^{-1} (X) \subseteq \bigcup_{i=1}^{n} V_{\theta (s_{i})}$.
Now $X$ is a compact subset of a Hausdorff space and so is closed.
It follows that $\phi^{-1}(X)$ is closed.
But $\bigcup_{i=1}^{n} V_{\theta (s_{i})}$ is a finite union of compact subsets and so is compact.
But a closed subset of a compact spaces is compact.
It follows that $\phi^{-1} (X)$ is compact, as claimed.
\end{proof}

\begin{center}
{\bf Proof of the Duality Theorem}
\end{center}

\begin{proof}
Let $S$ be a Boolean inverse $\wedge$-semigroup.
Then $\mathsf{G}(S)$ is a Boolean groupoid and $\mathsf{B}(\mathsf{G}(S))$ is a Boolean inverse semigroup.
By Theorem~2.34, the Comparison Theorem, $\mathsf{D}(S)$ is isomorphic to $\mathsf{B}(\mathsf{G}(S))$.
We therefore need to show that $S$ is isomorphic to $\mathsf{D}(S)$.
It is enough to show that if $\theta \colon S \rightarrow T$ is a cover-to-join map to a 
distributive inverse semigroup then $\theta$ is automatically a join-preserving homomorphism.
The isomorphism will then follow from the universal property of $\mathsf{D}(S)$.
Let $s = \bigvee_{i=1}^{n} s_{i}$ in $S$.
Then the $s_{i}$ are pairwise compatible.
We prove that $\{s_{1}, \ldots, s_{n}\}$ is a cover of $s$.
Let $0 \neq a \leq s$.
Then $a = sa^{-1}a$.
It follows that $a = \bigvee_{i=1}^{n} s_{i}a^{-1}a$.
Since $a$ is non-zero one of the $s_{i}a^{-1}a$ must be non-zero.
Thus the idempotent $s_{i}^{-1}s_{i}a^{-1}a$ is non-zero.
But $a$ and $s_{i}$ are both in $s^{\downarrow}$ and so they are compatible.
It follows also that $\mathbf{s_{i} \wedge a} = s_{i}^{-1}s_{i}a^{-1}a$.
Thus $s_{i} \wedge a \neq 0$, as required.
It follows that $\theta (s) = \bigvee_{i=1}^{n} \theta (s_{i})$, as required.

Let $G$ be a Hausdorff Boolean groupoid.
Then $\mathsf{B}(G)$ is a Boolean inverse semigroup and $\mathsf{G}(\mathsf{B}(G))$ is a Hausdorff Boolean groupoid.
The function $g \mapsto \mathcal{F}_{g}$ is a bijective functor by Lemma~2.36.
We need to show that this map is continuous and open.
Let $U$ be a compact-open bisection of $G$.
The image of $U$ under this map is $\{\mathcal{F}_{g} \colon g \in U \}$.
Now $U$ is an element of the inverse semigroup $\mathsf{B}(G))$.
Within this inverse semigroup, we may consider all the ultrafilters that contain $U$ as an element.
That is, the set $V_{U}$.
By Lemma~2.36, ultrafilters in $\mathsf{B}(G))$ all have the form $\mathcal{F}_{h}$ for $h \in G$.
Now the ultrafilter $\mathcal{F}_{h}$ contains $U$ if and only if $h \in U$.
Hence
$$V_{U} = \{\mathcal{F}_{g} \colon g \in U \}$$
is the image of $U$ under our map and so is an open set.

Finally, we prove that our map is continuous.
A basic compact-open subset of $\mathsf{G}(\mathsf{B}(G))$ is of the form $V_{U}$
where $U$ is a compact-open bisection of $G$.
The inverse image of $V_{U}$ under our map is $U$.
\end{proof}

\section{Basic properties}

In this section, we shall prove some results that will be used to help us analyse the examples in Section~4.

\subsection{Unambiguous $E^{\ast}$-unitary inverse semigroups}

The following lemma will simplify checking when a map is a cover-to-join map.
Note that $\mathcal{D}$ is one of Green's relations.

\begin{lemma} Let $S$ be an inverse $\wedge$-semigroup and let $\theta \colon S \rightarrow T$ be a homomorphism to a distributive inverse semigroup.
\begin{enumerate}

\item The map $\theta$ is a cover-to-join map if and only if the map $\theta \mid E(S) \colon E(S) \rightarrow E(T)$ is a
cover-to-join map.

\item  Let $\{f_{i} \colon i \in I\}$ be an idempotent transversal of the non-zero $\mathcal{D}$-classes of $S$.
Then $\theta$ is a cover-to-join map if and only if it is a cover-to-join map for the distinguished family of idempotents.

\end{enumerate}
\end{lemma}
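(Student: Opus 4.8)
The plan is to reduce both statements to a transport-of-covers argument: covers of an element $s$ correspond to covers of the idempotent $\dom(s)$, and covers of an arbitrary non-zero idempotent $e$ correspond --- via a conjugacy witnessing $e\,\mathcal{D}\,f_{i}$ --- to covers of the distinguished representative $f_{i}$ (by which I mean: $\theta(f_{i})=\bigvee\theta(A)$ for every finite cover $A$ of $f_{i}$). In each case one moves the cover across, applies the hypothesis, and then recovers $\theta(s)=\bigvee\theta(A)$ using the distributive law in $T$. The only tools needed are the behaviour of the natural partial order, the identity $\dom(s\wedge t)=\dom(s)\dom(t)$ for compatible $s,t$, the fact that elements with a common upper bound are compatible, and the distributive law in $T$.

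For (1), the forward direction is immediate, since a cover of an idempotent $e$ lies in $e^{\downarrow}\subseteq E(S)$ and hence is already a cover in $S$. For the converse, suppose $\theta\mid E(S)$ is cover-to-join and let $\{a_{1},\dots,a_{m}\}$ be a finite cover of a non-zero $s$. The first step is to check that $\{\dom(a_{1}),\dots,\dom(a_{m})\}$ is a cover of $\dom(s)$: if $0\neq x\leq\dom(s)$ then $sx\neq 0$ and $sx\leq s$, so $sx\wedge a_{i}\neq 0$ for some $i$, and since $sx$ and $a_{i}$ both lie below $s$ they are compatible, whence $x\wedge\dom(a_{i})=\dom(sx\wedge a_{i})\neq 0$ (using $\dom(sx)=x$). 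The hypothesis then gives $\dom(\theta(s))=\bigvee_{i}\dom(\theta(a_{i}))$; multiplying on the left by $\theta(s)$, applying the distributive law, and using $a_{i}=sa_{i}^{-1}a_{i}$ (which holds because $a_{i}\leq s$) yields $\theta(s)=\bigvee_{i}\theta(sa_{i}^{-1}a_{i})=\bigvee_{i}\theta(a_{i})$, as required.

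For (2), I would first apply (1) to reduce to the corresponding statement about $\theta\mid E(S)$, namely that $\theta\mid E(S)$ is cover-to-join iff it is cover-to-join on the distinguished idempotents $\{f_{i}\}$. One direction is trivial. For the other, let $e$ be a non-zero idempotent with finite cover $\{e_{1},\dots,e_{m}\}$, choose $i$ with $e\,\mathcal{D}\,f_{i}$ and $a\in S$ with $\dom(a)=e$ and $\ran(a)=f_{i}$, and observe that $u\mapsto aua^{-1}$ is a meet-preserving bijection from the idempotents below $e$ onto those below $f_{i}$. Using it one checks that $\{ae_{1}a^{-1},\dots,ae_{m}a^{-1}\}$ is a cover of $f_{i}$: transport a non-zero $y\leq f_{i}$ to $a^{-1}ya\leq e$, meet it against some $e_{j}$, and transport back. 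The hypothesis then gives $\theta(f_{i})=\bigvee_{j}\theta(a)\theta(e_{j})\theta(a)^{-1}$; conjugating both sides by $\theta(a)^{-1}(\,\cdot\,)\theta(a)$, applying the distributive law twice, and simplifying with $e_{j}\leq e=\dom(a)$, gives $\theta(e)=\bigvee_{j}\theta(e_{j})$.

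The main obstacle is not conceptual but careful bookkeeping: verifying that the transported families $\{\dom(a_{i})\}$ and $\{ae_{j}a^{-1}\}$ are genuine covers (which rests on $x\leq\dom(s)\Rightarrow sx\neq 0$ and on $u\mapsto aua^{-1}$ being an injective, meet-preserving map between the two intervals of idempotents), and checking that each use of the distributive law in $T$ is applied to a finite \emph{compatible} family --- which it is, since in every instance the family lies below a single element. The zero element is dealt with trivially throughout. I expect no deeper difficulty; the whole proof is a disciplined application of the $\dom$/$\ran$ calculus of inverse semigroups together with the distributivity axiom.
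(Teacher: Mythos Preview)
Your argument for the non-trivial directions of both (1) and (2) is correct and is essentially the paper's own proof: transport the cover of $s$ to a cover of $\dom(s)$ (respectively, transport the cover of $e$ to a cover of the distinguished $f_{i}$ via conjugation by $a$), apply the hypothesis, and then multiply on the left by $\theta(s)$ (respectively, conjugate by $\theta(a)^{-1}(\,\cdot\,)\theta(a)$) using distributivity in $T$.

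One slip in the ``trivial'' forward direction of (1): you assert $e^{\downarrow}\subseteq E(S)$, but elements below an idempotent need not be idempotent unless $S$ is $E^{\ast}$-unitary, which is not assumed here. What you actually need is that an idempotent cover $\{e_{1},\dots,e_{m}\}$ of $e$ in $E(S)$ is already a cover of $e$ in $S$: given any $0\neq x\leq e$, pass to $0\neq\dom(x)\leq e$, use the $E(S)$-cover condition to get $\dom(x)e_{i}\neq 0$ for some $i$, and then compatibility of $x$ and $e_{i}$ (both below $e$) gives $\dom(x\wedge e_{i})=\dom(x)e_{i}\neq 0$, hence $x\wedge e_{i}\neq 0$. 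With that repair your proof is complete.
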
 
\begin{proof} 

(1) Only one direction needs proving: 
we prove that if $\theta \mid E(S) \colon E(S) \rightarrow E(T)$ is a cover-to-join map then $\theta$ is a cover-to-join map.
Let $\{a_{1}, \ldots, a_{m}\}$ be a cover of $a$.
We prove that $\{\mathbf{d}(a_{1}), \ldots, \mathbf{d}(a_{m})\}$ is a cover of $\mathbf{d}(a)$.
Let $0 \neq e \leq \mathbf{d}(a)$.
Put $a' = ae$. 
Then $\mathbf{d}(a') = e$ and so $0 \neq a' \leq a$.
Thus by assumption $a_{i} \wedge a' \neq 0$ for some $i$.
But $a_{i}$ and $a'$ are bounded above by $a$ and so are compatible.
By compatibility, we get that
$\mathbf{d}(a_{i} \wedge a') = \mathbf{d}(a_{i}) \mathbf{d}(a') = e \neq 0$, as required.
By the assumption on $\theta$ it follows that
$$\theta (\mathbf{d}(a)) = \bigvee_{i} \theta (\mathbf{d}(a_{i})).$$ 
Multiply this equality on the left by $\theta (a)$.
The lefthand-side becomes $\theta (a)$ and the righthand side becomes $\bigvee_{i} \theta (a_{i})$, as required. 

(2) Let $e$ be an arbitrary idempotent such that $e \mathcal{D} f$ where $f$ belongs to our transversal.
Let $\{e_{1}, \ldots, e_{m} \}$ be a cover of $e$, where all elements are idempotents.
Let $a$ be any element of $S$ such that $a^{-1}a = e$ and $aa^{-1} = f$.
Put $f_{i} = ae_{i}a^{-1}$ all bounded above by $f$.
We claim that $\{f_{1}, \ldots, f_{m} \}$ is a cover of $f$.
Let $0 \neq j \leq f$.
Then $0 \neq a^{-1}ja \leq e$.
By assumption there exists $e_{i}$ such that $e_{i} \wedge a^{-1}ja \neq 0$. 
Observe that $e(e_{i} \wedge a^{-1}ja) = e_{i} \wedge a^{-1}ja$.
It follows that $a(e_{i} \wedge a^{-1}ja)a^{-1} \neq 0$.
Thus  $a(e_{i} \wedge a^{-1}ja)a^{-1} = j \wedge f_{i} \neq 0$, as required,
where we use the fact that multiplication always distributes over finite meets by Proposition~1.4.19 of \cite{Law1}.
Thus $\{f_{1}, \ldots, f_{m} \}$ is a cover of $f$.
It follows that $\theta (f) = \bigvee_{i} \theta (f_{i})$.
Now multiply this equation on the left by $\theta (a)^{-1}$ and on the right by $\theta (a)$.
This then gives us $\theta (e) = \bigvee_{i} \theta (e_{i})$, as required.
\end{proof}

We now locate an important class of inverse $\wedge$-semigroups.
An inverse semigroup with zero is is said to be {\em $E^{\ast}$-unitary} if $0 \neq e \leq s$ where $e$ is an idempotent implies that $s$ is an idempotent.
The following is Remark~2.3 of \cite{Lenz} which is worth repeating since it was a surprise to many people not least the author.

\begin{lemma} 
If $S$ is an $E^{\ast}$-unitary inverse semigroup then $S$ is an inverse $\wedge$-semigroup.
\end{lemma}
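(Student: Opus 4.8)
The plan is to show that for any two elements $s,t$ of an $E^\ast$-unitary inverse semigroup $S$, the set of lower bounds of $\{s,t\}$ has a greatest element, which is necessarily the meet $s\wedge t$. The natural candidate is to build the meet from idempotents. First I would observe that $s^{-1}s$ and $t^{-1}t$ are idempotents, so $e = s^{-1}s \wedge t^{-1}t$ exists in the semilattice $E(S)$ (meets of idempotents always exist). The claim will be that $s\wedge t$ exists and equals $se$ provided $se = te$, and equals $0$ otherwise; more precisely, the candidate should be constructed so as to capture exactly when $s$ and $t$ have a common nonzero lower bound.

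The key steps, in order, would be: (1) Suppose $u \leq s$ and $u \leq t$ with $u \neq 0$. Then $u^{-1}u \leq s^{-1}s$ and $u^{-1}u \leq t^{-1}t$, so $u^{-1}u \leq e$ where $e = s^{-1}s \wedge t^{-1}t$; hence $u = u(u^{-1}u) \leq se$ and $u = u(u^{-1}u) \leq te$, so in particular $se$ and $te$ are both nonzero and share the nonzero lower bound $u$. (2) Now consider $se^{-1}(te) = e s^{-1} t e$. Since $u \leq se$ and $u \leq te$ we have $u = (se)(se)^{-1}(te)$, hence $u^{-1}u \leq (te)^{-1}(se)(se)^{-1}(te)$, so the idempotent $p = (te)^{-1}se$ (an idempotent precisely because $se \leq s$ and $te \leq t$ lie below... — here is where $E^\ast$-unitarity enters) is nonzero. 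The point is that $es^{-1}te \leq s^{-1}t$, and $s^{-1}t$ need not be idempotent in general, but $es^{-1}te$ has an idempotent below it, namely $u^{-1}u$ composed appropriately, forcing $es^{-1}te$ itself to be idempotent by the $E^\ast$-unitary hypothesis; similarly $ets^{-1}e$ is idempotent. This means $se$ and $te$ are \emph{compatible}. (3) Once $se \sim te$, the element $(se)(te)$... rather, compatible elements in any inverse semigroup have a meet: $se \wedge te$ exists and $\dom(se\wedge te) = \dom(se)\dom(te)$ — this is quoted in the excerpt's preliminary discussion of compatibility. Finally I would verify that $se \wedge te$ is a lower bound of $\{s,t\}$ (clear, since $se \leq s$, $te \leq t$) and is the greatest such: any $u \leq s,t$ satisfies $u \leq se$ and $u \leq te$ by step (1), hence $u \leq se \wedge te$. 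If instead $s,t$ have no nonzero common lower bound, then $0$ is their meet.

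The main obstacle I anticipate is pinning down precisely the right idempotent to test and getting the $E^\ast$-unitarity to apply cleanly. The slogan ``$E^\ast$-unitary forces anything above a nonzero idempotent to be idempotent'' is exactly what converts ``$se$ and $te$ have a common nonzero lower bound'' into ``$se$ and $te$ are compatible,'' but one must be careful that the relevant product $es^{-1}te$ really does dominate a nonzero idempotent — this follows because if $0 \neq u \leq se, te$ then $u^{-1}u = u^{-1}(se)(se)^{-1}(te)u \cdots$, and chasing $u^{-1}u$ through shows $u^{-1}u \leq (se)^{-1}(te) \cdot (\text{something})$; the cleanest route is probably to note $u \leq se$ and $u \leq te$ give $u^{-1}u \leq (se)^{-1}u \leq (se)^{-1}te$, exhibiting $(se)^{-1}(te)$ as lying above the nonzero idempotent $u^{-1}u$, whence $(se)^{-1}(te)$ is idempotent, and symmetrically $(se)(te)^{-1}$ is idempotent, i.e. $se \sim te$. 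Everything else is routine manipulation in inverse semigroups, and the existence of meets of compatible pairs together with the formula for their domains is already cited in the text preceding the statement.
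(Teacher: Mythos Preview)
Your argument is correct. The paper itself gives no proof of this lemma --- it merely cites Remark~2.3 of Lenz \cite{Lenz} --- so there is nothing to compare against directly. Your approach is the standard one: reduce to the restrictions $se$ and $te$ with $e = s^{-1}s\,t^{-1}t$, show that any nonzero common lower bound $u$ of $s,t$ forces $0\neq u^{-1}u \leq (se)^{-1}(te)$ and hence, by $E^{\ast}$-unitarity, that $se$ and $te$ are compatible; then invoke the fact (already stated in the paper's preliminaries) that compatible elements always have a meet.

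Two small points of presentation. First, the case split is cleaner if phrased uniformly: either $se$ and $te$ are compatible, in which case $s\wedge t = se\wedge te$, or they are not, in which case the argument you give shows there can be no nonzero common lower bound and $s\wedge t = 0$. Second, the inequality $u^{-1}u \leq (se)^{-1}(te)$ deserves one explicit line: from $u = (se)u^{-1}u = (te)u^{-1}u$ one gets $(se)^{-1}(te)\,u^{-1}u = (se)^{-1}u = (se)^{-1}(se)u^{-1}u = e\,u^{-1}u = u^{-1}u$, which is exactly the relation $u^{-1}u \leq (se)^{-1}(te)$. With that made precise, the proof is complete.
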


There are many naturally occurring examples of $E^{\ast}$-unitary inverse semigroups and it is a condition that is easy to verify.
In particular, both the graph inverse semigroups and the tiling semigroups are $E^{\ast}$-unitary.
All the inverse semigroups in Section~4  will be $E^{\ast}$-unitary.

A poset with zero $X$ is said to be {\em unambiguous}\footnote{Strictly speaking `unambiguous except at zero' but that is too much of a mouthful.}
if for all $x,y \in X$ if there exists $0 \neq z \leq x,y$ the either $x \leq y$ or $y \leq x$.
An inverse semigroup with zero $S$ will be said to be {\em unambiguous} if its semilattice of idempotents is unambiguous.
That is, for any two non-zero idempotents $e$ and $f$ if $ef \neq 0$ then $e$ and $f$ are comparable.

Let $E$ be a meet semilattice with zero.
Given $e,f \in E$ we say that $e$ {\em is directly above} $f$ and that $f$ {\em is directly below} $e$ if $e > f$ and there is no $g \in E$ such that $e > g > f$.
For each $e \in E$ define $\hat{e}$ to be the set of elements of $E$ that are directly below $e$.
The semilattice $E$ is said to be {\em pseudofinite} if for each $e \in E$ the set $\hat{e}$ is finite
and whenever $e > f$ there exists $g \in \hat{e}$ such that $e > g > f$.
An inverse semigroup is pseudofinite if its semilattice of idempotents is pseudofinite.
The meet semilattice $E$ is said to satisfy the {\em Dedekind height condition} if for all
non-zero elements $e$ the set  $\left| e^{\uparrow} \right| < \infty$.
An inverse semigroup is said to satisfy this condition if its semilattice of idempotents does.

\begin{proposition} Let $E$ be an unambiguous semilattice with zero which is 
pseudofinite and satisfies the Dedekind finiteness condition.
Then $E$ is pre-Boolean.
\end{proposition}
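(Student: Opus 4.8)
The plan is to deduce this from part~(2) of the Booleanization Theorem, so the whole task reduces to showing that every tight filter of $E$ is an ultrafilter; note that part~(3) is not directly available, since an unambiguous semilattice with anything beyond trivial structure is never $0$-disjunctive. The first thing I would record is that unambiguity forces every filter $F$ of $E$ to be a chain: given $x,y\in F$, down-directedness supplies a nonzero $z\in F$ with $z\le x,y$, and then unambiguity makes $x$ and $y$ comparable.

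Next I would take a tight filter $F$ and argue by contradiction, assuming $F$ is not an ultrafilter. By Lemma~2.6(2) there is then a nonzero $b\notin F$ with $b\wedge c\ne 0$ for every $c\in F$. Using unambiguity again, $b$ is comparable to each $c\in F$; if $c\le b$ held for some $c\in F$ we would get $b\in c^{\uparrow}\subseteq F$, so in fact $b<c$ for all $c\in F$. Hence $F\subseteq b^{\uparrow}$, and the Dedekind height condition forces $\left|b^{\uparrow}\right|<\infty$, so $F$ is a finite chain and possesses a least element $m$. Since $0\ne b<m$, the element $m$ is not $0$-minimal.

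Now pseudofiniteness enters: $\hat{m}$ is finite, nonempty, and contained in $m^{\downarrow}$, and I claim $m\rightarrow\hat{m}$. Indeed, for nonzero $x\le m$, either $x=m$, and then $x\wedge g=g\ne 0$ for any $g\in\hat{m}$, or $x<m$, and then pseudofiniteness provides $g\in\hat{m}$ with $x\le g$, so $x\wedge g\ne 0$. Thus $\hat{m}$ is a finite cover of $m\in F$, and tightness of $F$ yields some $g\in\hat{m}\cap F$; but $g<m=\min F$, a contradiction. Hence every tight filter of $E$ is an ultrafilter, and $E$ is pre-Boolean by the Booleanization Theorem.

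The crux --- and the only step that is not a one-line verification --- is the passage from ``$F$ is not an ultrafilter'' to ``$F$ is finite'': it is exactly here that Lemma~2.6(2), unambiguity (to upgrade ``$b$ meets every element of $F$'' to ``$b$ lies strictly below every element of $F$'') and the Dedekind height condition have to be combined. Once $F$ is known to be a finite chain the rest is a short contradiction powered by pseudofiniteness and tightness, so I expect no further difficulty there.
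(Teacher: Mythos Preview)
Your proof is correct and is a genuinely different, more direct route than the paper's. The paper first establishes an auxiliary structural fact: whenever $0 \neq f < e$ and $V_{e:f} \neq \emptyset$, one has $V_{e:f} = \bigcup_{i} V_{e_{i}}$ for finitely many $e_{i} \leq e$; this is proved by an induction on $|f^{\uparrow}|$ (Dedekind height) reducing to the case $f \in \hat{e}$, where $\hat{e}$ is finite and orthogonal by unambiguity. Only then does the paper take a tight non-ultrafilter $P$, enlarge it to an ultrafilter $Q$, pick $f \in Q \setminus P$ below some $e \in P$, and use the decomposition of $V_{e:f}$ together with Lemma~2.29 to obtain a cover $\{e_{1},\ldots,e_{m},f\}$ of $e$ that forces some $e_{i}$ into both $P$ and $Q$, contradicting $f \in Q$. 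You bypass the $V_{e:f}$ machinery entirely by exploiting the observation that in an unambiguous semilattice every filter is a chain: the witness $b$ from Lemma~2.6(2) then lies strictly below every element of $F$, so $F \subseteq b^{\uparrow}$ is finite with minimum $m$, and a single application of pseudofiniteness yields the cover $\hat{m}$ of $m$ that contradicts tightness. Your argument is shorter and more transparent; the paper's argument additionally delivers the description of $V_{e:f}$, which is of independent interest as an instance of the trapping condition.

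One small correction to your opening aside: it is not true that a nontrivial unambiguous semilattice is never $0$-disjunctive --- the idempotent semilattice of each $P_{n}$ is both (see Lemma~3.4(2) and Proposition~4.1). Your real reason for invoking part~(2) rather than part~(3) of the Booleanization Theorem is simply that the hypotheses of the proposition do not include $0$-disjunctivity; the body of your proof is unaffected.
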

\begin{proof} We claim first that if $0 \neq f < e$ and if $V_{e:f} \neq \emptyset$
then we may find idempotents $e_{1}, \ldots, e_{m} \leq e$ such that
$$V_{e:f} = \bigcup_{i=1}^{m} V_{e_{i}}.$$

For each non-zero element $e$, the set $\hat{e}$ is a finite orthogonal set by unambiguity.

Let $0 \neq f < e$.
If $V_{e:f} \neq \emptyset$, then $V_{e:f}$ may be written as a finite union of sets of the form $V_{e: j}$
where $j < i$ is immediately below $e$.
By assumption, the set $f^{\uparrow}$ is finite.
There is therefore an element $f < g \leq e$ such that $g$ is immediately above $f$.
Observe that
$$V_{e: f} = V_{g:f} \cup V_{e:g}.$$
The argument is now repeated with $V_{e:g}$ and by induction we have proved the claim.

 Let $0 \neq f < e$ be such that $\hat{e} = \{f,e_{1}, \ldots, e_{m} \}$.
Then if $V_{e:f} \neq  \emptyset$ then
$$V_{e:f} = \bigcup_{i=1}^{m} V_{e_{i}}.$$
Let $F \in V_{e:f}$.
Then $e \in F$ and $f \notin F$.
By Lemma~2.6(2), there exists $g \in F$ such that $g \wedge f = 0$.
Since $g \wedge e \in F$ we may in fact assume that $g \leq e$.
Now $g \leq e$ implies that either $g \leq f$ or $g \leq e_{i}$ for some $i$.
We cannot have the former and so we must have the latter.
It follows that $e_{i} \in F$ and so $F \in V_{e_{i}}$.
To prove the reverse inclusion, let $F \in V_{e_{i}}$.
Then $F \in V_{e}$.
But we cannot have $f \in F$ because $f \wedge e_{i} = 0$.
It follows that $F \in V_{e:f}$.

We have therefore proved that if $V_{e:f} \neq \emptyset$ then
$$V_{e:f} = \bigcup_{i=1}^{m} V_{e_{i}}.$$
The claim is proved when we observe that the equality holds when the $e_{i}$ are replaced by $e_{i} \wedge e$.

We now prove that every tight filter is an ultrafilter which implies that $E$ is pre-Boolean
by part (2) of the Booleanization Theorem.
Let $P$ be a tight filter and suppose that it is not an ultrafilter.
Then there is an ultrafilter $Q$ such that $P \subset Q$.
Let $f \in Q \setminus P$ and let $e \in P$.
Now $e \wedge f \in Q \setminus P$.
It follows that we may choose $f \in Q \setminus P$ such that $f \leq e$.
Now $P$ is a tight filter that contains $e$ and omits $f$.
It follows by Lemma~2.21(1), that there exists $0 \neq g \leq e$ such that $g \wedge f = 0$.
By Lemma~2.6(1), there is an ultrafilter containing $g$ that must therefore omit $f$.
But then we have shown that $V_{e:f} \neq \emptyset$.
By what we proved above we have that
$$V_{e:f} = \bigcup_{i=1}^{m} V_{e_{i}}$$
for some idempotents $e_{1}, \ldots, e_{m}$.
It follows that
$$V_{e} =  \left( \bigcup_{i=1}^{m} V_{e_{i}} \right) \cup V_{f}.$$
Thus by Lemma~2.29, we have that
$$e \rightarrow (e_{1}, \ldots, e_{m},f).$$ 
Now $e \in P$ and $P$ is a tight filter.
It follows that either $f \in P$ or $e_{i} \in P$ for some $i$.
By construction, $f \notin P$.
It follows that $e_{i} \in P$ for some $i$.
It follows that $f,e_{i} \in Q$.
But $Q$ is an ultrafilter containing $e_{i}$.
It follows that $F \in V_{e:f}$ and so $f \notin Q$.
We have therefore arrived at a contradiction.
It follows that every tight filter is an ultrafilter.
\end{proof}

Concrete examples of semilattices satisfying the conditions of the above proposition may easily be constructed.
Let $G$ be a directed graph and let $G^{\ast}$ denote the free category generated by $G$.
Denote by $E$ the set of all elements of $G^{\ast}$ together with a zero element.
Define $e \leq f$ if and only if $e = fg$ for some element $g \in G^{\ast}$;
in other words, the prefix ordering.
With respect to this order, 
$E$ becomes a semilattice which is unambiguous and satisfies the Dedekind finiteness condition.
The following are easy to check.

\begin{lemma} With the above definitions, we have the following.
\begin{enumerate}

\item The semilattice $E$ has no $0$-minimal idempotents if and only if the in-degree of each vertex is at least 1.

\item The semilattice $E$ is $0$-disjunctive if and only if the in-degree of each vertex is either zero or at least 2.

\item  The semilattice $E$ is pseudofinite if and only if the in-degree of each vertex is finite.

\end{enumerate}
\end{lemma}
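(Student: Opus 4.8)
The plan is to unwind each of the three conditions into an explicit statement about the prefix order on the free category $G^{\ast}$ and then read off the equivalence. Two observations make everything routine. First, since the order on $E$ is unambiguous (as already noted above), for non-zero $x,y$ we have $x \wedge y = x$ when $x \leq y$, $x \wedge y = y$ when $y \leq x$, and $x \wedge y = 0$ when $x$ and $y$ are incomparable. Second, for $x \in G^{\ast}$ the non-zero elements of $x^{\downarrow}$ are exactly the composites $xg$ with $g \in G^{\ast}$ and $\mathbf{r}(g) = \mathbf{d}(x)$, so the order structure below $x$ is governed entirely by the arrows of $G^{\ast}$ ending at the vertex $\mathbf{d}(x)$, and in the first instance by the \emph{edges} ending at $\mathbf{d}(x)$, i.e.\ by the in-degree of $\mathbf{d}(x)$. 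As $x$ ranges over $G^{\ast}$ (taking $x$ an identity when convenient) the vertex $\mathbf{d}(x)$ realises every vertex of $G$, so the quantifiers ``over all vertices'' and ``over all $x$'' translate into one another.

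For (1): $x$ is $0$-minimal exactly when $x^{\downarrow} = \{x,0\}$, i.e.\ exactly when the only $g$ with $\mathbf{r}(g) = \mathbf{d}(x)$ is the identity at $\mathbf{d}(x)$, i.e.\ exactly when $\mathbf{d}(x)$ has in-degree $0$; hence $E$ has no $0$-minimal idempotent precisely when no vertex has in-degree $0$, which is (1). For (3): I would first identify the set $\hat{x}$ of elements directly below $x$ with $\{\, xa : a \text{ a single edge},\ \mathbf{r}(a) = \mathbf{d}(x) \,\}$, since an element $xg < x$ fails to be directly below $x$ precisely when $g$ factors into two non-identity arrows. Thus $|\hat{x}|$ is the in-degree of $\mathbf{d}(x)$, and the finiteness clause in the definition of pseudofinite is equivalent to every vertex having finite in-degree; the density clause then follows, since for $f = xg < x$ with $g$ non-trivial we may write $g = ah$ with $a$ a single edge, whence $xa \in \hat{x}$ and $f = (xa)h \leq xa$.

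For (2): Suppose first that every vertex has in-degree $0$ or $\geq 2$, and let $0 \neq e < x$. Then $\mathbf{d}(x)$ is not of in-degree $0$, so it has in-degree $\geq 2$; writing $e = xg$ with $g$ non-trivial and $g = ah$ for a single edge $a$, pick an edge $b \neq a$ with $\mathbf{r}(b) = \mathbf{d}(x)$ and set $e' = xb$. Then $0 \neq e' < x$, and $e,e'$ are incomparable because either $e \leq e'$ or $e' \leq e$ would, after cancelling $x$ in the free category, force $a = b$; hence $e \wedge e' = 0$ by unambiguity, and $0$-disjunctivity holds (the in-degree $0$ case being vacuous, since then no $e$ exists). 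Conversely, if some vertex $v$ has in-degree exactly $1$, with unique incoming edge $a$, take $x = 1_{v}$ and $e = a$: every $0 \neq e' < x$ is of the form $e' = 1_{v}g = g$ with $g$ a non-trivial path into $v$, so $g = ah$ for some $h$ (the edge of $g$ arriving at $v$ being forced to equal $a$), hence $e' = ah \leq a = e$ and $e \wedge e' = e' \neq 0$, so $0$-disjunctivity fails.

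The only part requiring real care --- and the one that will absorb most of the writing --- is the bookkeeping in (2): one must check both that in-degree $\geq 2$ supplies a disjoint companion for \emph{every} $e$ below $x$, and that in-degree exactly $1$ genuinely obstructs, for which the explicit witness $x = 1_{v}$, $e = a$ is the cleanest choice. The other thing to be vigilant about is fixing, once and for all, which end of a path is truncated by the prefix order and using that convention consistently; past that each clause is a direct translation through the two observations in the first paragraph, which is why the paper can reasonably call these ``easy to check''.
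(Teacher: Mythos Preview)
Your argument is correct and is exactly the intended verification; the paper omits the proof entirely, calling the three claims ``easy to check'', and what you have written is the natural unpacking of each condition via the bijection between non-zero elements of $x^{\downarrow}$ and arrows of $G^{\ast}$ ending at $\mathbf{d}(x)$. One small remark on (3): the density clause in the paper's definition of \emph{pseudofinite} is literally written as $e > g > f$, which taken strictly would be vacuously violated whenever $f \in \hat{e}$; the intended reading is $e > g \geq f$, and your argument establishes precisely that.
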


\subsection{The group of units of $\mathsf{D}(S)$}

Let $S$ be a pre-Boolean inverse $\wedge$-semigroup.
In this section, we shall be interested in calculating the group of units of $\mathsf{D}(S)$ in the case where it is unital.
By Theorem~2.28, this occurs precisely when $E(S)$ is compactable.   
We shall focus on  the case where $S$ is unambiguous and $E^{\ast}$-unitary.
The proofs of the following can be found in \cite{JL2} or easily proved directly.

\begin{lemma} \mbox{}
\begin{enumerate}

\item Let $S$ be an unambiguous inverse semigroup.
Then $(S,\leq)$ is an unambiguous poset if and only if $S$ is $E^{\ast}$-unitary. 

\item Let $S$ be an unambiguous $E^{\ast}$-unitary inverse semigroup.
Then $S$ is separative if and only if the semilattice of idempotents $E(S)$ is $0$-disjunctive.

\end{enumerate}
\end{lemma}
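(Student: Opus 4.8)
The plan is to treat the two parts in turn, in each case proving both implications directly from the definitions, using only the elementary facts about the natural partial order recalled in Section~2: that $\mathbf{d}$ and $\mathbf{r}$ are order-preserving, that elements bounded above are compatible, that compatible elements $s,t$ have a meet with $\mathbf{d}(s\wedge t)=\mathbf{d}(s)\mathbf{d}(t)$ (and dually), that $sf\leq s$ for an idempotent $f$, and that $t\leq s$ together with $\mathbf{d}(t)=\mathbf{d}(s)$ forces $t=s$; for part~(2) we shall also invoke Lemma~2.2.

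For part~(1), the implication ``$(S,\leq)$ unambiguous $\Rightarrow$ $S$ is $E^{\ast}$-unitary'' is immediate: if $0\neq e\leq s$ with $e$ idempotent, then $0\neq e=\mathbf{d}(e)\leq\mathbf{d}(s)$, so $e$ lies below both $s$ and $\mathbf{d}(s)$; unambiguity makes $s$ and $\mathbf{d}(s)$ comparable, and in either order this gives $s=\mathbf{d}(s)$, an idempotent. For the converse, assume $S$ is $E^{\ast}$-unitary (and unambiguous) and let $0\neq z\leq x,y$. Applying $\mathbf{d}$ gives $0\neq\mathbf{d}(z)\leq\mathbf{d}(x),\mathbf{d}(y)$, so $\mathbf{d}(x)\mathbf{d}(y)\neq0$ and, by unambiguity of $E(S)$, the idempotents $\mathbf{d}(x),\mathbf{d}(y)$ are comparable; say $\mathbf{d}(x)\leq\mathbf{d}(y)$. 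The key step is to form the truncation $x'=y\mathbf{d}(x)$. One checks $x'\leq y$, $\mathbf{d}(x')=\mathbf{d}(x)\mathbf{d}(y)\mathbf{d}(x)=\mathbf{d}(x)$, and $z\leq x'$ (since $z=y\mathbf{d}(z)=y\mathbf{d}(x)\mathbf{d}(z)=x'\mathbf{d}(z)$). From $z\leq x$ we get $x^{-1}z=\mathbf{d}(x)\mathbf{d}(z)=\mathbf{d}(z)$, and left-multiplying $z\leq x'$ by $x^{-1}$ yields $0\neq\mathbf{d}(z)\leq x^{-1}x'$, so by $E^{\ast}$-unitarity $x^{-1}x'$ is idempotent; the dual computation with $\mathbf{r}$ shows $xx'^{-1}$ is idempotent, so $x$ and $x'$ are compatible. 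Then $x\wedge x'$ exists with domain $\mathbf{d}(x)\mathbf{d}(x')=\mathbf{d}(x)$, and since it lies below both $x$ and $x'$, each of domain $\mathbf{d}(x)$, we conclude $x=x\wedge x'=x'\leq y$.

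For part~(2), by Lemma~2.2 separativity of $S$ says exactly that $a\rightarrow b\Leftrightarrow a\leq b$, so it suffices to analyse the Lenz arrow. Suppose first $E(S)$ is $0$-disjunctive and $a\rightarrow b$. If $a=0$ there is nothing to prove; otherwise $a\leq a$ forces $a\wedge b\neq0$, so $0\neq a\wedge b\leq a,b$ and part~(1) makes $a,b$ comparable. It remains to exclude $b<a$: then $\mathbf{d}(b)<\mathbf{d}(a)$ (strictly, else $b=a\mathbf{d}(b)=a$), with $\mathbf{d}(b)\neq0$ since $a\rightarrow b$ and $a\neq0$ force $b\neq0$, so $0$-disjunctivity supplies $0\neq e'<\mathbf{d}(a)$ with $e'\mathbf{d}(b)=0$; then $x=ae'$ has $0\neq x\leq a$ (its domain is $e'$) while $\mathbf{d}(x\wedge b)=\mathbf{d}(x)\mathbf{d}(b)=e'\mathbf{d}(b)=0$, so $x\wedge b=0$, contradicting $a\rightarrow b$. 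Hence $a\leq b$. Conversely, assume $S$ separative and let $e,f$ be idempotents with $0\neq e<f$. Since $f\not\leq e$, Lemma~2.2 gives $f\not\rightarrow e$, so some $0\neq x\leq f$ has $x\wedge e=0$. Put $e'=\mathbf{d}(x)$: then $e'\leq\mathbf{d}(f)=f$, $e'\neq0$, and $e'\neq f$ (else $x=f\mathbf{d}(x)=f$, whence $x\wedge e=e\neq0$), so $0\neq e'<f$; moreover $ee'=\mathbf{d}(x\wedge e)=0$. Thus $E(S)$ is $0$-disjunctive.

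I expect the main obstacle to be the converse direction of part~(1): producing the truncation $x'=y\mathbf{d}(x)$ and, crucially, using the $E^{\ast}$-unitary hypothesis to promote the ``one-sided idempotency'' of $x^{-1}x'$ and $xx'^{-1}$ to genuine compatibility of $x$ and $x'$, from which comparability of $x$ and $y$ drops out. Everything else is routine order-theoretic bookkeeping with $\mathbf{d}$ and $\mathbf{r}$, of the sort already carried out repeatedly in Section~2.
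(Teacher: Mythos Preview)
Your proof is correct in both parts. The paper itself does not supply a proof of this lemma: it merely states that ``the proofs of the following can be found in \cite{JL2} or easily proved directly,'' so your direct argument fills in exactly what the paper omits, and there is nothing to compare against.
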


\begin{lemma} Let $S$ be an unambiguous $E^{\ast}$-unitary inverse semigroup.
Given a finitely generated order ideal $\{s_{1}, \ldots, s_{m} \}^{\downarrow}$,
there exists an orthogonal set $\{t_{1}, \ldots, t_{n}\}$ such that
$\{s_{1}, \ldots, s_{m} \}^{\downarrow} = \{t_{1}, \ldots, t_{n} \}^{\downarrow}$.
\end{lemma}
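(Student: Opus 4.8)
The plan is to reduce everything to a single dichotomy for compatible pairs of elements and then pass to the maximal generators. First I would observe that an orthogonal set is automatically compatible, so the order ideal produced in the conclusion is necessarily a \emph{compatible} one; accordingly I shall assume — as holds in every application of this lemma, where the order ideal lies in $\mathsf{FC}(S)$ — that $\{s_{1}, \ldots, s_{m}\}$ is itself a compatible subset of $S$.

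The heart of the argument is the claim: \emph{if $s \sim t$ in $S$ then either $s$ and $t$ are orthogonal, or $s$ and $t$ are comparable in the natural partial order}. To prove this I would proceed as follows. Since $s \sim t$, the meet $s \wedge t$ exists and, as recalled in Section~1, $\mathbf{d}(s \wedge t) = \mathbf{d}(s)\mathbf{d}(t)$ and dually $\mathbf{r}(s \wedge t) = \mathbf{r}(s)\mathbf{r}(t)$. A short computation yields the elementary equivalences $st^{-1} = 0 \iff \mathbf{d}(s)\mathbf{d}(t) = 0$ and $s^{-1}t = 0 \iff \mathbf{r}(s)\mathbf{r}(t) = 0$, so for compatible $s$ and $t$ the condition $s \wedge t = 0$ is exactly orthogonality of $s$ and $t$. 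If instead $s \wedge t \neq 0$, then $s \wedge t$ is a non-zero element lying below both $s$ and $t$; since $S$ is unambiguous and $E^{\ast}$-unitary the poset $(S,\leq)$ is unambiguous by Lemma~3.5(1), and hence $s$ and $t$ are comparable. This establishes the claim.

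Given the claim, the lemma follows quickly. I would take $\{t_{1}, \ldots, t_{n}\}$ to be the set of maximal elements of the finite set $\{s_{1}, \ldots, s_{m}\}$ with respect to $\leq$. Each $s_{i}$ lies beneath some $t_{j}$, so $\{t_{1}, \ldots, t_{n}\}^{\downarrow} = \{s_{1}, \ldots, s_{m}\}^{\downarrow}$; moreover $\{t_{1}, \ldots, t_{n}\}$ is an antichain and is contained in the compatible set $\{s_{1}, \ldots, s_{m}\}$. Consequently any two distinct $t_{i}$ and $t_{j}$ are compatible and incomparable, hence orthogonal by the claim, and $\{t_{1}, \ldots, t_{n}\}$ is the desired orthogonal set.

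The hard part is really just the claim, and the delicate point within it is the implication $s \wedge t = 0 \Rightarrow s \perp t$: this genuinely uses compatibility and fails without it — for instance, in the polycyclic monoid on two generators the two generators $a$ and $b$ are incomparable with $a \wedge b = 0$, yet $ab^{-1} \neq 0$, so they are not orthogonal — so I would be careful to route that step through the identities above rather than manipulating $s \wedge t$ directly.
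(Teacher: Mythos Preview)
Your argument is correct and follows the same line as the paper's: both reduce to the dichotomy that any two generators are either comparable or orthogonal, and then pass to the maximal ones (the paper phrases this as an iterative discard, you take maxima directly). You have in fact been more careful than the paper --- you explicitly prove the dichotomy and correctly flag that it needs the generating set to be compatible, a hypothesis the paper's statement omits but which is present in every application and without which (as your $P_{2}$ example shows) the conclusion fails.
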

\begin{proof}
Starting with $i = 1$ compare $s_{i}$ with $s_{j}$ where $j > i$.
If they are orthogonal continue; if $s_{i} < s_{j}$ then discard $s_{i}$ and increase $i$ by 1;
if $s_{j} < s_{i}$ then discard $s_{j}$ and continue comparing.
\end{proof}

The following is now immediate.

\begin{corollary} Let $S$ be an unambiguous $E^{\ast}$-unitary inverse semigroup.
Then every element of $\mathsf{D}(S)$ is an orthogonal join of elements from $\delta (S)$.
\end{corollary}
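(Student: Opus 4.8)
The plan is to combine Lemma~3.9 with the known description of $\mathsf{D}(S)$ coming from Step~2 and Step~3 of the construction of the completion. Recall that every element of $\mathsf{D}(S)$ has the form $[\{s_{1}, \ldots, s_{m}\}^{\downarrow}]$ for some finitely generated compatible order ideal of $\mathbf{S} = S/\!\leftrightarrow$, and that $\delta(s) = [\mathbf{s}^{\downarrow}]$. Since $S$ is unambiguous and $E^{\ast}$-unitary, so is $\mathbf{S}$ (the quotient preserves these properties, and the natural map is $0$-restricted by Lemma~2.1 and Proposition~2.4). The idea is simply that Lemma~3.9 rewrites any finite generating compatible set of $\mathbf{S}$ as an \emph{orthogonal} set generating the same order ideal, and an orthogonal set is in particular a compatible one, so the rewriting takes place inside $\mathsf{FC}(\mathbf{S})$.

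First I would observe that if $\{t_{1}, \ldots, t_{n}\}$ is an orthogonal subset of $\mathbf{S}$ then, passing to $\mathsf{D}(S) = \mathsf{FC}(\mathbf{S})/\!\equiv$, the class $[\{t_{1}, \ldots, t_{n}\}^{\downarrow}]$ is the join $\bigvee_{i=1}^{n}[\mathbf{t_{i}}^{\downarrow}] = \bigvee_{i=1}^{n} \delta(t_{i})$, and moreover this join is orthogonal: indeed $\beta$ is a homomorphism into $\mathsf{Bi}(\mathsf{G}(S))$ with $\beta(\{t_{1}, \ldots, t_{n}\}^{\downarrow}) = \bigcup_{i} V_{t_{i}}$ by Lemma~2.12 and Lemma~2.14, and since $t_{i}^{-1}t_{j} = 0 = t_{i}t_{j}^{-1}$ for $i \neq j$ we get $V_{t_{i}}^{-1}V_{t_{j}} = V_{t_{i}^{-1}t_{j}} = \emptyset$ and likewise $V_{t_{i}}V_{t_{j}}^{-1} = \emptyset$ by Lemma~2.10, so the images $\delta(t_{i})$ are pairwise orthogonal in $\mathsf{D}(S) \hookrightarrow \mathsf{Bi}(\mathsf{G}(S))$.

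Second I would take an arbitrary element of $\mathsf{D}(S)$, write it as $[\{s_{1}, \ldots, s_{m}\}^{\downarrow}]$ with the $s_{i} \in \mathbf{S}$ forming a compatible set, apply Lemma~3.9 to replace it by an orthogonal set $\{t_{1}, \ldots, t_{n}\}$ with the same down-closure, and conclude by the previous paragraph that the element equals $\bigvee_{i=1}^{n} \delta(t_{i})$, an orthogonal join of elements of $\delta(S)$ (strictly, of $\delta(\mathbf S)$, but each $t_i$ lifts to an element of $S$ since $\lambda \colon S \to \mathbf S$ is surjective, and $\delta = \xi\iota$ factors through $\lambda$). That is exactly the assertion.

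The only point needing a little care — and the place I'd expect to be the mild obstacle — is the bookkeeping that Lemma~3.9 is legitimately applicable: it is stated for an unambiguous $E^{\ast}$-unitary inverse semigroup, and one must check that $\mathbf{S}$ inherits both properties from $S$. Unambiguity of $E(\mathbf{S})$ follows because $\lambda$ is a $\wedge$-congruence that is $0$-restricted (Proposition~2.4), so $\mathbf{e}\,\mathbf{f} \neq 0$ forces $ef \neq 0$ hence $e,f$ comparable hence $\mathbf{e},\mathbf{f}$ comparable; and $E^{\ast}$-unitarity is preserved similarly, using that $0 \neq \mathbf{e} \leq \mathbf{s}$ with $\mathbf e$ idempotent lifts, via Lemma~2.3 and $0$-restrictedness, to an idempotent strictly below a conjugate of $s$. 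Everything else is routine.
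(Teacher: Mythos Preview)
Your argument is correct and follows the same route as the paper, which declares the corollary ``immediate'' from the preceding lemma on rewriting a finitely generated compatible order ideal with orthogonal generators. You add (correct) detail the paper omits, namely the verification that the separative quotient $\mathbf{S}$ remains unambiguous and $E^{\ast}$-unitary so that the lemma legitimately applies there.
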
 

A homomorphism of inverse semigroups is said to be {\em idempotent-pure} if the inverse images of idempotents consist only of idempotents.

\begin{lemma} Let $S$ be $E^{\ast}$-unitary.
Then the congruence $\equiv$ defined on $\mathsf{FC}(S)$ is idempotent-pure.
\end{lemma}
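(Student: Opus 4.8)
The plan is to unwind the definitions of $\equiv$ and of idempotent-purity, reduce the statement to a claim about the generators of a finitely generated compatible order ideal, and invoke the $E^{\ast}$-unitary hypothesis only at the very last step. Begin with a preliminary observation: an order ideal of the form $\{e_{1}, \ldots, e_{m}\}^{\downarrow}$ with every $e_{i} \in E(S)$ is an idempotent of $\mathsf{FC}(S)$, since $e_{i}e_{j} \leq e_{i}$ and $e_{i}^{2} = e_{i}$ give $A \cdot A = A$. Thus saying that $\equiv$ is idempotent-pure amounts to: whenever $A = \{a_{1}, \ldots, a_{m}\}^{\downarrow} \in \mathsf{FC}(S)$ has the property that $\xi(A)$ is an idempotent of $\mathsf{FC}(S)/\!\equiv$, then $A$ can be generated by idempotents and so is itself an idempotent of $\mathsf{FC}(S)$. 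A zero generator is already an idempotent, so it suffices to show that each non-zero $a_{i}$ is an idempotent.

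The first step is to pass to the groupoid picture. Since $\beta = \beta'\xi$ and $\beta'$ is a homomorphism, if $\xi(A)$ is idempotent then $\beta(A) = \beta'(\xi(A))$ is an idempotent of $\mathsf{Bi}(\mathsf{G}(S))$; by Lemma~2.9 the idempotents of $\mathsf{Bi}(\mathsf{G}(S))$ are exactly the subsets of $\mathsf{G}(S)_{o}$, so
$$\beta(A) = \bigcup_{i=1}^{m} V_{a_{i}} \subseteq \mathsf{G}(S)_{o}.$$
In particular, for each $i$, every ultrafilter in $V_{a_{i}}$ is idempotent.

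The second step applies Lemma~2.11(4): since every ultrafilter in $V_{a_{i}}$ is idempotent, $a_{i} \leftrightarrow a_{i} \wedge a_{i}^{-1}a_{i}$. Set $e_{a_{i}} = a_{i} \wedge a_{i}^{-1}a_{i}$, an idempotent with $e_{a_{i}} \leq a_{i}$; because $a_{i} \neq 0$ and $\leftrightarrow$ is $0$-restricted (Lemma~2.1), $e_{a_{i}} \neq 0$. This is precisely where the hypothesis enters: $S$ is $E^{\ast}$-unitary, so $0 \neq e_{a_{i}} \leq a_{i}$ with $e_{a_{i}}$ idempotent forces $a_{i}$ to be an idempotent. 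Hence $A$ is generated by idempotents, so $A$ is an idempotent of $\mathsf{FC}(S)$, which is what idempotent-purity requires.

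I do not expect a genuine obstacle: the argument is a short chain of results already in hand. The only points needing a moment's attention are the preliminary identification of (enough of) the idempotents of $\mathsf{FC}(S)$ and the remark that only non-zero generators need argument --- the latter because $0$-restrictedness of $\leftrightarrow$, applied to a non-zero $a_{i}$, is exactly what produces the non-zero idempotent $e_{a_{i}} \leq a_{i}$ that feeds the $E^{\ast}$-unitary condition.
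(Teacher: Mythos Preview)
Your proof is correct, but it takes a different route from the paper's. The paper argues directly from the definition of $\equiv$: given $(a_{1},\ldots,a_{m}) \leftrightarrow (e_{1},\ldots,e_{n})$ with each $e_{j}$ idempotent, it observes that $a_{i} \rightarrow (e_{1},\ldots,e_{n})$ forces $a_{i} \wedge e_{j} \neq 0$ for some $j$, and $a_{i} \wedge e_{j}$ is then a non-zero idempotent beneath $a_{i}$, so the $E^{\ast}$-unitary hypothesis finishes. Your argument instead passes through the groupoid picture: you use the embedding $\beta'$ and Lemma~2.9 to see that $\bigcup_{i} V_{a_{i}} \subseteq \mathsf{G}(S)_{o}$, then invoke Lemma~2.11(4) to produce the non-zero idempotent $e_{a_{i}} = a_{i} \wedge a_{i}^{-1}a_{i} \leq a_{i}$.

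The paper's route is shorter and more elementary but tacitly uses that every idempotent of $\mathsf{FC}(S)$ is generated by idempotents of $S$ (a standard fact about the Schein completion, since for compatible $a,b$ the product $a^{-1}b$ is idempotent, so $A = A^{-1}A$ is an order ideal of $E(S)$). Your route trades that small lemma for the groupoid machinery already developed in Section~2.1, and only needs the trivial direction (idempotent generators $\Rightarrow$ idempotent order ideal) at the end. Both arrive at the same final step: a non-zero idempotent beneath $a_{i}$ plus $E^{\ast}$-unitarity.
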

\begin{proof} Suppose that $(a_{1}, \ldots, a_{m}) \leftrightarrow (e_{1}, \ldots, e_{n})$ where the $e_{j}$ are idempotents.
Then the $a_{i}$ are idempotents.
This follows because for each $i$ there exists $j$ such that $a_{i} \wedge e_{j} \neq 0$.
But $a_{i} \wedge e_{j}$ is a non-zero idempotent beneath $a_{i}$ and so $a_{i}$ is also an idempotent.
\end{proof}

A finite compatible subset $A \subseteq S$ is said to be {\em essential} when both
$\mathbf{d}(A) = \{ a^{-1}a \colon a \in A\}$ and $\mathbf{r}(A) = \{aa^{-1} \colon a \in A \}$
are essential sets of idempotents; that is, both are covers of $E(S) \setminus \{ 0\}$.

Let $T$ be an inverse semigroup with zero.
An idempotent $e$ in $T$ is said to be {\em essential}
if for each non-zero idempotent $f \in S$ we have that $f \wedge e \neq 0$.
This says that $\{e\}$ is a cover of $E(T)$.
We denote by $T^{e}$ the set of all elements $s \in T$ such that $\mathbf{d}(s)$ and $\mathbf{r}(s)$ are essential idempotents.
By Lemma~4.2 of \cite{Law6}, $T^{e}$ is an inverse subsemigroup of $T$.

\begin{lemma} Let $S$ be separative. 
An idempotent $\{e_{1}, \ldots, e_{n}\}^{\downarrow}$ in $\mathsf{FC}(S)$ is essential if and only if
the set $\{e_{1}, \ldots, e_{n} \}$ is essential.
\end{lemma}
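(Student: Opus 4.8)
The plan is to reduce both occurrences of the word ``essential'' to statements purely about the idempotents of $S$, and then to observe that these statements literally coincide. I would begin by recording the concrete picture of $\mathsf{FC}(S)$ coming from Schein's construction (Proposition~2.5): the natural partial order is inclusion of order ideals, the idempotents of $\mathsf{FC}(S)$ are exactly the finitely generated order ideals of $E(S)$, and the meet in $E(\mathsf{FC}(S))$ of two such idempotents $\{e_1,\ldots,e_n\}^{\downarrow}$ and $\{f_1,\ldots,f_m\}^{\downarrow}$ is their set-theoretic intersection, which equals $\{e_i \wedge f_j : 1 \le i \le n,\ 1 \le j \le m\}^{\downarrow}$. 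Two consequences I will use: this meet is non-zero (strictly above $\{0\}$) if and only if $e_i \wedge f_j \neq 0$ for some $i,j$; and for each non-zero $f \in E(S)$ the principal order ideal $f^{\downarrow}$ is a non-zero idempotent of $\mathsf{FC}(S)$. I will also note that for a set of idempotents $A = \{e_1,\ldots,e_n\}$ one has $\mathbf{d}(A) = \mathbf{r}(A) = \{e_1,\ldots,e_n\}$, so ``$A$ essential as a finite compatible subset of $S$'' means precisely ``$\{e_1,\ldots,e_n\}$ is a cover of $E(S) \setminus \{0\}$''.

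For the forward implication I would assume $A = \{e_1,\ldots,e_n\}^{\downarrow}$ is an essential idempotent of $\mathsf{FC}(S)$, pick an arbitrary non-zero $f \in E(S)$, and apply essentiality of $A$ to the non-zero idempotent $f^{\downarrow}$ of $\mathsf{FC}(S)$: this yields $A \cap f^{\downarrow} \neq \{0\}$, that is, $e_i \wedge f \neq 0$ for some $i$, which is exactly the assertion that $\{e_1,\ldots,e_n\}$ covers $E(S) \setminus \{0\}$, hence is essential. Conversely, assuming $\{e_1,\ldots,e_n\}$ is essential in $S$, I would take an arbitrary non-zero idempotent $B = \{f_1,\ldots,f_m\}^{\downarrow}$ of $\mathsf{FC}(S)$; being non-zero it has some $f_j \neq 0$, so essentiality of $\{e_1,\ldots,e_n\}$ in $S$ produces an $i$ with $e_i \wedge f_j \neq 0$, and this element lies in $A \cap B$, so $A \wedge B \neq \{0\}$. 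As $B$ was arbitrary, $A$ is essential in $\mathsf{FC}(S)$.

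I do not anticipate a genuine obstacle here. The only point requiring care is the first paragraph: correctly identifying meets of idempotents in $\mathsf{FC}(S)$ with intersections of order ideals and evaluating the latter as $\{e_i \wedge f_j\}^{\downarrow}$, all of which is routine bookkeeping in the Schein construction already invoked for Proposition~2.5. Once that description is in place, both implications are one-line applications of the definition of a cover. (Separativity of $S$ is not actually used in this argument; it is inherited as the standing hypothesis of the subsection.)
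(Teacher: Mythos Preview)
Your proposal is correct and follows essentially the same route as the paper: test essentiality of $\{e_1,\ldots,e_n\}^{\downarrow}$ against principal idempotents $f^{\downarrow}$ for the forward direction, and for the converse pick a non-zero generator of an arbitrary idempotent of $\mathsf{FC}(S)$ and apply the cover property. The paper's version is terser (it writes the product $f^{\downarrow}\{e_1,\ldots,e_n\}^{\downarrow}$ rather than the meet, and leaves the converse as ``easy to check''), but the argument is the same; your observation that separativity plays no role is also accurate.
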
 
\begin{proof} Suppose that $\{e_{1}, \ldots, e_{n}\}^{\downarrow}$ is an essential idempotent.
Let $f \in E(S)$ be any non-zero idempotent.
Then $f^{\downarrow}\{e_{1}, \ldots, e_{n}\}^{\downarrow}$ is a non-zero idempotent.
Thus for some $i$ we have that $fe_{i}$ is non-zero, as required.

Suppose that $\{e_{1}, \ldots, e_{n} \}$ is a cover of $E(S)$.
Then it is easy to check that  $\{e_{1}, \ldots, e_{n}\}^{\downarrow}$ is an essential idempotent.
\end{proof}

On an inverse semigroup $S$ define $s \, \sigma \, t$ if and only if there exists $u \leq s,t$.
Then $\sigma$ is a congruence and $S/\sigma$ is a group.
It is, in fact, the {\em minimum group congruence}.  
See Section~2.4 of \cite{Law1}.

\begin{theorem} Let $S$ be $E^{\ast}$-unitary, unambiguous, compactable and separative.
Then the group of units of $\mathsf{D}(S)$ is isomorphic to the group $\mathsf{FC}(S)^{e}/\sigma$.
\end{theorem}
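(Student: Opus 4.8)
The plan is to realise $U(\mathsf{D}(S))$, the group of units of $\mathsf{D}(S)$, as the image of $\mathsf{FC}(S)^{e}$ under the quotient map $\xi\colon\mathsf{FC}(S)\rightarrow\mathsf{D}(S)$; here $\mathsf{D}(S)=\mathsf{FC}(S)/\equiv$ because $S$ is separative. I would first record two preliminaries. Since $S$ is compactable its semilattice of idempotents is in particular pre-Boolean, so by part~(1) of the Booleanization Theorem together with Theorem~2.28 the inverse $\wedge$-semigroup $\mathsf{D}(S)$ is a \emph{unital} Boolean inverse $\wedge$-semigroup, whose top idempotent $1$ is in fact an identity (a top idempotent always is: $1s=\mathbf{r}(s)s=s=s\mathbf{d}(s)=s1$). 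The technical heart is the following dictionary: \emph{for an idempotent $E$ of $\mathsf{FC}(S)$, $\xi(E)=1$ if and only if $E$ is an essential idempotent of $\mathsf{FC}(S)$.} The ``only if'' uses only that $\xi$ is $0$-restricted and a $\wedge$-homomorphism: if $\xi(E)=1$ and $0\neq F\in E(\mathsf{FC}(S))$ then $\xi(E\wedge F)=1\wedge\xi(F)=\xi(F)\neq 0$, so $E\wedge F\neq 0$. For the ``if'': since $\equiv$ is idempotent-pure on $\mathsf{FC}(S)$ (this is where $E^{\ast}$-unitarity of $S$ enters), every non-zero idempotent of $\mathsf{D}(S)$ is $\xi$ of a non-zero idempotent of $\mathsf{FC}(S)$, and combining this with $0$-restrictedness shows that $\xi(E)$ is an essential idempotent of $\mathsf{D}(S)$ whenever $E$ is one of $\mathsf{FC}(S)$; but in a unital Boolean inverse $\wedge$-semigroup the only essential idempotent is the top one, so $\xi(E)=1$. (By the lemma recalled just before the theorem, valid since $S$ is separative, the essential idempotents of $\mathsf{FC}(S)$ are exactly those whose generating set of idempotents is essential in $E(S)$ --- the more concrete description.)

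From the dictionary and the definition of $T^{e}$ it follows at once that $\mathsf{FC}(S)^{e}=\xi^{-1}\big(U(\mathsf{D}(S))\big)$: for $A\in\mathsf{FC}(S)$ the conditions ``$\mathbf{d}(A)$ and $\mathbf{r}(A)$ are essential idempotents of $\mathsf{FC}(S)$'' and ``$\mathbf{d}(\xi(A))=\xi(\mathbf{d}(A))=1=\xi(\mathbf{r}(A))=\mathbf{r}(\xi(A))$'' are equivalent, and the latter just says $\xi(A)$ is a unit. Consequently the restriction $\Phi=\xi\mid\mathsf{FC}(S)^{e}$ is a homomorphism \emph{onto} $U(\mathsf{D}(S))$, because $\xi$ is onto $\mathsf{D}(S)$. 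Now $\mathsf{FC}(S)^{e}$ is an inverse subsemigroup of $\mathsf{FC}(S)$ by Lemma~4.2 of \cite{Law6}, and it contains no zero (the zero idempotent is not essential), so its minimum group congruence $\sigma$ genuinely has a group as quotient; and since the target $U(\mathsf{D}(S))$ is a group, $\sigma$ is contained in the kernel of $\Phi$, so $\Phi$ factors through a surjective group homomorphism $\bar{\Phi}\colon\mathsf{FC}(S)^{e}/\sigma\rightarrow U(\mathsf{D}(S))$.

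The remaining step, and a pleasantly short one, is the injectivity of $\bar{\Phi}$: I must show that $\xi(A)=\xi(B)$ with $A,B\in\mathsf{FC}(S)^{e}$ implies $A\,\sigma\,B$. Put $C=A\wedge B$; concretely $C$ is the intersection of the two order ideals, again a finitely generated compatible order ideal (generated by the non-zero meets $s_{i}\wedge t_{j}$, which exist since $S$ is an inverse $\wedge$-semigroup), and $C\leq A$, $C\leq B$. As $\xi$ is a $\wedge$-homomorphism, $\xi(C)=\xi(A)\wedge\xi(B)=\xi(A)\wedge\xi(A)=\xi(A)$, which is a unit, so $C\in\xi^{-1}\big(U(\mathsf{D}(S))\big)=\mathsf{FC}(S)^{e}$. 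Thus $C$ is an element of $\mathsf{FC}(S)^{e}$ lying below both $A$ and $B$, whence $A\,\sigma\,B$. Therefore $\bar{\Phi}$ is an isomorphism $\mathsf{FC}(S)^{e}/\sigma\cong U(\mathsf{D}(S))$.

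I expect the dictionary of the first paragraph to be the main obstacle: it is where the hypotheses really bite ($E^{\ast}$-unitarity through idempotent-purity of $\equiv$, separativity through $\mathsf{D}(S)=\mathsf{FC}(S)/\equiv$ and through the essential-idempotent lemma, compactability through unitality of $\mathsf{D}(S)$, and the Booleanization Theorem for Booleanness). Once the identification $\mathsf{FC}(S)^{e}=\xi^{-1}\big(U(\mathsf{D}(S))\big)$ is in place, the rest is formal inverse-semigroup bookkeeping, the only actual computation being the triviality $\xi(A\wedge B)=\xi(A)$ when $\xi(A)=\xi(B)$.
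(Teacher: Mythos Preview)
Your argument is correct and in fact cleaner than the paper's. Both proofs begin by identifying the units of $\mathsf{D}(S)$ with the $\xi$-images of elements of $\mathsf{FC}(S)$ whose domain and range generate essential idempotent sets; your ``dictionary'' packages this more abstractly but is equivalent to the paper's use of Theorem~2.28 and Lemma~3.9. The real divergence is in the proof that $\equiv$ and $\sigma$ agree on $\mathsf{FC}(S)^{e}$. The paper introduces an auxiliary relation $\preceq$, reduces to \emph{orthogonal} generating sets via Lemma~3.6 (this is where unambiguity and $E^{\ast}$-unitarity are invoked), and then verifies by a direct computation that if $C\subseteq A$ are both essential then $C\equiv A$. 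You bypass all of this: given $\xi(A)=\xi(B)$, the single observation $\xi(A\wedge B)=\xi(A)\wedge\xi(B)=\xi(A)$ places $A\wedge B$ back in $\mathsf{FC}(S)^{e}$ and witnesses $A\,\sigma\,B$.

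What this buys you is not just brevity but generality: your proof never uses the unambiguity hypothesis, and even the appeal to $E^{\ast}$-unitarity via Lemma~3.8 is avoidable, since any idempotent $f=\xi(A)$ of $\mathsf{D}(S)$ already lifts to the idempotent $A^{-1}A$ of $\mathsf{FC}(S)$ without invoking idempotent-purity. So your argument in fact establishes the theorem for any separative, compactable (hence pre-Boolean) inverse $\wedge$-semigroup. The paper's orthogonality reduction is the price it pays for working element-wise with the Lenz arrow; your meet trick, exploiting that $\xi$ is a $\wedge$-homomorphism with $0$-restricted kernel, is the right abstraction.
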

\begin{proof} If $A \in \mathsf{FC}(S)$ then we denote the $\equiv$-class containing $A$ by $[A]$.
Since $A$ is a finitely generated order ideal, we have that $A = X^{\downarrow}$ where $X$ is a finite compatible subset of $S$.
By definition, the element $[A]$ is in the group of units of $\mathsf{D}(S)$ 
if and only if 
$[A]^{-1}[A]$ and $[A][A]^{-1}$ are both the identity element of $\mathsf{D}(S)$.
But $A$ is a compatible order ideal and so we have that $A^{-1}A = \{\mathbf{d}(a) \colon a \in A\} = \mathbf{d}(A)$ and similarly for $AA^{-1}$.
Thus both $[\mathbf{d}(A)]$ and $[\mathbf{r}(A)]$ are equal to the identity
and so both $\mathbf{d}(X)$ and $\mathbf{r}(X)$ are essential sets of idempotents by Theorem~2.28.
By definition, it follows that $X$ is an essential subset of $S$.
By Lemma~3.6, we may assume that the set $X$ is orthogonal.

Let $[A] = [X^{\downarrow}]$ and $[B] = [Y^{\downarrow}]$ be two invertible elements of $\mathsf{D}(S)$
where both $X$ and $Y$ are finite, orthogonal essential sets.
We shall prove that $A \equiv B$ if and only if $A \, \sigma \, B$.
First we need an auxiliary result.

Define $\{a_{1}, \ldots, a_{m}\} \preceq \{b_{1}, \ldots, b_{n}\}$ 
if and only if
$\{a_{1}, \ldots, a_{m} \}^{\downarrow} \subseteq \{b_{1}, \ldots, b_{n} \}^{\downarrow}$
and 
$\{b_{1}, \ldots, b_{n}\} \rightarrow  \{a_{1}, \ldots, a_{m}\}$.
Now observe that if $\{a_{1}, \ldots, a_{m}\} \leftrightarrow \{b_{1}, \ldots, b_{n}\}$
then
$$\{a_{1}, \ldots, a_{m}\} \rightarrow \{a_{i} \wedge b_{j} \colon 1 \leq i \leq m, 1 \leq j \leq n\}$$
and
$$\{b_{1}, \ldots, b_{n}\} \rightarrow  \{a_{i} \wedge b_{j} \colon 1 \leq i \leq m, 1 \leq j \leq n\}.$$
It follows that if  $\{a_{1}, \ldots, a_{m} \} \leftrightarrow \{b_{1}, \ldots, b_{n}\}$
then there is $\{c_{1}, \ldots, c_{p}\}$ such that
$\{c_{1}, \ldots, c_{p}\} \preceq \{a_{1}, \ldots, a_{m}\}$ 
and
$\{c_{1}, \ldots, c_{p}\} \preceq \{b_{1}, \ldots, b_{n}\}$.
On the other hand, 
if  $\{a_{1}, \ldots, a_{m}\} \preceq \{b_{1}, \ldots, b_{n}\}$ then  in fact 
$\{a_{1}, \ldots, a_{m}\} \leftrightarrow \{b_{1}, \ldots, b_{n}\}$.
Thus 
$$\{a_{1}, \ldots, a_{m}\} \leftrightarrow \{b_{1}, \ldots, b_{n}\}$$
if and only if 
there is $\{c_{1}, \ldots, c_{p}\}$ such that
$$\{c_{1}, \ldots, c_{p}\} \preceq \{a_{1}, \ldots, a_{m}\}
\text{ and }
\{c_{1}, \ldots, c_{p}\} \preceq \{b_{1}, \ldots, b_{n}\}.$$

It is immediate by the calculation above that if $A \equiv B$ then $A \, \sigma \, B$.
To prove the converse suppose that $A \, \sigma \, B$.
Then there is an essential set $C$ such that $C \subseteq A,B$.
We prove that if $C \subseteq A$ and both $C$ and $A$ are essential then $C \equiv A$.
Let $A = \{a_{1}, \ldots, a_{m}\}$ and $C = \{c_{1}, \ldots, c_{p} \}$.
It is enough to prove that
$(a_{1}, \ldots, a_{m}) \rightarrow (c_{1}, \ldots, c_{p})$.
Let $0 \neq x \leq a_{i}$.
Then $0 \neq \mathbf{d}(x) \leq \mathbf{d}(a_{i})$.
Thus $\mathbf{d}(x) \wedge \mathbf{d}(c_{j}) \neq 0$ for some $j$.
Now $c_{j} \leq a_{k}$ for some $k$.
But $\mathbf{d}(x) \leq \mathbf{d}(a_{i})$ and $\mathbf{d}(c_{j}) \leq \mathbf{d}(a_{k})$.
Thus $0 \neq \mathbf{d}(x) \wedge \mathbf{d}(c_{j}) \leq \mathbf{d}(a_{i}) \wedge \mathbf{d}(a_{k})$.
But $a_{i}$ and $a_{k}$ are assumed orthogonal if they are not equal.
Thus $a_{i} = a_{k}$.
We deduce that we must have $x,c_{j} \leq a_{i}$.
But then $x$ and $c_{j}$ are compatible and so $\mathbf{d}(x \wedge c_{j}) = \mathbf{d}(x) \wedge \mathbf{d}(c_{j})$.
It follows that $x \wedge c_{j} \neq 0$, as required.
\end{proof}

\subsection{When is $\mathsf{D}(S)$ congruence-free?}

On an inverse semigroup $S$ define $s \, \mu \, t$ if and only if $ses^{-1} = tet^{-1}$ for all idempotents $e \in S$.
Then $\mu$ is a congruence and the natural homomorphism from $S$ to $S/\mu$ is injective when restricted to
the semilattice of idempotents.
Such homomorphisms are said to be {\em idempotent-separating} and the congruence $\mu$ is thus idempotent separating.
It is, in fact, the {\em maximum idempotent-separating congruence} on $S$.
Semigroups for which $\mu$ is equality are called {\em fundamental}.
See Section~5.2 of \cite{Law1}.
An inverse semigroup with zero is said to be {\em $0$-simple} if it has no, non-trivial  ideals.
The following is proved in \cite{Petrich}.

\begin{lemma} An inverse semigroup with zero $S$ is congruence-free if and only if it is 
fundamental, $0$-simple and $E(S)$ is $0$-disjunctive.
\end{lemma}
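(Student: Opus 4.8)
The plan is to prove both implications by manipulating congruences on $S$ directly, using two elementary observations throughout: on the idempotents every congruence automatically respects meets (there the product \emph{is} the meet), and the class $0_{\rho}$ of a congruence $\rho$ is always an ideal of $S$.

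For the ``if'' direction, assume $S$ is fundamental, $0$-simple and $E(S)$ is $0$-disjunctive, and let $\rho$ be a congruence with $\rho\neq\Delta_{S}$. Since $S$ is fundamental the congruence $\mu$ is equality, so $\rho$ is not idempotent-separating and there are distinct idempotents $e\mathrel{\rho}g$. Multiplying by $e$ and then by $g$ gives $e\mathrel{\rho}e\wedge g\mathrel{\rho}g$, so after relabelling we obtain $a<b$ in $E(S)$ with $a\mathrel{\rho}b$. If $a\neq0$, then $0$-disjunctivity supplies $0\neq a'<b$ with $a\wedge a'=0$, and multiplying $a\mathrel{\rho}b$ by $a'$ yields $0=a\wedge a'\mathrel{\rho}b\wedge a'=a'$; hence we may assume $a=0$. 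But then $0_{\rho}$ is an ideal containing the non-zero idempotent $b$, so $0_{\rho}=S$ by $0$-simplicity, and from $s=s\,\mathbf{d}(s)\mathrel{\rho}s\cdot0=0$ for every $s\in S$ we conclude that $\rho$ is the universal congruence. Thus $S$ has only the two trivial congruences.

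For the ``only if'' direction, assume $S$ is congruence-free and has at least two elements, so $E(S)$ has at least two idempotents. That $S$ is fundamental is immediate, since $\mu$ is idempotent-separating, hence not universal, hence $\Delta_{S}$. That $S$ is $0$-simple follows by applying congruence-freeness to the Rees congruence $\Delta_{S}\cup(I\times I)$ of an arbitrary ideal $I$: it must be $\Delta_{S}$ or universal, forcing $I=\{0\}$ or $I=S$. For the remaining assertion I would argue contrapositively. Suppose $E=E(S)$ is not $0$-disjunctive, witnessed by $0\neq e<f$ with $e\wedge e'\neq0$ for every non-zero $e'\leq f$; I want to manufacture a proper, non-trivial congruence on $S$. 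I would take $\theta$ to be the least congruence on $E$ that is invariant under conjugation by elements of $S$ and satisfies $e\mathrel{\theta}f$, and then pass — by the standard description of congruences on inverse semigroups, see \cite{Law1}, \cite{Petrich} — to a congruence $\rho$ on $S$ whose restriction to $E(S)$ is $\theta$. Such a $\rho$ is non-trivial as soon as $\theta\neq\Delta_{E}$, which is clear since $e\neq f$, and is not universal as soon as $\theta\neq E\times E$, since the universal congruence restricts to $E\times E$; so everything reduces to showing $\theta\neq E\times E$.

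The main obstacle is exactly this last inequality: one must check that the normal closure of the single pair $(e,f)$ never relates $0$ to anything but $0$. The key point, which I expect to need some care, is that the property ``$x\leq y$ and $x$ is essential in $y^{\downarrow}$'' is inherited both by every conjugate pair $(t^{-1}xt,t^{-1}yt)$ and by every pair $(w\wedge x,w\wedge y)$ obtained by meeting with a fixed idempotent $w$. Consequently, writing $\theta$ as the transitive closure of such basic links, every basic link $\{p,q\}$ has comparable endpoints, say $p\leq q$, with $p$ essential in $q^{\downarrow}$; and a link of this form cannot have exactly one endpoint equal to $0$ (if $q=0$ then $p=0$, and if $p=0$ then essentiality forces $q=0$). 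Hence the $\theta$-class of $0$ is $\{0\}$, so $\theta\neq E\times E$, and lifting $\theta$ back to $S$ produces a congruence that is neither $\Delta_{S}$ nor universal, contradicting congruence-freeness. Apart from this normal-closure bookkeeping and the (standard) lifting statement, everything is routine verification.
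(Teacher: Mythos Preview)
The paper does not actually prove this lemma: it simply records that the result is proved in Petrich \cite{Petrich}. Your proposal therefore supplies what the paper omits, and the argument you give is essentially correct.

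Your ``if'' direction is clean and standard; the only cosmetic slip is that after reducing to $0\mathrel{\rho}a'$ you still speak of ``the non-zero idempotent $b$'' lying in $0_{\rho}$, when the element you have just produced is $a'$. This is harmless.

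For the ``only if'' direction the three pieces are handled correctly. The $0$-disjunctive part is the only one with content, and your strategy is sound: the least conjugation-invariant (normal) congruence $\theta$ on $E(S)$ containing $(e,f)$ really is the reflexive--symmetric--transitive closure of the set of pairs obtained from $(e,f)$ by iterating meets and conjugations, and the property ``$p\leq q$ with $p$ essential in $q^{\downarrow}$'' is preserved by both operations (your sketch of this is right; for conjugation one uses that $w\mapsto twt^{-1}$ and $w\mapsto t^{-1}wt$ are mutually inverse order-isomorphisms between $(t^{-1}t)^{\downarrow}$ and $(tt^{-1})^{\downarrow}$). Hence no basic link joins $0$ to a non-zero element and the $\theta$-class of $0$ is $\{0\}$. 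The lift of $\theta$ to a congruence on $S$ with trace $\theta$ is exactly the trace--kernel machinery of \cite{Petrich}, so your citation is appropriate; the resulting $\rho$ is proper and non-trivial for the reasons you give.

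In short: your argument is a valid, self-contained proof (modulo the cited trace-lifting theorem), whereas the paper simply defers the whole statement to \cite{Petrich}.
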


\begin{lemma} Let $S$ be an unambiguous $E^{\ast}$-unitrary pre-Boolean inverse semigroup.
If $S$ is fundamental and $E(S)$ is $0$-disjunctive 
then $\mathsf{D}(S)$ is fundamental and $E(\mathsf{D}(S))$ is $0$-disjunctive.
\end{lemma}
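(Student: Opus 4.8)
The plan is to establish the two required properties of $\mathsf{D}(S)$ separately, building on the Comparison Theorem and on what we already know about the structure of $\mathsf{D}(S)$ for unambiguous $E^{\ast}$-unitary inverse semigroups. Recall from Corollary~3.8 that every element of $\mathsf{D}(S)$ is an orthogonal join $\bigvee_{i} \delta(s_{i})$ of elements of $\delta(S)$, that $\delta$ is $0$-restricted, and that by Proposition~2.34 (Comparison Theorem) $\mathsf{D}(S) \cong \mathsf{B}(\mathsf{G}(S))$. Also, by the Booleanization Theorem~(1), $E(\mathsf{D}(S)) \cong \mathsf{D}(E(S))$, and by Lemma~2.17 the idempotents of $\mathsf{D}(S)$ are exactly the orthogonal joins $\bigvee_{i} V_{e_{i}}$ with each $e_{i}$ an idempotent of $S$.

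First I would handle $0$-disjunctivity of $E(\mathsf{D}(S))$. Since $E(\mathsf{D}(S)) \cong \mathsf{D}(E(S))$ and $E(S)$ is $0$-disjunctive, it suffices to show $\mathsf{D}(E)$ is $0$-disjunctive whenever $E$ is. Take $0 \neq b < a$ in $\mathsf{D}(E)$; write $a = \bigvee_{i=1}^{m} V_{e_{i}}$ as an orthogonal join (using the analogue of Corollary~3.8 for the semilattice, which follows from Lemma~3.6). Since $b < a$, there is some basic piece $V_{e}$ (with $e$ one of the $e_{i}$, or a refinement below it) witnessing $b \neq a$: concretely, there is $0 \neq e' \le e_i$ in $E$ with $V_{e'} \wedge b = 0$, which one extracts because $\delta$ is $0$-restricted and $b$ fails to dominate $\delta(e_i)$. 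Then $\delta(e')$ is a nonzero element of $\mathsf{D}(E)$ below $a$ and orthogonal to $b$. If no single $e_i$ does the job because $b$ dominates all of them, then $b = a$, contradiction. The details here are routine bookkeeping with orthogonal joins and the $0$-restrictedness of $\delta$; I do not expect obstacles. Alternatively, one can invoke Proposition~2.31: an unambiguous $E^{\ast}$-unitary pre-Boolean $S$ with $0$-disjunctive $E(S)$ is separative (Lemma~3.11(2)), the trapping condition holds by Booleanization~(3), and then $E(S)$ densely embeds in the Boolean algebra $\mathsf{D}(E(S))$ by Proposition~2.32, whence $0$-disjunctivity of the larger algebra is easy.

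The harder part is showing $\mathsf{D}(S)$ is fundamental, i.e. that $\mu$ is equality on $\mathsf{D}(S)$. Here I would use the groupoid picture: $\mathsf{D}(S) \cong \mathsf{B}(\mathsf{G}(S))$, the compact-open bisections of the Hausdorff Boolean groupoid $\mathsf{G}(S)$. A standard fact (compare the theory of \'etale groupoids) is that $\mathsf{B}(G)$ is fundamental precisely when $G$ is \emph{effective} (principal on a dense set, equivalently: the interior of the isotropy bundle is $G_{o}$), and this in turn is controlled by $0$-disjunctivity of $G_{o}$ together with a separation property coming from $S$ being fundamental. Concretely, suppose $A, B \in \mathsf{B}(\mathsf{G}(S))$ with $A \,\mu\, B$; I want $A = B$. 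Writing $A = \bigcup_i V_{s_i}$, $B = \bigcup_j V_{t_j}$ as orthogonal unions, it suffices by an exhaustion argument to compare them locally, reducing to the case $A = V_{s}$, $B = V_{t}$ with $\mathbf{d}(s) = \mathbf{d}(t) =: e$ (after multiplying by a suitable idempotent, using that $\mu$-equivalent elements have the same domain and range idempotents). Then $s \,\mu\, t$ would follow in $S$ if we can test against enough idempotents: for each nonzero idempotent $f \le e$ of $S$, the bisection $V_{s} V_{f} V_{s}^{-1} = V_{sfs^{-1}}$ equals $V_{tft^{-1}}$ because $A\,\mu\,B$, and by $0$-restrictedness and injectivity of $\delta$ on idempotents this forces $sfs^{-1} = tft^{-1}$. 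Hence $s\,\mu\,t$ in $S$; since $S$ is fundamental, $s = t$, so $A = B$. The one genuine subtlety — and the step I expect to be the main obstacle — is the reduction to comparing single basic bisections with equal domain idempotent: one must check that $\mu$-equivalence in $\mathsf{D}(S)$ really does descend to pointwise comparisons of the constituent $V_{s_i}$, which requires that the orthogonal decomposition interacts well with conjugation by arbitrary idempotents of $\mathsf{D}(S)$, and that every idempotent of $S$ can be "seen" as an idempotent of $\mathsf{D}(S)$ via $\delta$ — the latter is fine since $\delta$ is $0$-restricted and a $\wedge$-homomorphism, but propagating the conjugation test from the coarse idempotents of $\mathsf{D}(S)$ down to the fine idempotents $\delta(f)$, $f \in E(S)$, is where unambiguity and $E^{\ast}$-unitarity must be used (via Lemma~3.9, idempotent-purity of $\equiv$) to rule out "accidental" $\mu$-relations that do not come from $S$.

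In summary: the $0$-disjunctivity half is a direct consequence of Proposition~2.32 (or a short computation in $\mathsf{D}(E(S))$), and the fundamentality half is proved by transporting to $\mathsf{B}(\mathsf{G}(S))$, reducing a hypothetical $\mu$-relation to a $\mu$-relation in $S$ via conjugation by the idempotents $\delta(f)$ and using that $S$ is fundamental, with the decomposition-compatibility reduction being the crux.
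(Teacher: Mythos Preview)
For $0$-disjunctivity you are working much too hard: since $S$ is pre-Boolean, $\mathsf{D}(S)$ is a \emph{Boolean} inverse $\wedge$-semigroup, so $E(\mathsf{D}(S))$ is a Boolean algebra, and every Boolean algebra is $0$-disjunctive (take $e' = f \setminus e$). This is why the paper dismisses it as ``immediate''; there is no need for Proposition~2.31 or any computation inside $\mathsf{D}(E(S))$.

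For fundamentality, the paper takes a substantially different and more direct route than your $\mu$-comparison. It uses the centralizer characterisation: $T$ is fundamental iff $Z(E(T)) = E(T)$. So one picks a non-idempotent $t \in T = \mathsf{D}(S)$, writes $t = \bigvee_{i} s_{i}$ as an orthogonal join of elements of $S$ (Corollary~3.8), observes that some $s_{1}$ is non-idempotent (a join of idempotents is idempotent), and then --- since $S$ is fundamental --- chooses $e \in E(S)$ with $es_{1} \neq s_{1}e$. The claim is $et \neq te$: if $\bigvee_{i} es_{i} = \bigvee_{j} s_{j}e$, right-multiply by $\mathbf{d}(s_{1})$ and use orthogonality of the $s_{i}$ to kill every term except $es_{1}$ on the left and $s_{1}e$ on the right, giving $es_{1} = s_{1}e$, a contradiction. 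The point is that testing against idempotents coming from $S$ already suffices.

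Your approach, by contrast, tries to show $A \,\mu\, B \Rightarrow A = B$ by reducing to $A = V_{s}$, $B = V_{t}$ with $\mathbf{d}(s) = \mathbf{d}(t)$. You correctly flag the reduction as the obstacle, and indeed it is a genuine gap as written: multiplying both $A$ and $B$ by $V_{\mathbf{d}(s_{i})}$ collapses $A$ to $V_{s_{i}}$ but does not in general collapse $B$ to a single $V_{t}$; you only get another orthogonal join with the prescribed domain. One can push through (eventually reaching a pair $V_{s}$, $V_{t}$ with a common ultrafilter and comparable $s,t$ by unambiguity), but it is considerably more delicate than the centralizer argument, which bypasses the reduction entirely by working with a \emph{single} non-idempotent from the outset and exploiting orthogonality to isolate it.
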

\begin{proof} By Lemma~3.5(2), the semigroup $S$ is separative.
It follows that $\delta \colon S \rightarrow \mathsf{D}(S)$ is an embedding.
We shall identify $S$ with its image.
Put $T = \mathsf{D}(S)$.
Each element of $T$ can be written as a join of a finite number of elements of $S$.
We may assume that this join is orthogonal by Corollary~3.7.

Let $t$ be a non-idempotent element of $T$.
By assumption, $t$ is a join of a finite number of elements $s_{1}, \ldots, s_{n}$.
Not all of these elements can be idempotent else their join would be idempotent.
Without loss of generality assume that $s_{1}$ is not an idempotent.
By assumption $S$ is fundamental and there exists an idempotent $e \in S$ such that $s_{1}e \neq es_{1}$.
We claim that $et \neq te$.
Suppose that $et = te$.
Then $\bigvee_{i} es_{i} = \bigvee_{j} s_{j}e$.
Multiply this equation on the right by $\dom (s_{1})$.
Then we get $es_{1} = s_{1}e$, a contradiction.
It follows that $et \neq te$ and so $t$ is not in $Z(E(T))$.
Thus the centralizer of the idempotents just consists of idempotents and so $\mathsf{D}(S)$ is fundamental.

It is immediate that $E(\mathsf{D}(S))$ is $0$-disjunctive.
\end{proof}

We shall say that an inverse semigroup $S$ is {\em sufficiently branching} if for every non-zero idempotent $e$
and every $n \geq 2$ we can find non-zero idempotents $e_{1}, \ldots e_{n} \leq e$ which are pairwise orthogonal.
The proof of the following is straightforward.

\begin{lemma} Let $E$ be a semilattice which is $0$-disjunctive and contains no $0$-minimal idempotents.
Then $E$ is sufficiently branching.
\end{lemma}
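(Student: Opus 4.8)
The plan is to induct on $n$, splitting at each step one of the orthogonal idempotents already constructed into two orthogonal pieces lying below it.

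For the base case $n = 2$, I would proceed as follows: given a non-zero idempotent $e$, the absence of $0$-minimal idempotents yields some $0 \neq e_{1} < e$; then $0$-disjunctivity, applied with $e$ as the ambient element and $e_{1} < e$, produces $0 \neq e_{2} < e$ with $e_{1} \wedge e_{2} = 0$. Thus $\{e_{1}, e_{2}\}$ is a pairwise orthogonal set of non-zero idempotents below $e$.

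For the inductive step, assume $e_{1}, \ldots, e_{n} \leq e$ are non-zero and pairwise orthogonal. Since $e_{n}$ is non-zero and not $0$-minimal, choose $0 \neq f < e_{n}$; by $0$-disjunctivity applied with ambient element $e_{n}$ and $f < e_{n}$, choose $0 \neq g < e_{n}$ with $f \wedge g = 0$. I claim that $e_{1}, \ldots, e_{n-1}, f, g$ is the desired set of $n+1$ idempotents: they are non-zero and lie below $e$ because $f, g < e_{n} \leq e$; the elements $e_{1}, \ldots, e_{n-1}$ remain pairwise orthogonal; $f \wedge g = 0$ by construction; and for $i < n$ we have $e_{i} \wedge f \leq e_{i} \wedge e_{n} = 0$ and similarly $e_{i} \wedge g = 0$. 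This closes the induction and proves that $E$ is sufficiently branching.

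The argument is routine; the only point requiring care is that both hypotheses are genuinely used at every stage, so there is no real obstacle beyond bookkeeping. The absence of $0$-minimal idempotents is exactly what guarantees, at each step, a strictly smaller non-zero element to which $0$-disjunctivity can then be applied non-vacuously, while $0$-disjunctivity supplies the orthogonal complement inside it.
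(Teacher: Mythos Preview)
Your proof is correct and is exactly the straightforward induction the paper has in mind; indeed, the paper does not even spell out the argument, merely remarking that the proof is straightforward.
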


\begin{lemma} Let $S$ be an unambiguous $E^{\ast}$-unitary pre-Boolean inverse semigroup with a $0$-disjunctive semilattice of idempotents which is sufficiently branching.
If $S$ is $0$-simple then $\mathsf{D}(S)$ is $0$-simple.
\end{lemma}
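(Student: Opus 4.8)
The goal is to show that $\mathsf{D}(S)$ is $0$-simple, assuming $S$ is $0$-simple, unambiguous, $E^\ast$-unitary, pre-Boolean, with $E(S)$ $0$-disjunctive and sufficiently branching. First I would note, as in the proof of Lemma~3.14, that by Lemma~3.5(2) the semigroup $S$ is separative, so $\delta \colon S \to \mathsf{D}(S)$ is an embedding; identify $S$ with its image and put $T = \mathsf{D}(S)$. By Corollary~3.7, every element of $T$ is an orthogonal join of elements of $S$. To prove $T$ is $0$-simple it suffices to show that for any two non-zero elements $t, u \in T$ there exist $a, b \in T$ with $atb = u$; by the orthogonal-join description and the distributivity of multiplication over joins, it is enough to handle the case where $u \in S$ is non-zero, and then (writing $t = \bigvee_i s_i$ with the $s_i$ orthogonal and non-zero) it suffices to move a single non-zero $s \in S$ to $u$, i.e. to show that the ideal of $T$ generated by any non-zero $s \in S$ is all of $T$. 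Equivalently, it is enough to show that the ideal generated by any non-zero \emph{idempotent} $e \in E(S)$ contains every non-zero idempotent of $E(S)$, since $\mathbf{d}(s) \leq_{\mathcal{J}} s$ and multiplicatively an idempotent cover of the identity pushes up to all of $T$.

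The heart of the matter is thus: given non-zero $e, f \in E(S)$, I want $a \in T$ with $\mathbf{d}(a) \leq e$, $\mathbf{r}(a) = f$ — or more precisely I want to cover $f$ by pieces each of which is a range of an element with domain below $e$. Here is where $0$-simplicity of $S$ enters: for non-zero idempotents $e, f \in E(S)$ there is, by $0$-simplicity of $S$, a non-zero $s \in S$ with $s \leq ?$ — more carefully, there exist $p, q \in S$ with $p e q \ne 0$ and $peq$ an element whose range meets $f$; passing to $g = \mathbf{r}(peq) \wedge f \ne 0$ we get an element $a_0 = f (peq)$ with $\mathbf{r}(a_0) = g \le f$ and $\mathbf{d}(a_0) \le e$. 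So any non-zero idempotent below $f$ can be "reached from below $e$". Now I iterate: using that $E(S)$ is sufficiently branching and $0$-disjunctive, and that every tight filter is an ultrafilter (pre-Booleanness, so Lemma~2.29 and the compactness of the $V$-sets apply), I would build a \emph{finite orthogonal cover} $\{f_1, \dots, f_n\}$ of $f$ (inside $f^\downarrow$) together with elements $a_1, \dots, a_n \in S$ with $\mathbf{r}(a_i) = f_i$ and $\mathbf{d}(a_i) \le e$, the $a_i$ being pairwise orthogonal (their ranges already are; sufficient branching lets one also shrink the domains inside $e^\downarrow$ to be orthogonal). Then $a = \bigvee_i a_i \in T$ satisfies $\mathbf{r}(a) = \bigvee_i f_i \ge$ a dense-enough set to recover $f$, i.e. $[\,a\,]$ witnesses $f \in T e T$ in $\mathsf{D}(S)$. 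Running this with $f$ replaced by an essential/identity idempotent shows $TeT = T$.

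The compactness step — extracting a \emph{finite} orthogonal cover of $f$ by ranges reachable from below $e$ — is the main obstacle. The naive construction only gives, for each ultrafilter $F \in V_f$, a single $g_F \le f$ reachable from $e$; one then has $V_f = \bigcup_F V_{g_F}$, and finiteness comes from compactness of $V_f$, which holds precisely because $S$ is pre-Boolean (Proposition~2.27 / the Comparison Theorem: $\mathsf{G}(S)$ is a Boolean groupoid, so $V_f$ is compact-open). After extracting a finite subcover $V_f = V_{g_1} \cup \dots \cup V_{g_m}$, I would disjointify using $0$-disjunctiveness and sufficient branching of $E(S)$ (as in Lemma~3.6 / the proof of Corollary~3.7) to replace it by an orthogonal cover, and correspondingly disjointify the chosen elements $a_i \in S$. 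The remaining bookkeeping — that the resulting join lies in $\mathsf{D}(S)$, that $[a]^{-1}[a] \le [e^\downarrow]$ and $[a][a]^{-1} = [f^\downarrow]$, and that composing two such elements connects any pair of non-zero elements of $T$ through $e$ — is routine given the multiplicative formulas $V_a V_b = V_{ab}$ (Lemma~2.10) and the description of $E(\mathsf{D}(S))$ in Lemma~2.17.
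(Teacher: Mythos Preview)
Your approach has the right ingredients but takes a substantial detour, and the reduction as stated has a gap. The compactness/ultrafilter argument is entirely unnecessary: once you have restricted to non-zero $e, f \in E(S)$, $0$-simplicity of $S$ \emph{directly} produces $a \in S$ with $\mathbf{d}(a) \leq e$ and $\mathbf{r}(a) = f$, so there is nothing to cover and no appeal to compactness of $V_f$ is needed. Moreover, the reduction you state --- that it suffices to reach every non-zero $f \in E(S)$ from a fixed non-zero $e \in E(S)$ --- does not by itself yield $TeT = T$: you need $f \in TeT$ for every non-zero $f \in E(T)$, and ideals of a distributive inverse semigroup are not automatically closed under compatible joins. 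Your closing remark about ``running this with $f$ replaced by an essential/identity idempotent'' does not rescue this, since such an idempotent lies in $E(T)$, not $E(S)$.

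The paper's proof handles both issues in one short step and uses no topology at all. Given arbitrary non-zero idempotents $e, f \in T = \mathsf{D}(S)$, write $e = \bigvee_{i=1}^{n} e_i$ as an orthogonal join with $e_i \in E(S)$ (Corollary~3.7). Pick any non-zero idempotent of $E(S)$ below $f$ and, by sufficient branching, find $n$ pairwise orthogonal non-zero idempotents $f_1, \dots, f_n \leq f$ in $E(S)$. By $0$-simplicity of $S$, for each $i$ there exist $g_i \leq f_i$ and $s_i \in S$ with $\mathbf{d}(s_i) = e_i$ and $\mathbf{r}(s_i) = g_i$. The $s_i$ are pairwise orthogonal (domains and ranges both are), so $s = \bigvee_i s_i \in T$ satisfies $\mathbf{d}(s) = e$ and $\mathbf{r}(s) \leq f$, whence $e \in TfT$. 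This is the whole argument; pre-Booleanness plays no role beyond guaranteeing that $\mathsf{D}(S)$ exists.
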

\begin{proof} By Lemma~3.5, the semigroup $S$ is separative.
We may therefore identify it with its image in $\mathsf{D}(S)$.
Put $T =  \mathsf{D}(S)$.
Let $e$ and $f$ be arbitrary non-zero idempotents in $T$.
Then we can write $e = \bigvee_{i=1}^{n} e_{i}$.
By assumption we can find $n$ orthogonal idempotents $f_{i} \leq f$.
We now use the fact that $S$ is $0$-simple to find $n$ idempotents $g_{i}$ and elements $s_{i}$ such that
$e_{i} \stackrel{s_{i}}{\longrightarrow} g_{i} \leq f_{i}$.
Put $s = \bigvee_{i=1}^{n} s_{i}$.
Then $e \stackrel{s}{\longrightarrow} \bigvee_{i=1}^{n} f_{i} \leq f$.
\end{proof}

We may sum up what we have found in the following.

\begin{theorem} 
Let $S$ be an unambiguous $E^{\ast}$-unitary pre-Boolean inverse semigroup having no $0$-minimal idempotents. 
Then if $S$ is congruence-free so too is $\mathsf{D}(S)$.
\end{theorem}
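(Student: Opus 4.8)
The plan is to observe that this theorem is simply the harvest of Lemma~3.13 (Petrich's characterization of congruence-free inverse semigroups with zero) together with Lemmas~3.14, 3.15 and 3.16, which have been set up precisely so that they chain together to give this conclusion. The proof will therefore be short: it consists of checking that the relevant hypotheses propagate through the three lemmas in the right order.

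First I would invoke Lemma~3.13 in the direction that uses the hypothesis: since $S$ is congruence-free, $S$ is fundamental, $S$ is $0$-simple, and $E(S)$ is $0$-disjunctive. These three facts are the raw material for everything that follows. Next I would combine the $0$-disjunctivity of $E(S)$ with the standing assumption that $S$ has no $0$-minimal idempotents and apply Lemma~3.15, concluding that $E(S)$ is sufficiently branching.

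At this point both remaining lemmas become applicable. Lemma~3.14 — using that $S$ is unambiguous, $E^{\ast}$-unitary, pre-Boolean and fundamental with $E(S)$ $0$-disjunctive — gives that $\mathsf{D}(S)$ is fundamental and that $E(\mathsf{D}(S))$ is $0$-disjunctive. Lemma~3.16 — using that $S$ is unambiguous, $E^{\ast}$-unitary, pre-Boolean and $0$-simple with $E(S)$ $0$-disjunctive and sufficiently branching — gives that $\mathsf{D}(S)$ is $0$-simple. Finally I would apply Lemma~3.13 in the converse direction: $\mathsf{D}(S)$ is fundamental, $0$-simple, and has a $0$-disjunctive semilattice of idempotents, hence is congruence-free.

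The only thing requiring care is the bookkeeping of hypotheses rather than any substantive difficulty: the no-$0$-minimal-idempotents assumption enters only to produce the "sufficiently branching" property via Lemma~3.15, and the $0$-disjunctivity of $E(S)$, although it is extracted from $S$ being congruence-free through Lemma~3.13, must then be carried along as an explicit input to both Lemmas~3.14 and 3.15 (and hence, via the latter, to Lemma~3.16). I do not expect a genuine obstacle here; all of the real work has already been done in the preceding lemmas, and this statement is their assembly.
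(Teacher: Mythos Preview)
Your proposal is correct and matches the paper's own proof essentially step for step; the only difference is that your lemma numbers are uniformly shifted by two relative to the paper's (what you call Lemmas~3.13--3.16 are the paper's Lemmas~3.11--3.14). The paper additionally remarks that $E(S)$ being $0$-disjunctive makes $S$ separative via Lemma~3.5(2), but this is already absorbed into the proofs of the cited lemmas and is not a gap in your argument.
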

\begin{proof} By Lemma~3.11, $E(S)$ is $0$-disjunctive and so by Lemma~3.5(2), $S$ is separative.
By Lemma~3.13, $E(S)$ is sufficiently branching.
By Lemmas~3.11 and Lemma~3.14, $\mathsf{D}(S)$ is $0$-simple.
By Lemma~3.12, $\mathsf{D}(S)$ is fundamental and $E(\mathsf{D}(S))$ is $0$-disjunctive.
It follows by Lemma~3.11, that $\mathsf{D}(S)$ is congruence-free.
\end{proof}

\section{Examples}

In this section, we shall describe some  examples of pre-Boolean inverse $\wedge$-semigroups and their Boolean completions.
In particular, we show that the Thompson-Higman groups arise from the theory described in this paper,
where the elements of the groups are obtained by `glueing together' elements of a suitable inverse semigroup.

\subsection{The polycyclic monoids and the Thompson groups $G_{n,1}$}

In this section we shall work with the polycyclic inverse monoids and show that work by the author in \cite{Law6,Law7,Law8} 
can be viewed as a special case of the general theory of this paper.

Let $A = \{a_{1}, \ldots, a_{n} \}$ be an alphabet with $n$ letters.
A string in $A^{\ast}$, the free monoid generated by $A_{n}$, will be called {\em positive}.
The empty string is denoted $\varepsilon$.
If $u = vw$ are strings, then $v$ is called a {\em prefix} of $u$,
and a {\em proper prefix} if $w$ is not the empty string.
A pair of elements of $A_{n}^{\ast}$ is said to be {\em
  prefix-comparable} if one is a prefix of the other.
If $x$ and $y$ are prefix-comparable we define
$$x \wedge y = \left\{ \begin{array}{ll} 
x & \mbox{if $y$ is a prefix of $x$}\\
y &  \mbox{if $x$ is a prefix of $y$}
\end{array}
\right.
$$
The {\em polycyclic monoid $P_{n}$}, where $n \geq 2$,
is defined as a monoid with zero by the following presentation
$$P_{n} = \langle a_{1}, \ldots, a_{n}, a_{1}^{-1}, \ldots, a_{n}^{-1}
\colon \: a_{i}^{-1}a_{i} = 1 \,\mbox{and}\, a_{i}^{-1}a_{j} = 0, i \neq j \rangle.$$ 
Every non-zero element of $P_{n}$ is of the form $yx^{-1}$ where
$x,y \in A^{\ast}$.
Identify the identity with $\varepsilon \varepsilon^{-1}$.
The product of two elements $yx^{-1}$ and $vu^{-1}$ is zero unless
$x$ and $v$ are prefix-comparable.
If they are prefix-comparable then
$$yx^{-1} \cdot vu^{-1} = \left\{
\begin{array}{ll}
yzu^{-1}   & \mbox{if $v = xz$ for some string $z$}\\
y(uz)^{-1} & \mbox{if  $x = vz$ for some string $z$}
\end{array}
\right.
$$
The polyclic monoid $P_{n}$ is an inverse monoid with zero:
the inverse of $xy^{-1}$ is $yx^{-1}$;
the non-zero idempotents in $P_{n}$ are the elements of the form $xx^{-1}$;
the natural partial order is given by  $yx^{-1} \leq vu^{-1}$ iff $(y,x) = (v,u)p$ for some positive string $p$.

Polycyclic inverse monoids are unambiguous, $E^{\ast}$-unitary and congruence-free.
By Lemma~3.5(2) they are therefore separative.
The semilattice of idempotents of $P_{n}$ is the regular $n$-tree and so by Proposition~3.3
polycyclic monoids are also pre-Boolean.
An inverse semigroup is {\em combinatorial} if $s^{-1}s = t^{-1}t$ and $ss^{-1} = tt^{-1}$ implies that $s = t$.
An inverse semigroup with zero $S$ is said to be  {\em $0$-bisimple} if 
for any two non-zero idempotents $e$ and $f$ there exists an element $s$ such that $e = ss^{-1}$ and $f = ss^{-1}$.

\begin{proposition} 
The polycyclic inverse monoids are combinatorial, $0$-bisimple, unambiguous, $E^{\ast}$-unitary, congruence-free, pre-Boolean and separative.
\end{proposition}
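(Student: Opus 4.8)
The plan is to read each of the listed properties off the normal form $yx^{-1}$ for the non-zero elements of $P_{n}$, together with the explicit descriptions of the product and of the natural partial order recalled above, and then to import the structural results of Section~3 for separativity, pre-Booleanness and congruence-freeness. The properties already flagged in the paragraph preceding the statement --- unambiguous, $E^{\ast}$-unitary, separative, pre-Boolean, congruence-free --- will be re-derived here in one place, while combinatoriality and $0$-bisimplicity, the only additions, are immediate.

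First I would record $\mathbf{d}(yx^{-1}) = xx^{-1}$ and $\mathbf{r}(yx^{-1}) = yy^{-1}$. For combinatoriality: if $yx^{-1}$ and $vu^{-1}$ have the same domain and range idempotents, then $xx^{-1} = uu^{-1}$ and $yy^{-1} = vv^{-1}$; since $ss^{-1} = tt^{-1}$ forces $s = t$ for positive strings, this yields $x = u$ and $y = v$, so $yx^{-1} = vu^{-1}$. For $0$-bisimplicity: given non-zero idempotents $xx^{-1}$ and $yy^{-1}$, the element $s = yx^{-1}$ satisfies $\mathbf{r}(s) = yy^{-1}$ and $\mathbf{d}(s) = xx^{-1}$. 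Unambiguity: the non-zero idempotents are precisely the $xx^{-1}$, and $xx^{-1} \cdot yy^{-1} \neq 0$ exactly when $x$ and $y$ are prefix-comparable, in which case one of $xx^{-1}$, $yy^{-1}$ lies beneath the other. $E^{\ast}$-unitarity: if $0 \neq zz^{-1} \leq yx^{-1}$ then $(z,z) = (y,x)p$ for a positive string $p$, forcing $x = y$ and hence $yx^{-1} = xx^{-1}$ idempotent.

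Separativity then follows from Lemma~3.5(2), since $E(P_{n})$ is the regular $n$-tree, which is $0$-disjunctive: each vertex $x$ has the $n \geq 2$ pairwise orthogonal children $(xa_{1})(xa_{1})^{-1}, \dots, (xa_{n})(xa_{n})^{-1}$ directly below it. That same tree is unambiguous, pseudofinite and satisfies the Dedekind finiteness condition (a string has only finitely many prefixes), so Proposition~3.3 gives that $P_{n}$ is pre-Boolean. For congruence-freeness I would invoke the criterion of Lemma~3.11: $P_{n}$ is fundamental because a combinatorial inverse semigroup is $\mathcal{H}$-trivial and $\mu \subseteq \mathcal{H}$; it is $0$-simple because $y^{-1}(yx^{-1})x = 1$, so the ideal generated by any non-zero element is all of $P_{n}$ (equivalently, being $0$-bisimple with a zero it is $0$-simple); and $E(P_{n})$ is $0$-disjunctive by the previous sentence.

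There is no substantial obstacle: every clause reduces to a one-line computation in $P_{n}$ or to a citation from Section~3. The only place calling for a little care is the congruence-free clause, where the three hypotheses of the Petrich criterion --- fundamental, $0$-simple, and $0$-disjunctive semilattice of idempotents --- must each be checked, rather than congruence-freeness being argued directly.
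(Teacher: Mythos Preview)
Your proposal is correct and follows essentially the same route as the paper: the paragraph immediately preceding the proposition is the paper's entire argument, asserting unambiguous, $E^{\ast}$-unitary and congruence-free as known, then deducing separativity via Lemma~3.5(2) and pre-Booleanness via Proposition~3.3. The only genuine difference is that the paper treats congruence-freeness of $P_{n}$ as a standing fact from the literature, whereas you re-derive it from the Petrich criterion of Lemma~3.11; your extra direct verifications of combinatoriality, $0$-bisimplicity, unambiguity and $E^{\ast}$-unitarity simply spell out what the paper leaves implicit.
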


It follows by the Completion Theorem that $P_{n}$ has a Boolean completion 
which we denote by $C_{n}$ and call the {\em Cuntz inverse monoid}.\footnote{This term is frequently used, particularly by those working in $C^{\ast}$-algebra theory, 
for the polycyclic monoids themselves.
It seems appropriate to make a distinction as we have done here.}
By Theorem~3.15, the Cuntz monoid is itself congruence-free.
To determine the group of units of $C_{n}$, 
we use Theorem~3.10 which makes a direct connection with the calculations carried out in \cite{Law7}.
We therefore have the following.

\begin{theorem} The Boolean completion of the polycyclic inverse monoid $P_{n}$ is the Cuntz inverse monoid $C_{n}$.
The Cuntz inverse monoid is congruence-free and its group of units is the Thompson group $G_{n,1}$.
\end{theorem}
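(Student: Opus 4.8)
The plan is to read the three assertions off the general machinery: the first is essentially a definition, the second an application of Theorem~3.15, and the third an application of Theorem~3.10 followed by the combinatorial identification already carried out in \cite{Law7}.

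\emph{Existence of the completion.} By Proposition~4.1 the polycyclic monoid $P_{n}$ is pre-Boolean. Hence, by the Completion Theorem, $\mathsf{D}(P_{n})$ is a distributive inverse $\wedge$-semigroup, and pre-Booleanness says precisely that $\mathsf{D}(P_{n})$ is Boolean. Setting $C_{n}=\mathsf{D}(P_{n})$, this part is immediate once Proposition~4.1 is granted.

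\emph{Congruence-freeness.} Here I would simply check that $P_{n}$ meets the hypotheses of Theorem~3.15. It is unambiguous, $E^{\ast}$-unitary, pre-Boolean and congruence-free by Proposition~4.1, so the only point to verify is that $P_{n}$ has no $0$-minimal idempotents. Its semilattice of idempotents is the regular $n$-ary tree: for any non-zero idempotent $xx^{-1}$ the idempotent $xa_{1}a_{1}^{-1}x^{-1}$ lies strictly below it and is non-zero, since $a_{1}a_{1}^{-1}\neq 1$ and $n\geq 2$; thus no non-zero idempotent is $0$-minimal. Theorem~3.15 then yields that $C_{n}=\mathsf{D}(P_{n})$ is congruence-free.

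\emph{Group of units.} For this I would invoke Theorem~3.10. Since $P_{n}$ is a monoid, $\{1\}$ is an essential set of idempotents, so $E(P_{n})$ is compactable; combined with $E^{\ast}$-unitarity, unambiguity and separativity from Proposition~4.1, Theorem~3.10 identifies the group of units of $\mathsf{D}(P_{n})$ with $\mathsf{FC}(P_{n})^{e}/\sigma$. It then remains to recognise this group as the Thompson group $G_{n,1}$. By Lemma~3.6 an element of $\mathsf{FC}(P_{n})^{e}$ is represented by a finite orthogonal family $\{t_{1},\dots,t_{m}\}$ of non-zero elements of $P_{n}$ for which $\{\mathbf{d}(t_{1}),\dots,\mathbf{d}(t_{m})\}$ and $\{\mathbf{r}(t_{1}),\dots,\mathbf{r}(t_{m})\}$ are both essential, i.e.\ complete prefix codes over the alphabet $\{a_{1},\dots,a_{n}\}$ (equivalently, bases of the rooted $n$-ary tree); and by the description of $\sigma$ used in Theorem~3.10 two such families are $\sigma$-related exactly when they admit a common refinement. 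This is the classical ``tree-pair'' presentation of $G_{n,1}$, and the verification that the resulting group is Higman's $G_{n,1}$ is precisely the content of the computations in \cite{Law7}, so this step reduces to citing that paper.

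The hard part will be this last identification of $\mathsf{FC}(P_{n})^{e}/\sigma$ with $G_{n,1}$, i.e.\ matching the prefix-code-bijection-modulo-refinement description with Higman's original definition; but since \cite{Law7} performs exactly this analysis in the language of the polycyclic monoids, the theorem follows by feeding Proposition~4.1 and the general results of Sections~2--3 into those earlier calculations.
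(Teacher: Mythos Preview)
Your proposal is correct and follows essentially the same route as the paper: Proposition~4.1 plus the Completion Theorem for the first claim, Theorem~3.15 for congruence-freeness, and Theorem~3.10 together with the computations in \cite{Law7} for the group of units. You have even made explicit the verification that $P_{n}$ has no $0$-minimal idempotents, which the paper leaves implicit but is needed to invoke Theorem~3.15.
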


By Lemma~3.1 and the theory developed in \cite{Law7}, 
a homomorphism $\theta \colon P_{n} \rightarrow T$ where $T$ is a distributive inverse {\em monoid} is a cover-to-join map 
if and only if $\bigvee_{i=1}^{n} \theta (a_{i}a_{i}^{-1})$ is the identity of $T$.
In fact, the covers of the identity of $P_{n}$ correspond bijectively to the maximal prefix codes in the free monoid
on $n$ generators, and all maximal prefix codes may be constructed from the simplest one $\{a_{1}, \ldots, a_{n}\}$ using order-theoretic
and algebraic operations \cite{Law6,Law7}.
It follows that we may describe $C_{n}$ in the following way:
\begin{itemize}

\item It is distributive.

\item It contains a copy of $P_{n}$ and every element of $C_{n}$ is the join of a finite subset of $P_{n}$.

\item $1 = \bigvee_{i=1}^{n} a_{i}a_{i}^{-1}$.

\item It is the freest inverse semigroup satisfying the above conditions.

\end{itemize}

There is a further consequence of this characterization.
The symmetric inverse monoids $I(X)$ are distributive inverse monoids.
A map from $P_{n}$ to $I(X)$ which is a cover-to-join map is precisely what we have called
a {\em strong representation} of $P_{n}$ in the papers \cite{JL1,Law8}.
They lead to isomorphic representations of $C_{n}$ inside $I(X)$.
The theory of such strong representations is, as shown in \cite{JL1},
precisely what the monograph \cite{B} by Bratteli and Jorgensen is about.
In addition, strong representations of $P_{n}$ lead to isomorphic representations of the Thompson group $G_{n,1}$.
An example of such a representation motivated by linear logic is described in Section~9.3 of \cite{Law1}.

\subsection{The polycyclic monoids and the Thompson-Higman groups $G_{n,r}$}

We shall now show how to obtain the remaining Thompson groups from a generalization of the polycyclic inverse monoids.

An inverse semigroup $S$ is said to have {\em maximal idempotents} if for each non-zero idempotent $e$ there is an
idempotent $e^{\circ}$ such that $e \leq e^{\circ}$ where $e^{\circ}$ is a maximal idempotent such that if 
$e \leq i^{\circ}, j^{\circ}$ then $i^{\circ} = j^{\circ}$.
The set of idempotents $\{ e^{\circ} \colon e \in E(S)^{\ast} \}$ is called the set of {\em maximal idempotents}.
The proof of the following is immediate.

\begin{lemma} Let $S$ be an inverse semigroup with a finite number of maximal idempotents $F$.
Then $F$ is a cover of $E(S) \setminus \{ 0\}$. 
\end{lemma}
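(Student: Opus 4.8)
The plan is to simply unwind the definition of cover and apply the defining property of maximal idempotents. Recall that asserting that $F$ is a cover of $E(S)\setminus\{0\}$ means precisely two things: that $F\subseteq E(S)\setminus\{0\}$, and that for every non-zero idempotent $f$ there exists $g\in F$ with $f\wedge g\neq 0$. So first I would check the containment $F\subseteq E(S)\setminus\{0\}$: every element of $F$ is of the form $e^{\circ}$ for some non-zero idempotent $e$, and $0\neq e\leq e^{\circ}$ forces $e^{\circ}\neq 0$.

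For the covering condition, I would take an arbitrary non-zero idempotent $f\in E(S)$. By hypothesis $S$ has maximal idempotents, so there is a maximal idempotent $f^{\circ}$ with $f\leq f^{\circ}$, and $f^{\circ}\in F$ by the definition of the set of maximal idempotents. Since $f\leq f^{\circ}$ we have $f\wedge f^{\circ}=f\neq 0$, which is exactly what is required. As $f$ was arbitrary, $F$ covers $E(S)\setminus\{0\}$.

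There is no genuine obstacle here --- this is why the statement is flagged as immediate. The only substantive role played by the finiteness of $F$ in the hypothesis is to ensure that the cover we have exhibited is a \emph{finite} cover, i.e.\ an essential set of idempotents in the sense used later in the paper; this is the form in which the conclusion will be applied (for example when verifying compactability and invoking Theorem~2.28). I would simply remark on this at the end rather than belabour it.
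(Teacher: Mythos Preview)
Your proof is correct and matches the paper's treatment: the paper simply declares the proof immediate, and your unwinding of the definitions is exactly the intended argument. Your closing remark about the role of finiteness (making $F$ an essential set) is also apt and consistent with how the lemma is used downstream.
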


We shall need the Rees matrix construction in a very simple form.
Let $M$ be any inverse monoid with zero,
and let $r$ be any (finite) cardinal.
Define $B_{r}(M)$ as follows.
The non-zero elements are triples $(i,m,j)$ where
$1 \leq i,j \leq r$ and $m \in M$ is non-zero,
together with a zero $\mathbf{0}$.
The product
$$(i,m,j)(k,n,l) = (i,mn,l)$$
if $j = k$ and $mn$ is non-zero;
otherwise it is $\mathbf{0}$.
Observe that $B_{1}(M)$ is isomorphic to $M$.
The proofs of the following are routine.

\begin{lemma} The semigroup $B_{r}(M)$ is inverse with a semilattice of idempotents which is a $0$-direct union of $r$ copies of the
semilattice of $S$. In addition, we have the following:
\begin{enumerate}

\item If $M$ is $0$-bisimple then so too is $B_{r}(M)$. 

\item If $M$ is unambiguous then so too is $B_{r}(M)$. 

\item If $M$ is $E^{\ast}$-unitary then so too is $B_{r}(M)$. 

\item The semigroup $B_{r}(M)$ has $r$ maximal idempotents.

\end{enumerate}
\end{lemma}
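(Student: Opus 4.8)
The plan is to verify everything by direct computation with the triples $(i,m,j)$, the only mild subtlety being the description of the natural partial order on $B_{r}(M)$, from which parts (2)--(4) then follow almost mechanically.

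First I would check that $B_{r}(M)$ is inverse. The natural candidate for the inverse of $(i,m,j)$ is $(j,m^{-1},i)$: since $m\neq 0$ in the inverse monoid $M$ we have $mm^{-1}\neq 0$, so $(i,m,j)(j,m^{-1},i)(i,m,j)=(i,mm^{-1}m,j)=(i,m,j)$ and dually, giving regularity. The non-zero idempotents are exactly the $(i,e,i)$ with $e\in E(M)^{\ast}$, because $(i,m,j)^{2}=(i,m,j)$ forces $i=j$ and $m^{2}=m$; and two such idempotents $(i,e,i),(j,f,j)$ have product $\mathbf{0}$ unless $i=j$, in which case $(i,ef,i)=(i,fe,i)$ since $E(M)$ is commutative. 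Hence idempotents commute and $B_{r}(M)$ is inverse, with $E(B_{r}(M))$ the union of the $r$ blocks $\{(i,e,i):e\in E(M)^{\ast}\}\cup\{\mathbf{0}\}$, each order-isomorphic to $E(M)$ and meeting pairwise only in $\mathbf{0}$; that is, a $0$-direct union of $r$ copies of $E(M)$.

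Next I would compute the natural partial order: from $(i,m,j)=(i,m,j)(i,m,j)^{-1}(k,n,l)=(i,mm^{-1},j)(k,n,l)$ one reads off that $(i,m,j)\leq(k,n,l)$ if and only if $i=k$, $j=l$ and $m\leq n$ in $M$. With this in hand the remaining items are short. For (1), given non-zero idempotents $(i,e,i)$ and $(j,f,j)$, use $0$-bisimplicity of $M$ to pick $m\in M$ with $mm^{-1}=e$ and $m^{-1}m=f$; then $s=(i,m,j)$ satisfies $ss^{-1}=(i,e,i)$ and $s^{-1}s=(j,f,j)$. For (2), unambiguity of $B_{r}(M)$ means unambiguity of its semilattice of idempotents, so it suffices to note that if $(i,e,i)(j,f,j)\neq\mathbf{0}$ then $i=j$ and $ef\neq 0$, whence $e,f$ are comparable in $E(M)$ and therefore $(i,e,i),(i,f,i)$ are comparable. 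For (3), if $\mathbf{0}\neq(i,e,i)\leq(k,n,l)$ then by the order description $k=l=i$ and $e\leq n$ with $e$ a non-zero idempotent, so $n$ is idempotent by $E^{\ast}$-unitarity of $M$ and hence $(i,n,i)$ is idempotent. For (4), since $M$ is a monoid the elements $(i,1,i)$ for $1\leq i\leq r$ are idempotents, every $(i,e,i)$ satisfies $(i,e,i)\leq(i,1,i)$, nothing lies strictly above $(i,1,i)$, and $(i,e,i)\leq(j,1,j)$ forces $i=j$; thus these are precisely the $r$ maximal idempotents.

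There is no deep obstacle here: the whole lemma is a sequence of bookkeeping verifications. The one place to be careful is to pin down the natural partial order exactly (in particular that it forces the first and last coordinates to agree), since parts (2), (3) and (4) all rest on that description; everything else is immediate from the corresponding property of $M$ together with commutativity of $E(M)$.
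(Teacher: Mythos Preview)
Your argument is correct and is exactly the kind of routine verification the paper has in mind: the paper gives no proof at all beyond the sentence ``The proofs of the following are routine.'' One small typo to fix: in your computation of the natural partial order you write $(i,m,j)(i,m,j)^{-1}=(i,mm^{-1},j)$, but of course this product is $(i,mm^{-1},i)$; your conclusion $i=k$, $j=l$, $m\leq n$ is nonetheless correct once that is repaired.
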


Let $n \geq 2$ and $r \geq 1$ be any finite positive integers.
Define
$$P_{n,r} = B_{r}(P_{n})$$
and call it the {\em extended polycyclic inverse semigroup with parameters $(n,r)$}.

By Lemmas~4.3 and 4.4 together with Proposition~3.3 and Theorem~3.15, we have the following.

\begin{theorem} 
The extended polycyclic inverse semigroup $P_{n,r}$ is a compactable pre-Boolean inverse semigroup.
Its Boolean completion, denoted by $C_{n,r}$, is a congruence-free Boolean inverse $\wedge$-monoid.
\end{theorem}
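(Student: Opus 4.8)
The plan is to assemble the statement entirely from results already in hand, exploiting the identity $P_{n,r} = B_{r}(P_{n})$ so that every structural property of $C_{n,r} = \mathsf{D}(P_{n,r})$ is traced back to the corresponding property of the polycyclic monoid $P_{n}$ recorded in Proposition~4.1 and transported across the Rees--matrix construction by Lemma~4.4. First I would establish that $P_{n,r}$ is pre-Boolean. By Lemma~4.4 its semilattice of idempotents $E(P_{n,r})$ is a $0$-direct union of $r$ copies of the regular $n$-tree $E(P_{n})$; since order comparisons, the sets $\hat{e}$, and the sets $e^{\uparrow}$ all stay within a single summand, $E(P_{n,r})$ is again unambiguous, pseudofinite, and satisfies the Dedekind finiteness condition. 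Proposition~3.3 then makes $E(P_{n,r})$ pre-Boolean, and part~(1) of the Booleanization Theorem lifts this to $P_{n,r}$ itself.

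Next, compactability and unitality: by Lemma~4.4(4) the semigroup $P_{n,r}$ has exactly $r$ maximal idempotents, and by Lemma~4.3 this finite set is a cover of $E(P_{n,r}) \setminus \{0\}$, i.e.\ an essential set of idempotents, so $E(P_{n,r})$ is compactable. By Theorem~2.28 (applied to $E(P_{n,r})$, whose distributive completion is $E(\mathsf{D}(P_{n,r}))$ by part~(1) of the Booleanization Theorem) the semigroup $C_{n,r} = \mathsf{D}(P_{n,r})$ is unital; the Completion Theorem already gives that it is a distributive inverse $\wedge$-semigroup, and since its semilattice of idempotents $\mathsf{D}(E(P_{n,r}))$ is a Boolean algebra, $C_{n,r}$ is a Boolean inverse $\wedge$-monoid.

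Finally, congruence-freeness of $C_{n,r}$: I would verify the hypotheses of Theorem~3.15 for $S = P_{n,r}$. Unambiguity and the $E^{\ast}$-unitary property come from Lemma~4.4(2),(3); pre-Booleanness was just shown; and $E(P_{n,r})$ has no $0$-minimal idempotents because the regular $n$-tree has none, again a summand-wise condition. For $P_{n,r}$ itself being congruence-free I would invoke Lemma~3.11: $B_{r}(P_{n})$ is combinatorial because $P_{n}$ is --- a triple $(i,m,j)$ is determined by its domain and range idempotents together with the combinatoriality of $P_{n}$ --- hence fundamental; it is $0$-simple because it is $0$-bisimple by Lemma~4.4(1); and $E(P_{n,r})$ is $0$-disjunctive because each summand, being the regular $n$-tree, is. Theorem~3.15 then yields that $C_{n,r} = \mathsf{D}(P_{n,r})$ is congruence-free, completing the proof.

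The routine-but-essential point to be careful about --- and essentially the only real work --- is the bookkeeping that all the relevant lattice-theoretic conditions (unambiguity, pseudofiniteness, Dedekind finiteness, $0$-disjunctiveness, absence of $0$-minimal idempotents) pass from a semilattice to a $0$-direct union of finitely many copies of it, together with the observation that $B_{r}$ of a combinatorial $0$-bisimple inverse monoid is again combinatorial and $0$-simple. None of this is deep, but it is what makes the chain Lemmas~4.3--4.4 $\to$ Proposition~3.3 $\to$ Theorem~3.15 go through, so I would state each of these reductions explicitly rather than leaving them implicit.
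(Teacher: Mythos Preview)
Your proposal is correct and follows exactly the route the paper indicates: the paper's proof is the single sentence ``By Lemmas~4.3 and 4.4 together with Proposition~3.3 and Theorem~3.15,'' and you have unpacked precisely that chain, additionally filling in via Lemma~3.11 the one detail the paper leaves implicit, namely that $P_{n,r}$ itself is congruence-free.
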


Our main theorem is the following.

\begin{theorem} 
The group of units of $C_{n,r}$ is the Higman-Thompson group $G_{n,r}$.
\end{theorem}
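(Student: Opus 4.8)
The plan is to combine the general machinery of Section~3.2 with the classical description of the Higman--Thompson groups as groups of "bijections induced by prefix rewriting" on forests of $n$-ary trees. First I would invoke Theorem~4.5 together with Theorem~3.10: since $P_{n,r}$ is $E^{\ast}$-unitary, unambiguous, compactable and separative (the separativity coming from Lemma~3.5(2) via $0$-disjunctivity of the $n$-tree, and compactability from Lemma~4.4(4) and Lemma~4.3), the group of units of $C_{n,r}=\mathsf{D}(P_{n,r})$ is isomorphic to $\mathsf{FC}(P_{n,r})^{e}/\sigma$. So the task reduces to identifying this concrete quotient with $G_{n,r}$.

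Next I would unwind what $\mathsf{FC}(P_{n,r})^{e}$ is. An element of $\mathsf{FC}(P_{n,r})$ is a finitely generated compatible order ideal, which by Lemma~3.6 may be taken to be generated by a finite \emph{orthogonal} set $\{(i_{k}, y_{k}x_{k}^{-1}, j_{k})\}$ of non-zero elements of $P_{n,r}=B_{r}(P_{n})$. The condition that this element lie in $\mathsf{FC}(P_{n,r})^{e}$ says, by Lemma~3.9 (or directly Theorem~2.28), that the set of domains $\{(j_{k}, x_{k}x_{k}^{-1}, j_{k})\}$ and the set of ranges $\{(i_{k}, y_{k}y_{k}^{-1}, i_{k})\}$ are each essential, i.e. covers of $E(P_{n,r})\setminus\{0\}$. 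Because the semilattice of idempotents of $P_{n,r}$ is a $0$-direct union of $r$ copies of the $n$-regular rooted tree, an essential set of idempotents is precisely a choice, for each of the $r$ roots, of a maximal prefix code in $A_{n}^{\ast}$; thus the data of an element of $\mathsf{FC}(P_{n,r})^{e}$ is exactly a pair of "$r$-labelled maximal prefix codes" in $A_{n}^{\ast}$ together with a bijection between the leaves of the two forests, the bijection being realised on each matched pair of leaves $x_{k}\mapsto y_{k}$ (in root $j_{k}$ to root $i_{k}$) by the prefix-replacement map $x_{k}z\mapsto y_{k}z$. This is precisely Higman's "tree-pair" presentation of an element of $G_{n,r}$.

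Then I would show that quotienting by $\sigma$ implements exactly Higman's equivalence of tree-pairs: two tree-pairs are identified iff they have a common refinement, obtained by simultaneously expanding leaves. Concretely, $A\,\sigma\,B$ iff there is an essential set $C$ with $C\subseteq A,B$; from the auxiliary computation in the proof of Theorem~3.10 (the $\preceq$-relation and the fact that $A\leftrightarrow B$ iff they have a common $\preceq$-lower bound which is again essential, using orthogonality) one reads off that $C$ is exactly a common refinement of the tree-pairs $A$ and $B$. So $\mathsf{FC}(P_{n,r})^{e}/\sigma$, as a set, is the set of Higman equivalence classes of tree-pairs with leaf-bijections, and the product in $\mathsf{FC}(P_{n,r})$ (subset multiplication of order ideals, which on essential sets amounts to composing the prefix-rewriting maps after passing to a common refinement of range-code of one with domain-code of the other) descends to the composition of partial-rewriting bijections. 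This is visibly the group operation of $G_{n,r}$, so the map sending a tree-pair to its induced bijection is a group isomorphism $\mathsf{FC}(P_{n,r})^{e}/\sigma\xrightarrow{\ \sim\ }G_{n,r}$.

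The main obstacle I expect is purely bookkeeping rather than conceptual: getting the bijection between "essential compatible orthogonal sets in $B_{r}(P_{n})$ modulo $\sigma$" and "$G_{n,r}$ in its tree-pair presentation" to respect multiplication, since the subset product of two order ideals in $\mathsf{FC}$ does not come pre-refined and one must check that the resulting map agrees, after refinement, with function composition in $G_{n,r}$; the $r$-component indices $(i,j)$ also need to be threaded carefully through the Rees-matrix multiplication so that the product is non-zero exactly when the forests match up. Everything needed is already available: Lemma~3.6 for orthogonal representatives, Lemma~3.9 and Theorem~2.28 for the characterisation of essential sets as forests of maximal prefix codes, and Theorem~3.10 for the reduction of the group of units to $\mathsf{FC}(P_{n,r})^{e}/\sigma$ and for the description of $\sigma$ via common refinements; the remaining identification with Higman's $G_{n,r}$ is then a matter of matching definitions, exactly as the $r=1$ case recovers the identification of the group of units of $C_{n}$ with $G_{n,1}$ from \cite{Law7} used in Theorem~4.2.
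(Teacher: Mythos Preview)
Your argument is correct and follows the same overall skeleton as the paper --- both begin by invoking Theorem~3.10 to reduce the problem to identifying $\mathsf{FC}(P_{n,r})^{e}/\sigma$ --- but you carry out that identification by a genuinely different route. You match $\mathsf{FC}(P_{n,r})^{e}/\sigma$ directly against Higman's tree-pair presentation of $G_{n,r}$: essential orthogonal subsets of $B_{r}(P_{n})$ become pairs of $r$-labelled forests of maximal prefix codes with a leaf bijection, and $\sigma$ becomes the common-refinement relation. The paper instead uses Scott's act-theoretic description: it realizes $P_{n,r}$ as the inverse semigroup $B(X,S)$ associated to the free $r$-generated right $A_{n}^{\ast}$-act via the RP-system machinery, identifies $\mathsf{FC}(P_{n,r})^{e}$ with the inverse monoid of isomorphisms between finitely generated \emph{essential} subacts (Scott's ``cofinite inescapable subspaces'', as reinterpreted by Birget), and then reads off $G_{n,r}$ as the $\sigma$-quotient of that monoid. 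Your approach is more elementary and self-contained, needing nothing beyond the combinatorics of prefix codes already used for the $r=1$ case in Theorem~4.2; the paper's approach buys a conceptual explanation of why the Rees-matrix construction $B_{r}(P_{n})$ is the right semigroup to start from, and ties the result into the general theory of $0$-bisimple inverse semigroups arising from left cancellative monoid actions.
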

\begin{proof} 
We shall need some definitions.
If $X$ is a set on which the monoid $S$ acts on the right then $X$ is called a {\em right $S$-act}.
Our actions will always be on the right.
{\em Subacts} are defined in the obvious way and {\em cyclic subacts}
have the form $xS$ where $x \in X$.
Subacts with the property that their intersection with any non-empty subact is also non-empty are said to be {\em essential}.
The act $X$ is said to be {\em finitely generated} if $X = \bigcup_{i=1}^{n} x_{i}S$
for some elements $x_{i} \in X$.
If $X$ is any set and $S$ is any monoid then $S$ acts on
the set $X \times S$ by right multiplication on the second component, 
and this determines what is called a {\em free $S$-act}.
{\em (Right) homomorphisms} between right $S$-acts are defined in the obvious way.

We begin by analysing the construction of the Higman-Thompson groups $G_{n,r}$ by Scott \cite{Scott92}.
Let $X = \{x_{1}, \ldots, x_{r} \}$ and $A = \{a_{1}, \ldots, a_{n}\}$ where we assume that $X \cap A = \emptyset$.
We denote by $A^{\ast}$ the free monoid on $A$.
Scott considers the set $XA^{\ast}$ on which $A^{\ast}$ acts on the right in the obvious way.
The elements of $X$ are simply playing the role of indices, so we could equally well
write $XA^{\ast}$ as $X \times A^{\ast}$ with the action given by $(x,w)a = (x,wa)$.
Thus the starting point for Scott's work is free actions of free monoids.
Because $X$ has $r$ elements, it is a free $r$-generated action.

Scott now goes on to consider subsets, subspaces in Scott's terminology, of the free action which are closed under the action of the free monoid.
These are subacts.
She singles out those subacts which are `cofinite' and `inescapable'.
Birget \cite{Birget} noted that these two conditions together translate into `finitely generated' and `essential'.
Thus we are interested in those finitely generated subacts which are essential.
We consider the set of all isomorphisms between the finitely generated essential subacts.
These form an inverse monoid.
In fact, an inverse monoid with a special property:
each element sits beneath a unique maximal element.
Such inverse monoids are said to be $F$-inverse.
It is this property that enables us to define a group.
Let $G_{n,r}$ be the set of maximal isomorphisms between the finitely generated
essential subacts.
If $f$ and $g$ are two such maps, their composition $fg$ will not in general
be a maximal map but will sit beneath a unique such map.
We define this to be the product of $f$ and $g$, and in this way $G_{n,r}$ is a group.
Alternatively, this group is also the maximum group image of the inverse semigroup of isomorphisms.
Thus Thompson's group $G_{n,r}$ is constructed from
the free $r$-generated action of the free monoid on $n$ generators
by considering the isomorphisms between essential finitely generated subacts.

We now link this definition of the Thompson group $G_{n,r}$ to the inverse semigroup $P_{n,r}$.


We describe first how to construct all $0$-bisimple inverse semigroups.
Let $X$ be a set on which the monoid $S$ acts on the right.
We say that $(X,S)$ is an {\em RP-system} if $S$ is left cancellative,
the intersection of any two cyclic subacts of $X$ is either empty or again a cyclic subact,
and if $xs = x's$ then $x = x'$.
Define $B(X,S)$ to be the set of all right $S$-isomorphisms between the cyclic subacts together with the empty function.
Then $B(X,S)$ is a $0$-bisimple inverse semigroup and every $0$-bisimple inverse semigroup arises in this way.
This is a classical theorem of semigroup theory which is described in a wider context with references in \cite{Law2}.
A left cancellative monoid  satisfies {\em Clifford's condition} if the intersection of any two principal right ideals
is either empty or again a principal right ideal.
If $S$ is such a monoid then $S$ acts on itself on the right and $B(S,S)$ is a $0$-bisimple inverse {\em monoid},
and every $0$-bisimple inverse {\em monoid} is isomorphic to one constructed in this way.
Let $S$ be a left cancellative monoid satisfying Clifford's condition, whose associated $0$-bisimple inverse monoid is $B(S) = B(S,S)$.
Let $(X \times S,S)$ be a free $S$-system. 
Then it is an easy exercise to check that $B(X \times S,S)$ is isomorphic to $B_{|X|}(B(S))$.
It follows from the above discussion that the extended polycyclic semigroup $P_{n,r}$ 
is constructed from the free system $(\{1,\ldots, r\} \times A_{n}^{\ast},A_{n}^{\ast})$.

Consider now the $RP$-system $(X,S)$ where $X$ is finitely generated.
Then the inverse monoid $D^{e}(B(X,S))$ is isomorphic to the inverse monoid of isomorphisms between the finitely generated essential subacts of $X$.
It follows that when $X = \{1,\ldots, r\} \times A_{n}^{\ast}$ and $S = A_{n}^{\ast}$
then  $D^{e}(B(X,S))/\sigma$ is the Thompson group $G_{n,r}$.
Thus by Theorem~3.10, the group of units of $C_{n,r}$ is also the Thompson group $G_{n,r}$. 
\end{proof}

\subsection{Self-similar groups}

The theory of self-similar groups has been developed by Grigorchuk and his school \cite{N}.
However, in \cite{Law9}, the author showed that they had in fact first been defined by Perrot \cite{P}
as part of a generalization of the pioneering work by Rees \cite{Rees}.
Define a \emph{Perrot semigroup} to be an inverse semigroup that is unambiguous and has the Dedekind height property.
Then self-similar group actions correspond to $0$-bisimple Perrot monoids.
They can be regarded informally as polycyclic monoids which have acquired a group of units.
Those $0$-bisimple Perrot monoids which are also $E^{\ast}$-unitary are pre-Boolean.
This corresponds to the group $G$ acting on the free monoid $A_{n}^{\ast}$ in such a way
that the functions $\phi_{x} \colon G_{x} \rightarrow G$ are all injective, where $\phi_{x}(g) = g|_{x}$. 
The self-similar action is faithful if and only if the inverse monoid is fundamental.
It follows that in this case the inverse monoid is congruence-free.
In this way, certain kinds of self-similar group actions can be used to construct congruence-free Boolean inverse monoids
generalizing the Cuntz inverse monoids.
In fact, arbitrary self-similar group actions are pre-Boolean \cite{LL}.

\subsection{Graph inverse semigroups}

Graph inverse semigroups are constructed as a special case of a general procedure for constructing inverse semigroups 
from left cancellative categories \cite{Law2,Law5} originating in the work of Leech \cite{Leech1}.

A {\em Leech category} is a left cancellative category in which any pair of arrows with a common range that can be completed to a commutative square have a pullback.
A {\em principal right ideal} in a category $C$ is a subset of the form $aC$ where $a \in C$.
A category $C$ is said to be {\em right rigid} of $aC \cap bC \neq \emptyset$ implies that $aC \subseteq bC$ or $bC \subseteq aC$;
this terminology is derived from Cohn \cite{Cohn}.
A {\em left Rees category} is a left cancellative, right rigid category in which each principal right ideal is properly contained 
in only finitely many distinct principal right ideals.
Left Rees categories are Leech categories.

With each Leech category $C$ we may associate an inverse semigroup $\mathbf{S}(C)$ as follows; all proofs may be found in \cite{Law5}.
Put
$$U = \{(a,b) \in C \times C \colon \mathbf{d}(a) = \mathbf{d}(b) \}.$$
Define a relation $\sim$ on $U$ as follows
$$(a,b) \sim (a',b') \Leftrightarrow (a,b) = (a',b')u$$
for some invertible element $u \in C$.
This is an equivalence relation on $U$ and we denote the equivalence class containing $(a,b)$ by $[a,b]$.
The product $[a,b][c,d]$ is defined as follows:
if there are no elements $x$ and $y$ such that $bx = cy$ then the product is defined to be zero;
if such elements exist choose such a pair that is a pullback.
The product is then defined to be $[ax,dy]$.
Observe that $[a,b]^{-1} = [b,a]$.
Thus $[a,b] \mathcal{L} [c,d]$ if and only if $[b,b] = [d,d]$ 
and
$[a,b] \mathcal{R} [c,d]$ if and only if $[a,a] = [c,c]$. 
The non-zero idempotents of the inverse semigroup $\mathbf{S}(C)$ are the elements of the form $[a,a]$.
Define $[a,a]^{\circ} = [\mathbf{r}(a),\mathbf{r}(a)]$.
The natural partial order is given by
$[a,b] \leq [c,d]$ if and only if $(a,b) = (c,d)p$ for some arrow $p$.

\begin{proposition} Let $C$ be a Leech category.
\begin{enumerate}

\item $\mathbf{S}(C)$ is an inverse semigroup with maximal idempotents and each $\mathcal{D}$-class contains a maximal idempotent. 

\item If the groupoid of invertible elements in $C$ is trivial then $\mathbf{S}(C)$ is combinatorial and each $\mathcal{D}$-contains exactly one maximal idempotent.

\item If $C$ is a left Rees category then $\mathbf{S}(C)$ is a Perrot semigroup (Section~4.3).

\end{enumerate}
\end{proposition}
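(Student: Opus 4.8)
The plan is to compute directly with the coordinate description of $\mathbf{S}(C)$: its non-zero idempotents are the classes $[a,a]$, the natural order is $[a,b]\le [c,d]\iff (a,b)=(c,d)p$ for some arrow $p$, the product $[a,a][b,b]$ is $\mathbf{0}$ unless $aC\cap bC\neq\emptyset$, and (as recorded in the excerpt) $[a,b]\,\mathcal{L}\,[c,d]\iff[b,b]=[d,d]$ while $[a,b]\,\mathcal{R}\,[c,d]\iff[a,a]=[c,c]$. Two preliminary observations drive part (1). First, $[a,a]\le[\mathbf{r}(a),\mathbf{r}(a)]$ always, since $(a,a)=(\mathbf{r}(a),\mathbf{r}(a))a$. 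Second, an idempotent $[c,c]$ is a maximal idempotent if and only if it has a representative which is an identity of $C$: from $[c,c]\le[\mathbf{r}(c),\mathbf{r}(c)]$, maximality of $[c,c]$ forces $[c,c]=[\mathbf{r}(c),\mathbf{r}(c)]$ and $\mathbf{r}(c)$ is an identity; conversely, if $e$ is an identity and $[e,e]\le[d,d]$, then $(e,e)=(d,d)p$ gives $dp=e$, hence $\mathbf{r}(d)=e$ and $[d,d]\le[\mathbf{r}(d),\mathbf{r}(d)]=[e,e]$, so $[d,d]=[e,e]$. The same computation shows that any maximal idempotent above a given $[a,a]$ must equal $[\mathbf{r}(a),\mathbf{r}(a)]$ (it has an identity representative $e$, and $[a,a]\le[e,e]$ forces $e=\mathbf{r}(a)$). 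This establishes that $\mathbf{S}(C)$ has maximal idempotents with $[a,a]^{\circ}=[\mathbf{r}(a),\mathbf{r}(a)]$. For the $\mathcal{D}$-class clause, the element $[\mathbf{d}(a),a]$ is defined (since $\mathbf{d}(\mathbf{d}(a))=\mathbf{d}(a)$), and a one-line computation with the product rule gives $[\mathbf{d}(a),a]^{-1}[\mathbf{d}(a),a]=[a,a]$ and $[\mathbf{d}(a),a][\mathbf{d}(a),a]^{-1}=[\mathbf{d}(a),\mathbf{d}(a)]$; since $[\mathbf{d}(a),\mathbf{d}(a)]$ is a maximal idempotent, the $\mathcal{D}$-class of $[a,a]$ contains one.

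For (2), if the groupoid of invertible arrows of $C$ is trivial then the equivalence $\sim$ defining $\mathbf{S}(C)$ is equality, so $[a,b]=[c,d]\iff(a,b)=(c,d)$. Combinatoriality is then immediate from the $\mathcal{L}$- and $\mathcal{R}$-descriptions: $s\,\mathcal{L}\,t$ and $s\,\mathcal{R}\,t$ force both coordinates of $t$ to match those of $s$. For the uniqueness of the maximal idempotent in a $\mathcal{D}$-class, suppose $[e,e]$ and $[f,f]$ are maximal idempotents (with $e,f$ identities, by (1)) connected by some $[x,y]$ with $[x,x]=[e,e]$, $[y,y]=[f,f]$. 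Unwinding $\sim$ shows $x$ and $y$ are invertible arrows with $\mathbf{r}(x)=e$, $\mathbf{r}(y)=f$ and a common domain, so $xy^{-1}$ is an invertible arrow from $f$ to $e$; triviality of the groupoid of invertibles forces $e=f$. Combined with (1), each $\mathcal{D}$-class contains exactly one maximal idempotent.

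For (3), $\mathbf{S}(C)$ is an inverse semigroup with zero by construction, so it remains to check that it is unambiguous and satisfies the Dedekind height condition. Unambiguity: if a non-zero idempotent lies below both $[a,a]$ and $[b,b]$ then $[a,a][b,b]\neq\mathbf{0}$, i.e. $aC\cap bC\neq\emptyset$; right rigidity of $C$ then gives $aC\subseteq bC$ or $bC\subseteq aC$, and since $aC\subseteq bC\iff a=bp$ for some arrow $p\iff[a,a]\le[b,b]$, the two idempotents are comparable. For the Dedekind height condition, fix a non-zero idempotent $[a,a]$; the idempotents above it are exactly the $[b,b]$ with $aC\subseteq bC$, and $[b,b]=[b',b']\iff bC=b'C$, so $[a,a]^{\uparrow}$ is in bijection with the set of principal right ideals of $C$ containing $aC$. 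As $C$ is a left Rees category, $aC$ is properly contained in only finitely many principal right ideals, so this set is finite and $\mathbf{S}(C)$ is a Perrot semigroup.

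The main obstacle is bookkeeping rather than depth: one must handle the equivalence $\sim$ (the invertible factors) carefully so that ``having an identity representative'' is well defined and so that the correspondence in (3) between idempotents above $[a,a]$ and principal right ideals containing $aC$ is a genuine bijection and not merely a surjection; and one must not confuse the two ``maximal idempotents'' attached to $[a,a]$, namely the order-maximal $[\mathbf{r}(a),\mathbf{r}(a)]$ lying above it and the maximal idempotent $[\mathbf{d}(a),\mathbf{d}(a)]$ sitting in its $\mathcal{D}$-class.
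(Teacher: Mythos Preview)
Your argument is correct. The paper itself does not supply a proof of this proposition: it states the construction of $\mathbf{S}(C)$ and the proposition, referring all proofs to \cite{Law5}. Your direct coordinate computation is the natural approach and matches what one finds in that reference. The key identifications you make---that the maximal idempotents are precisely those with an identity representative, that $[a,a]^{\circ}=[\mathbf{r}(a),\mathbf{r}(a)]$, and that idempotents above $[a,a]$ correspond bijectively to principal right ideals containing $aC$---are exactly the right ones. Your careful handling of the equivalence $\sim$ (checking that $[b,b]=[b',b']\iff bC=b'C$ via left cancellation) is what makes the bijection in part~(3) genuine rather than merely a surjection, and you are right to flag this as the main bookkeeping hazard.
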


In the case where $C$ has no non-trivial invertible elements, an equivalence class $[a,b]$ consists of the singleton $(a,b)$.
In this case, we denote the equivalence class by $ab^{-1}$.

Let $G$ be a directed graph and denote by $G^{\ast}$ the free category on $G$.

\begin{lemma} 
Free categories are left Rees categories with trivial groupoids of invertible elements.
\end{lemma}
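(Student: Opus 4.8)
The plan is to verify the two claims in Lemma~4.10 directly from the definitions: that the free category $G^{\ast}$ on a directed graph $G$ is a left Rees category, and that its groupoid of invertible elements is trivial. Recall that a left Rees category is a left cancellative, right rigid category in which each principal right ideal is properly contained in only finitely many distinct principal right ideals.

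First I would observe that $G^{\ast}$ has objects the vertices of $G$ and arrows the finite directed paths (including the empty path at each vertex), with composition by concatenation. Triviality of the groupoid of invertible elements is immediate: if $p$ is an invertible path then there is $q$ with $pq$ and $qp$ both identities (empty paths), and since the length of a concatenation is the sum of the lengths, $p$ and $q$ must both have length zero, so $p$ is an identity. Left cancellativity is equally routine: if $pq = pr$ with all composites defined, then comparing the suffixes of equal paths forces $q = r$ (the edges of $q$ and $r$ occupy the same positions after the common prefix $p$).

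Next I would check right rigidity and the finiteness condition on principal right ideals. For a path $a$, the principal right ideal $aG^{\ast}$ consists of all paths of the form $ap$; thus $aG^{\ast} \subseteq bG^{\ast}$ iff $b$ is a prefix of $a$. If $aG^{\ast} \cap bG^{\ast} \neq \emptyset$, there is a path $c$ that has both $a$ and $b$ as prefixes; but any two prefixes of a common path are themselves prefix-comparable (the shorter is a prefix of the longer), so either $a$ is a prefix of $b$ or $b$ is a prefix of $a$, giving $bG^{\ast} \subseteq aG^{\ast}$ or $aG^{\ast} \subseteq bG^{\ast}$. Finally, $aG^{\ast} \subsetneq bG^{\ast}$ forces $b$ to be a proper prefix of $a$, and a path of length $n$ has exactly $n+1$ prefixes (hence at most $n$ proper ones), so $aG^{\ast}$ is properly contained in only finitely many principal right ideals. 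This establishes that $G^{\ast}$ is a left Rees category.

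None of this presents a genuine obstacle; the lemma is flagged in the text as "easy to check," and the only mild care needed is the bookkeeping with the prefix ordering (the same ordering already used for the polycyclic monoids in Section~4.1), together with noting that identities are the empty paths, so that "invertible" collapses to "identity." I would present the argument as the short verification sketched above rather than spelling out every concatenation identity.
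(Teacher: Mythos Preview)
Your proof is correct. The paper does not actually supply a proof of this lemma --- it is stated without argument, in keeping with its routine nature --- and your direct verification (left cancellation via unique suffix, right rigidity via prefix-comparability of two prefixes of a common path, the finiteness condition via the length bound on proper prefixes, and triviality of invertibles via additivity of length) is exactly the expected check.
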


Given a directed graph $G$, we define the \emph{graph inverse semigroup} $P_{G}$ to be the semigroup with zero defined by the above construction.
The free category has no non-trivial invertible elements and so each equivalence class is denoted by $xy^{-1}$.
Thus the non-zero elements of $P_{G}$ are of the form $uv^{-1}$ where $u,v$ are paths in $G$ with common domain.
With elements in this form, 
multiplication works in the following way:
\begin{equation*}
xy^{-1} \cdot uv^{-1} =
\begin{cases}
 xzv^{-1} & \mbox{if $u=yz$ for some path $z$}\\
 x \left( vz \right)^{-1} & \mbox{if $y=uz$ for some path $z$}\\
 0 & \mbox{otherwise.}\\
\end{cases}
\end{equation*}
 Let $xy^{-1}$ and $uv^{-1}$ be non-zero elements of $P_{G}$. 
Then
$$xy^{-1} \leq uv^{-1}
\Leftrightarrow 
\exists p \in \mathcal{G}^{\ast} \text{ such that }  
x = up \text{ and } y = vp.$$
If $xy^{-1} \leq uv^{-1}$ or $uv^{-1} \leq xy^{-1}$ then we say $xy^{-1}$ and $uv^{-1}$ are \emph{comparable}.

The following is proved in \cite{JL2} and is in fact a characterization of graph inverse semigroups.

\begin{proposition} A graph inverse semigroup is a combinatorial Perrot semigroup with maximal idempotents
such that each $\mathcal{D}$-class contains a unique maximal idempotent. 
It is in addition $E^{\ast}$-unitary.
\end{proposition}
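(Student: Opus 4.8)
The first three assertions are immediate from what has already been established. By Lemma~4.8 the free category $G^{\ast}$ is a left Rees category whose groupoid of invertible elements is trivial, and $P_{G}$ is by definition the inverse semigroup $\mathbf{S}(G^{\ast})$. Hence Proposition~4.7 applies: part~(3) gives that $P_{G}$ is a Perrot semigroup (in particular it is unambiguous and has the Dedekind height property), part~(1) gives that $P_{G}$ has maximal idempotents and that every $\mathcal{D}$-class meets the set of maximal idempotents, and part~(2) --- invoked because the groupoid of invertible elements of $G^{\ast}$ is trivial --- gives both that $P_{G}$ is combinatorial and that each $\mathcal{D}$-class contains exactly one maximal idempotent. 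So the only thing requiring a genuine argument is that $P_{G}$ is $E^{\ast}$-unitary.

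For this I would argue directly from the combinatorial description of $P_{G}$ recalled above. The non-zero idempotents of $P_{G}$ are precisely the elements $uu^{-1}$ with $u$ a path of $G$, and the natural partial order is given by
$$xy^{-1} \leq uv^{-1} \;\Leftrightarrow\; x = up \text{ and } y = vp \text{ for some } p \in G^{\ast}.$$
So let $0 \neq e \leq s$ in $P_{G}$ with $e$ an idempotent, and write $e = uu^{-1}$ and $s = xy^{-1}$. Applying the order relation with $uu^{-1}$ in the role of the smaller element, we obtain a path $p$ with $u = xp$ and $u = yp$, whence $xp = yp$ in $G^{\ast}$. But a free category is right cancellative --- a common final segment may be deleted from an equality of paths --- so $x = y$, and therefore $s = xx^{-1}$ is an idempotent. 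Thus every element above a non-zero idempotent is idempotent, i.e.\ $P_{G}$ is $E^{\ast}$-unitary, and the proof is complete.

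There is no serious obstacle here: once the explicit form of the elements of $P_{G}$, of its idempotents and of its natural partial order is in hand, the verification of $E^{\ast}$-unitarity is a one-line cancellation. The only points that deserve care are getting the orientation of the order relation right (the prefix datum $p$ is appended on the \emph{right}) and noting that we are using right cancellativity of $G^{\ast}$, which is not among the general hypotheses on a Leech category but does hold for free categories. One could instead deduce $E^{\ast}$-unitarity from Lemma~3.5(1), since right rigidity of $G^{\ast}$ makes $P_{G}$ unambiguous; but verifying that $(P_{G},\leq)$ is an unambiguous poset is no shorter than the argument above.
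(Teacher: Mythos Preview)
Your argument is correct. The paper does not actually prove this proposition in-line: it simply states that the result is proved in \cite{JL2} (and remarks that it is in fact a characterization of graph inverse semigroups). So there is no proof in the paper to compare against.

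Your approach is the natural one given the material already assembled in the section: Proposition~4.7 and Lemma~4.8 immediately yield the combinatorial, Perrot, and maximal-idempotent assertions, and your direct verification of $E^{\ast}$-unitarity via right cancellativity of the free category is exactly the standard one-line check. The only point worth flagging is that you are using right cancellativity of $G^{\ast}$, which (as you note) is not part of the Leech-category axioms but is obvious for free categories; this is harmless here but is the sort of thing a careful reader will want spelled out.
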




 





The key result is the following and follows from our theory and an analysis that generalizes the polycyclic case.

\begin{theorem} 
Graph inverse semigroups $P_{G}$ over graphs $G$ in which each vertex has finite in-degree are pre-Boolean.
\end{theorem}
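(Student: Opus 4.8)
The plan is to deduce this from Proposition~3.3 by passing to the semilattice of idempotents. First I would invoke part~(1) of the Booleanization Theorem: an inverse $\wedge$-semigroup is pre-Boolean precisely when its semilattice of idempotents is, and $P_{G}$ is an inverse $\wedge$-semigroup, being $E^{\ast}$-unitary by Proposition~4.9 (hence a $\wedge$-semigroup by Lemma~3.2). Thus it suffices to show that $E(P_{G})$ is a pre-Boolean semilattice. The next step is to identify $E(P_{G})$ explicitly. The non-zero idempotents of $P_{G}$ are exactly the elements $xx^{-1}$ for $x$ a path in $G$, and the restriction of the natural partial order gives $xx^{-1} \leq yy^{-1}$ iff $x = yp$ for some path $p$; hence $x \mapsto xx^{-1}$ is an order isomorphism carrying the prefix-ordered poset of paths of $G$, with a zero adjoined, onto $E(P_{G})$. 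In other words, $E(P_{G})$ is precisely the semilattice $E$ attached to the free category $G^{\ast}$ in the discussion preceding Lemma~3.4.

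It then remains to check the hypotheses of Proposition~3.3 for $E = E(P_{G})$. Unambiguity and the Dedekind finiteness condition hold: both were recorded for the path semilattice in the paragraph before Lemma~3.4, and they also follow from Proposition~4.9, which says $P_{G}$ is a (combinatorial) Perrot semigroup and so unambiguous with the Dedekind height property. The one genuinely new input is pseudofiniteness, and this is exactly where the hypothesis on $G$ enters: by Lemma~3.4(3), $E$ is pseudofinite if and only if every vertex of $G$ has finite in-degree, which is assumed. Proposition~3.3 now yields that $E(P_{G})$ is pre-Boolean, and the Booleanization Theorem~(1), already applied above, upgrades this to: $P_{G}$ is pre-Boolean.

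I do not expect any real obstacle here: the substantive work has been absorbed into Proposition~3.3 (through the tight-filter machinery of Section~2.2) and into the structural description of graph inverse semigroups in Section~4.4. The one place that calls for care is the identification of $E(P_{G})$ with the path semilattice $E$, specifically the bookkeeping that comparability of the idempotents $xx^{-1}$ and $yy^{-1}$ corresponds to prefix-comparability of $x$ and $y$, and that paths which fail to be prefix-comparable have meet $0$ in $E(P_{G})$. It is also worth noting why one must route through Proposition~3.3 rather than through part~(3) of the Booleanization Theorem: since $G$ may have vertices of in-degree $1$, the semilattice $E(P_{G})$ need not be $0$-disjunctive by Lemma~3.4(2), so the trapping-condition criterion does not apply, whereas Proposition~3.3 was formulated precisely to dispense with the disjunctivity assumption.
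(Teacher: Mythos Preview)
Your proposal is correct and is precisely the argument the paper intends: the polycyclic case in Section~4.1 is handled by observing that $E(P_{n})$ is the regular $n$-tree and invoking Proposition~3.3, and the paper's remark that Theorem~4.10 ``follows from our theory and an analysis that generalizes the polycyclic case'' points to the same route via Proposition~3.3, Lemma~3.4(3), and part~(1) of the Booleanization Theorem. Your closing observation that part~(3) of the Booleanization Theorem is unavailable here because vertices of in-degree~$1$ defeat $0$-disjunctivity is also to the point.
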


We call the Boolean completions of graph inverse semigroups {\em Cuntz-Krieger inverse semigroups} $CK_{G}$.
We may describe $CK_{G}$ in the following way:
\begin{itemize}

\item It is distributive.

\item It contains a copy of $P_{G}$ and every element of $CK_{G}$ is the join of a finite subset of $P_{G}$.

\item $e = \bigvee_{f' \in \hat{e}} f'$ for each maximal idempotent $e$ of $P_{G}$.

\item It is the freest inverse semigroup satisfying the above conditions.

\end{itemize}

A direct construction of this semigroup is given in \cite{JL1}.
The Cuntz-Krieger inverse semigroups constructed from {\em finite} directed graphs are monoids.
Their groups of units are analogues of the Thompson groups $G_{n,1}$. 

\section{A characterization of finite symmetric inverse monoids}

In classical Stone duality, there is an order isomorphism between the ideals of the Boolean algebra and
the open subsets of the associated Boolean space.
We generalize this result to our setting by reformulating the results of Section~7 of \cite{Lenz}. 
We shall apply this result to bring out the analogy between the finite symmetric inverse monoids $I(X)$, 
where $X$ has $n$ elements, and the finite-dimensional $C^{\ast}$-algebras $M_{n}(\mathbb{C})$.

In a groupoid $G$, define $g \mathcal{L} h$ if $g^{-1}g = h^{-1}h$, and define $g \mathcal{R} h$ if $gg^{-1} = hh^{-1}$.
Put $\mathcal{D} = \mathcal{L} \circ \mathcal{R} = \mathcal{R} \circ \mathcal{L}$,
an equivalence relation.
The $\mathcal{D}$-classes are precisely the {\em connected components} of the groupoid $G$.
A subset of a groupoid $G$ is said to be {\em invariant} if it is a union of $\mathcal{D}$-classes.

In an inverse semigroup $S$ an ideal $I \subseteq S$ is determined by the idempotents it contains $E(I)$,
since $a \in I$ if and only if $aa^{-1} \in I$ if and only if $a^{-1}a \in I$.
The set $E(I)$ is an order ideal of $E(S)$ and is {\em self-conjugate} in the sense that $e \in E(I)$ implies that $ses^{-1} \in E(I)$ for all $s \in S$. 
An ideal $T$ of $S$ is said to be {\em tightly closed} 
if $s_{1}, \ldots, s_{n} \in T$ and $\{s_{1}, \ldots, s_{n}\}$ a cover of $s$ implies that $s \in T$.
Self-conjugate order ideals of $E(S)$ are the same thing as the invariant order ideals of $E(S)$ used in \cite{Lenz};
the simple proof is left to the reader.
It follows that we may rephrase the correspondence found in Lemma~7.7 of \cite{Lenz} in terms of ideals of the
inverse semigroup rather than as order ideals of its semilattice of idempotents.
We give the proof from this point of view.

\begin{theorem} Let $S$ be a pre-Boolean inverse semigroup and $\mathsf{G}(S)$ its associated topological groupoid. 
Then there is an order isomorphism between the set of tightly closed ideals of $S$ and the set of open invariant subsets of $\mathsf{G}(S)$.
\end{theorem}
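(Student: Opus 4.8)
The plan is to write down an explicit, order-preserving correspondence between the two posets using the concrete model of $\mathsf{G}(S)$ as the groupoid of ultrafilters of $S$, topologised by the basic sets $V_{s}$. To a tightly closed ideal $T$ of $S$ I would assign the set
$$\Phi(T) = \bigcup_{s \in T} V_{s},$$
and to an open invariant subset $\mathcal{O} \subseteq \mathsf{G}(S)$ the set
$$\Psi(\mathcal{O}) = \{ s \in S \colon V_{s} \subseteq \mathcal{O} \}.$$
Both assignments are visibly inclusion-preserving, so once $\Phi$ and $\Psi$ are shown to be well-defined and mutually inverse the theorem follows.

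First I would check that $\Phi$ and $\Psi$ take values where claimed. That $\Phi(T)$ is open is immediate, since each $V_{s}$ is open (Lemma~2.10). For invariance, suppose $F \in V_{s}$ with $s \in T$ and $F \mathrel{\mathcal{D}} F'$; using $\mathcal{D} = \mathcal{L} \circ \mathcal{R}$ one factors $F' = P \cdot F \cdot Q$ for suitable ultrafilters $P,Q$ (with all indicated products defined in the groupoid), and then for $p \in P$, $q \in Q$ Lemma~2.10 gives $F' \in V_{p}V_{s}V_{q} = V_{psq}$; since $T$ is a two-sided ideal, $psq \in T$, so $F' \in \Phi(T)$, and $\Phi(T)$ is a union of $\mathcal{D}$-classes. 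Dually, if $V_{s} \subseteq \mathcal{O}$ and $x \in S$, then every $F \in V_{xs} = V_{x}V_{s}$ is $\mathcal{L}$-related to some element of $V_{s} \subseteq \mathcal{O}$, whence $F \in \mathcal{O}$ by invariance, so $\Psi(\mathcal{O})$ is an ideal; and if $\{s_{1},\dots,s_{n}\}$ is a cover of $s$ with each $s_{i} \in \Psi(\mathcal{O})$, then $s \rightarrow \{s_{1},\dots,s_{n}\}$, so by Lemma~2.11 $V_{s} \subseteq \bigcup_{i} V_{s_{i}} \subseteq \mathcal{O}$ and $s \in \Psi(\mathcal{O})$: thus $\Psi(\mathcal{O})$ is tightly closed.

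Finally I would verify $\Psi \Phi = \mathrm{id}$ and $\Phi \Psi = \mathrm{id}$. The inclusions $T \subseteq \Psi(\Phi(T))$ and $\Phi(\Psi(\mathcal{O})) \subseteq \mathcal{O}$ are trivial, and $\Phi(\Psi(\mathcal{O})) \supseteq \mathcal{O}$ follows because the $V_{s}$ form a basis, so each $F \in \mathcal{O}$ lies in some $V_{s} \subseteq \mathcal{O}$. The one nontrivial point is $\Psi(\Phi(T)) \subseteq T$: if $V_{s} \subseteq \bigcup_{t \in T} V_{t}$, then since $S$ is pre-Boolean $\mathsf{G}(S)$ is a Hausdorff Boolean groupoid and $V_{s}$ is compact (Comparison Theorem), so $V_{s} \subseteq \bigcup_{i=1}^{n} V_{t_{i}}$ with $t_{i} \in T$; by Lemma~2.11 this says $s \rightarrow \{t_{1},\dots,t_{n}\}$, hence $\{s \wedge t_{1},\dots,s \wedge t_{n}\}$ is a cover of $s$ consisting of elements of $T$, and tight closedness of $T$ gives $s \in T$. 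Being mutually inverse and inclusion-preserving, $\Phi$ and $\Psi$ then constitute the desired order isomorphism. I expect the main obstacle to be the invariance step: one must translate the purely groupoid-theoretic relation $\mathcal{D}$ (connected components) faithfully into the semigroup via the factorisation $F' = P \cdot F \cdot Q$, checking that all composites are defined, so that the two-sided ideal hypothesis on $T$ — as opposed to merely an order-ideal condition — is exactly what the argument consumes; the remaining steps are routine once compactness of the $V_{s}$, which is where the pre-Boolean hypothesis enters, is in hand.
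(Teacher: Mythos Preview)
Your proposal is correct and follows essentially the same route as the paper: the maps $\Phi$ and $\Psi$ are exactly the paper's $\mathbf{O}$ and $\mathbf{C}$, and the four verifications (openness and invariance of $\Phi(T)$, ideal and tightly-closed properties of $\Psi(\mathcal{O})$, and the two round-trip identities via compactness of $V_{s}$ and Lemma~2.11) match the paper's almost step for step. The only cosmetic difference is that you package the invariance and ideal arguments through the identity $V_{a}V_{b}=V_{ab}$ and a groupoid factorisation $F'=P\cdot F\cdot Q$, whereas the paper does the same computation by naming explicit elements of the ultrafilters; both arguments consume the two-sided ideal hypothesis in the same way.
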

\begin{proof}
Let $T$ be a tightly closed ideal of $S$.
Define 
$$\mathbf{O}(T) = \bigcup_{t \in T} V_{t}.$$
Let $O$ be an open invariant subset of $\mathsf{G}(S)$.
Define 
$$\mathbf{C}(O) = \{ s \in S \colon V_{s} \subseteq O\}.$$
Observe that both of these functions are order-preserving.

The set $\mathbf{O}(T)$ is open by construction.
We prove that it is invariant.
Let $F \, \mathcal{D} \, G \in \mathbf{O}(T)$.
By definition, there is  an ultrafilter $A$ such that $F^{-1} \cdot F = A \cdot A^{-1}$ and $G^{-1} \cdot G = A^{-1} \cdot A$.
By definition, $G \in V_{s}$ for some $s \in T$.
Thus $s \in G$.
It follows that $s^{-1}s \in  G^{-1} \cdot G = A^{-1} \cdot A$.
Let $a \in A$ be arbitrary.
Then $b = as^{-1}s   \in A$ and  $b \in I$ since $I$ is an ideal.
Now $bb^{-1} \in A \cdot A^{-1}  = F^{-1} \cdot F$.
Let $t \in F$ be arbitrary.
Then $c = tbb^{-1} \in F$ and $c \in I$ since $I$ is an ideal.
Then $F \in V_{c}$ where $c \in I$.
It follows that $F \in \mathbf{O}(I)$, as required.

We prove that $I = \mathbf{C}(O) = \{ s \in \colon V_{s} \subseteq O\}$ is a tightly closed ideal.
Let $s \in I$ and $t \in S$.
We prove that $st \in I$; the dual result follows by symmetry.
By assumption, $V_{s} \subseteq O$.
We prove that $V_{st} \subseteq O$.
Let $F \in V_{st}$.
Then $st \in F$.
Thus $st(st)^{-1} \in F \cdot F^{-1}$.
But $st(st)^{-1} \leq ss^{-1}$ and so $ss^{-1} \in F \cdot F^{-1}$.
It follows that $A = (F \cdot F^{-1} s)^{\uparrow}$ is an ultrafilter containing $s$ and so $A \in O$.
But $A \, \mathcal{R} \, F$ and so $F \in O$ since $O$ is an invariant subset.
We have therefore shown that $I$ is an ideal.
The proof that it is tightly closed is immediate by Lemma~2.11(2).

Finally, it remains to show what happens when we iterate our two functions.
First, for every tighty closed ideal $I$ we have that $I = \mathbf{C} \mathbf{O}(I)$.
The proof of this uses the fact that the sets $V_{s}$ are compact, the definition of a tightly closed ideal and Lemma~2.11(2).
Second, for every open invariant subset $O$ we have that $O = \mathbf{O} \mathbf{C} (O)$.
The proof of this is routine.
\end{proof}

Let $S$ be an inverse $\wedge$-semigroup.
If $e$ and $f$ are non-zero idempotents define $e \preceq f$ if and only if there are elements $x_{1}, \ldots, x_{m}$ 
such that $\mathbf{d}(x_{1}),\ldots, \mathbf{d}(x_{m}) \leq f$ and $e \rightarrow \{ \mathbf{r}(x_{1}), \ldots, \mathbf{r}(x_{n}) \}$.

\begin{lemma} 
With the above definition, $\preceq$ is a preorder on $E(S) \setminus \{0\}$.
\end{lemma}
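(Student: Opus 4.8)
The plan is to verify reflexivity and transitivity of $\preceq$ directly from the definition, using the properties of the Lenz arrow relation already established in Lemma~2.1 and the basic arithmetic of the natural partial order in an inverse semigroup.

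For reflexivity, given a non-zero idempotent $e$, I would take $m=1$ and $x_{1}=e$; then $\mathbf{d}(x_{1})=\mathbf{r}(x_{1})=e\leq e$, and since $e\rightarrow e$ by reflexivity of $\rightarrow$ (Lemma~2.1(1)), we have $e\preceq e$.

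For transitivity, suppose $e\preceq f$ and $f\preceq g$. So there are $x_{1},\ldots,x_{m}$ with $\mathbf{d}(x_{i})\leq f$ and $e\rightarrow\{\mathbf{r}(x_{1}),\ldots,\mathbf{r}(x_{m})\}$, and there are $y_{1},\ldots,y_{n}$ with $\mathbf{d}(y_{j})\leq g$ and $f\rightarrow\{\mathbf{r}(y_{1}),\ldots,\mathbf{r}(y_{n})\}$. The natural candidate witnessing $e\preceq g$ is the family of products $x_{i}y_{j}$ (for those pairs where the product is non-zero), whose domains lie beneath $\mathbf{d}(y_{j})\leq g$. The key claim to establish is that $e\rightarrow\{\mathbf{r}(x_{i}y_{j})\}$. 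Here I would argue as follows: let $0\neq z\leq e$; since $e\rightarrow\{\mathbf{r}(x_{i})\}$, we get $z\wedge\mathbf{r}(x_{i})\neq 0$ for some $i$, i.e. $z$ meets the range of $x_{i}$ non-trivially. Transporting this idempotent back through $x_{i}$ to the domain side gives a non-zero idempotent beneath $\mathbf{d}(x_{i})\leq f$; since $f\rightarrow\{\mathbf{r}(y_{j})\}$, that idempotent meets $\mathbf{r}(y_{k})$ for some $k$, so in particular $\mathbf{d}(x_{i})\wedge\mathbf{r}(y_{k})\neq 0$, whence $x_{i}y_{k}\neq 0$. Transporting back out through $x_{i}$ to the range side then shows $z\wedge\mathbf{r}(x_{i}y_{k})\neq 0$. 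This is the kind of bookkeeping with $\mathbf{d}$, $\mathbf{r}$ and the partial order that appears repeatedly in the paper (for instance in the proof of Lemma~3.1(2)).

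The main obstacle I expect is precisely this transitivity argument: one has to be careful that conjugating a non-zero idempotent beneath $\mathbf{r}(x_{i})$ back along $x_{i}$ lands beneath $\mathbf{d}(x_{i})$ and stays non-zero, and then conjugating it forward again after meeting $\mathbf{r}(y_{k})$ yields a non-zero idempotent beneath $\mathbf{r}(x_{i}y_{k})$ that still sits beneath the original $z$. All of this is routine inverse-semigroup calculation — using $\mathbf{d}(s)\leq e\Rightarrow s=es$, the identity $\mathbf{r}(st)=s\,\mathbf{d}(t)\wedge\mathbf{d}(s)$-type relations, and the fact that multiplication distributes over finite meets (Proposition~1.4.19 of \cite{Law1}) — but it needs to be carried out without sign errors. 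Once the claim $e\rightarrow\{\mathbf{r}(x_{i}y_{j})\}$ is in hand, transitivity is immediate, and $\preceq$ is a preorder.
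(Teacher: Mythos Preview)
Your proposal is correct and follows exactly the same route as the paper: reflexivity is immediate (the paper doesn't even bother to mention it), and transitivity is witnessed by the family of products $x_{i}y_{j}$, with the key claim being $e \rightarrow \{\mathbf{r}(x_{i}y_{j})\}$. The paper simply asserts that this ``can be checked'' without further detail, so your outline of the conjugation bookkeeping --- pulling a non-zero idempotent beneath $\mathbf{r}(x_{i})$ back through $x_{i}$ to land beneath $\mathbf{d}(x_{i})\leq f$, applying $f\rightarrow\{\mathbf{r}(y_{j})\}$, and pushing forward again --- is more explicit than the paper itself.
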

\begin{proof} We need only prove that $\preceq$ is transitive.
Let $e \preceq f$ and $f \preceq g$.
By definition there are elements $x_{1}, \ldots,x_{m}$ and $y_{1}, \ldots, y_{n}$ such that
$e \rightarrow \{\mathbf{r}(x_{1}), \ldots, \mathbf{r}(x_{m})\}$ and $\mathbf{d}(x_{i}) \leq f$ for $1 \leq i \leq m$,
and
$f \rightarrow \{\mathbf{r}(y_{1}), \ldots, \mathbf{r}(y_{n})\}$ and $\mathbf{d}(y_{j}) \leq f$ for $1 \leq j \leq n$.
It can be checked that 
$e \rightarrow \{ \mathbf{r}(x_{i}y_{j}) \colon 1 \leq i \leq m, 1 \leq j \leq n   \}$ and
$\mathbf{d}(x_{i}y_{j}) \leq g$ for all $1 \leq i \leq m, 1 \leq j \leq n$.
This shows that $e \preceq g$.
 \end{proof}

Define $\equiv$ on $E(S) \setminus \{0\}$ by $e \equiv f$ if and only if $e \preceq f$ and $f \preceq e$.
We say that an inverse $\wedge$-semigroup is {\em $0$-simplifying} if it has no non-trivial tightly closed ideals.
The following is immediate from Lemma~7.8 of \cite{Lenz} and our discussion about ideals above.

\begin{proposition} 
An inverse $\wedge$-semigroup is $0$-simplifying if and only if $\equiv$ is a universal equivalence.
\end{proposition}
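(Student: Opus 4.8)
The plan is to prove that an inverse $\wedge$-semigroup $S$ is $0$-simplifying precisely when the preorder-induced equivalence $\equiv$ on $E(S) \setminus \{0\}$ identifies every pair of non-zero idempotents. The bridge between the two sides is the correspondence between tightly closed ideals of $S$ and self-conjugate tightly closed order ideals of $E(S)$ established in the discussion preceding Theorem~5.1, together with the observation (to be spelled out) that the tightly closed order ideal generated by a single idempotent $f$ is exactly $\{e \in E(S)^{\ast} \colon e \preceq f\} \cup \{0\}$. Indeed, $e \preceq f$ says there are $x_1,\ldots,x_m$ with each $\mathbf{d}(x_i) \leq f$ and $e \rightarrow \{\mathbf{r}(x_1),\ldots,\mathbf{r}(x_m)\}$; the conditions $\mathbf{d}(x_i) \le f$ force each $x_i f x_i^{-1}$ into the self-conjugate order ideal generated by $f$, and the Lenz-arrow condition $e \rightarrow \{\mathbf{r}(x_i)\}$ is exactly what tight closure adds.

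First I would record that $\preceq$ is a preorder (Lemma~5.2, already granted) so that $\equiv$ is a genuine equivalence relation, and that $e \preceq f$ whenever $0 \neq e \leq f$ (take $m=1$, $x_1 = f$, so $\mathbf{d}(x_1) = \mathbf{r}(x_1) = f \ge e$ and $e \rightarrow \{f\}$ holds since every non-zero $x \le e \le f$ satisfies $x \wedge f = x \neq 0$). Next I would verify the key claim: for a non-zero idempotent $f$, the smallest tightly closed ideal $I_f$ of $S$ containing $f$ has $E(I_f)^{\ast} = \{e \colon e \preceq f\}$. One inclusion is the easy direction --- if $e \preceq f$ with witnesses $x_i$, then each $x_i f x_i^{-1}$ lies in the ideal generated by $f$, $\{\mathbf{r}(x_i)\} = \{x_i f x_i^{-1}\}^{\uparrow}$-type considerations put these idempotents (or idempotents below them) in any self-conjugate order ideal containing $f$, and then $e \rightarrow \{\mathbf{r}(x_i)\}$ plus tight closure forces $e$ in. For the reverse inclusion I would check that $\{e \colon e \preceq f\} \cup \{0\}$ is itself a self-conjugate order ideal: downward closure is immediate from $e' \le e \preceq f \Rightarrow e' \preceq f$ (via $e \preceq e$ enlarged, or directly), self-conjugacy follows from conjugating the witnesses $x_i$, and tight closure follows from transitivity of $\preceq$ combined with the fact that a cover $\{s_1,\ldots,s_n\}$ of $s$ with each $\mathbf{d}(s_k), \mathbf{r}(s_k) \preceq f$ yields $\mathbf{r}(s) \rightarrow \{\mathbf{r}(s_1),\ldots,\mathbf{r}(s_n)\}$ hence $\mathbf{r}(s) \preceq f$.

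With that lemma in hand the theorem is quick. If $\equiv$ is universal, then for any $f \neq 0$ the ideal $I_f$ contains every non-zero idempotent $e$ (since $e \equiv f$ gives $e \preceq f$), so $I_f = S$; hence $S$ has no proper non-zero tightly closed ideal, i.e.\ $S$ is $0$-simplifying. Conversely, if $S$ is $0$-simplifying, then for every $f \neq 0$ the tightly closed ideal $I_f$, being non-zero, must be all of $S$, so every non-zero idempotent $e$ satisfies $e \preceq f$; since $f$ was arbitrary this gives $e \equiv f$ for all non-zero $e, f$, so $\equiv$ is universal. I expect the main obstacle to be the careful bookkeeping in the key lemma, specifically checking that $\{e : e \preceq f\}$ is tightly closed --- one must manipulate covers and the Lenz arrow and appeal to Lemma~2.1 (transitivity and compatibility of $\rightarrow$) to stitch the witnesses together, which is exactly the computation Lemma~5.2 carries out for transitivity and which is claimed to be routine in Lemma~7.8 of \cite{Lenz}. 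Everything else is a direct translation through the ideal/order-ideal correspondence.
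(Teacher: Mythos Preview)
Your proposal is correct and follows exactly the route the paper gestures at: the paper's own ``proof'' is simply a citation of Lemma~7.8 of \cite{Lenz} together with the ideal/self-conjugate-order-ideal correspondence, and your key lemma (that $E(I_f)^\ast = \{e : e \preceq f\}$) is precisely the content of that cited result. One small point to watch when you write it up: the paper's definition of ``cover of $s$'' requires the covering set to lie in $s^{\downarrow}$, so from $e \rightarrow \{\mathbf{r}(x_i)\}$ you should pass to the genuine cover $\{e \wedge \mathbf{r}(x_i)\} \subseteq e^{\downarrow}$ before invoking tight closure --- this is harmless since order ideals are downward closed, but it is the step where the bookkeeping must be done carefully.
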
 

Our use of the term  $0$-simplifying was motivated by Kumjian's use of the term {\em simplification}.
Clearly, every $0$-simple semigroup is $0$-simplifying.
We now describe an example which shows that the converse is not true.

\begin{example} {\em Let $I(X)$ be a finite symmetric inverse monoid on the set $\{1, \ldots, n\}$.
This semigroup is not $0$-simple when $n \geq 2$ but we shall prove that it is $0$-simplifying.
We shall use the observation that if $g \, \mathcal{D} \, f \preceq e$ then $g \preceq e$.
Let $e$ be the partial identity on the set $\{1, \ldots, r\}$ where $1 \leq r \leq n-1$ and let 1 be the identity on the whole of $X$.
We show that $e \equiv 1$.
First $e \preceq 1$.
This can be seen by defining the $r$ partial bijections $f_{i}(i) = i$.
Second we prove that $1 \preceq e$.
This can be seen by defining the $n$ partial bijections $f_{i}(1) = i$ for $1 \leq i \leq n$.
The proof is concluded by observing that every non-zero idempotent whose rank is $r$ is
$\mathcal{D}$-related to $e$.

Alternatively, we may use Theorem~5.1 and the fact that 
the groupoid associated with $I(X)$ is just the connected groupoid $X \times X$ with the discrete topology.}
\end{example}

A  groupoid is said to be {\em principal} if all its local groups are trivial.
Such groupoids are essentially equivalence relations.
We shall now investigate the question of which Boolean inverse $\wedge$-semigroups $S$ have the property that $\mathsf{G}(S)$ are principal.
Such groupoids are interesting from the $C^{\ast}$-algebra perspective \cite{Renault}.

We recall first a construction to be found in \cite{Petrich}.
Let $S$ be an arbitrary inverse semigroup and let $F \subset E(S)$ be a subsemigroup.
Put
$$F^{c} = \{ s \in S \colon s^{-1}s,ss^{-1} \in F, \text{ and } sFs^{-1}, s^{-1}Fs \subseteq F \}.$$
Then the following is easy to verify.

\begin{lemma} With the above definition, we have the following:
\begin{enumerate}

\item $F^{c}$ is an inverse subsemigroup of $S$ whose semilattice of idempotents is $F$.

\item If $T$ is any inverse subsemigroup of $S$ whose semilattice of idempotents is $F$ then $T \subseteq F^{c}$.

\item Let $F$ be a filter in $E(S)$. Then $F^{\uparrow} \subseteq F^{c}$.
\end{enumerate}
\end{lemma}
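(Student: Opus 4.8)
The plan is to verify all three parts by direct computation, using only the standard identities of inverse semigroup theory together with the standing observation that $ses^{-1}$ is an idempotent whenever $e$ is. For part (1), I would first check that $F^{c}$ is closed under inversion: if $s\in F^{c}$ then $(s^{-1})^{-1}s^{-1}=ss^{-1}\in F$ and $s^{-1}(s^{-1})^{-1}=s^{-1}s\in F$, and the two conjugation conditions required of $s^{-1}$ are precisely the two already assumed of $s$ with the roles of $s$ and $s^{-1}$ interchanged. Closure under multiplication is the substantive bookkeeping: for $s,t\in F^{c}$ one has $(st)^{-1}(st)=t^{-1}(s^{-1}s)t\in t^{-1}Ft\subseteq F$, dually $(st)(st)^{-1}=s(tt^{-1})s^{-1}\in sFs^{-1}\subseteq F$, and for $e\in F$ the identity $(st)e(st)^{-1}=s(tet^{-1})s^{-1}$ shows $(st)F(st)^{-1}\subseteq s(tFt^{-1})s^{-1}\subseteq sFs^{-1}\subseteq F$, with the analogous computation for $(st)^{-1}F(st)$. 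To identify the idempotents, I would note that every $e\in F$ lies in $F^{c}$, since $e^{-1}e=ee^{-1}=e\in F$ and $eFe^{-1}=e^{-1}Fe=eFe\subseteq F$ because $F$ is a subsemigroup of $E(S)$; hence $F\subseteq E(F^{c})$. Conversely, if $s\in E(F^{c})$ then $s$ is idempotent, so $s=ss^{-1}\in F$ by the defining condition, giving $E(F^{c})=F$.

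Part (2) is then immediate: if $E(T)=F$ and $s\in T$, then $s^{-1}s,ss^{-1}\in E(T)=F$, and for any $e\in F=E(T)$ the element $ses^{-1}$ lies in $T$ and is idempotent, hence lies in $E(T)=F$; thus $sFs^{-1}\subseteq F$ and symmetrically $s^{-1}Fs\subseteq F$, so $s\in F^{c}$. For part (3) I would reduce to part (2). A filter $F$ of the semilattice $E(S)$ is in particular a subsemigroup, so $F^{c}$ is defined, and $F^{\uparrow}=\{s\in S\colon e\le s\text{ for some }e\in F\}$ is an inverse subsemigroup of $S$ whose semilattice of idempotents is $F$: it is visibly closed under inversion; it is closed under products because $e\le s$ and $f\le t$ with $e,f\in F$ give $ef\le sf\le st$ with $ef\in F$; and any idempotent lying above an element of the filter $F$ again belongs to $F$, so $E(F^{\uparrow})=F$. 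Applying part (2) with $T=F^{\uparrow}$ then yields $F^{\uparrow}\subseteq F^{c}$. Alternatively one may simply cite Lemma~2.18(2) together with Lemma~2.8(2) for the structure of $F^{\uparrow}$.

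I do not anticipate a genuine obstacle here; the argument is routine and the paper rightly calls it ``easy to verify.'' The only points that require a moment's care are the standing fact that conjugates of idempotents are idempotents (so that the conjugation conditions in the definition of $F^{c}$ make sense and transfer along products), the use of $F$ being closed under products to obtain $F\subseteq E(F^{c})$ in part (1), and the step $e\le s$, $f\le t\Rightarrow ef\le st$ in part (3), obtained by multiplying the first inequality on the right by $f$ and the resulting inequality on the left by $s$.
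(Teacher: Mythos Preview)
Your verification is correct and complete; the paper itself omits the proof entirely, stating only that the lemma ``is easy to verify'' and citing \cite{Petrich} for the construction. Your reduction of part~(3) to part~(2) via the observation that $F^{\uparrow}$ is an inverse subsemigroup with $E(F^{\uparrow})=F$ is exactly the natural approach, and as you note this is already recorded in the paper as Lemma~2.18(2) combined with Lemma~2.8(2).
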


We can now prove our first characterization.

\begin{lemma} Let $S$ be a pre-Boolean inverse $\wedge$-semigroup.
 Then $\mathsf{G}(S)$ is principal if and only if 
$F^{\uparrow} = F^{c}$
for each ultrafilter $F \subseteq E(S)$.
\end{lemma}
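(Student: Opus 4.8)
The plan is to characterize when the isotropy groups of $\mathsf{G}(S)$ are trivial in terms of the identities of the groupoid, and then translate this into the stated condition on filters. Recall that the identities of $\mathsf{G}(S)$ are precisely the idempotent ultrafilters, and by Lemma~2.18 these correspond bijectively to the ultrafilters $F$ of $E(S)$ via $F \mapsto F^{\uparrow}$. The local group at such an identity $F^{\uparrow}$ consists of all ultrafilters $A$ in $S$ with $A^{-1} \cdot A = F^{\uparrow} = A \cdot A^{-1}$; that is, those ultrafilters $A$ which are ``based'' at $F$ in the sense that $E(A^{-1} \cdot A) = E(A \cdot A^{-1}) = F$.

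First I would make precise the connection between such ultrafilters $A$ and the set $F^{c}$. If $A$ is an ultrafilter with $A^{-1} \cdot A = F^{\uparrow} = A \cdot A^{-1}$, then for every $s \in A$ we have $s^{-1}s, ss^{-1} \in F$, and moreover $sFs^{-1} \subseteq F$ since $F^{\uparrow}$ is an inverse subsemigroup and $s (F^{\uparrow}) s^{-1} \subseteq A \cdot F^{\uparrow} \cdot A^{-1} = A \cdot A^{-1} \cdot A \cdot A^{-1} = A \cdot A^{-1} = F^{\uparrow}$; intersecting with idempotents gives $sFs^{-1} \subseteq F$, and dually. Hence $A \subseteq F^{c}$, and in fact $A \subseteq F^{c}$ with $E(A) = F$. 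Conversely, using Lemma~2.38 (that $F^{\uparrow} \subseteq F^{c}$) and Lemma~2.37, $F^{\uparrow}$ is itself an inverse subsemigroup of $S$ with semilattice of idempotents $F$, and it is an ultrafilter in $S$ by Lemma~2.18(2). So $F^{\uparrow}$ is always one element of the local group at $F^{\uparrow}$ — namely the identity. The local group is trivial exactly when $F^{\uparrow}$ is the \emph{only} ultrafilter $A$ with $E(A^{-1} \cdot A) = E(A \cdot A^{-1}) = F$.

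Next I would show that the local group at $F^{\uparrow}$ is trivial if and only if $F^{c} = F^{\uparrow}$. For the forward direction: suppose the local group is trivial and let $s \in F^{c}$. Then $s^{-1}s, ss^{-1} \in F$, and $s F s^{-1}, s^{-1} F s \subseteq F$. Using this one checks that $(s F^{\uparrow})^{\uparrow}$ is an ultrafilter $A$ in $S$ containing $s$, with $A^{-1} \cdot A = F^{\uparrow}$ and $A \cdot A^{-1} = (s F s^{-1})^{\uparrow} \subseteq F^{\uparrow}$; since $A \cdot A^{-1}$ is an idempotent ultrafilter contained in $F^{\uparrow}$ and ultrafilters are maximal, $A \cdot A^{-1} = F^{\uparrow}$. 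So $A$ lies in the local group at $F^{\uparrow}$, hence $A = F^{\uparrow}$ by triviality, and therefore $s \in F^{\uparrow}$. Thus $F^{c} \subseteq F^{\uparrow}$, and combined with Lemma~2.38 we get equality. For the converse: if $F^{c} = F^{\uparrow}$ and $A$ is any ultrafilter in the local group at $F^{\uparrow}$, then $A \subseteq F^{c} = F^{\uparrow}$ by the first paragraph, and since $A$ is an ultrafilter contained in the filter $F^{\uparrow}$ we get $A = F^{\uparrow}$; hence the local group is trivial.

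Finally, $\mathsf{G}(S)$ is principal iff every local group is trivial iff (by Lemma~2.18(4), which says $F \mapsto F^{\uparrow}$ is a bijection from ultrafilters of $E(S)$ onto $\mathsf{G}(S)_o$) the equality $F^{c} = F^{\uparrow}$ holds for every ultrafilter $F \subseteq E(S)$, which is the claim. The pre-Boolean hypothesis is not heavily used in this particular lemma — it is carried along because $\mathsf{G}(S)$ is given its topology and treated as a Boolean groupoid elsewhere — but it does no harm. The main obstacle I anticipate is the careful verification in the forward direction that $(sF^{\uparrow})^{\uparrow}$ really is a well-defined ultrafilter with the stated $\mathbf{d}$ and $\mathbf{r}$; this is the kind of coset-style computation handled by Lemma~2.8, and I would invoke that lemma rather than redo the bookkeeping, checking only that $s^{-1}s \in F$ (so that $sF^{\uparrow}$ is nonempty and the construction of Lemma~2.8(1) applies) and that $sFs^{-1} \subseteq F$ forces $A \cdot A^{-1} \subseteq F^{\uparrow}$.
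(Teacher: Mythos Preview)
Your proof is correct and follows essentially the same route as the paper's. Both directions hinge on the same two moves: (i) if $A$ lies in the local group at $F^{\uparrow}$ then every element of $A$ belongs to $F^{c}$, and (ii) conversely, from any $s \in F^{c}$ one manufactures the ultrafilter $A = (sF^{\uparrow})^{\uparrow}$ and checks it lies in the local group. The paper carries out (i) inline for a single element $a \in A$, whereas you state it once in your second paragraph and then invoke it; you are also slightly more explicit than the paper in arguing that $A \cdot A^{-1} \subseteq F^{\uparrow}$ forces equality by maximality, where the paper simply asserts $(aHa^{-1})^{\uparrow} = H$. Your closing remark that the pre-Boolean hypothesis plays no real role in this lemma is accurate---the paper's proof does not use it either.
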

\begin{proof} Suppose first that the condition holds.
We prove that $\mathsf{G}(S)$ is principal. 
Let $H$ be an ultrafilter and inverse subsemigroup of $S$.
Then $F = E(H)$ is an ultrafilter in $E(S)$ such that $H = F^{\uparrow}$.
Suppose that $A$ is an ultrafilter such that $A^{-1} \cdot A = H = A \cdot A^{-1}$.
we may write $A = (aH)^{\uparrow} = (Ha)^{\uparrow}$ where $a \in A$ is arbitrary.
Clearly $a^{-1}a,aa^{-1} \in F$.
Also $aHa^{-1} \subseteq H$ and $a^{-1}Ha \subseteq H$.
It follows that $a \in F^{c}$.
By assumption $e \leq a$ where $e \in F$.
Thus $A \subseteq F^{\uparrow}$.
But $A$ is an ultrafilter and so $A = F^{\uparrow} = H$, as required.

We now assume that $\mathsf{G}(S)$ is principal and prove the condition.
Let $F$ be an arbitrary ultrafilter in $E(S)$.
Then $H = F^{\uparrow}$ is an ultrafilter in $S$.
Suppose that $a \in F^{c}$.
Then in particular $a^{-1}a,aa^{-1} \in F$.
Put $A = (aH)^{\uparrow}$.
Then $A$ is an ultrafilfter in $S$.
Observe that $A^{-1} \cdot A = H$.
Also $A \cdot A^{-1} = (aHa^{-1})^{\uparrow}$.
But $a \in F^{c}$ and so $(aHa^{-1})^{\uparrow} = H$.
By assumption $A = H$.
Thus $a \in F^{\uparrow}$, as required.
\end{proof}

We have the following useful consequence of the above result.

\begin{lemma} Let $S$ be a separative or $E^{\ast}$-unitary pre-Boolean inverse $\wedge$-semigroup.
If $\mathsf{G}(S)$ is principal then $S$ is fundamental.
\end{lemma}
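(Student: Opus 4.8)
The plan is to show directly that the maximum idempotent-separating congruence $\mu$ on $S$ is trivial, using the hypothesis that $\mathsf{G}(S)$ is principal. Recall that $s \mathrel{\mu} t$ if and only if $ses^{-1} = tet^{-1}$ for all idempotents $e$. Suppose $s \mathrel{\mu} t$ with $s,t$ non-zero; then in particular $s^{-1}s = t^{-1}t$ and $ss^{-1} = tt^{-1}$, so $s$ and $t$ are in the same $\mathcal{D}$-class with the same domain and range idempotents, and they are compatible (since $s^{-1}t$ and $st^{-1}$ are then idempotents, being images of a $\mu$-class that fixes all idempotents). Under either hypothesis — separative, or $E^{\ast}$-unitary — a compatible pair with equal domain and range must actually be comparable or even equal, but to nail it down I want to produce a single ultrafilter witnessing $s = t$.

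The key step is to find an ultrafilter $F$ of $S$ containing both $s$ and $t$. Since $s \wedge t$ exists and $\mathbf{d}(s \wedge t) = \mathbf{d}(s)\mathbf{d}(t) = s^{-1}s \ne 0$, the element $s \wedge t$ is non-zero, so by Lemma~2.6(1) there is an ultrafilter $A \in V_{s \wedge t} \subseteq V_s \cap V_t$. Now consider $H = A^{-1} \cdot A$, an idempotent ultrafilter, hence (by Lemma~2.18) corresponding to an ultrafilter $E(H)$ of $E(S)$. I would like to argue that, because $\mathsf{G}(S)$ is principal, the only arrow of the groupoid with $H$ as its domain-object and with the same range-object as $A$ is $A$ itself — but that is automatic; what principality buys is that the local group at the identity $H$ is trivial. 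The point is then: $A^{-1} \cdot A = B^{-1} \cdot B$ and $A \cdot A^{-1} = B \cdot B^{-1}$ forces $A = B$ for any ultrafilters $A, B$, which is precisely Lemma~2.8(3) without needing principality. So the honest route is different.

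Here is the route I expect to work. The hypothesis really to be used is Lemma~5.7: $\mathsf{G}(S)$ principal is equivalent to $F^{\uparrow} = F^{c}$ for each ultrafilter $F$ of $E(S)$. Given $s \mathrel{\mu} t$ as above, pick any ultrafilter $A$ containing $s \wedge t$ and set $F = E(A^{-1} \cdot A)$, an ultrafilter of $E(S)$ with $H := F^{\uparrow} = A^{-1} \cdot A$. Because $s \mathrel{\mu} t$, conjugation by $s$ and by $t$ agree on all idempotents, and in particular $s$ (equivalently $t$) maps $H$ into $H$: for $f \in H$ we have $fs^{-1}s = f$ so $sfs^{-1} \le ss^{-1}$ and $sfs^{-1} = tft^{-1}$; a short check using $sFs^{-1} \subseteq F$ and the definition shows $s, t \in F^{c}$. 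By principality $F^{c} = F^{\uparrow} = H$, so $s, t \in H$. But $H$ is an idempotent filter, hence an inverse subsemigroup, so every element of $H$ is an idempotent; since $S$ (being separative, or $E^{\ast}$-unitary) is $0$-restricted separative and $s \wedge s^{-1}s \in H$, Lemma~2.11(4) combined with separativity forces $s$ to be an idempotent, and likewise $t$. Two idempotents that are $\mu$-related are equal (idempotents are always $\mu$-trivial), so $s = t$.

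The main obstacle I anticipate is verifying cleanly that $s \mathrel{\mu} t$ with non-zero $s,t$ implies $s \in F^{c}$ for the ultrafilter $F = E(A^{-1}\cdot A)$ attached to a chosen ultrafilter $A \ni s \wedge t$: one must check both $s^{-1}s, ss^{-1} \in F$ and the conjugation conditions $sFs^{-1}, s^{-1}Fs \subseteq F$, using that $\mu$ fixes all idempotents and that $F$ is an ultrafilter in $E(S)$. The $E^{\ast}$-unitary case is slightly cleaner since there $\mu$-related elements bounded below by a common non-zero idempotent are forced equal almost immediately; the separative case needs the detour through Lemma~2.11(4) as above. I would present the argument uniformly via Lemma~5.7 and remark on the two cases where they diverge.
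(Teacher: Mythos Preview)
There is a genuine gap. Your attempt to show $s \in F^{c}$ fails on two counts. First, membership in $F^{c}$ requires $ss^{-1} \in F$; but your $F = E(A^{-1}\cdot A)$ contains $s^{-1}s$, while $ss^{-1}$ lives in $E(A\cdot A^{-1})$, the \emph{other} identity of the arrow $A$ --- there is no reason these coincide for a general $s$. Second, $s\,\mu\,t$ only says that conjugation by $s$ agrees with conjugation by $t$; it says nothing about conjugation by $s$ fixing the particular ultrafilter $F$, so $sFs^{-1}\subseteq F$ is unjustified. Both obstructions vanish exactly when $s$ is $\mu$-related to an \emph{idempotent}: the paper's proof therefore first reduces to the centralizer case $a\,\mu\,a^{-1}a$. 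Then $aa^{-1}=a^{-1}a$, and for every idempotent $f$ one has $afa^{-1}=a^{-1}a\,f\,a^{-1}a=fa^{-1}a$, so for \emph{any} ultrafilter $F\subseteq E(S)$ containing $a^{-1}a$ one gets $a\in F^{c}$ immediately.

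There is also a second problem in your endgame. Even granting $s\in H=F^{\uparrow}$, it is not true that ``every element of $H$ is an idempotent'': an idempotent ultrafilter is an inverse subsemigroup containing an idempotent, not a set of idempotents. What you actually get is an idempotent $e\leq s$, which finishes the $E^{\ast}$-unitary case but not the separative one; your appeal to Lemma~2.11(4) is misplaced since that lemma needs \emph{every} ultrafilter in $V_{s}$ to be idempotent, not merely one. The paper handles this by running the $F^{c}$ argument for \emph{every} idempotent ultrafilter containing $a^{-1}a$, obtaining $V_{a^{-1}a}\subseteq V_{a}$, and then a short dual argument gives $V_{a}\subseteq V_{a^{-1}a}$, whence $a\leftrightarrow a^{-1}a$; now separativity (or $E^{\ast}$-unitarity) forces $a$ to be an idempotent.
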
 
\begin{proof}
Let $a \mu a^{-1}a$.
Let $H$ be any ultrafilter containing $a^{-1}a$.
It is necessarily an inverse subsemigroup.
Put $F = E(H)$ so that $H = F^{\uparrow}$.
We claim that $a \in F^{c}$.
By assumption $a^{-1}a = aa^{-1}$.
Also if $f \in F$ then $afa^{-1} = fa^{-1}a$ and $a^{-1}fa = fa^{-1}a$.
It follows that $a \in F^{c}$ and so, by assumption, there exists $e \in F$ such that $e \leq a$.
In particular, $a \in H$.
We have shown that $V_{a^{-1}a} \subseteq V_{a}$.

Let $A$ be any ultrafilter containing $a$.
Then $A = (aA^{-1} \cdot A)^{\uparrow}$.
But $a^{-1}a \in A^{-1} \cdot A$ and $A^{-1} \cdot A$ is an ultrafilter containing $a^{-1}a$.
It follows by our result above that $a \in A^{-1} \cdot A$ and so $a^{-1} \in A^{-1} \cdot A$.
Thus $aa^{-1} = a^{-1}a \in A$.

We have proved that $a \leftrightarrow a^{-1}a$.

If $S$ is $E^{\ast}$-unitary let $F$ be any ultrafilter containing $a$.
Then it must also contain $a^{-1}a$. It follows that the idempotent $a \wedge a^{-1}a$ is non-zero.
But it lies beneath $a$ and so $a$ is an idempotent and so $a = a^{-1}a$.
It follows that $S$ is fundamental.

If $S$ is separative, we have that $a = a^{-1}a$.
It follows that $S$ is fundamental.
\end{proof}

The converse of the above lemma is not true because by Theorem~4.2 the Cuntz inverse monoid $C_{n}$ is congruence-free and so fundamental,
but its groupoid $\mathsf{G}(C_{n})$ is not principal, because there are idempotent ultrafilters that can be constructed from right-infinite
periodic strings \cite{Renault}.
The best we can do at the moment is the following which is enough for our purposes.

\begin{lemma} 
Let $S$ be a finite Boolean inverse $\wedge$-semigroup.
Then $S$ is fundamental if and only if $\mathsf{G}(S)$ is principal.
\end{lemma}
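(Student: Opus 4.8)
The plan is to handle the two implications separately; finiteness will be needed only for the direction ``fundamental $\Rightarrow$ principal''.

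For ``$\mathsf{G}(S)$ principal $\Rightarrow S$ fundamental'' I would first note that a Boolean inverse $\wedge$-semigroup $S$ is automatically pre-Boolean (the proof of the Duality Theorem gives $S \cong \mathsf{D}(S)$, which is Boolean) and separative, and then quote the lemma showing that principality of $\mathsf{G}(S)$ forces fundamentality under exactly these hypotheses. The only thing to check is separativity: using Lemma~2.1(5) this reduces to showing that $c \leq a$ and $a \rightarrow c$ with $a \neq 0$ imply $a = c$. Taking $e$ to be the relative complement of $\mathbf{d}(c)$ inside the Boolean algebra $\mathbf{d}(a)^{\downarrow}$, the element $x = ae$ lies below $a$ and, since $x$ and $c$ are compatible (both below $a$), one has $\mathbf{d}(x \wedge c) = e \wedge \mathbf{d}(c) = 0$; so $a \rightarrow c$ forces $e = 0$, hence $\mathbf{d}(c) = \mathbf{d}(a)$ and $c = a$. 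Note this direction uses nothing about finiteness.

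For ``fundamental $\Rightarrow$ principal'' I would invoke the lemma characterizing principality of $\mathsf{G}(S)$ as the condition $F^{\uparrow} = F^{c}$ for every ultrafilter $F \subseteq E(S)$; since $F^{\uparrow} \subseteq F^{c}$ always holds, only $F^{c} \subseteq F^{\uparrow}$ is at issue. Here finiteness enters: every ultrafilter of $E(S)$ is the principal filter $e^{\uparrow}$ at a $0$-minimal idempotent $e$, and then $F^{\uparrow} = e^{\uparrow}$ in $S$. Given $s \in F^{c}$, the defining conditions unwind to $e \leq \mathbf{d}(s)$, $e \leq \mathbf{r}(s)$, $ses^{-1} \geq e$ and $s^{-1}es \geq e$; conjugating $s^{-1}es \geq e$ by $s$ (order-preservingly, everything lying below $\mathbf{d}(s)$) and using $e \leq \mathbf{r}(s)$ yields $ses^{-1} \leq e$, hence $ses^{-1} = e$, and symmetrically $s^{-1}es = e$. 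Then $x := se$ has $\mathbf{d}(x) = \mathbf{r}(x) = e$, so it lies in the maximal subgroup at $e$; since $e$ is $0$-minimal, $e^{\downarrow} = \{0, e\}$, so for every idempotent $f$ we get $xfx^{-1} = x(ef)x^{-1} = ef = efe$ (because $ef \in \{0,e\}$ and $xex^{-1} = \mathbf{r}(x) = e$), i.e.\ $x \, \mu \, e$. Fundamentality of $S$ forces $x = e$, so $e \leq s$ and $s \in F^{\uparrow}$, as required.

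The main obstacle is the core of this second direction: extracting from the rather opaque membership $s \in F^{c}$ that $s$ conjugates the primitive idempotent $e$ to itself, and then recognizing that the resulting element of the local group at $e$ must be trivial — in essence the auxiliary fact that a fundamental inverse semigroup has trivial maximal subgroups at its $0$-minimal idempotents. Once this is isolated the remainder is routine, being just the combination of the two preceding lemmas with these identities.
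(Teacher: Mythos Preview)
Your proposal is correct. For the implication ``principal $\Rightarrow$ fundamental'' you follow exactly the paper's line (quote Lemma~5.7 after noting separativity), with the bonus that you actually supply the separativity argument that the paper merely asserts.

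For ``fundamental $\Rightarrow$ principal'' you take a genuinely different route. The paper does not invoke Lemma~5.6 at all: instead it observes that, by finiteness, ultrafilters in $S$ are principal filters on $0$-minimal \emph{elements}, so that $\mathsf{G}(S)$ is isomorphic to the groupoid $M$ of $0$-minimal elements of $S$; then $M^{0}=M\cup\{0\}$ is a primitive ideal of $S$, hence fundamental, and the paper cites Petrich for the fact that a primitive fundamental inverse semigroup is combinatorial, which is exactly the statement that $M$ is principal. Your argument instead stays at the level of ultrafilters in $E(S)$, uses the criterion $F^{\uparrow}=F^{c}$ from Lemma~5.6, and proves by hand the key point that the $\mathcal{H}$-class at a $0$-minimal idempotent in a fundamental inverse semigroup is trivial. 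Both approaches ultimately rest on this same fact; yours is more self-contained (no external reference) and exploits the machinery already built in the paper, while the paper's version is more structural and shorter once one is willing to cite the primitive/combinatorial equivalence.
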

\begin{proof} Boolean inverse $\wedge$-semigroups are separative, so by Lemma~5.7 we have only one direction to prove.
Thus let $S$ be a fundamental finite Boolean inverse $\wedge$-semigroup. 
By finiteness, every ultrafilter in $S$ is principal and is generated by an element immediately above zero.
It follows that the groupoid of ultrafilters of $S$ is isomorphic to the groupoid of $0$-minimal elements $M$ of $S$.
The set $M^{0} = M \cup \{0\}$ is an ideal of $S$.
Since $S$ is fundamental so too is $M^{0}$.
But $M^{0}$ is a primitive inverse semigroup.
Such semigroups are fundamental precisely when they are combinatorial; see Exercises~II.3.10(i) of \cite{Petrich}.
It follows that $M$ is a principal groupoid.
\end{proof}

We now have a characterization of the finite symmetric inverse monoids.

\begin{theorem} The only finite Boolean inverse $\wedge$-monoids that are fundamental and $0$-simplifying are the finite symmetric inverse monoids.
\end{theorem}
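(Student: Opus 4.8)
The plan is to translate the two hypotheses into statements about the topological groupoid $\mathsf{G}(S)$ and then read off the answer via the Duality Theorem. So let $S$ be a finite Boolean inverse $\wedge$-monoid that is fundamental and $0$-simplifying; the goal is to produce a finite set $X$ with $S \cong I(X)$.

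\emph{First}, I would record that $\mathsf{G}(S)$ is finite, discrete and principal. Since $S$ is finite, every ultrafilter of $S$ is principal and generated by a $0$-minimal element, so $\mathsf{G}(S)$ is finite and carries the discrete topology; moreover, exactly as in the proof of Lemma~5.9, $\mathsf{G}(S)$ is isomorphic as a topological groupoid to the groupoid $M$ of $0$-minimal elements of $S$, and because $S$ is fundamental, Lemma~5.9 gives that $\mathsf{G}(S)$ is principal. \emph{Next}, I would use the hypothesis that $S$ is $0$-simplifying. By Theorem~5.1 the tightly closed ideals of $S$ correspond, order-isomorphically, to the open invariant subsets of $\mathsf{G}(S)$; since $\mathsf{G}(S)$ is discrete, every invariant subset — being a union of $\mathcal{D}$-classes, i.e.\ of connected components — is open. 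Hence $0$-simplifying says that the only unions of connected components of $\mathsf{G}(S)$ are $\emptyset$ and $\mathsf{G}(S)$ itself, i.e.\ $\mathsf{G}(S)$ is connected (it is non-empty by Lemma~2.6(1) once $S\neq\{0\}$; the degenerate case $S=\{0\}$ is $I(\emptyset)$). Thus $\mathsf{G}(S)$ is a finite, connected, principal groupoid, and any such groupoid is isomorphic to the pair groupoid $X\times X$ on a finite set $X$ (pick a base object, use connectedness plus triviality of isotropy to get a unique arrow to each object), where $|X|$ is the number of $0$-minimal elements of $S$.

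\emph{Finally}, I would identify $\mathsf{B}(X\times X)$. A local bisection of $X\times X$ is precisely a subset which is the graph of a partial bijection of $X$, and since $X$ is finite every such subset is compact-open; product of bisections corresponds to composition of partial bijections and the diagonal $(X\times X)_o$ is the identity, so $\mathsf{B}(X\times X)\cong I(X)$ as inverse monoids. Since a finite Boolean inverse $\wedge$-semigroup is pre-Boolean, the Duality Theorem together with the Comparison Theorem (Theorem~2.34) gives $S\cong\mathsf{D}(S)\cong\mathsf{B}(\mathsf{G}(S))\cong\mathsf{B}(X\times X)\cong I(X)$. Conversely, each finite $I(X)$ is a fundamental Boolean inverse $\wedge$-monoid whose associated groupoid $X\times X$ is connected, hence is $0$-simplifying by Theorem~5.1 (this is essentially Example~5.5), so the class described consists of exactly the finite symmetric inverse monoids.

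The step needing the most care is the second one: extracting from Theorem~5.1 that $0$-simplifying forces $\mathsf{G}(S)$ to be connected relies on finiteness of $S$ to make the groupoid discrete (so that invariance alone forces openness), and one then needs the elementary but not entirely automatic fact that a finite connected principal groupoid is a pair groupoid. The remaining steps are either quoted directly from earlier sections or are routine computations with bisections.
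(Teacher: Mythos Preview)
Your proof is correct and follows essentially the same route as the paper: use the finite-case lemma (what you call Lemma~5.9, the paper's Lemma~5.8) to get that $\mathsf{G}(S)$ is principal, observe that finiteness makes the topology discrete, apply Theorem~5.1 so that $0$-simplifying forces a single connected component, identify a finite connected principal groupoid with a pair groupoid $X\times X$, and finally recover $S\cong I(X)$ from the duality $S\cong\mathsf{B}(\mathsf{G}(S))$. Your write-up is somewhat more explicit than the paper's (you spell out why $\mathsf{B}(X\times X)\cong I(X)$ and invoke the Comparison Theorem by name), but there is no substantive difference in strategy or in the key lemmas used.
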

\begin{proof} By Example~5.4, the finite symmetric inverse monoids are $0$-simplifying
and it is well-known, and can be verified directly, that they are inverse $\wedge$-semigroups and fundamental.
It remains to prove the converse.
Let $S$ be a finite Boolean inverse $\wedge$-monoid that is fundamental and $0$-simplifying. 
By Lemma~5.8, the groupoid $\mathsf{G}(S)$ is principal.
By finiteness, the topology is discrete.
But by Theorem~5.1, the groupoid $\mathsf{G}(S)$ can have no non-trivial open invariant subsets
and so this groupoid must consist of just one component.
It follows that $\mathsf{G}(S)$ is isomorphic to a groupoid of the form $X \times X$ where $X$ is a finite set.
The inverse monoid $S$ is isomorphic to the inverse semigroup of compact-open bisections of $\mathsf{G}(S)$ and
so is isomorphic to the inverse monoid $I(X)$.
\end{proof}

It is worth observing that in Boolean inverse $\wedge$-semigroups, the tightly closed ideals are precisely those ideals
that are also closed under joins whenever they exist.

\subsection{Concluding remarks }

The theory developed in this paper owes its inspiration to the work of Kellendonk \cite{Kell1,Kell2} and Paterson \cite{Pat1}
and to that of Lenz \cite{Lenz} who reconciled their two approaches.
It is worth noting that there is a precursor to our work in the theory of spectra of posets.
The earliest reference we can find that has the flavour of what we do is by B\"uchi \cite{Buchi},
but the most direct ancestor is probably Dooley \cite{D} and the Darmstadt school \cite{HK}. 
The defining property of unambiguous inverse semigroups is strongly reminiscent of the property of open balls in ultrametric spaces.
This provides the link between our work and that of Hughes \cite{Hughes1,Hughes2,Hughes3}
though he works mainly with topological groupoids and $C^{\ast}$-algebras.



\begin{thebibliography}{99}


\bibitem{AH} C.~J.~Ash, T.~E.~Hall, Inverse semigroups on graphs, {\em Semigroup Forum} {\bf 11} (1975), 140--145.

\bibitem{Birget} J.-C.~Birget, The groups of Richard Thompson and complexity, {\em Inter.~J.~Alg.~Computation} {\bf 14} (2004), 569--626.

\bibitem{B} O.~Bratteli, P.~E.~T.~Jorgensen, {\em Iterated function systems and permutation representations of the Cuntz algebra},
Memoirs of the A.M.S. No.~663, 1999.

\bibitem{Buchi} J.~B\"uchi, Die Boole'sche Partialordnung und die Paarung von Gef\"ugen, {\em Portug. Math.} {\bf 7} (1948), 118--180.


\bibitem{Cohn} P.~M.~Cohn, {\em Free rings and their relations}, Second Edition, Academic Press, 1985.

\bibitem{C} J.~Cuntz, Simple $C^{\ast}$-algebras generated by isometries, {\em Commun. Math. Phys.} {\bf 57} (1977), 173--185.

\bibitem{CK} J.~Cuntz, W.~Krieger, A class of $C^{\ast}$-algebras and topological Markov chains, {\em Invent. Math.} {bf 56} (1980), 251--268.

\bibitem{DW} H.~G.~Dales, W.~H.~Woodin, {\em An introduction to independence for analysts}, CUP, 1987.

\bibitem{Doctor} H.~P.~Doctor, The categories of Boolean lattices, Boolean rings and Boolean spaces, {\em Canad. Math. Bull.} {\bf 7} (1964), 245--252.

\bibitem{D} A.~H.~Dooley, The spectral theory of posets and its applications to $C^{\ast}$-algebras,
{\em Trans. Amer. Math. Soc.} {\bf 224} (1976), 143--155.

\bibitem{E1} R.~Exel, Inverse semigroups and combinatorial $C^{\ast}$-algebras, {\em Bulletin of the Brazilian Mathematical Society}
{\bf 39 } (2008),  191--313.

\bibitem{E} R.~Exel, Tight representations of semilattices and inverse semigroups, {\em Semigroup Forum} {\bf 79} (2009), 159--182.


\bibitem{HK} K.~H.~Hofmann, K.~Keimel, A general character theory for partially ordered sets and lattices,
{\em Mem. Amer. Math. Soc.} No.~122 (1972).

\bibitem{H} J.~M.~Howie, {\em Fundamentals of semigroup theory}, Clarendon Press, Oxford, 1995.

\bibitem{Hughes1} B.~Hughes, Trees and ultrametric spaces: a categorical equivalence, {\em Adv. Math.} {\bf 189} (2004), 148--191.

\bibitem{Hughes2} B.~Hughes, Trees, ultrametrics and noncommutative geometry, arXiv:math/0605131.

\bibitem{Hughes3} B.~Hughes, Local similarities and the Haagerup property, {\em Groups Geom. Dyn.} {\bf 3} (2009).

\bibitem{JL1} D.~Jones, M.~V.~Lawson, Strong representations of the polycyclic inverse monoids: cycles and atoms, 
{\em Periodica Math. Hung.} {\bf 64} (2012), 53-87.

\bibitem{JL2} D.~Jones, M.~V.~Lawson, Graph inverse semigroups: their characterization and completion, in preparation, arXiv:1106.3644v1.

\bibitem{K1} K.~Kawamura, Polynomial representations of the Cuntz algebras arising from permutations I. General theory, {\em Lett. Math. Phys.}
{\bf 71} (2005), 149--158.
 
\bibitem{K2} K.~Kawamura, Polynomial representations of the Cuntz algebras arising from permutations II. Branching laws of endomorphisms,
Preprint RIMS-1433 (2003).

\bibitem{Kell1} J.~Kellendonk, The local structure of tilings and their integer group of coinvariants, {\em Commun. Math. Phys.}
{\bf 187} (1997), 115--157.

\bibitem{Kell2} J.~Kellendonk, Topological equivalence of tilings, {\em J. Math. Phys.} {\bf 38} (1997), 1823--1842.

\bibitem{KL} J.~Kellendonk, M.~V.~Lawson, Tiling semigroups, {\em J.~Alg.} {\bf 224} (2000), 140--150. 

\bibitem{K} A.~Kumjian, On localizations and simple $C^{\ast}$-algebras, {\em Pacific J. Math.} {\bf 112} (1984), 141--192.


\bibitem{Law0} M.~V.~Lawson, Coverings and embeddings of inverse semigroups, {\em Proc. Edinb. Math. Soc.} {\bf 36} (1993), 399--419.

\bibitem{Law1} M.~V.~Lawson, {\em Inverse semigroups}, World-Scientific, 1998.

\bibitem{Law2} M.~V.~Lawson, Constructing inverse semigroups from category actions, {\em J. Pure Applied Algebra} {\bf 137} (1999), 57--101. 

\bibitem{Law3} M.~V.~Lawson, The structure of $0$-$E$-unitary inverse semigroups I: the monoid case,
{\em Proc. Edinb. Math. Soc.} {\bf 42} (1999), 497--520.

\bibitem{Law4} M.~V.~Lawson, $E^{\ast}$-unitary inverse semigroups, in {\em Semigroups, algorithms, automata and languages} 
(eds G M S Gomes, J-E Pin, P V Silva) World Scientific, (2002) 195--214.

\bibitem{Law5} M.~V.~Lawson, Ordered groupoids and left cancellative categories, {\em Semigroup Forum} {\bf 68} (2004), 458--476.


\bibitem{Law6} M.~V.~Lawson, Orthogonal completions of the polycyclic monoids, {\em Communications in Algebra} {\bf 35} (2007), 1651--1660.

\bibitem{Law7} M.~V.~Lawson, The polycyclic monoids $P_{n}$ and the Thompson groups $V_{n,1}$, {\em Communications in Algebra} {\bf 35} (2007), 4068--4087.

\bibitem{Law9} M.~V.~Lawson, A correspondence between a class of monoids and self-similar group actions I, {\em Semigroup Forum 76} (2008), 489-517.

\bibitem{Law10} M.~V.~Lawson, Semigroups related to subshifts of graphs, {\em Mathematics Studies of the Estonian Mathematical Society 3}, 
(edited by V. Laan, S. Bulman-Fleming, R. Kaschek) (2008), 92--107.

\bibitem{Law8} M.~V.~Lawson, Primitive partial permutation representations of the polycyclic monoids and branching function systems,
{\em Periodica Hung.} {\bf 58} (2009), 189--207.

\bibitem{Law11}  M.~V.~Lawson, A noncommutative generalization of Stone duality, {\em J. Austral. Math. Soc.} {\bf 88} (2010), 385--404.

\bibitem{Law12}  M.~V.~Lawson, Compactable semilattices, {\em Semigroup Forum} {\bf 81} (2010), 187--199.

\bibitem{LL}  M.~V.~Lawson, D.~H.~Lenz, Pseudogroups and their \'etale groupoids, arXiv:1107.5511v2.

\bibitem{LS} M.~V.~Lawson, B.~Steinberg, Ordered groupoids and etendues, 
{\em Cahiers de topologie et g\'eom\`etrie diff\'erenti\`elle cat\'egoriques} {\bf XLV} (2004), 82--108. 

\bibitem{LW} M.~V.~Lawson, A.~Wallis, A correspondence between a class of monoids and self-similar group actions II, in preparation. 

\bibitem{LMS} M.~V.~Lawson, S.~Margolis, B.~Steinberg, The \'etale groupoid of an inverse semigroup as a groupoid of filters, arXiv:1106.5198v3. 




\bibitem{Leech1} J.~Leech, Constructing inverse monoids from small categories, {\em Semigroup Forum} {\bf 36} (1987), 89--116.

\bibitem{Leech2} J.~Leech, Inverse monoids with a natural semilattice ordering, {\em Proc. Lond. Math.Soc.} (3) {\bf 70} (1995), 146--182.

\bibitem{Leech3} J.~Leech, On the foundations of inverse monoids and inverse algebras, {\em Proceedings of the Edinburgh Mathematical Society}
{\bf 41} (1998), 1--21.

\bibitem{Lenz} D.~Lenz, An order-based construction of a topological groupoid from an inverse semigroup, {\em Proc. Edinb. Math. Soc.}
{\bf 51} (2008), 387--406.




\bibitem{N} V.~Nekrashevych, {\em Self-similar groups}, AMS, 2005.

\bibitem{Nivat} M.~Nivat, J.-F.~Perrot, Une g\'en\'eralisation du
mono\"{\i}de bicyclique,
{\em Comptes Rendus de l'Acad\'emie des Sciences de Paris}
{\bf 271} (1970), 824--827.

\bibitem{Pat1} A.~L.~T.~Paterson, {\em Groupoids, inverse semigroups,
    and their operator algebras}, Birkh\"auser, 1999.

\bibitem{Pat2} A.~L.~T.~Paterson, {\em Graph inverse semigroups, groupoids and their $C^{\ast}$ algebras}, {\em J. Operator Theory} {\bf 48} (2002), 645--662.

\bibitem{P} J.-F.~Perrot, {\em Contribution \`a  l'\'etude des mono\"\i des syntactiques et de certains groupes associ\'es aux automates finis},
Th\'ese Sc. Math. Paris, 1972. 

\bibitem{Petrich} M.~Petrich, {\em Inverse semigroups}, John Wiley \& Sons, 1984.

\bibitem{R} I.~Raeburn, {\em Graph algebras}, American Mathematical Society, 2005. 

\bibitem{Rees} D.~Rees, On the ideal structure of a semi-group satisfying a cancellation law, 
{\em Quart. J. Math. Oxford Ser.} {\bf 19} (1948), 101--108.

\bibitem{Renault} J.~Renault, {\em A groupoid approach to $C^{\ast}$-algebras}, Springer-Verlag, 1980.

\bibitem{Resende} P.~Resende, Lectures on \'etale groupoids, inverse semigroups and quantales, Lecture Notes, August, 2006. 

\bibitem{Scott92} E.~A.~Scott, A tour around finitely presented infinite
  simple groups, in {\em Algorithms and classification in
    combinatorial group theory} (editors G.~Baumslag,
  C.~F.~Miller~III), Springer-Verlag, 1992, 83--119.

\bibitem{S} G.~F.~Simmons, {\em Introduction to topology and modern analysis}, McGraw-Hill, 1963.


\bibitem{W} S.~Willard, {\em General topology}, Addison-Wesley, 1970.


\end{thebibliography}
\end{document}